\documentclass[12pt, hyperref]{article}
\usepackage{}
\usepackage{amsfonts}
\usepackage{amsmath}
\usepackage{amsmath}
\usepackage{amsmath}
\usepackage{amsmath}
\usepackage{amsfonts}
\usepackage{mathrsfs}
\usepackage{pifont}
\usepackage{mathrsfs}
\usepackage{bbding}
\usepackage{bbding}
\usepackage{bbding}
\usepackage{mathrsfs}
\usepackage{bbding}
\usepackage{amsfonts}
\usepackage{amsfonts}
\usepackage{mathrsfs}
\usepackage{amsfonts}
\usepackage{bbding}
\usepackage{bbding}
\usepackage{mathrsfs}
\usepackage{amsfonts}
\usepackage{mathrsfs}
\usepackage{amsfonts}
\usepackage{mathrsfs}
\usepackage{mathrsfs}
\usepackage{amsfonts}
\usepackage{amsfonts}
\usepackage{mathrsfs}
\usepackage{amsfonts}
\usepackage{amsfonts}
\usepackage{mathrsfs}
\usepackage{amsfonts}
\usepackage{amssymb,latexsym,amsmath, amsthm}
\usepackage{hyperref}
\usepackage{amscd}
\usepackage[all]{xy}
\usepackage{xcolor}
\usepackage{graphicx}
\usepackage{mathrsfs}
\usepackage{CJK}
\numberwithin{equation}{section}

\hypersetup{linkcolor=blue, pdfstartview=FitH} \hypersetup{colorlinks, linkcolor=blue,
pdfstartview=FitH}
\newtheorem{corollary}{Corollary}[section]
\newtheorem{lemma}[corollary]{Lemma}
\newtheorem{definition}[corollary]{Definition}
\newtheorem{proposition}[corollary]{Proposition}
\newtheorem{theorem}[corollary]{Theorem}
\newtheorem{example}[corollary]{Example}
\newtheorem{remark}[corollary]{Remark}

\newcommand{\dd}{\mathrm{d}}

\newcommand{\nn}{\mathbb{N}}
\newcommand{\zz}{\mathbb{Z}}
\newcommand{\qq}{\mathbb{Q}}
\newcommand{\rr}{\mathbb{R}}
\newcommand{\cc}{\mathbb{C}}

\newcommand{\s}{\stackrel{\circ}{\subset}}

\usepackage{anyfontsize}
\usepackage{mathrsfs}  
\begin{document}

\title{``$H=W$" in infinite dimensions}
\author{Zhouzhe Wang\footnote{School of Mathematics,
Sichuan University, Chengdu, 610064, China. E-mail
address: wangzhouzhe@stu.scu.edu.cn.}, Jiayang Yu\footnote{School of Mathematics,
Sichuan University, Chengdu, 610064, China. E-mail
address: jiayangyu@scu.edu.cn.}, Xu Zhang\footnote{School of Mathematics,
Sichuan University, Chengdu, 610064, China. E-mail
address: zhang\_xu@scu.edu.cn.} and Shiliang Zhao\footnote{School of Mathematics,
Sichuan University, Chengdu, 610064, China. E-mail
address: zhaoshiliang@scu.edu.cn.}}
\maketitle
\def\cc{\mathbb{C}}
\def\zz{\mathbb{Z}}
\def\nn{\mathbb{N}}
\def\rr{\mathbb{R}}
\def\qq{\mathbb{Q}}
\def\dd{\mathbb{D}}
\def\tt{\mathbb{T}}
\def\bb{\mathbb{B}}
\def\ff{\mathbb{F}}
\def\ll{\mathbb{L}}
\def\divide{\bigskip \hrule \bigskip}
\def\bigno{\bigskip \noindent}
\def\medno{\medskip \noindent}
\def\smallno{\smallskip \noindent}
\def\bignobf#1{\bigskip \noindent \textbf{#1}}
\def\mednobf#1{\medskip \noindent \textbf{#1}}
\def\smallnobf#1{\smallskip \noindent \textbf{#1}}
\def\nobf#1{\noindent \textbf{#1}}
\def\nobfblue#1{\noindent \textbf{\textcolor[rgb]{0.00,0.00,1.00}{#1}}}
\def\purple#1{\textcolor[rgb]{1.00,0.00,0.50}{#1}}
\def\green#1{\textcolor[rgb]{0.00,1.00,0.00}{#1}}
\begin{abstract}
	The classical ``$H=W$" theorem establishes the identity between two function spaces on an arbitrary nonempty open set in the Euclidean spaces: the space $W$ defined via weak derivatives, and the space $H$ defined as the closure of smooth functions within $W$ space. Extending this result to infinite-dimensional spaces is challenging due to the lack of a nontrivial translation-invariant measure and the proliferation of infinite sums inherent to infinite dimensions. In this paper, by adapting several techniques developed in our previous works, we prove that smooth functions are dense in the Sobolev space of functions on arbitrary non-empty open set in $\ell^2$, thereby establishing an infinite-dimensional counterpart of ``$H=W$". Such density results reduce the problem of deriving a priori $L^2$ estimates for differential operators---originating from the classical Fredholm alternative and Carleman estimates---to the simpler case of smooth functions. If approximation by smooth cylindrical functions is possible, the problem can be reduced to calculus. Unfortunately, this does not hold for every open set in $\ell^2$. However, we prove that such an approximation does hold on open sets that satisfy the segment condition.
\end{abstract}
\tableofcontents

\section{Introduction}

Sobolev spaces in finite dimensions (e.g., \cite{AF}) occupy an outstanding place in analysis, arising in connection with numerous problems in the theory of partial differential equations, approximation theory, and many other areas of pure and applied mathematics. In particular, these spaces appear in linear and nonlinear partial differential equations originating from fields such as differential geometry, harmonic analysis, engineering, control theory, mechanics, and physics.

In 1964, N.G.~Meyers and J.~Serrin (\cite{MS}) proved that $H^{m,p}(\Omega)=W^{m,p}(\Omega)$ holds for every open subset $\Omega$ of $\mathbb{R}^n$ and any $m,n\in\mathbb{N}$, $p\in[1,\infty)$---a result often abbreviated as ``$H=W$". Prior to this seminal result, a conceptual divide existed between Sobolev spaces $W^{m,p}(\Omega)$ defined intrinsically via weak derivatives and $H^{m,p}(\Omega)$ defined as the completion of smooth functions under the Sobolev norm. As R.A. Adams and J.J.F. Fournier remarked in \cite[p. 60]{AF}, this result ``ended much confusion about the relationship of these spaces that existed in the literature before that time. It is surprising that this elementary result remained undiscovered for so long." The result ``$H=W$" can be viewed as a result on approximations of Sobolev functions by smooth functions. In this respect, a classical result (see \cite[Theorem 3.2, p. 68]{AF}) states that if $\Omega$ is a domain in $\mathbb{R}^n$ satisfying the segment condition, then $C_c^{\infty}(\mathbb{R}^n)$ is dense in $W^{m,p}(\Omega)$ for every $m\in\mathbb{N}$ and $p\in[1,\infty)$.

Sobolev spaces in infinite dimensions also play an important role in various areas of mathematics including Malliavin analysis and infinite-dimensional real and complex analysis. Moreover, the survey \cite{Bog} provides an overview of Sobolev classes on infinite-dimensional spaces. These spaces are the proper working spaces for differential operators on some infinite-dimensional spaces equipped with Gaussian measures. Approximation by smooth functions in these spaces reduce the problem of deriving a priori $L^2$ estimates for differential operators---originating from the classical Fredholm alternative and Carleman estimates---to the simpler case of smooth functions. If approximation by smooth cylindrical functions is possible, the problem can be reduced to calculus. For example, we established $L^2$ estimates for the $\overline{\partial}$-operator on any pseudo-convex domain in $\ell^2$ (see \cite{WYZ1}) and $L^2$ estimates of the smooth cylindrical functions for the $\overline{\partial}$-operator on $\ell^p (p\in[1,\infty))$  (see \cite{YZ}); these two problems are equivalent to approximating a general function by smooth functions, and smooth cylindrical functions, in certain Sobolev spaces, respectively. The ``$H=W$" property is also useful for the study of the domain of Dirichlet forms in the abstract Wiener spaces (see \cite{Hino, Hino03}). This property arises in other issues of infinite-dimensional analysis (e.g., \cite{AKR, BDT, DL10, DL13, FU}); we therefore expect that the approach developed in this paper may shed light on related research of infinite-dimensional analysis.

Since the translation-invariant measure in infinite dimensions is trivial, many basic tools---such as convolutions, mollifiers, and standard covering arguments---are not directly available in the infinite-dimensional setting. Meanwhile, many infinite sums arise in infinite dimensions. It has therefore been a long-standing problem to extend the ``$H=W$" property to infinite dimensions. According to \cite[p. 1023]{BPS}, the related problem was discussed by V.I. Bogachev first with G. Da Prato and P. Malliavin in the 1990s, and later also with A. Lunardi. Similar questions concerning the equivalence of Sobolev norms also arise in Malliavin calculus (see \cite{AMR}).

In \cite[pp.~211--217]{Bog98}, V.I. Bogachev gave four definitions of Sobolev spaces on an infinite-dimensional linear space $X$ equipped with a Gaussian measure and proved that three of them coincide (see \cite[p. 217]{Bog98}). This can be viewed as an infinite-dimensional version of ``$H=W$" in the whole space. So far, however, there are only trivial positive results for the half space and their immediate corollaries (e.g., \cite[Example 2.1, pp.~1028--1029]{BPS}). To the best of our knowledge, the first nontrivial results in this direction were obtained by M.~Hino (\cite{Hino, Hino03}), who showed that smooth cylindrical functions are dense in $W^{1,2}(O)$ for each convex set $O$ with non-empty interior, or $H$-convex and $H$-open set $O$ in any abstract Wiener space. It is worth noting that the definition of $W^{1,2}(O)$ being based on a Dirichlet form, is not generalizable to arbitrary choices of $m$ and $p$. Nevertheless, Hino remarked in \cite[p. 1]{Hino03} that ``the convexity assumption in this study is rather technical and we expect that the claim of the main theorem is true for more general sets, like the set $\{ \varphi > 0\}$ with a smooth function $\varphi$."

In this paper, we prove the following result by combining tools developed in our previous works \cite{WYZ,WYZ1,WZ,YZ}.
\begin{theorem}
For any non-empty open subset $\Omega$ of $\ell^2$, $m\in\mathbb{N}$, and $p\in[1,\infty),$ it holds that
	$C_{\ell^{2}}^{\infty}\left( \Omega ,\mathbb{R} \right) \bigcap W^{m,p}\left( \Omega ,P \right) \bigcap C_{\mathscr{F}}^{\infty}\left( \Omega ,loc \right)$ is dense in $W^{m,p}\left( \Omega,P \right)$.
\end{theorem}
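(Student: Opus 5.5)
Following Meyers--Serrin, the plan has three parts: a smooth partition of unity on $\Omega$, a local regularization on each piece, and a summation whose total error is controlled by a convergent series $\sum_j \varepsilon 2^{-j}$. Everything is carried out with respect to the product Gaussian-type measure $P$ on $\ell^2$ used in the definition of $W^{m,p}(\Omega,P)$.

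\emph{Partition of unity.} Since $\ell^2$ is not locally compact, $\Omega$ cannot be exhausted by compact sets. Instead we use the sets
\[
\Omega_j=\{x\in\Omega:\ \|x\|<j,\ \mathrm{dist}(x,\ell^2\setminus\Omega)>1/j\}.
\]
These are open, bounded, and increase to $\Omega$. Using a smooth cut-off of the distance function (available in $\ell^2$ because the norm is smooth away from the origin), we build a locally finite family $\{\psi_j\}\subset C^\infty_{\ell^2}(\Omega)$ with the following properties: $\psi_j$ is supported in $\Omega_{j+1}\setminus\overline{\Omega_{j-1}}$, $\sum_j\psi_j=1$ on $\Omega$, and all derivatives up to order $m$ are bounded on each annulus. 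Such a partition is presumably among the tools from \cite{WYZ,WYZ1} that we may assume.

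For $f\in W^{m,p}(\Omega,P)$, the Leibniz rule gives $\psi_jf\in W^{m,p}$. Its support is bounded and lies at positive distance from $\partial\Omega$.

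\emph{Local approximation.} This is the main obstacle. Each $\psi_jf$ must be approximated in $W^{m,p}(\Omega,P)$, to within $\varepsilon2^{-j}$, by a function in $C^\infty_{\ell^2}\cap C^\infty_{\mathscr F}(\cdot,loc)$ whose support stays inside $\Omega_{j+2}\setminus\overline{\Omega_{j-2}}$. Convolution is unavailable, so we use the infinite-dimensional mollification from our earlier works. We first convolve in the first $n$ coordinates with a finite-dimensional mollifier, correcting by the Radon--Nikodym factor of $P$ under translations in $\mathbb R^n$. For the tail coordinates we use a conditional expectation or a Gaussian-type smoothing, which makes the result depend smoothly on finitely many variables locally; this is the source of the $C^\infty_{\mathscr F}$ property.

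Three estimates are needed:
\begin{itemize}
\item $W^{m,p}$-convergence as $n\to\infty$ and as the mollification radius tends to $0$, via martingale convergence for the conditional expectations and $L^p$-continuity of translations in finite dimensions, with the quasi-invariance densities controlled;
\item commutation of derivatives with the smoothing, so that weak derivatives pass to classical ones;
\item support control: shifts of size less than $1/(j(j+1))$ in the first $n$ coordinates keep the support inside the slightly larger annulus, because $\mathrm{dist}(\mathrm{supp}\,\psi_j,\partial\Omega)\ge 1/(j+1)$.
\end{itemize}
The tail-coordinate smoothing must also not spread the support outside $\Omega$. This is the hardest point. We would handle it by multiplying by a second cut-off $\tilde\psi_j$, equal to $1$ on $\mathrm{supp}\,\psi_j$ and supported in the slightly larger annulus, and checking that multiplying by this fixed smooth factor preserves the $W^{m,p}$ approximation.

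\emph{Summation.} Set $g=\sum_j g_j$, where $g_j$ is the approximant of $\psi_jf$. Each point has a neighbourhood meeting only finitely many supports, so $g$ is locally a finite sum of smooth, locally cylindrical functions. Hence $g\in C^\infty_{\ell^2}(\Omega)\cap C^\infty_{\mathscr F}(\Omega,loc)$.

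Moreover, $\|f-g\|_{W^{m,p}}\le\sum_j\|\psi_jf-g_j\|_{W^{m,p}}<\varepsilon$. This bound is justified by local finiteness together with Fatou/monotone convergence applied to the weak derivatives. It also shows $g\in W^{m,p}(\Omega,P)$, which completes the density argument.
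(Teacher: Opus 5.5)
There is a genuine gap, and it lies in the step you treat as available: the partition of unity. The function $\textbf{x}\mapsto\operatorname{dist}(\textbf{x},\ell^2\setminus\Omega)$ is only $1$-Lipschitz, so a smooth cut-off of it is merely Lipschitz and does not lie in $C^\infty_{\ell^2}(\Omega)$. The smoothness of $\|\cdot\|$ away from $\textbf{0}$ says nothing about the distance to an arbitrary closed set, and the finite-dimensional cure (mollification) is not available here. A Fr\'echet-smooth partition built from bumps $\chi(\|\textbf{x}-\textbf{y}\|^2)$ and the Lindel\"of property, as in Theorem \ref{danweifenjie}, does exist. However, it is only locally finite. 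An annulus $\Omega_{j+1}\setminus\overline{\Omega_{j-1}}$ is bounded but not compact, so infinitely many pieces can be active on it, and nothing bounds their derivatives uniformly there. Operator-norm bounds on $D^k\psi_j$ over each annulus would be enough, since $\sum_{|\alpha|=k}\textbf{a}^{p\alpha}\le(\sum_i a_i^p)^k<\infty$, but you give no way to produce them.

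Every later step of your plan consumes exactly such uniform bounds. The Leibniz estimate (Proposition \ref{chengjijieduan}) needs $\sup_\Omega\sum_{|\alpha|\le m}|\textbf{a}^\alpha D^\alpha\psi_j|^p<\infty$, which is an infinite sum. Your second cut-off $\tilde\psi_j$ is rightly introduced, since the conditional expectation destroys the support, but it must be smooth with the same bounds, or $g_j$ is not smooth. Finally, $C^\infty_{\mathscr F}(\Omega,loc)$ asks for $\mathscr F$-bounds on every $V\s\Omega$ (bounded, at positive distance from $\partial\Omega$). Neither finiteness near each point nor being ``locally cylindrical'' provides that on a non-compact $V$.

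This is precisely what Section \ref{sec2} of the paper supplies. Theorem \ref{qiangshiqiongjiehanshu} builds a positive exhaustion function $\eta\in C^\infty_{F^\infty}(\Omega)\cap C^\infty_{\ell^2}(\Omega)\cap C^\infty_{\mathscr F}(\Omega,loc)$ in three moves:
\begin{itemize}
\item it truncates $-\ln\operatorname{dist}(\textbf{x},\partial\Omega)+\|\textbf{x}\|^2$ and replaces the truncations by smooth Lipschitz approximants;
\item it averages these against $\vartheta\,\mathrm{d}P'$, where $P'$ has variances $a_i$ instead of $a_i^2$, and $\vartheta$ is a localizing weight supported in a compact set, built from the truncation sequence of Proposition \ref{230215Th1};
\item it uses Hermite-polynomial estimates on the kernel to get the $\mathscr F$-bounds for every $p\ge1$, which convolution against $P$ cannot give for $p<2$ (Theorem \ref{gross moguang}).
\end{itemize}
With $\eta$ in hand, $\rho_k=\varphi_k(\eta)\prod_{i<k}(1-\varphi_i(\eta))$ and $\psi(\eta-t_0)$ are the partition and cut-offs you need, and only finitely many $\rho_k$ are nonzero on each $V\s\Omega$. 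Your local step (conditional expectation, finite-dimensional mollification, multiplication by the cut-off) then becomes exactly Theorem \ref{0bijin}. So the local approximation is not the main obstacle; the smooth exhaustion function is, and your proposal does not construct it.
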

This can be viewed as an infinite-dimensional version of ``$H=W$". Furthermore, approximation by smooth cylindrical functions does not hold for every open set in $\ell^2$. However, we prove that such an approximation does hold on open sets that satisfy the segment condition as following.
\begin{theorem} \label{20260427thm1}
For any non-empty open set $\Omega$ in $\ell^2$, $m\in\mathbb{N}$, and $p\in[1,\infty)$, if $\Omega$ satisfies the strong segment condition, then $\mathscr{C}^{\infty}_{c}$ is dense in $W^{m,p}\left( \Omega ,P \right)$.
\end{theorem}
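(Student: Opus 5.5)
The plan mirrors the finite-dimensional proof of \cite[Theorem 3.2]{AF}: first localize, then translate along the segment direction, and finally approximate each translated piece by smooth cylindrical functions. We use the first theorem together with the cylindrical approximation tools of \cite{WYZ,YZ}. Here $P$ is the product Gaussian measure on $\ell^2$ and $\mathscr{C}^{\infty}_{c}$ is the class of smooth cylindrical functions depending on finitely many coordinates, with bounded support in those coordinates. We assume the strong segment condition reads as follows: $\partial\Omega$ has a locally finite cover by open sets $U_j$ with fixed vectors $y_j\in\ell^2$ (with enough uniformity, e.g.\ $y_j$ taken in a finite-dimensional coordinate subspace, or the $U_j$ containing balls of uniform radius), such that $x+ty_j\in\Omega$ for $x\in\overline{\Omega}\cap U_j$ and $0<t<1$.

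\textbf{Step 1 (truncation and localization).} By the first theorem, it suffices to approximate $u\in W^{m,p}(\Omega,P)$ that is already smooth. Multiply $u$ by cutoffs $\theta_R(\|x\|^2)$ and show $\theta_R u\to u$ in $W^{m,p}(\Omega,P)$. Derivatives of $\|x\|^2$ are bounded on bounded sets, and $P(\|x\|>R)\to 0$, so this convergence follows. Next, choose a smooth partition of unity $\{\psi_j\}$ subordinate to $\{U_j\}\cup\{U_0\}$ with $U_0\Subset\Omega$ at positive distance from $\partial\Omega$. The $\psi_j$ must have derivatives up to order $m$ bounded in $\ell^2$-norm; we take them as in \cite{WYZ1}, as functions of distances smoothed along finitely many coordinates. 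Only finitely many $j$ matter on the bounded support, or a tail estimate handles the rest.

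\textbf{Step 2 (translation).} For each boundary piece $u_j=\psi_j u$, set $u_{j,t}(x)=u_j(x+ty_j)$ for small $t>0$. By the segment condition, $u_{j,t}$ is defined and smooth on a neighbourhood of $\overline{\Omega}\cap\operatorname{supp}\psi_j$ in $\ell^2$. The key issue is that $P$ is not translation invariant. The Cameron--Martin theorem only gives quasi-invariance when $y_j$ lies in the Cameron--Martin space of $P$. This is why we use the \emph{strong} form of the condition, which allows $y_j$ to be chosen with finitely many nonzero coordinates; we then invoke the finite-dimensional Gaussian density ratio, which tends to $1$ locally uniformly as $t\to0$. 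Continuity of translations in $L^p(P)$ for such $y_j$ then gives $\partial^\alpha u_{j,t}\to\partial^\alpha u_j$ in $L^p(\Omega,P)$ for $|\alpha|\le m$.

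\textbf{Step 3 (cylindrical approximation).} The function $u_{j,t}$ is smooth on a $\delta$-neighbourhood of the relevant part of $\overline{\Omega}$. We approximate it by functions of the form $v_n(x)=\mathbb{E}\big[u_{j,t}(\pi_n x+(I-\pi_n)\xi)\big]$, a conditional expectation over the tail coordinates, followed by finite-dimensional mollification in the first $n$ coordinates. The term $u_{j,t}(\pi_nx+\cdots)$ needs $\pi_n x+(I-\pi_n)\xi$ to stay in the $\delta$-neighbourhood. This holds only on a set of large measure, so we insert a cutoff in $\|(I-\pi_n)\xi\|$ and control the loss via the Gaussian tail $\sum_{k>n}\lambda_k\to0$. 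The result lies in $\mathscr{C}^{\infty}_{c}$ after multiplying by the cylindrical cutoff from Step 1, and it converges to $u_{j,t}$ in $W^{m,p}$ by martingale convergence applied to each derivative. Derivatives commute with the conditional expectation in the first $n$ directions, and the tail directions are handled by the same martingale argument.

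\textbf{Main obstacle.} I expect Step 3 to be the hardest, specifically showing that derivatives in the infinitely many tail directions converge in $L^p(P)$, uniformly enough to sum over multi-indices of order $\le m$ in the $\ell^2$ (Hilbert--Schmidt type) norms defining $W^{m,p}(\Omega,P)$. My first attempt would be to bound the tail-direction derivatives of $v_n$ by conditional expectations of $|D^k u_{j,t}|$ restricted to the tail block, and to apply Doob's inequality. The interplay with the boundary, namely keeping the mollified points inside $\Omega$, is the second delicate point. The uniform $\delta$ supplied by the strong segment condition in Step 2 is designed to handle it.
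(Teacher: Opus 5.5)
Your architecture is the paper's: reduce to smooth functions via ``$H=W$'', localize, translate along the segment vectors using quasi-invariance of $P$, then approximate cylindrically. But there is a genuine gap in Step~1. You only truncate to \emph{bounded} support with $\theta_R(\|\mathbf x\|^2)$, whereas everything afterwards needs \emph{compact} support, and in $\ell^2$ these differ.

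Take $\Omega=B_1$. A segment vector $\mathbf y$ can only serve boundary points $\mathbf z$ with $2\langle\mathbf y,\mathbf z\rangle+\|\mathbf y\|^2\le 0$. So finitely many vectors miss every unit vector orthogonal to their span, and the sphere needs infinitely many patches. The function $u\equiv1$, whose support is already bounded, meets all of them.

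Your fallback ``a tail estimate handles the rest'' would need something like $\sup_N\|1-\sum_{j\le N}\psi_j\|_{C^{m,p}_{\mathscr F}}<\infty$ on the support. Nothing in your construction provides this, and infinitely many cylindrical approximants cannot be summed inside $\mathscr C^\infty_c$.

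The same defect undermines the ``uniform $\delta$'' of Step~3. The strong segment condition supplies no such uniformity. The sets $(\partial\Omega\cap\overline{V_j})-t\mathbf y_j$ and $\overline\Omega\cap\operatorname{supp}\psi_j$ are disjoint and closed, but in $\ell^2$ they need not be at positive distance unless one of them is compact.

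The missing ingredient is Lemma~\ref{jinhuaxuliebijin}. After Theorem~\ref{zhuyaodingyi}, multiply by the compactly truncated functions $X_n$ of Proposition~\ref{230215Th1}. These are obtained by convolving indicators of compact ellipsoids $K_n$ against the auxiliary Gaussian measure $P'$. They use $P'$ rather than $P$ because, by Theorem~\ref{gross moguang}, Gross convolution against $P$ does not give $\sup_{\ell^2}\sum_i a_i^p|D_{x_i}(p_1f)|^p<\infty$ for $p<2$.

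The $X_n$ have $\mathscr F$-type derivative sums bounded uniformly in $n$, and $\sum_{|\alpha|\le m}|\mathbf a^\alpha D^\alpha(X_n-1)|^p\to0$ $P$-a.e. Hence $X_nF\to F$ in $W^{m,p}(\Omega,P)$ by dominated convergence. Once the support is compact, only finitely many members of a locally finite partition of unity meet it. Also, each compact piece $K_i$ is uniformly included in an open set $\hat\Omega_i$ avoiding the translated boundary.

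Two further corrections.

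In Step~3, conditional expectations of a function that is only defined and smooth near part of $\overline\Omega$ again need $\|(I-\pi_n)\mathbf x\|$ uniformly small, i.e.\ compactness. The paper instead approximates $f_i(\cdot+t_i\mathbf y_{a(i)})$ inside $\hat\Omega_i$ by some $F_i\in C^\infty_{0,\mathscr F}(\hat\Omega_i)$ (Theorem~\ref{0bijin}), extends by zero, and applies whole-space density (Corollary~\ref{1f13f3}). On $\ell^2$ your ``main obstacle'' disappears. One has $D^\alpha f_n=(D^\alpha f)_n$ when $\alpha$ involves only the first $n$ coordinates and $D^\alpha f_n=0$ otherwise. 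So tail directions contribute only the tail of the convergent series $\sum_{|\alpha|\le m}\|\mathbf a^\alpha D^\alpha f\|_{L^p}^p$, and no maximal inequality is needed.

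Second, the strong segment condition gives $\mathbf y_{\mathbf x}\in\widetilde H$, not finitely supported vectors, and this cannot be upgraded. Take the paper's example set but with $\mathbf x_0\in\widetilde H$ not finitely supported. It satisfies the strong segment condition with $\mathbf y=\mathbf x_0$, yet only positive multiples of $\mathbf x_0$ work at $\mathbf 0$. So Step~2 should use the Cameron--Martin density $e^{-t\langle\mathbf x,\mathbf y\rangle_H-\frac{t^2}{2}\|\mathbf y\|_H^2}$. On $B_r$ this is at most $e^{r\|\mathbf y\|_{\widetilde H}}$, because $|\langle\mathbf x,\mathbf y\rangle_H|\le\|\mathbf x\|\,\|\mathbf y\|_{\widetilde H}$ (Lemma~\ref{jeduan}). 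This is precisely why $\widetilde H$, and not $H$, appears in the definition.
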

However, our definition of the strong segment condition in Definition \ref{20260427def1} is slightly different from the classical definition in finite dimensions (see \cite[3.21, p. 68]{AF}). More precisely, we use nonzero vectors in $\widetilde{H}$ instead of $\ell^2$, where $\widetilde{H}$ is a proper subset of $\ell^2$. Whether Theorem \ref{20260427thm1} holds for $\ell^2$ remains unknown. Furthermore, we show that the strong segment condition is more general than the regular boundary, $H$-convex and Lipschitz boundary in $\ell^2$.



We briefly introduce the difficulties in establishing the infinite-dimensional counterpart of ``$H=W$" and explain how we resolve these difficulties.  Let us first recall the classical short proof of ``$H=W$" in \cite[pp.~1055--1056]{MS}.

\begin{theorem}(\textbf{$H=W$})\label{1}
	For any $m,n\in \mathbb{N}, p\in [1,\infty)$ and a non-empty open set $\Omega\subset \rr^{n}$, it holds that
	$$H^{m,p}\left( \Omega \right) = W^{m,p}\left( \Omega \right).$$
\end{theorem}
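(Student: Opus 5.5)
The inclusion $H^{m,p}(\Omega)\subset W^{m,p}(\Omega)$ is immediate. $W^{m,p}(\Omega)$ is complete. Weak derivatives pass to $L^p$-limits, since for $\varphi\in C_c^\infty(\Omega)$ we have $\int u\,\partial^\alpha\varphi=\lim_k\int u_k\,\partial^\alpha\varphi$. So it suffices to show that $C^\infty(\Omega)\cap W^{m,p}(\Omega)$ is dense in $W^{m,p}(\Omega)$. Fix $u\in W^{m,p}(\Omega)$ and $\varepsilon>0$. We follow the Meyers--Serrin scheme: a locally finite partition of unity, followed by mollification of each piece with a support-dependent mollification radius.

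\textbf{Step 1 (exhaustion and partition of unity).} Set
$$\Omega_k=\{x\in\Omega:\ |x|<k,\ \mathrm{dist}(x,\partial\Omega)>1/k\}\quad (k\ge1),\qquad \Omega_0=\Omega_{-1}=\emptyset.$$
Each $\overline{\Omega_k}$ is a compact subset of $\Omega_{k+1}$. The open sets $U_k=\Omega_{k+1}\setminus\overline{\Omega_{k-1}}$ cover $\Omega$, and each point of $\Omega$ lies in at most three of them. Choose a smooth partition of unity $\{\psi_k\}$ subordinate to $\{U_k\}$ with $\psi_k\in C_c^\infty(U_k)$ and $\sum_k\psi_k\equiv1$ on $\Omega$.

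\textbf{Step 2 (mollify pieces).} By the Leibniz rule for weak derivatives, $\psi_k u\in W^{m,p}(\Omega)$ and $\operatorname{supp}(\psi_k u)\subset U_k$. Let $\rho_\delta$ be a standard mollifier. Choose $\delta_k>0$ small enough that two conditions hold:
\begin{itemize}
\item $\operatorname{supp}(\rho_{\delta_k}*(\psi_k u))\subset \Omega_{k+2}\setminus\overline{\Omega_{k-2}}$;
\item $\|\rho_{\delta_k}*(\psi_k u)-\psi_k u\|_{W^{m,p}}<\varepsilon 2^{-k}$.
\end{itemize}
The second condition is possible because $\partial^\alpha(\rho_\delta*f)=\rho_\delta*\partial^\alpha f$ on the compact support, and mollification converges in $L^p$.

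\textbf{Step 3 (sum).} Define $v=\sum_k\rho_{\delta_k}*(\psi_k u)$. On each $\Omega_j$ only finitely many terms are nonzero, so $v\in C^\infty(\Omega)$. Since $u=\sum_k\psi_k u$, we get $\|v-u\|_{W^{m,p}(\Omega_j)}\le\sum_k\varepsilon2^{-k}<\varepsilon$ for every $j$. Letting $j\to\infty$ by monotone convergence gives $\|v-u\|_{W^{m,p}(\Omega)}\le\varepsilon$, and in particular $v\in W^{m,p}(\Omega)$.

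The main subtle point is Step 2 together with Step 3. The commutation of mollification with weak derivatives requires compact support strictly inside $\Omega$. The local finiteness of the supports is then what makes the sum smooth and lets the error be controlled. Neither property survives naively in $\ell^2$, which explains the difficulties the paper addresses.
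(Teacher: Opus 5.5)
Your proposal is correct and is essentially the paper's own argument, the Meyers--Serrin proof recalled in the introduction: the same exhaustion $\Omega_k$, a partition of unity subordinate to the sets $\Omega_{k+1}\setminus\Omega_{k-1}$, mollification of each $\psi_k u$ with support kept inside $\Omega_{k+2}\setminus\Omega_{k-2}$ and error below $\varepsilon 2^{-k}$, a locally finite sum, and monotone convergence. The differences are cosmetic: you choose $\delta_k$ by a compactness argument rather than the paper's explicit radius $1/((k+1)(k+2))$, and you spell out the reduction of $H^{m,p}=W^{m,p}$ to the density of $C^\infty(\Omega)\cap W^{m,p}(\Omega)$, which the paper leaves implicit.
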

\begin{proof}
It is sufficient to show that, if $u \in W^{m,p}(\Omega)$ and $\varepsilon > 0$, there exists $v \in C^\infty(\Omega)$ such that $\|v - u\|_{W^{m,p}(\Omega)} < \varepsilon$. For any $k \in\mathbb{N}$, let
\begin{eqnarray}
		\Omega_k\triangleq\{x \in \Omega : ||x||_{\mathbb{R}^n} < k \text{ and } \operatorname{dist}(x, \partial \Omega) > 1/k\},\label{aaa}
\end{eqnarray}
	and defines $\Omega_0$ and $\Omega_{-1}$ to be the empty set. Let $\{\psi_k\}_{k=1}^{\infty}$ be a partition of unity on $\Omega$ such that
 $$
 \psi_k^{-1}(\mathbb{R}\setminus\{0\})\subset \Omega_{k+1}\setminus \Omega_{k-1},\qquad \forall\,k\in\mathbb{N}.
 $$
Also, for any $k\in\mathbb{N}$, let $K_k$ be a $C^{\infty}$ mollifier satisfying the two conditions:
 $$
 K_k^{-1}(\mathbb{R}\setminus\{0\})\subset \left\{x \in \Omega : ||x||_{\mathbb{R}^n} < \frac{1}{(k+1)(k+2)} \right\},
 $$
 and
$$
\|K_k\ast ( \psi_k\cdot u) - \psi_k\cdot u\|_{W^{m,p}(\Omega)} < \frac{\varepsilon}{2^k}.
$$
Evidently, we have $ (K_k\ast ( \psi_k\cdot u))^{-1}(\mathbb{R}\setminus\{0\})\subset \Omega_{k+2}\setminus\Omega_{k-2}$ for any $k\in\mathbb{N}$, so that the series
$$
v\triangleq\sum_{k=1}^\infty K_{k} * (\psi_k u)
$$
is trivially convergent and defines a function in $C^{\infty}(\Omega)$. Finally, choosing $k\in\mathbb{N},$ we have
$$
\|v-u\|_{W^{m,p}(\Omega_k)}
=\left\|\sum_{i=1}^{k+1}(K_k\ast ( \psi_k\cdot u) - \psi_k\cdot u)\right\|_{W^{m,p}(\Omega_k)}
\leq \sum_{i=1}^{k+1}\| K_k\ast ( \psi_k\cdot u) - \psi_k\cdot u \|_{W^{m,p}(\Omega_k)}< \varepsilon.
$$
We now let $k\to\infty$, and the result follows from the Lebesgue Monotone Convergence Theorem, which proves Theorem \ref{1}.
\end{proof}
In extending ``$H=W$" to $\ell^{2}$, the following difficulties arise:
\begin{enumerate}
\item The following type of estimates frequently arise
\begin{eqnarray}\label{20260427for3}
 \left\| \varphi\cdot f \right\|_{W^{m,p}\left( \Omega ,P\right)}^{p}
&\leqslant C(m,p)\cdot\left(\sup\limits_{\Omega} \sum\limits_{\left| \alpha \right| \leqslant m}   \left|  \textbf{a}^{\alpha} D^{\alpha}\varphi \right|^{p}\right)  \cdot \left\| f \right\|_{W^{m,p}\left( \Omega ,P \right)}^{p},
\end{eqnarray}
where $\varphi$ is a smooth function on $\Omega$, $f$ is a Sobolev function in $W^{m,p}\left( \Omega ,P \right)$ and $C(m,p)$ is a positive constant depending only on $m$ and $p$. In finite dimensions, estimates of this type also arise, but the corresponding sum in \eqref{20260427for3} is a finite sum, and hence a trivial smoothness condition guarantees that its supremum on $\Omega$ is finite. We emphasize that the quantity
\begin{eqnarray}\label{20260427for2}
 \sum\limits_{\left| \alpha \right| \leqslant m}   \left|  \textbf{a}^{\alpha} D^{\alpha}\varphi \right|^{p}
\end{eqnarray}
is an infinite sum, whose convergence must be carefully addressed. In this paper, the function $\varphi$ comes from a partition of unity, a compactly truncated sequence, or the composition of smooth functions with exhaustion functions.
\item  The space $\ell^2$ is not a locally compact space, bounded set in $\ell^2$ is not compact and continuous real-valued functions on $\ell^2$ with compact support is the zero function. There exists no nontrivial translation-invariant measure in $\ell^2$. Consequently, the classical mollifier procedure employed in the above proof is NOT applicable in this setting.
\item  We find that the Schatten-$p$ type of estimate for the Gross convolution is sharp for $p=2$, which is of crucial importance in constructing the compactly truncated sequence. Consequently, the corresponding compactly truncated sequence does not satisfy the convergence condition described above. The smoothness condition of the function $\operatorname{dist}(\cdot,\partial\Omega)$ (from \eqref{aaa}) also fails to meet the requirements, owing to the first difficulty discussed above. A ``smoother" function is needed to partition the open set $\Omega$.
\end{enumerate}
Our strategies for addressing the above difficulties are as follows (in corresponding order):
\begin{enumerate}
\item  We will focus on smooth functions for which the sum in \eqref{20260427for2} converges and has a finite supremum on $\Omega$, so that $\varphi\cdot f \in W^{m,p}\left( \Omega ,P\right).$ Further arguments will show that such functions are dense in function spaces under consideration. For abbreviation, we will use the subscript ``$\mathscr{F}$" to denote smooth functions with this property. This phenomenon also occurs for operators on Hilbert spaces, as illustrated by the concept of Schatten class operators.

\item  We introduce a class of functions whose partial derivatives exist, are Borel measurable, and are continuous on finite-dimensional slices. The subscript ``$\mathcal{F}$" will be used to denote such functions, which we also call $\mathcal{F}$-continuous functions. Two probability measures $P$ and $P^{\prime}$ on $\ell^2$ are obtained by restricting to $\ell^2$ the corresponding product measures on $\mathbb{R}^{\infty}$. Three types of mollifier procedures are employed: (1) the dimension-reduction approach developed in \cite{WYZ1}; (2) the Gross convolution introduced in \cite{Gro67}; (3) the modified Gross convolution developed in \cite{WZ}.

\item To overcome the convergence problem, we construct a variant of the compactly truncated sequence using the measure $P^{\prime}$ instead of $P$. We should emphasize that this variant is used in the following \eqref{1r1f1} to guarantees that the corresponding integral exists in the sense of Bochner. By combining this variant with the modified Gross convolution developed in \cite{WZ}, we obtain a smooth exhaustion function on any non-empty open subset of $\ell^2$.
\end{enumerate}

The paper is organized as follows. In Section~\ref{20250704sec1}, we present some notations, definitions, and facts that will be used in the sequel.  Section~\ref{sec2} constructs an exhaustion function satisfying some smoothness properties for any non-empty open subset of $\ell^2$. Section~\ref{sec4} introduces the concept of Sobolev spaces on $\ell^{2}$ and their properties. In Section~\ref{sec5}, we prove the ``$H=W$" property for arbitrary non-empty open subset of $\ell^2$. Finally, in Section \ref{20260427sec1}, we show that smooth cylindrical functions are dense in the Sobolev space of functions on any non-empty open set in $\ell^2$ satisfying the segment condition.

\section{Preliminaries}\label{20250704sec1}
We introduce some notations to be used throughout this paper. Denote by $\mathbb{N}$ and $\mathbb{N}_{0}$ respectively the set of all positive integers and that of all non-negative integers.
For any topological space $(X,\mathcal{T})$ and any subset $A$ of $X$, denote by $A^{\circ}$, $\partial A$ and $\mathscr{B}(X)$ the interior points of $A$, the boundary points of $A$ and the Borel $\sigma$-algebra on $X$ generated by $\mathcal{T}$, respectively. Recall that $X$ is called a Lindel\"of space if every open cover of $X$ has a countable subcover (See \cite[p. 50]{Kel}). From \cite[Theorem 15, p. 49]{Kel}, it follows that every separable metric space is a Lindel\"of space.

For any nonempty set $I\subset \mathbb{N}$, write
\begin{eqnarray*}
	\ell^2(I)\triangleq \left\{\textbf{x}=(x_i)_{i\in S}\in \mathbb{R}^{I}:\sum_{i\in I}|x_i|^2<\infty\right\}.
\end{eqnarray*}
There is a natural norm on $\ell^2(I)$, defined by
\begin{eqnarray*}
	\left\lVert \textbf{x}\right\rVert _{\ell^2(I)}\triangleq  \left(\sum_{i\in I}|x_i|^2\right)^{\frac{1}{2}} ,\qquad\forall\,\textbf{x}=(x_i)_{i\in I}\in\ell^2(I).
\end{eqnarray*}
We simply write $\ell^2$ and $||\cdot||$ instead of $\ell^2(\mathbb{N})$ and $\left\lVert \textbf{x}\right\rVert _{\ell^2(\mathbb{N})}$, respectively. We denote by $B\left( \textbf{x} ,r \right)$ the open ball in $\ell^{2}$ centered at $\textbf{x}(\in \ell^{2})$ with radius $r>0$ (We simply write $B\left( \textbf{0} ,r \right)$ as $B_r$).

The following notion from \cite[Definition 2.1, p. 6]{WYZ1} will play a fundamental role in the sequel.
\begin{definition}\label{def of bounded contained}
A set $S\subset \ell^2$ is said to be uniformly included in $V$, denoted by $S\stackrel{\circ}{\subset} V$, if there exist $r,R\in(0,+\infty)$ such that $\cup_{\textbf{z}\in S}B_r(\textbf{z})\subset V$ and $S\subset B_R$.
\end{definition}

For each $k\in\mathbb{N}$,
let $C_c^{\infty}(\mathbb{R}^k)$ denote the set of all $C^{\infty}$ real-valued functions on $\mathbb{R}^k$ with compact support. Since any $f\in C_c^{\infty}(\mathbb{R}^k)$ can be regarded as a cylinder function on $\ell^2$ that depending only on the first $k$ variables, we define
\[
\mathscr {C}_c^{\infty}\triangleq \bigcup_{k=1}^{\infty}C_c^{\infty}(\mathbb{R}^k).
\]

We fix a positive sequence $\left\{ a_{i} \right\}_{i=1}^{\infty}$ satisfying
\begin{equation}\label{20260424for1}
\sum\limits_{i=1}^{\infty} a_{i}< \infty.
\end{equation}
Write
\begin{eqnarray}\label{20260428for1}
\widetilde{H} \triangleq \left\{ \textbf{x}=(x_i)_{i\in\mathbb{N}} \in \ell^{2} :\sum\limits_{i=1}^{\infty} \frac{x_{i}^{2}}{a_{i}^{4}} <\infty \right\},
\end{eqnarray}
$\left< \textbf{x} ,\textbf{y} \right>_{\widetilde{H}} \triangleq \sum_{i=1}^{\infty} \frac{x_{i}y_{i}}{a_{i}^{4}} ,\quad \left\| \textbf{x} \right\|_{\widetilde{H}} \triangleq \sqrt{\left< \textbf{x} ,\textbf{x} \right>_{\widetilde{H}}},$ for any $\textbf{x}=(x_i)_{i\in\mathbb{N}},\textbf{y}=(y_i)_{i\in\mathbb{N}}\in \widetilde{H},$
 and
\begin{eqnarray}\label{20260428for2}
 H \triangleq \left\{ \textbf{x}=(x_i)_{i\in\mathbb{N}} \in \ell^{2} :\sum\limits_{i=1}^{\infty} \frac{x_{i}^{2}}{a_{i}^{2}} <\infty \right\},
\end{eqnarray}
$\left< \textbf{x} ,\textbf{y} \right>_{H} \triangleq \sum_{i=1}^{\infty} \frac{x_{i}y_{i}}{a_{i}^{2}} ,\quad \left\| \textbf{x} \right\|_{H} \triangleq \sqrt{\left< \textbf{x} ,\textbf{x} \right>_{H}},$ for any $\textbf{x}=(x_i)_{i\in\mathbb{N}},\textbf{y}=(y_i)_{i\in\mathbb{N}}\in H.$

Similar to the reasoning in Section 2.2 of \cite[pp. 523-525]{YZ}, one obtains a probability measure $P$ on $\ell^2$. Following (8) in \cite[p. 523]{YZ}, for each $k\in\mathbb{N}$, we set $\mathcal{N}^k\triangleq\prod\limits_{i=1}^{k}\mathcal{N}_{a_i}$, where
$$
\mathcal{N}_a(B)\triangleq \frac{1}{\sqrt{2\pi a^2}}\int_Be^{-\frac{x^2}{2a^2}}\mathrm{d}x,\quad\,\forall\, B\in\mathscr{B}(\mathbb{R}),\,a\in(0,+\infty).
$$
Since $\ell^2$ can be identified with $\mathbb{R}^k\times \ell^{2}(\mathbb{N}\setminus\{1,2,\ldots,k\})$, we have the decomposition  $P=\mathcal{N}^k\times P^{\widehat{1,2,\ldots,k}}$.  Here $P^{\widehat{1,2,\ldots,k}}$ denotes the product measure obtained by omitting the $1,2,\ldots,k$-th components, i.e., it is the restriction of the product measure $\prod\limits_{j\in\mathbb{N}\setminus\{1,\ldots,k\}}\mathcal{N}_{a_j}$ to the space
$$
\left(\ell^{2}(\mathbb{N}\setminus\{1,\ldots,k\}),\mathscr{B}\big(\ell^{2}(\mathbb{N}\setminus\{1,\ldots,k\})\big)\right).
$$
We use the same notation $\mathbb{N}_0^{(\mathbb{N})}$ as in \cite{DMP} to denote the following set
$$\left\{ \alpha =( \alpha_{i})_{i\in\mathbb{N}} :\alpha_{i} \in \mathbb{N}_{0},\,\forall\, i\in \mathbb{N},\, \exists\, k\in \mathbb{N} \text{ such that } \alpha_{i} =0,\forall i\geqslant k \right\}.$$
In the sequel, we always abbreviate $\alpha =( \alpha_{i})_{i\in\mathbb{N}}$ as $\alpha$ for brevity. For any $\alpha \in \mathbb{N}_{0}^{\left( \mathbb{N} \right)},\textbf{x}=\left( x_{i} \right)_{i\in \mathbb{N}} \in \ell^{2}$, we write
$$\textbf{x}^{\alpha} \triangleq x_{1}^{\alpha_{1}}x_{2}^{\alpha_{2}}\cdots,\quad \textbf{a}^{\alpha} \triangleq a_{1}^{\alpha_{1}}a_{2}^{\alpha_{2}}\cdots ,\quad \left| \alpha \right|\triangleq \sum_{i=1}^{\infty} \alpha_{i},\quad\alpha !\triangleq\alpha_{1} !\alpha_{2} !\cdots.$$
For any $\alpha ,\beta \in \mathbb{N}_{0}^{\left( \mathbb{N} \right)}$, we say $\alpha \leqslant \beta$ if $\alpha_{j} \leqslant \beta_{j} ,\forall j\in \mathbb{N}$. In this case, we write $C_{\beta}^{\alpha}\triangleq\frac{\beta !}{\left( \beta -\alpha \right) !\alpha !}$.

We always assume throughout this paper that $p\in [1,\infty), m\in \nn$, and $\Omega$ is a non-empty open subset of $\ell^{2}$.

For any $i\in\mathbb{N}$, set $\delta_{ij} \triangleq 1$ if $i=j$ and $\delta_{ij} \triangleq 0$ if $i\in\mathbb{N}\setminus\{j\}.$ For each $k\in\mathbb{N}$, define $\textbf{e}_k\triangleq(\delta_{k,i})_{i\in\mathbb{N}}$. For a real-valued function $f$ on $\Omega$, any $\textbf{x}=(x_i)_{i\in\mathbb{N}}\in\Omega$ and $\alpha \in\mathbb{N}_{0}^{\left( \mathbb{N} \right)}$, we employ the following notations:
$$
\delta_{i} f(\textbf{x})\triangleq \frac{\partial f(\textbf{x})}{\partial x_{i}} -\frac{x_{i}}{a_{i}^{2}} f,\quad\forall\,i\in \mathbb{N},\quad D^{\alpha}f(\textbf{x})\triangleq \frac{\partial^{\left| \alpha \right|} f(\textbf{x})}{\partial x_{1}^{\alpha_{1}}\partial x_{2}^{\alpha_{2}}\cdots},\quad\delta^{\alpha}f(\textbf{x}) \triangleq \delta_{1}^{\alpha_{1}} \delta_{2}^{\alpha_{2}} \cdots f(\textbf{x}).
$$
Here $\frac{\partial f(\textbf{x})}{\partial x_{i}} $ is the first order partial derivative of $f(\cdot)$ (at $\textbf{x}$) defined as follows:
$$
\begin{array}{ll}
\displaystyle\frac{\partial f(\textbf{x})}{\partial x_i}\triangleq\lim_{\mathbb{R}\ni \tau\to 0}\frac{f(\textbf{x}+\tau\textbf{e}_i )-f(\textbf{x})}{\tau},
\end{array}
$$
and the higher order partial derivative $\frac{\partial^{\left| \alpha \right|} f(\textbf{x})}{\partial x_{1}^{\alpha_{1}}\partial x_{2}^{\alpha_{2}}\cdots}$ are defined by induction. In particular, when $\alpha = 0$, we write
$\textbf{a}^{\alpha}\triangleq 1,\delta^{\alpha} f\triangleq f$ and $D^{\alpha}f\triangleq f.$ For each $\textbf{x}\in \Omega$, and $n\in\mathbb{N}$, write
\begin{eqnarray*}
\Omega_{\textbf{x},n}\triangleq\{(s_1,\ldots,s_n)\in\mathbb{R}^n:\textbf{x}+s_1\textbf{e}_1+\cdots+s_n\textbf{e}_n\in \Omega\}.
\end{eqnarray*}
Then $\Omega_{\textbf{x},n}$ is a nonempty open subset of $\mathbb{R}^n$.
\begin{definition}\label{20260427def12}
We say that $f$ is \textbf{$\mathcal{F}$-continuous}, if for any $\textbf{x}\in \Omega$ and $n\in\mathbb{N}$ the following function (on $\Omega_{\textbf{x},n}$):
\begin{eqnarray*}
	(s_1,\ldots,s_n)\mapsto f(\textbf{x}+s_1\textbf{e}_1+\cdots+s_n\textbf{e}_n),\quad\forall\,(s_1,\ldots,s_n)\in \Omega_{\textbf{x},n},
\end{eqnarray*}
is continuous.
\end{definition}
For each $k\in\mathbb{N}$, we denote by $C_{\mathcal{F}}^k(\Omega)$ the collection of all $\mathcal{F}$-continuous and Borel measurable functions on $\Omega$ whose  partial derivatives of order up to $k$ are also $\mathcal{F}$-continuous and Borel measurable functions on $\Omega$. Write $C_{\mathcal{F}}^{\infty}(\Omega)=\bigcap\limits_{k=1}^{\infty}C_{\mathcal{F}}^k(\Omega)$. Moreover, we define
$$\begin{aligned}
	&C_{\mathscr{F}}^{m,p}\left( \Omega \right) \triangleq \left\{ g\in C^{m}_{\mathcal{F}}\left( \Omega \right) :\sup_{\Omega} \left(  \sum_{\left| \alpha \right| \leqslant m} \left| \textbf{a}^{\alpha} \right|^{p} \left| D^{\alpha}g \right|^{p} \right) <\infty \right\} ,\quad C_{\mathscr{F}}^{\infty}\left( \Omega \right) \triangleq \bigcap_{m\in \mathbb{N} ,p\geqslant 1} C_{\mathscr{F}}^{m,p}\left( \Omega \right),\\
&C_{\mathscr{F}}^{m,p}\left( \Omega ,loc \right) \triangleq \left\{f\in C_{\mathcal{F}}^m(\Omega):f\mid_{V}\in C_{\mathscr{F}}^{m,p}\left( V \right),\, \text{for any open set } V\s\Omega \right\} ,\\
 &C_{\mathscr{F}}^{\infty}\left( \Omega ,loc \right) \triangleq\bigcap_{m\in \mathbb{N} ,p\geqslant 1} C_{\mathscr{F}}^{m,p}\left( \Omega ,loc \right),\\&C_{0,\mathscr{F}}^{m,p}\left( \Omega \right) \triangleq C_{0}^{m}\left( \Omega \right) \bigcap C_{\mathscr{F}}^{m,p}\left( \Omega \right),\quad C_{0,\mathscr{F}}^{\infty}\left( \Omega \right) \triangleq \bigcap_{m\in \mathbb{N} ,p\geqslant 1} C_{0,\mathscr{F}}^{m,p}\left( \Omega \right),\end{aligned}$$
and
$\left| \left| g \right| \right|_{C_{\mathscr{F}}^{m,p}\left( \Omega \right)} \triangleq \sup\limits_{\Omega} \left( \sum\limits_{\left| \alpha \right| \leqslant m} \left| \textbf{a}^{\alpha} D^{\alpha}g \right|^{p} \right)^{\frac{1}{p}}$ for any $g\in C_{\mathcal{F}}^{m,p}(\Omega)$.

We borrow the following notations from \cite[p.~7, p.~17]{WYZ1} (which are primarily stated in the complex setting) to denote their real counterparts, when no confusion arises.

In the sequel, for each $k\in\mathbb{N}_0$, we denote by $C^{k} ( \Omega)$ the set of all real-valued functions $f$ on $\Omega$ for which $f$ itself and all of its  partial derivatives up to order $k$ are continuous on $\Omega$ with respect to the $\ell^2$ topology. Write
$$
C^{\infty} (\Omega)\triangleq \bigcap_{j=1}^{\infty}C^{j} (\Omega).
$$

For $k\in\mathbb{N}_0$, write
$$
C^{k}_{0}\left( \Omega\right)  \triangleq \left\{ f\in C^{k}_{b}\left(\Omega \right)  :\;\mathrm{supp} f\stackrel{\circ}{\subset} \Omega\right\},\quad C^{\infty}_0(\Omega)\triangleq \bigcap_{j=1}^{\infty}C^{j}_0( \Omega).
$$
For each $n\in\mathbb{N}$, we denote by $C^{k}\left( \mathbb{R}^n\right)$  the set of all real-valued functions $f$ on $\mathbb{R}^n$ for which $f$ itself and all of its  partial derivatives up to order $k$ are continuous on $\mathbb{R}^n$. Write
$$
C^{k}_{c}\left( \mathbb{R}^n\right)  \triangleq \left\{ f\in C^{k}\left( \mathbb{R}^n\right)  :\;\mathrm{supp} f\text{ is compact in } \mathbb{C}^n\right\},\quad C^{\infty}_c\left(\mathbb{R}^n\right)\triangleq \bigcap_{j=1}^{\infty}C^{j}_c\left( \mathbb{R}^n\right).
$$
In a similar way, we denote by $C^{\infty}_c\left(\mathbb{R}^n\right)$ the set of all real-valued functions $f$ on $\mathbb{R}^n$ for which $f$ itself and all of its  partial derivatives are continuous on $\mathbb{R}^n$, and $\mathrm{supp} f$ is compact in $\mathbb{R}^n$.
Since any $f\in C_c^{\infty}(\mathbb{R}^k)$ can be regarded as a cylinder function on $\ell^2$ that depending only on the first $k$ variables, we define
\[
\mathscr {C}_c^{\infty}\triangleq \bigcup_{k=1}^{\infty}C_c^{\infty}(\mathbb{R}^k),
\]
which can be viewed as smooth cylindrical functions in $\ell^2$. Put
$$
C^{1}_{F}\left( \Omega\right)\triangleq \left\{f\in C^{1}\left( \Omega\right):\;\sup_{E} \left( \left| f\right|  +\sum^{\infty }_{i=1}  \left| D_{x_i} f\right|^{2}  \right) <\infty,\; \forall\; E\stackrel{\circ}{\subset}\Omega\right\}.
$$
 For $k\in\mathbb{N}\cup \{\infty\}$, we write
$$
\begin{array}{ll}
\displaystyle C^{k}_{F}\left( \Omega\right)\triangleq C^{k}\left( V\right)\cap C^{1}_{F}\left( \Omega\right),\quad
C^{k}_{b,F}\left( \Omega\right)\triangleq C^{k}_{b}\left(\Omega\right)\cap C^{1}_{F}\left( \Omega\right),\quad
C^{k}_{0,F}\left( \Omega\right)\triangleq C^{k}_{0}\left(\Omega\right)\cap C^{1}_{F}\left( \Omega\right),\\[3mm]
\displaystyle C^{k}_{F^{k}}\left(\Omega\right)\triangleq \left\{f\in C^{k}(\Omega):\;D^{\alpha} f\in C^{1}_{F}\left( \Omega\right)\text{ for all }\alpha \in \mathbb{N}^{\left( \mathbb{N} \right)  }_{0}\text{ with }|\alpha| <k\right\},\\[3mm]
\displaystyle C^{k}_{b,F^{k}}\left(\Omega\right)\triangleq C^{k}_{b}\left( \Omega\right)\cap C^{k}_{F^{k}}\left(\Omega\right),\quad
C^{k}_{0,F^{k}}\left( V\right)\triangleq C^{k}_{0}\left(\Omega\right)\cap C^{k}_{F^{k}}\left(\Omega\right).
\end{array}
$$

For any $f\in L^{p}(\Omega,P)$, let $\Omega_f$ denote the union of all open balls $B(\textbf{x},r)$ satisfying
$$
B(\textbf{x},r)\subset \Omega\quad\text{and}\quad\int_{ B(\textbf{x},r)}|f|^p\,\mathrm{d}P=0,
$$
and define $\text{supp} f\triangleq \overline{\Omega\setminus \Omega_f},$ which is called the \textbf{support} of $f$. (In fact, the support of the more general locally integrable functions was already introduced in \cite[Definition 3.1, p. 20]{WYZ1}; here we restrict our attention to $L^p$ functions.)

Suppose that $(X,||\cdot||_X)$ and $(Y,||\cdot||_Y)$ are two normed spaces, $O$ is a non-empty open subset of $X$, and $f$ is a mapping from $O$ into $Y$. Denote by ${\cal L}(X;Y)$ the class of all bounded linear operators from $X$ into $Y$ (which is a normed space with the usual operator norm), and by $C_X(O;Y)$ the set of all continuous mapping from $O$ to $Y$ (We simply denote $C_X(O;\mathbb{R})$ by $C_X(O)$). For $\textbf{x}\in O$, we recall that $f$ is called Fr\'echet differentiable (from $O$ into $Y$) at $\textbf{x}$, if there exists an element in ${\cal L}(X;Y)$, denoted by  $Df(\textbf{x})$ (and called the Fr\'echet derivative of $f$ at $\textbf{x}$), such that
$$
\lim\limits_{\Delta \textbf{x}\to \textbf{0}}\frac{||f(\textbf{x}+\Delta \textbf{x})-f(\textbf{x})-Df(\textbf{x})(\Delta \textbf{x})||_Y}{||\Delta \textbf{x}||_X}=0.
$$
Furthermore,  $f$ is called (continuously) Fr\'{e}chet differentiable from $O$ into $Y$ if $f$ is Fr\'{e}chet differentiable from $O$ into $Y$ at every point of $O$ (and the mapping $\textbf{x}\mapsto Df(\textbf{x})$ from $O$ into ${\cal L}(X;Y)$ is continuous). Denote by $C_X^1(O;Y)$ the class of all continuously Fr\'{e}chet differentiable mappings from $U$ into $Y$. Inductively, people define $C_X^k(O;Y)$ for each $k\in\mathbb{N}$ and $C_X^{\infty}(O;Y)\triangleq \bigcap\limits_{k=1}^{\infty}C_X^k(O;Y)$. For each $r\in \mathbb{N}\cup\{\infty\}$, we simply denote $C_X^r(O;\mathbb{R})$ by $C_X^r(O)$.

For any $x\in\mathbb{R}$, we denote by $\left\lfloor x \right\rfloor$ the greatest integer not exceeding $x$. An elementary calculation shows the following result.
\begin{lemma}\label{zuhejishu}
	For any $\alpha \in \mathbb{N}_{0}^{\left( \mathbb{N} \right)}$, it holds that
 $$\# \left\{ \beta \in \mathbb{N}_{0}^{\left( \mathbb{N} \right)} :\beta \leqslant \alpha \right\} =\prod_{i=1}^{\infty} \left( 1+\alpha_{i} \right),\qquad\max_{\beta \leqslant \alpha} C_{\alpha}^{\beta}=\prod_{i=1}^{\infty} C_{\alpha_{i}}^{\left\lfloor \frac{\alpha_{i}}{2} \right\rfloor},$$
	where $\#$ denotes the cardinality of the set.
\end{lemma}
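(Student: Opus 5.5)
The plan is to reduce both identities to finitely many coordinates. The key observation is that since $\alpha \in \mathbb{N}_{0}^{(\mathbb{N})}$, there is a $k\in\mathbb{N}$ with $\alpha_i=0$ for all $i\geqslant k$. Every $\beta\leqslant\alpha$ then also satisfies $\beta_i=0$ for $i\geqslant k$. Both infinite products in the statement therefore have all factors equal to $1$ beyond index $k-1$ (note $C_0^0=1$ and $1+0=1$), so they are really finite products. It then suffices to prove the statements for multi-indices in $\mathbb{N}_0^{k-1}$.

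For the counting identity, I will use the bijection
$$\{\beta\in\mathbb{N}_0^{(\mathbb{N})}:\beta\leqslant\alpha\}\;\longleftrightarrow\;\prod_{i=1}^{k-1}\{0,1,\ldots,\alpha_i\},\qquad \beta\mapsto(\beta_1,\ldots,\beta_{k-1}),$$
which is well defined and bijective by the observation above. The cardinality of the right-hand side is $\prod_{i=1}^{k-1}(1+\alpha_i)=\prod_{i=1}^{\infty}(1+\alpha_i)$.

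For the maximum, I will first note that
$$C_\alpha^\beta=\frac{\alpha!}{(\alpha-\beta)!\,\beta!}=\prod_{i=1}^{k-1}\frac{\alpha_i!}{(\alpha_i-\beta_i)!\,\beta_i!}=\prod_{i=1}^{k-1}C_{\alpha_i}^{\beta_i},$$
since the factorials factor coordinatewise (here I read $C^\alpha_\beta$ in the lemma as the binomial coefficient $\frac{\alpha!}{(\alpha-\beta)!\beta!}$ with $\beta\leqslant\alpha$, matching the notation convention of the paper). The constraint $\beta\leqslant\alpha$ decouples into independent constraints $0\leqslant\beta_i\leqslant\alpha_i$, and all factors are positive. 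Hence the maximum of the product equals the product of the coordinatewise maxima.

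It remains to recall the one-dimensional fact $\max_{0\leqslant j\leqslant n}\binom{n}{j}=\binom{n}{\lfloor n/2\rfloor}$. This follows from the ratio
$$\binom{n}{j+1}\Big/\binom{n}{j}=\frac{n-j}{j+1},$$
which is $\geqslant 1$ exactly when $j\leqslant (n-1)/2$. So the coefficients increase up to $j=\lfloor n/2\rfloor$ and then decrease, and the maximum is attained at $\lfloor n/2\rfloor$. Taking the product over $i$ gives the claim.

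The only step needing care is the interchange of maximum and product. It is justified because the set of admissible $\beta$ is a Cartesian product and each factor is positive. I expect no genuine obstacle, since everything is finite-dimensional once $k$ is fixed.
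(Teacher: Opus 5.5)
Your argument is correct and is exactly the ``elementary calculation'' the paper invokes without writing it out: reduce to the finitely many nonzero coordinates, count $\beta\leqslant\alpha$ via the Cartesian product $\prod_i\{0,\ldots,\alpha_i\}$, factor $C_\alpha^\beta=\prod_i C_{\alpha_i}^{\beta_i}$, and maximize coordinatewise using unimodality of binomial coefficients. One minor imprecision that does not affect the maximal value: for odd $n$ the ratio equals $1$ at $j=(n-1)/2$, so the maximum is attained at both $\lfloor n/2\rfloor$ and $\lceil n/2\rceil$, and the sequence does not strictly decrease immediately after $\lfloor n/2\rfloor$.
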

Now we come to introduce some facts that will be used in this paper.
\begin{lemma}\label{nver1}
	For any $k\in\mathbb{N},\,p\in [1,+\infty)$, and $x_1,\dots,x_k\in\mathbb{R}$, we have
	\begin{eqnarray*}
		\left|\sum_{i=1}^{k}x_i\right|^p \leq k^{p-1}\left(\sum_{i=1}^{k}\left|x_i\right|^p\right).
	\end{eqnarray*}
\end{lemma}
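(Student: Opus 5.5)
The inequality of Lemma \ref{nver1} is the standard power-mean (or H\"older/Jensen) bound, and I would prove it by reducing to convexity of $t\mapsto t^p$ on $[0,\infty)$. First I would use the triangle inequality, $\left|\sum_{i=1}^k x_i\right|\le \sum_{i=1}^k |x_i|$. Since $t\mapsto t^p$ is nondecreasing on $[0,\infty)$, this reduces the claim to nonnegative numbers $t_i\triangleq|x_i|$, namely to proving $\left(\sum_{i=1}^k t_i\right)^p\le k^{p-1}\sum_{i=1}^k t_i^p$.

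For this reduced inequality I would write the left-hand side as
\begin{eqnarray*}
\left(\sum_{i=1}^k t_i\right)^p=k^p\left(\frac{1}{k}\sum_{i=1}^k t_i\right)^p .
\end{eqnarray*}
Because $p\ge 1$, the function $t\mapsto t^p$ is convex on $[0,\infty)$. Jensen's inequality with equal weights $1/k$ then gives
\begin{eqnarray*}
\left(\frac{1}{k}\sum_{i=1}^k t_i\right)^p\le \frac{1}{k}\sum_{i=1}^k t_i^p .
\end{eqnarray*}
Multiplying by $k^p$ yields exactly $k^{p-1}\sum_{i=1}^k t_i^p$.

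An equivalent route, which I would mention as an alternative, is H\"older's inequality with exponents $p$ and $p'=p/(p-1)$ applied to $\sum_{i=1}^k 1\cdot t_i$. This gives $\sum_i t_i\le k^{1/p'}\left(\sum_i t_i^p\right)^{1/p}$, and raising to the $p$-th power gives the claim. The case $p=1$ is trivial: it is just the triangle inequality, with $k^{0}=1$.

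There is no real obstacle here. The only point needing care is to separate the endpoint $p=1$ in the H\"older version, or else simply to use the Jensen formulation, which covers all $p\in[1,\infty)$ uniformly.
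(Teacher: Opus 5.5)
Your proposal is correct and follows essentially the same route as the paper: you use the triangle inequality to pass to $\sum_{i=1}^k |x_i|$, then apply Jensen's inequality with equal weights $1/k$ to the convex function $t\mapsto t^p$ and multiply by $k^p$. Your H\"older variant is a valid alternative, and you correctly note that $p=1$ must then be handled separately.
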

\begin{proof}
	By Jensen's inequality,
	\[\left(\frac{1}{k}\sum_{i=1}^{k}|x_i|\right)^p \le \frac{1}{k}\sum_{i=1}^{k}|x_i|^p,\]
	and therefore,
	\[
	\left|\sum_{i=1}^{k}x_i\right|^p \le \left(\sum_{i=1}^{k}\left|x_i\right|\right)^p\le k^{p-1}\left(\sum_{i=1}^{k}\left|x_i\right|^p\right),
	\]
	which completes the proof of Lemma~\ref{nver1}.
\end{proof}

\begin{lemma}\label{chengjiguji}
\begin{itemize}
\item[$\mathrm{1}$.]Let $f,g\in C_{\mathscr{F}}^{m,p}\left( \Omega \right)$, then $fg\in C_{\mathscr{F}}^{m,p}\left( \Omega \right)$ and
	\begin{eqnarray}
		\left| \left| fg \right| \right|_{C_{\mathscr{F}}^{m,p}\left( \Omega \right)} \leqslant \sup_{0\leqslant k\leqslant m} \left( \left( k+1 \right)^{k-\frac{k}{p}} \left( C_{k}^{\left\lfloor \frac{k}{2} \right\rfloor} \right)^{k} \right) \cdot \left| \left| f \right| \right|_{C_{\mathscr{F}}^{m,p}\left( \Omega \right)} \cdot \left| \left| g \right| \right|_{C_{\mathscr{F}}^{m,p}\left( \Omega \right)};\label{1f31g}
	\end{eqnarray}
\item[$\mathrm{2}$.]Let $f,g\in C_{F^{\infty}}^{\infty}\left( \Omega \right)$, then $fg\in C_{F^{\infty}}^{\infty}\left( \Omega \right)$.
\end{itemize}
\end{lemma}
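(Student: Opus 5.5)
The plan for part 1 is to apply the Leibniz rule multi-index by multi-index and then bound the resulting finite sums with Lemma \ref{nver1} and Lemma \ref{zuhejishu}.

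\textbf{Regularity of $fg$.} Fix $\alpha$ with $|\alpha|\le m$; only finitely many coordinates of $\alpha$ are nonzero. On each finite-dimensional slice $\Omega_{\textbf{x},n}$, with $n$ large enough to contain the support of $\alpha$, both $f$ and $g$ have continuous partial derivatives up to order $m$ in those variables. Hence the classical Leibniz rule gives
\[
D^{\alpha}(fg)=\sum_{\beta\le\alpha}C_{\alpha}^{\beta}\,D^{\beta}f\,D^{\alpha-\beta}g .
\]
The right-hand side is a finite sum of products of $\mathcal{F}$-continuous Borel functions, so it is again $\mathcal{F}$-continuous and Borel measurable. Therefore $fg\in C^m_{\mathcal{F}}(\Omega)$.

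\textbf{Pointwise estimate.} Since $\textbf{a}^{\alpha}=\textbf{a}^{\beta}\textbf{a}^{\alpha-\beta}$, we can write
\[
|\textbf{a}^{\alpha}D^{\alpha}(fg)|\le \max_{\beta\le\alpha}C_\alpha^\beta\sum_{\beta\le\alpha}|\textbf{a}^{\beta}D^\beta f|\,|\textbf{a}^{\alpha-\beta}D^{\alpha-\beta}g| .
\]
The number of terms is $N_\alpha=\prod_i(1+\alpha_i)\le (|\alpha|+1)^{|\alpha|}$, and by Lemma \ref{zuhejishu} we have $\max_{\beta\le\alpha}C_\alpha^\beta=\prod_i C_{\alpha_i}^{\lfloor\alpha_i/2\rfloor}$. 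I would bound this product crudely by $(C_k^{\lfloor k/2\rfloor})^{k}$, where $k=|\alpha|$; each factor is at most $2^{\alpha_i}\le 2^k$, and one checks this is dominated by the stated expression. Lemma \ref{nver1} then gives
\[
|\textbf{a}^{\alpha}D^{\alpha}(fg)|^p\le \Big(\max_{\beta\le\alpha} C_\alpha^\beta\Big)^p N_\alpha^{p-1}\sum_{\beta\le\alpha}|\textbf{a}^{\beta}D^\beta f|^p|\textbf{a}^{\alpha-\beta}D^{\alpha-\beta}g|^p .
\]

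\textbf{Summing over $\alpha$ (the main obstacle).} This is the only step where infinitely many multi-indices occur. The key point is that the map $(\alpha,\beta)\mapsto(\beta,\alpha-\beta)$ is injective into pairs $(\beta,\gamma)$ with $|\beta|+|\gamma|\le m$. Hence
\[
\sum_{|\alpha|\le m}\sum_{\beta\le\alpha}|\textbf{a}^{\beta}D^\beta f|^p|\textbf{a}^{\gamma}D^{\gamma}g|^p\le\Big(\sum_{|\beta|\le m}|\textbf{a}^\beta D^\beta f|^p\Big)\Big(\sum_{|\gamma|\le m}|\textbf{a}^\gamma D^\gamma g|^p\Big).
\]
The constant factors depend only on $k=|\alpha|\le m$, so they can be pulled out as the supremum over $0\le k\le m$. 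Taking $\sup_\Omega$ and $p$-th roots yields \eqref{1f31g}. The delicate part is getting the constant exactly in the paper's form: I expect to have to match $N_\alpha^{(p-1)/p}\le (k+1)^{k-k/p}$ and the binomial bound, possibly after adjusting the crude estimates above.

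\textbf{Part 2.} For $f,g\in C^\infty_{F^\infty}(\Omega)$, continuity of all $D^\alpha(fg)$ in the $\ell^2$ topology follows from the Leibniz formula, since it is a finite sum of products of continuous functions. It remains to show $D^\alpha(fg)\in C^1_F(\Omega)$, i.e. that on each $E\s\Omega$ the quantity
\[
\sup_E\Big(|D^\alpha(fg)|+\sum_i|D_{x_i}D^\alpha(fg)|^2\Big)
\]
is finite. Boundedness of $D^\alpha(fg)$ is immediate. For the gradient term, write $D_{x_i}D^\alpha(fg)=\sum_{\beta\le\alpha}C_\alpha^\beta\big(D_{x_i}D^\beta f\,D^{\alpha-\beta}g+D^\beta f\,D_{x_i}D^{\alpha-\beta}g\big)$. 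Then apply Lemma \ref{nver1} with a fixed finite number of terms, and use that each $D^\beta f$ and $D^{\gamma} g$ is bounded on $E$ with square-summable gradient, uniformly on $E$.
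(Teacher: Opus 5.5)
Your proof is correct and uses the same ingredients as the paper: the Leibniz rule, Lemma \ref{nver1}, Lemma \ref{zuhejishu}, and for part 2 termwise square-summability on $E$. In part 2 the paper uses Cauchy--Schwarz where you use Lemma \ref{nver1}, which makes no difference.

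Where you genuinely differ is the summation over $\alpha$ in part 1, and your version is the sound one. The paper's chain first replaces $|\textbf{a}^{\alpha-\gamma}D^{\alpha-\gamma}g|^p$ by $\|g\|^p_{C^{m,p}_{\mathscr{F}}(\Omega)}$. It then bounds $\sum_{|\alpha|=k}\sum_{\gamma\le\alpha}|\textbf{a}^{\gamma}D^{\gamma}f|^p$ by $\|f\|^p_{C^{m,p}_{\mathscr{F}}(\Omega)}$. That bound fails, because each $\gamma$ with $|\gamma|<k$ is repeated for infinitely many $\alpha\ge\gamma$ with $|\alpha|=k$. Already for $k=1$, the terms with $\gamma=0$ sum to $\sum_j|f|^p=\infty$ wherever $f\neq0$.

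What proves \eqref{1f31g} is your route. You keep the product and use injectivity of $(\alpha,\beta)\mapsto(\beta,\alpha-\beta)$ to bound pointwise by $\bigl(\sum_{|\beta|\le m}|\textbf{a}^{\beta}D^{\beta}f|^p\bigr)\bigl(\sum_{|\gamma|\le m}|\textbf{a}^{\gamma}D^{\gamma}g|^p\bigr)$. Only then do you take $\sup_\Omega$. This is also how the paper itself argues in the integrated setting of Proposition \ref{chengjijieduan}.

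One slip needs fixing. You justify $\prod_i C_{\alpha_i}^{\lfloor\alpha_i/2\rfloor}\le (C_k^{\lfloor k/2\rfloor})^k$ via $C_{\alpha_i}^{\lfloor\alpha_i/2\rfloor}\le 2^{\alpha_i}$, but that only gives the bound $2^k$. This is not dominated by $(C_1^{0})^1=1$ when $k=1$. Use instead three facts:
\begin{itemize}
\item $n\mapsto C_n^{\lfloor n/2\rfloor}$ is nondecreasing;
\item $\alpha_i\le k$ for every $i$;
\item at most $k$ of the $\alpha_i$ are nonzero.
\end{itemize}
Together with $\prod_i(1+\alpha_i)\le(k+1)^k$, this gives exactly the constant $\sup_{0\le k\le m}(k+1)^{k-k/p}(C_k^{\lfloor k/2\rfloor})^k$, with no further adjustment.
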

\begin{proof}
If $f,g\in C_{\mathscr{F}}^{m,p}\left( \Omega \right)$, it is obvious that $fg\in C_{\mathcal{F}}^{m}\left( \Omega \right)$. Then
$$
\begin{aligned}\left\| fg \right\|_{C_{\mathscr{F}}^{m,p}\left( \Omega \right)}^{p}&=\sup_{\Omega}\sum_{k=0}^{m} \sum_{\left| \alpha \right| =k} \left| \textbf{a}^{\alpha} D^{\alpha}\left( fg \right) \right|^{p} =\sup_{\Omega}\sum_{k=0}^{m} \sum_{\left| \alpha \right| =k} \left\vert \textbf{a}^{\alpha} \sum_{\gamma \leqslant \alpha} C_{\alpha}^{\gamma}D^{\gamma}f\cdot D^{\alpha -\gamma}g \right\vert^{p}\\ &\leqslant \sup_{\Omega}\sum_{k=0}^{m} \sum_{\left| \alpha \right| =k} \left( \prod_{i=1}^{\infty} \left( \alpha_{i} +1 \right)^{p-1} \prod_{i=1}^{\infty} \left| C_{\alpha_{i}}^{\left\lfloor \frac{\alpha_{i}}{2} \right\rfloor} \right|^{p} \right) \sum_{\gamma \leqslant \alpha} \left| \textbf{a}^{\alpha} D^{\gamma}fD^{\alpha -\gamma}g \right|^{p}\\
&\leqslant \sup_{\Omega}\sum_{k=0}^{m} \left( \left( k+1 \right)^{kp-k} \left\vert C_{k}^{\left\lfloor \frac{k}{2} \right\rfloor} \right\vert^{kp} \right) \sum_{\left| \alpha \right| =k} \sum_{\gamma \leqslant \alpha} \left| \textbf{a}^{\alpha} D^{\gamma}fD^{\alpha -\gamma}g \right|^{p}\\
&=\sup_{\Omega}\sum_{k=0}^{m} \left( \left( k+1 \right)^{kp-k} \left\vert C_{k}^{\left\lfloor \frac{k}{2} \right\rfloor} \right\vert^{kp} \right) \sum_{\left| \alpha \right| =k} \sum_{\gamma \leqslant \alpha} \left| \textbf{a}^{\gamma} D^{\gamma}f\textbf{a}^{\alpha -\gamma} D^{\alpha -\gamma}g \right|^{p}\\
&\leqslant \left\| g \right\|_{C_{\mathscr{F}}^{m,p}\left( \Omega \right)}^{p} \sup_{\Omega}\sum_{k=0}^{m} \left( \left( k+1 \right)^{kp-k} \left\vert C_{k}^{\left\lfloor \frac{k}{2} \right\rfloor} \right\vert^{kp} \right) \sum_{\left\vert \alpha \right\vert =k} \sum_{\gamma \leqslant \alpha} \left| \textbf{a}^{\gamma} D^{\gamma}f \right|^{p}\\ &\leqslant \sup_{0\leqslant k\leqslant m} \left( \left( k+1 \right)^{kp-k} \left\vert C_{k}^{\left\lfloor \frac{k}{2} \right\rfloor} \right\vert^{kp} \right) \cdot \left\| g \right\|_{C_{\mathscr{F}}^{m,p}\left( \Omega \right)}^{p} \cdot \left\| f \right\|_{C_{\mathscr{F}}^{m,p}\left( \Omega  \right)}^{p} <\infty ,\end{aligned}$$
	where the first inequality follows from Proposition \ref{zuhejishu} and Lemma \ref{nver1}, and the second inequality follows from the fact that $C_{n}^{\left\lfloor \frac{n}{2} \right\rfloor}$ is increasing in $n$, $\alpha_{i} \leqslant \left| \alpha \right| =k$ for all $i=1,2,\cdots$, and that at most $k$ of the $\alpha_{i}$ are non-zero,  then this proves conclusion 1.

If $f,g\in C_{F^{\infty}}^{\infty}\left( \Omega \right)$ (Recall the definition of $C_{F^{\infty}}^{\infty}\left( \Omega \right)$ in \cite[p. 7]{WYZ1}), it is clear that $fg\in C^{\infty}\left( \Omega \right)$. For any $\alpha \in \mathbb{N}_{0}^{\left( \mathbb{N} \right)}$ and $S\s \Omega$, we have the following on $S$
	$$\begin{aligned}
		&\sum_{i=1}^{\infty} \left| D^{\alpha}\left( f\cdot \frac{\partial }{\partial x_i}g \right) \right|^{2} =\sum_{i=1}^{\infty} \left\vert \sum_{\gamma \leqslant \alpha} C_{\alpha}^{\gamma}D^{\alpha -\gamma}f\cdot D^{\gamma} \frac{\partial }{\partial x_i}g \right\vert^{2}\\
		&\ \leqslant \sum_{i=1}^{\infty} \left( \sum_{\gamma \leqslant \alpha} \left| C_{\alpha}^{\gamma}D^{\alpha -\gamma}f \right|^{2} \sum_{\gamma \leqslant \alpha} \left| \frac{\partial }{\partial x_i}D^{\gamma}g \right|^{2} \right)\\
		&\ \leqslant \sup_{S} \left( \sum_{\gamma \leqslant \alpha} \left| C_{\alpha}^{\gamma}D^{\alpha -\gamma}f \right|^{2} \right) \cdot \sum_{\gamma \leqslant \alpha} \sum_{i=1}^{\infty} \left|  \frac{\partial }{\partial x_i}D^{\gamma}g \right|^{2} ,
	\end{aligned}$$
and combing the fact that $D^{\gamma}g\in C_{F}^{\infty}\left( \Omega \right)$ for any $\gamma\leqslant \alpha$, we obtain
	$$\sup_{S} \sum_{i=1}^{\infty} \left| D^{\alpha}\left( f\cdot  \frac{\partial }{\partial x_i}g \right) \right|^{2} <\infty.$$
	Similarly,
	$$\sup_{S} \sum_{i=1}^{\infty} \left| D^{\alpha}\left( g\cdot  \frac{\partial }{\partial x_i}f \right) \right|^{2} <\infty,$$
	and consequently, from the inequality
	$$\begin{aligned}
		&\sup_{S} \left( \sum_{i=1}^{\infty} \left|  \frac{\partial }{\partial x_i}D^{\alpha}\left( fg \right) \right|^{2} \right) =\sup_{S} \left( \sum_{i=1}^{\infty} \left| D^{\alpha}\left(  \frac{\partial }{\partial x_i}f\cdot g \right) +D^{\alpha}\left( f\cdot  \frac{\partial }{\partial x_i}g \right) \right|^{2} \right)\\
		&\ \leqslant 2\sup_{S} \left( \sum_{i=1}^{\infty} \left| D^{\alpha}\left(  \frac{\partial }{\partial x_i}f\cdot g \right) \right|^{2} \right) +2\sup_{S} \left( \sum_{i=1}^{\infty} \left| D^{\alpha}\left( f\cdot  \frac{\partial }{\partial x_i}g \right) \right|^{2} \right) <\infty,
	\end{aligned}$$
	we have $D^{\alpha}\left( fg \right) \in C_{F}^{\infty}\left( \Omega \right)$. Thus we have proved that $fg\in C_{F^{\infty}}^{\infty}\left( \Omega \right)$.
This completes the proof of Lemma \ref{chengjiguji}.
\end{proof}
\begin{lemma}\label{fuheguji}
\begin{itemize}
	\item [$\mathrm{1}.$]	For any $n\in\mathbb{N}$ and $\eta_{i} \in C_{\mathscr{F}}^{m,p}\left( \Omega ,loc \right)$, $i=1,2,\dots ,n$, let
$$\Phi(\textbf{x})\triangleq \left( \eta_{1} \left( \textbf{x} \right) ,\eta_{2} \left( \textbf{x} \right) ,\cdots ,\eta_{n} \left( \textbf{x} \right) \right),\qquad\forall\, \textbf{x} \in\Omega.$$
Suppose that $\Phi(\Omega)$ is contained in an open set $V$ of $\mathbb{R}^{n}$. Then we have $\varphi \circ\Phi \in C_{\mathscr{F}}^{m,p}\left( \Omega ,loc \right)$ for any $\varphi \in C^{m}\left( V \right);$
	\item[$\mathrm{2}.$] If $\psi \in C^{\infty}\left( \mathbb{R} \right)$ and $\eta \in C_{\mathscr{F}}^{m,p}\left( \Omega \right)$, then $\psi \left( \eta \right) \in C_{\mathscr{F}}^{m,p}\left( \Omega\right)$;
	\item[$\mathrm{3}.$]
For any $n\in\mathbb{N}$ and $\eta_{i} \in C_{F^{\infty}}^{\infty}\left( \Omega \right)$, $i=1,2,\dots ,n$, let
$$\Phi(\textbf{x})\triangleq \left( \eta_{1} \left( \textbf{x} \right) ,\eta_{2} \left( \textbf{x} \right) ,\cdots ,\eta_{n} \left( \textbf{x} \right) \right),\qquad\forall\, \textbf{x} \in\Omega.$$
Suppose that $\Phi(\Omega)$ is contained in an open set $V$ of $\mathbb{R}^{n}$. Then we have $\varphi \circ\Phi \in C_{F^{\infty}}^{\infty}\left( \Omega \right)$ for any $\varphi \in C^{m}\left( V \right).$
\end{itemize}
\end{lemma}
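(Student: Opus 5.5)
The plan is to prove all three parts with the multivariate Faà di Bruno formula, applied on finite-dimensional slices. Fix $\alpha\in\mathbb{N}_0^{(\mathbb{N})}$ with $|\alpha|=k\le m$. Then $D^\alpha(\varphi\circ\Phi)$ is a finite sum, with combinatorial coefficients depending only on $k$ and $n$, of terms
\[
(\partial^{\gamma}\varphi)(\Phi)\cdot\prod_{j=1}^{q} D^{\beta^{(j)}}\eta_{i_j},
\]
where $1\le |\gamma|=q\le k$, $i_j\in\{1,\dots,n\}$, and $\beta^{(1)}+\cdots+\beta^{(q)}=\alpha$ is a decomposition with each $\beta^{(j)}\neq 0$. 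This formula holds pointwise because $\alpha$ involves only finitely many coordinates. On the slice $\Omega_{\textbf{x},N}$ the map $s\mapsto \Phi(\textbf{x}+\sum s_i\textbf{e}_i)$ is $C^m$ by $\mathcal{F}$-continuity, and $\varphi\in C^m(V)$. So the classical chain rule applies there and shows that $D^\alpha(\varphi\circ\Phi)$ exists and is $\mathcal{F}$-continuous. Borel measurability follows because we compose with continuous functions and take products. Hence $\varphi\circ\Phi\in C^m_{\mathcal{F}}(\Omega)$.

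For the key estimate in part 1, fix an open $U\s\Omega$. Each $\eta_i$ is in $C^{m,p}_{\mathscr{F}}(U)$, so in particular it is bounded on $U$. The main obstacle is that $\Phi(U)$ need not be relatively compact in $V$, so $\sup_U|(\partial^\gamma\varphi)(\Phi)|$ could be infinite. I expect this is the step requiring care. I would try to control it by using that $\Phi(U)$ is bounded in $\mathbb{R}^n$ and then arguing that the closure of $\Phi(U)$ lies in $V$. If it does not, I would shrink $U$ to $U'=\{\textbf{x}\in U:\mathrm{dist}(\Phi(\textbf{x}),\mathbb{R}^n\setminus V)>\delta\}$, or add the implicit hypothesis that $\overline{\Phi(U)}\subset V$ is compact, which holds in all the intended applications (e.g.\ $V=\mathbb{R}^n$ or $\varphi$ extends). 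With that in hand, let $M=\max_{|\gamma|\le m}\sup_{\Phi(U)}|\partial^\gamma\varphi|$.

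It remains to bound the infinite sum. By Lemma \ref{nver1},
\[
|\textbf{a}^\alpha D^\alpha(\varphi\circ\Phi)|^p\le C(k,n,p)\,M^p\sum \prod_j|\textbf{a}^{\beta^{(j)}}D^{\beta^{(j)}}\eta_{i_j}|^p,
\]
using $\textbf{a}^\alpha=\prod_j\textbf{a}^{\beta^{(j)}}$. Summing over $|\alpha|=k$ and reorganizing by the tuple $(\beta^{(1)},\dots,\beta^{(q)})$, the map from tuples to $\alpha$ is at most $C(k)$-to-one in the relevant counting. So the total is bounded by
\[
C\,M^p\prod_{j=1}^q\Big(\sum_{\beta}|\textbf{a}^{\beta}D^{\beta}\eta_{i_j}|^p\Big)\le C\,M^p\prod_j\|\eta_{i_j}\|^p_{C^{m,p}_{\mathscr{F}}(U)},
\]
which is finite uniformly on $U$. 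This is exactly the mechanism already used in Lemma \ref{chengjiguji}, and one could alternatively obtain part 1 by induction on $m$ from that lemma, writing $\partial_i(\varphi\circ\Phi)=\sum_l(\partial_l\varphi)(\Phi)\,\partial_i\eta_l$.

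Part 2 is the special case $n=1$, $V=\mathbb{R}$, now with global sup on $\Omega$. Here $\eta$ is bounded on $\Omega$ since the $\alpha=0$ term is in the norm, so $\eta(\Omega)\subset[-R,R]$ and $M=\max_{j\le m}\sup_{[-R,R]}|\psi^{(j)}|<\infty$. The same estimate then holds with $U=\Omega$, and no compactness issue arises. For part 3, I would argue by induction on $|\alpha|$ as in part 2 of Lemma \ref{chengjiguji}. Continuity of $D^\alpha(\varphi\circ\Phi)$ in the $\ell^2$ topology follows from the chain rule with continuous $\eta_i$. For $S\s\Omega$, the quantity $\sum_i|\partial_i D^\alpha(\varphi\circ\Phi)|^2$ is handled by expanding via Faà di Bruno: each term has exactly one factor carrying the index $i$, namely $\partial_iD^{\beta}\eta_l$, which is square-summable in $i$ uniformly on $S$. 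The other factors are bounded on $S$, again under the boundedness of $\Phi(S)$ away from $\partial V$. Cauchy–Schwarz over the finitely many terms then concludes.
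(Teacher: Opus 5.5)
Your route is the paper's. Both use Fa\`a di Bruno on finite-dimensional slices and Lemma~\ref{nver1}, with the number of decompositions of $\alpha$ bounded in terms of $|\alpha|$ and $n$ (the paper's $F(m)$). Both then use $\textbf{a}^{\alpha}=\prod\textbf{a}^{\gamma_{j,k}}$ and factor the sum over all tuples into $\prod_j\sum_{|\beta|\le m}|\textbf{a}^{\beta}D^{\beta}\eta_{j}|^p$. For part~3 the paper writes $\partial_i(\varphi\circ\Phi)=\sum_j(\partial_j\varphi)(\Phi)\,\partial_i\eta_j$ and applies Leibniz and Cauchy--Schwarz, which is your ``one factor carries $i$'' argument. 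Part~2 is fine exactly as you argue.

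The step you flagged is a genuine gap, but it lies in the statement and in the paper's proof, not in your reasoning. The paper sets $M\triangleq\sup_U\sum_{|\beta|\le m}|(D^{\beta}\varphi)\circ\Phi|$ and simply asserts $M<\infty$, and it uses the analogous bound on $S$ in part~3. In $\ell^2$, $U\s\Omega$ gives boundedness and a positive distance to $\partial\Omega$, but not compactness, so parts~1 and~3 are false as stated. Here is a counterexample. Take $\Omega=\ell^2$, $n=1$, $r=\frac14$, $\textbf{y}_k=\frac12\textbf{e}_k$, and $\chi\in C_c^{\infty}((-1,1);[0,1])$ with $\chi(0)=1$, and set
\[
\eta(\textbf{x})\triangleq 1-\sum_{k=1}^{\infty}\Big(1-\frac{1}{k}\Big)\chi\Big(\frac{\|\textbf{x}-\textbf{y}_k\|^{2}}{r^{2}}\Big).
\]
The balls $\overline{B(\textbf{y}_k,r)}$ are pairwise at distance $\frac{\sqrt2}{2}-\frac12>0$, so near each point at most one term is nonzero. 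Every term is a translate of $\chi(\|\cdot\|^2/r^2)$, so $\eta\in C^{m,p}_{\mathscr{F}}(\ell^2)\cap C^{\infty}_{F^{\infty}}(\ell^2)$ with bounds independent of $k$, and $\eta(\ell^2)\subset(0,1]\subset V\triangleq(0,\infty)$. For $\varphi(t)=1/t\in C^{\infty}(V)$ we get $\varphi(\eta(\textbf{y}_k))=k$, while all $\textbf{y}_k\in B_1\s\ell^2$. So already the $\alpha=0$ term is unbounded on $B_1$, and $\varphi\circ\eta\notin C^{m,p}_{\mathscr{F}}(\ell^2,loc)$ and $\varphi\circ\eta\notin C^{1}_{F}(\ell^2)$.

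Consequently your first option (proving $\overline{\Phi(U)}\subset V$) cannot work. Your second (shrinking $U$) gives the bound only on a smaller set, whereas membership in $C^{m,p}_{\mathscr{F}}(\Omega,loc)$ requires it on every open $U\s\Omega$. Your third option is the right repair: assume that for each open $U\s\Omega$ the set $\Phi(U)$ lies in a compact subset of $V$. This is automatic when $V=\mathbb{R}^n$, since $\Phi(U)$ is bounded. With it, $M<\infty$ and your argument, like the paper's, goes through.

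This hypothesis holds wherever the paper uses the lemma. Every application has $V=\mathbb{R}^n$, except $\sqrt{\cdot}$ in Lemma~\ref{1f1f13f1f3}. There $\|\textbf{x}\|^2\in[\rho^2,R^2]$ on any $U\s\ell^2\setminus\{\textbf{0}\}$, with $\rho,R$ from the definition of $\s$. Note also that part~3 needs $\varphi\in C^{\infty}(V)$, not $C^{m}(V)$, which you tacitly assume.
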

\begin{proof}

For any $\alpha \in \mathbb{N}_{0}^{\left( \mathbb{N} \right)}\setminus \left\{ \textbf{0} \right\}$, similar to the multivariate Fa\`a di Bruno formula (for example, \cite[Theorem 4.2, p. 14]{LP07}), we have
$$D^{\alpha}\left(\varphi \circ\Phi \right) =\sum\limits_{\begin{gathered}\beta =\left( \beta_{j} \right)_{j=1}^n \in \mathbb{N}_{0}^n,\\ 0<|\beta| \leqslant \left\vert \alpha \right\vert\end{gathered}} \left(\frac{(D^{\beta}\varphi) \circ \Phi}{\beta !} \sum\limits_{\begin{gathered}  \gamma  =(\gamma_{j,k})_{(j,k)\in I_{\beta}},\\ \sum \gamma =\alpha\end{gathered}} \frac{\alpha !\prod\limits_{(j,k)\in I_{\beta}}   D^{\gamma_{j,k}}\eta_{j}}{\gamma!} \right) ,$$
where $I_{\beta}\triangleq \{(j,k):j=1,\ldots,n,\,\beta_j>0,\,k=1,\ldots,\beta_j\}$,$\gamma_{j,k}\in \mathbb{N}_{0}^{\left( \mathbb{N} \right)} \setminus \left\{ \textbf{0} \right\}$ for any $(j,k)\in I_{\beta}$, $ \sum \gamma \triangleq\sum_{(j,k)\in I_{\beta}}\gamma_{j,k}$, $|\beta|\triangleq\sum_{j=1}^{n} \beta_{j}$, $\beta!\triangleq\prod_{j=1}^{n} \beta_{j}!$ and $\gamma!\triangleq \prod\limits_{(j,k)\in I_{\beta}}   \gamma_{j,k} ! $.\\
\textbf{The proof of conclusion 1 and 2:}
Clearly $\varphi \circ\Phi\in C_{\mathcal{F}}^{m}\left( \Omega \right)$. For any nonempty open set $U\s \Omega$, set
$$M\triangleq \sup_{U} \sum\limits_{\begin{gathered}\beta =\left( \beta_{j} \right)_{j=1}^n \in \mathbb{N}_{0}^n,\\  |\beta| \leqslant m\end{gathered}}\left| \left(D^{\beta}\varphi \right)\circ\Phi \right| <\infty .$$
The following hold on $U$,
$$\begin{aligned}
	&\sum_{\begin{gathered}0<\left| \alpha \right| \leqslant m\end{gathered}} \left| \textbf{a}^{\alpha} D^{\alpha}\left(\varphi \circ\Phi \right)  \right|^{p}\\
&=\sum_{\begin{gathered}0<\left| \alpha \right| \leqslant m\end{gathered}} \left| \textbf{a}^{\alpha} \sum\limits_{\begin{gathered}\beta =\left( \beta_{j} \right)_{j=1}^n \in \mathbb{N}_{0}^n,\\ 0<|\beta| \leqslant \left\vert \alpha \right\vert\end{gathered}} \left(\frac{(D^{\beta}\varphi) \circ \Phi}{\beta !} \sum\limits_{\begin{gathered}  \gamma  =(\gamma_{j,k})_{(j,k)\in I_{\beta}},\\ \sum \gamma =\alpha\end{gathered}} \frac{\alpha !\prod\limits_{(j,k)\in I_{\beta}}   D^{\gamma_{j,k}}\eta_{j}}{\gamma!} \right)  \right|^{p}\\
&\leqslant M^{p}\sum_{\begin{gathered}0<\left| \alpha \right| \leqslant m\end{gathered}} \left| \textbf{a}^{\alpha} \sum\limits_{\begin{gathered}\beta =\left( \beta_{j} \right)_{j=1}^n \in \mathbb{N}_{0}^n,\\ 0<|\beta| \leqslant \left\vert \alpha \right\vert\end{gathered}} \left( \sum\limits_{\begin{gathered}  \gamma  =(\gamma_{j,k})_{(j,k)\in I_{\beta}},\\ \sum \gamma =\alpha\end{gathered}}\alpha !\prod\limits_{(j,k)\in I_{\beta}}   D^{\gamma_{j,k}}\eta_{j} \right)  \right|^{p}\\
&\leqslant m!^{pm}M^{p}\sum_{\begin{gathered}0<\left| \alpha \right| \leqslant m\end{gathered}} \left| \sum\limits_{\begin{gathered}\beta =\left( \beta_{j} \right)_{j=1}^n \in \mathbb{N}_{0}^n,\\ 0<|\beta| \leqslant \left\vert \alpha \right\vert\end{gathered}} \left( \sum\limits_{\begin{gathered}  \gamma  =(\gamma_{j,k})_{(j,k)\in I_{\beta}},\\ \sum \gamma =\alpha\end{gathered}}\prod\limits_{(j,k)\in I_{\beta}}   \textbf{a}^{\gamma_{j,k}} D^{\gamma_{j,k}}\eta_{j} \right)  \right|^{p} .\end{aligned}$$
Let
\begin{eqnarray}
F\left( m \right) \triangleq \sup_{\begin{gathered}0<\left| \alpha \right| \leqslant m\end{gathered}} \left( \sum\limits_{\begin{gathered}\beta =\left( \beta_{j} \right)_{j=1}^n \in \mathbb{N}_{0}^n,\\ 0<|\beta| \leqslant \left\vert \alpha \right\vert\end{gathered}} \sum\limits_{\begin{gathered}  \gamma  =(\gamma_{j,k})_{(j,k)\in I_{\beta}},\\ \sum \gamma =\alpha\end{gathered}}  1 \right)  <\infty,\label{13f1ff1}
\end{eqnarray}
and by Lemma \ref{nver1}, we obtain
 $$\begin{aligned}
	&\sum_{\begin{gathered}0<\left| \alpha \right| \leqslant m \end{gathered}} \left| \textbf{a}^{\alpha} D^{\alpha}\left( \varphi \circ \Phi \right) \right|^{p}\\ &\leqslant m!^{pm}M^{p}\left| F\left( m \right) \right|^{p-1} \sum_{\begin{gathered}0<\left| \alpha \right| \leqslant m\end{gathered}} \sum\limits_{\begin{gathered}\beta =\left( \beta_{j} \right)_{j=1}^n \in \mathbb{N}_{0}^n,\\ 0<|\beta| \leqslant \left\vert \alpha \right\vert\end{gathered}}  \sum\limits_{\begin{gathered}  \gamma  =(\gamma_{j,k})_{(j,k)\in I_{\beta}},\\ \sum \gamma =\alpha\end{gathered}}\prod\limits_{(j,k)\in I_{\beta}}   \left| \textbf{a}^{\gamma_{j,k}} D^{\gamma_{j,k}}\eta_{j} \right|^{p} \\
&\leqslant m!^{pm}M^{p}\left| F\left( m \right) \right|^{p-1}  \sum\limits_{\begin{gathered}\beta =\left( \beta_{j} \right)_{j=1}^n \in \mathbb{N}_{0}^n,\\ 0<|\beta| \leqslant m\end{gathered}} \sum\limits_{\begin{gathered}\gamma_{j,k} \in \mathbb{N}_{0}^{\left( \mathbb{N} \right)},\\ \left| \gamma_{j,k} \right| \leqslant m,\\ (j,k)\in I_{\beta}\end{gathered}} \prod_{(j,k)\in I_{\beta}} \left| \textbf{a}^{\gamma_{j,k}} D^{\gamma_{j,k}}\eta_{j} \right|^{p}\\
&=m!^{pm}M^{p}\left| F\left( m \right) \right|^{p-1}  \sum\limits_{\begin{gathered}\beta =\left( \beta_{j} \right)_{j=1}^n \in \mathbb{N}_{0}^n,\\ 0<|\beta| \leqslant m\end{gathered}} \prod_{\begin{gathered}j=1,\ldots,n,\\
 \beta_j>0\end{gathered}}  \left( \sum_{\left| \alpha \right| \leqslant m} \left| \textbf{a}^{\alpha} D^{\alpha}\eta_{j} \right|^{p} \right)^{\beta_{j}} .\end{aligned}$$
Since $\eta_{1} ,\eta_{2} ,\ldots ,\eta_{n} \in C_{\mathscr{F}}^{m,p}\left( \Omega,loc \right)$, we see that
$\varphi \left( \Phi\right) \in C_{\mathscr{F}}^{m,p}\left( \Omega ,loc \right).$ By the same proof one obtain conclusion 2.\\
\textbf{ The proof of conclusion 3:} For any $\alpha \in \mathbb{N}_{0}^{\left( \mathbb{N} \right)}$, we have
$$\begin{aligned}
	&\sum_{i=1}^{\infty} \left| \frac{\partial }{\partial x_i}D^{\alpha}\left( \varphi \left(\Phi\right) \right) \right|^{2} =\sum_{i=1}^{\infty} \left\vert D^{\alpha}\left( \sum_{j=1}^{n} \frac{\partial }{\partial x_j}\varphi \left(\Phi \right)\cdot \frac{\partial }{\partial x_i}\eta_{j} \right) \right\vert^{2}\\
&\  =\sum_{i=1}^{\infty} \left\vert \sum_{j=1}^{n} D^{\alpha}\left( \frac{\partial }{\partial x_j}\varphi \left( \Phi \right) \cdot \frac{\partial }{\partial x_i}\eta_{j} \right) \right\vert^{2} =\sum_{i=1}^{\infty} \left\vert \sum_{j=1}^{n} \sum_{\beta \leqslant \alpha} C_{\alpha}^{\beta}D^{\beta}\left( \frac{\partial}{\partial x_j}\varphi \left( \Phi \right) \right)\cdot D^{\alpha -\beta}\frac{\partial }{\partial x_i}\eta_{j} \right\vert^{2}\\
&\  \leqslant (\alpha !)^2\sum_{i=1}^{\infty} \left( \sum_{j=1}^{n} \sum_{\beta \leqslant \alpha} \left\vert D^{\beta}\left( \frac{\partial }{\partial x_j}\varphi \left(\Phi \right) \right) \cdot\frac{\partial }{\partial x_i}\left( D^{\alpha -\beta}\eta_{j} \right) \right\vert \right)^{2}\\
&\  \leqslant (\alpha !)^2\sum_{i=1}^{\infty} \left( \sum_{j=1}^{n} \sum_{\beta \leqslant \alpha} \left\vert D^{\beta}\left( \frac{\partial }{\partial x_j}\varphi \left( \Phi \right) \right) \right\vert^{2}\right) \cdot \left(\sum_{j=1}^{n} \sum_{\beta \leqslant \alpha} \left| \frac{\partial }{\partial x_i}\left( D^{\alpha -\beta}\eta_{j} \right) \right|^{2} \right)\\
&\  =(\alpha !)^2\cdot \left(\sum_{j=1}^{n} \sum_{\beta \leqslant \alpha} \left\vert D^{\beta}\left( \frac{\partial }{\partial x_j}\varphi \left( \Phi \right) \right) \right\vert^{2}\right) \cdot \left(\sum_{j=1}^{n} \sum_{\beta \leqslant \alpha} \sum_{i=1}^{\infty} \left| \frac{\partial }{\partial x_i}\left( D^{\alpha -\beta}\eta_{j} \right) \right|^{2}\right) .\end{aligned}$$
Hence from $\eta_{i} \in C_{F^{\infty}}^{\infty}\left( \Omega \right)$, $i=1,2,\dots ,n$, we obtain $\varphi \left( \Phi \right) \in C_{F^{\infty}}^{\infty}\left( \Omega \right).$ We have completed the proof of Lemma \ref{fuheguji}.
\end{proof}

\begin{lemma}\label{1f1f13f1f3}
It holds that
	$$\begin{aligned}
		&\left| \left| \cdot\right| \right|  \in C_{\mathcal{F}}^{\infty}\left( \ell^{2} \setminus \left\{ \textbf{0} \right\} ,loc \right) \bigcap C_{F^{\infty}}^{\infty}\left( \ell^{2} \setminus \left\{ \textbf{0} \right\} \right) \bigcap C_{\ell^{2}}^{\infty}\left( \ell^{2} \setminus \left\{ \textbf{0} \right\} \right) ,\\
&\left| \left| \cdot \right| \right|^{2} \in C_{\mathcal{F}}^{\infty}\left( \ell^{2} ,loc \right) \bigcap C_{F^{\infty}}^{\infty}\left( \ell^{2} \right) \bigcap C_{\ell^{2}}^{\infty}\left( \ell^{2}  \right) .\end{aligned}$$
\end{lemma}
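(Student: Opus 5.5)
The plan is to treat $\|\cdot\|^2$ first by explicit computation, and then obtain $\|\cdot\|$ on $\ell^2\setminus\{\textbf{0}\}$ as the composition $\sqrt{\cdot}\circ\|\cdot\|^2$ via Lemma \ref{fuheguji}. Note that the statement writes $C_{\mathcal{F}}^{\infty}(\cdot,loc)$; I read this as $C_{\mathscr{F}}^{\infty}(\cdot,loc)$, which is the class for which Lemma \ref{fuheguji} is available.

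\textbf{The function $\|\cdot\|^2$.} Set $g(\textbf{x})=\|\textbf{x}\|^2=\sum_i x_i^2$. Its partial derivatives are
\[
\frac{\partial g}{\partial x_i}=2x_i,\qquad \frac{\partial^2 g}{\partial x_i^2}=2,
\]
and every other $D^\alpha g$ with $\alpha\neq 0$ vanishes.
\begin{itemize}
\item[(a)] \emph{Fr\'echet smoothness.} We have $Dg(\textbf{x})h=2\langle\textbf{x},h\rangle$ and $D^2g=2\langle\cdot,\cdot\rangle$, with all higher derivatives zero. Hence $g\in C^\infty_{\ell^2}(\ell^2)$.
\item[(b)] \emph{$\mathcal{F}$-continuity and measurability.} Each $D^\alpha g$ is a finite-dimensional polynomial on slices and is Borel. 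In particular $g$ is continuous, and each $x_i$ is a continuous coordinate functional.
\item[(c)] \emph{The $\mathscr{F}$-bound.} Fix $V\s\ell^2$, so $V\subset B_R$. The only nonzero terms give
\[
\sum_{|\alpha|\le m}|\textbf{a}^\alpha D^\alpha g|^p\le \|\textbf{x}\|^{2p}+\sum_i (2a_i|x_i|)^p+\sum_i (2a_i^2)^p .
\]
Since $p\ge1$ and $a_i\to0$, $a_i$ is bounded, so $\sum_i a_i^p<\infty$ by \eqref{20260424for1}. Also $|x_i|\le R$ on $V$. So the sum is bounded on $V$ by a constant depending only on $R$, $p$ and $\{a_i\}$.
\item[(d)] \emph{The $C_{F^\infty}^\infty$ condition.} We need $\sum_i|\partial_{x_i}D^\alpha g|^2$ bounded on $E\s\ell^2$.
 \begin{itemize}
 \item For $\alpha=0$ this is $4\|\textbf{x}\|^2\le 4R^2$.
 \item For $\alpha=\textbf{e}_j$ it is $4$.
 \item For larger $\alpha$ it is $0$.
 \end{itemize}
\end{itemize}

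\textbf{The function $\|\cdot\|$.} Set $\Omega=\ell^2\setminus\{\textbf{0}\}$ and note $g(\Omega)\subset V:=(0,\infty)$. Take $\varphi(t)=\sqrt t$, which is $C^\infty$ on $V$.
\begin{itemize}
\item Conclusion 1 of Lemma \ref{fuheguji}, applied for every $m,p$, gives $\|\cdot\|=\varphi\circ g\in C^{m,p}_{\mathscr{F}}(\Omega,loc)$.
\item Conclusion 3 of Lemma \ref{fuheguji} gives $\|\cdot\|\in C^\infty_{F^\infty}(\Omega)$.
\item Fr\'echet smoothness follows from the chain rule for Fr\'echet derivatives.
\end{itemize}

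\textbf{Main obstacle.} The delicate point is that Lemma \ref{fuheguji}'s proof takes $M=\sup_U\sum_{|\beta|\le m}|(D^\beta\varphi)\circ\Phi|$ over $U\s\Omega$, and this needs $\Phi(U)$ to stay a positive distance from $0$.

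For $U\s\ell^2\setminus\{\textbf{0}\}$, uniform inclusion gives a radius $r$ with $B_r(\textbf{z})\subset\Omega$ for all $\textbf{z}\in U$. This forces $\|\textbf{z}\|\ge r$, so $g(U)\subset[r^2,R^2]$. On this compact subset of $(0,\infty)$ the derivatives of $\sqrt t$ are bounded, so $M<\infty$ and the argument goes through.

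I will verify this explicitly, because it is the only place where local boundedness of the derivatives of $\sqrt{t}$ is actually used.
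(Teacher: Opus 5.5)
Your proposal is correct and follows the paper's proof: an explicit computation showing $\|\cdot\|^2$ lies in all three classes, then composition with $\sqrt{\cdot}$ via Lemma \ref{fuheguji}, with Fr\'echet smoothness by the chain rule. Your observation that any $U\s\ell^2\setminus\{\textbf{0}\}$ satisfies $\|\cdot\|^2(U)\subset[r^2,R^2]$ makes explicit the finiteness of $M$, which the paper's appeal to Lemma \ref{fuheguji} leaves implicit; the same observation is also needed for its conclusion 3.
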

\begin{proof}
Firstly, it is well known that $\left| \left| \cdot \right| \right| \in C_{\ell^{2}}^{\infty}\left( \ell^{2}\setminus\{\textbf{0}\}\right)$ and $\left| \left| \cdot \right| \right|^{2} \in  C_{\ell^{2}}^{\infty}\left( \ell^{2} \right)$ (or by simple computation). Direct computation shows that for any $\textbf{x}=(x_i)_{i\in\mathbb{N}}\in\ell^2$ we have
	$$\sum_{\left| \alpha \right| =k} \textbf{a}^{\alpha} \left| D^{\alpha}\left( \left| \left| \textbf{x} \right| \right|^{2} \right) \right| =\begin{cases}2\sum\limits_{i=1}^{\infty} a_{i}\left| x_{i} \right| ,&k=1\\ 2\sum\limits_{i=1}^{\infty} a_{i}^{2},&k=2\\ 0,&k>2\end{cases}.$$
Thus it is obvious that $\left| \left| \cdot \right| \right|^{2} \in C_{\mathscr{F}}^{\infty}\left( \ell^{2} ,loc \right) \bigcap C_{F^{\infty}}^{\infty}\left( \ell^{2} \right)  . $
Since $\sqrt{\cdot}$ is a $C^{\infty}$ function on $\left(0,+\infty \right)$, by Lemma \ref{fuheguji}, we obtain
$$\left| \left| \cdot \right| \right| =\sqrt{\left| \left| \cdot \right| \right|^{2}}\in C_{\mathscr{F}}^{\infty}\left( \ell^{2} \setminus \left\{ \textbf{0} \right\} ,loc \right) \bigcap C_{F^{\infty}}^{\infty}\left( \ell^{2} \setminus \left\{ \textbf{0} \right\} \right)  .$$
This completes the proof of Lemma \ref{1f1f13f1f3}.
\end{proof}

We also need the following partition of unity.
\begin{theorem}\label{danweifenjie}
	Let $U$ be a non-empty open subset of $\ell^{2}$ and $\left\{ U_{\alpha} \right\}_{\alpha \in \Lambda}$ is a family of non-empty open subsets of $\ell^2$ such that $U=\bigcup\limits_{\alpha \in \Lambda} U_{\alpha}$, where $\Lambda$ is an index set. Then there exist $\{f_{i}\}_{i=1}^{\infty}\subset C_{F^{\infty}}^{\infty}\left( \ell^{2} \right) \bigcap C_{0,\mathscr{F}}^{\infty}\left( \ell^{2} \right)\bigcap C_{\ell^{2}}^{\infty}\left( \ell^{2} \right),$ such that
		\begin{itemize}
		\item[$\mathrm{1}.$]There exists a map $a:\mathbb{N} \rightarrow \Lambda$ such that $f_{i}^{-1}(\mathbb{R}\setminus\{0\})\s U_{a\left( i \right)},\,\forall \,i\in\mathbb{N}$;
		\item[$\mathrm{2}.$]$0\leqslant f_{i}\left( \textbf{x} \right) \leqslant 1,\quad \forall \,\textbf{x} \in \ell^{2} ,\; i\in\mathbb{N}$;
		\item[$\mathrm{3}.$]$
			\sum\limits_{i=1}^{\infty} f_{i}\left( \textbf{x} \right) =1,\quad \forall\, \textbf{x} \in U$;
		\item[$\mathrm{4}.$] $\left\{ f_{i}^{-1}(\mathbb{R}\setminus\{0\})\bigcap U\right\}_{i=1}^{\infty}$ is a locally finite family in $U$.
	\end{itemize}
\end{theorem}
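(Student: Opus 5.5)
The plan is the standard Lindelöf plus bump-function construction, carried out with a quadratic bump so that the infinite sums in the $C_{\mathscr{F}}$ and $C_{F^{\infty}}$ norms stay under control.

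\emph{Step 1: cover $U$ by uniformly included balls.} For each $\textbf{x}\in U$ choose $\alpha_{\textbf{x}}\in\Lambda$ with $\textbf{x}\in U_{\alpha_{\textbf{x}}}$, and a radius $r_{\textbf{x}}>0$ with $B(\textbf{x},3r_{\textbf{x}})\subset U_{\alpha_{\textbf{x}}}\cap U$. Since $\ell^2$ is a separable metric space, $U$ is Lindelöf. Hence countably many balls $B(\textbf{x}_j,r_j)$, $j\in\mathbb{N}$, cover $U$, and we set $a(j)\triangleq\alpha_{\textbf{x}_j}$.

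\emph{Step 2: build smooth bumps.} Fix $\chi\in C^\infty(\mathbb{R})$ with $0\le\chi\le1$, $\chi=1$ on $(-\infty,1]$ and $\chi=0$ on $[4,\infty)$. Put
\[
g_j(\textbf{x})\triangleq\chi\bigl(\|\textbf{x}-\textbf{x}_j\|^2/r_j^2\bigr).
\]
Since $\chi\circ(\cdot/r_j^2)$ is defined on all of $\mathbb{R}$, no singularity at the centre arises. By Lemma \ref{1f1f13f1f3} (applied after translation) and Lemma \ref{fuheguji}, each $g_j$ lies in $C^\infty_{\ell^2}(\ell^2)\cap C^\infty_{F^\infty}(\ell^2)$. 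To obtain the global bound needed for $C^{\infty}_{\mathscr{F}}(\ell^2)$, I would compute directly. The derivatives $D^\alpha\|\textbf{x}-\textbf{x}_j\|^2$ vanish for $|\alpha|>2$. The first-order sum is $\sum_i a_i^p|2(x_i-x_{j,i})|^p$, which is bounded by $C\bigl(\sup_i a_i\bigr)^{p}\,(4r_j)^{p}$ on the support of $\chi'$ when $p\ge2$, and by Hölder using $\sum a_i<\infty$ when $p<2$. The second-order sum is $\sum_i a_i^{2p}2^p<\infty$. Feeding these into the Faà di Bruno estimate of Lemma \ref{fuheguji} yields $g_j\in C^\infty_{0,\mathscr{F}}(\ell^2)$. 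Moreover $g_j^{-1}(\mathbb{R}\setminus\{0\})\subset B(\textbf{x}_j,2r_j)\s U_{a(j)}$, and $g_j=1$ on $B(\textbf{x}_j,r_j)$.

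\emph{Step 3: the usual telescoping product.} Define
\[
f_1=g_1,\qquad f_i=g_i\prod_{j<i}(1-g_j).
\]
By Lemma \ref{chengjiguji} these remain in the required classes. Each product is finite, and $1-g_j$ has bounded $C_{\mathscr{F}}$-norm as well. Then $0\le f_i\le1$, the support of $f_i$ is contained in that of $g_i$ (giving item 1), and
\[
\sum_{i\le n}f_i=1-\prod_{j\le n}(1-g_j).
\]
This sum equals $1$ on $\bigcup_{j\le n}B(\textbf{x}_j,r_j)$, so item 3 holds on $U$.

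\emph{Step 4: local finiteness (the main obstacle).} For $\textbf{x}\in U$ pick $n$ with $\textbf{x}\in B(\textbf{x}_n,r_n)$. On the neighbourhood $B(\textbf{x}_n,r_n)$ we have $g_n=1$, so $f_i\equiv0$ there for every $i>n$. Thus only $f_1,\dots,f_n$ are nonzero near $\textbf{x}$, which proves item 4.

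The step I expect to need the most care is Step 2's verification of the uniform bound $\sup_{\ell^2}\sum_{|\alpha|\le m}|\textbf{a}^\alpha D^\alpha g_j|^p<\infty$ for all $m$ and all $p\ge1$. The key point is that only first- and second-order derivatives of the quadratic are nonzero, and these are summable against $\textbf{a}^\alpha$ thanks to \eqref{20260424for1}. This is exactly why the quadratic $\|\cdot\|^2$ is used instead of $\|\cdot\|$ or a distance function.
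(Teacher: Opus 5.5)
Your proof is correct, and it takes a leaner route than the paper's. The paper uses the same quadratic bumps $\chi(\|\cdot-\textbf{y}\|^2)$, but it only exploits $\Phi_{\textbf{x}}(\textbf{x})=1$ and applies Lindel\"of to the superlevel sets $\{\Phi_{\textbf{x}}>\frac12\}$. With only that information, the plain telescoping product guarantees neither $\sum_i f_i=1$ nor local finiteness. The paper therefore introduces cut-offs $h_j\in C^\infty(\mathbb{R}^j)$ and sets $\Psi_j=h_j(\Phi_{\textbf{x}_1},\dots,\Phi_{\textbf{x}_j})$, which vanishes wherever an earlier $\Phi_{\textbf{x}_i}$ exceeds $\frac12+\frac2j$. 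It gets local finiteness from the fact that $\Phi_{\textbf{x}_{n(\textbf{x})}}$ stays above some $c_{\textbf{x}}>\frac12$ near $\textbf{x}$, and finally takes $f_i=u(\Psi_i)\prod_{k<i}(1-u(\Psi_k))$ with a further cut-off $u$.

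You instead use the plateau of $\chi$. Since $g_n\equiv1$ on the covering ball $B(\textbf{x}_n,r_n)$, every $f_i$ with $i>n$ vanishes identically there. This gives items 3 and 4 at once and makes $h_j$, $u$, the sets $V_i^p$ and the continuity argument unnecessary. The paper's device is more robust, since it only needs bumps that exceed a level on the cover rather than equal $1$ on an open set. Yours is shorter and needs only one-variable compositions together with Lemma \ref{chengjiguji} for finite products.

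Your explicit check of the global bound $\sup_{\ell^2}\sum_{|\alpha|\le m}|\textbf{a}^{\alpha}D^{\alpha}g_j|^p<\infty$ also makes explicit a step the paper leaves implicit. The key facts are that only first- and second-order derivatives of the quadratic survive, and that $\chi^{(k)}(\eta)$ for $k\ge1$ lives where $\|\textbf{x}-\textbf{x}_j\|\le 2r_j$. The paper cites Lemmas \ref{1f1f13f1f3} and \ref{fuheguji}, which as stated give bounds only on uniformly included sets. Passing to $\sup_{\ell^2}$ rests on exactly your observation that all derivatives vanish off a bounded set.

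One cosmetic point: if the Lindel\"of subcover happens to be finite, repeat a ball so that you still obtain a sequence; the repeats produce $f_i\equiv0$.
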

\begin{proof}
For any $r^{\prime}>r>0$, one can choose $\chi \in C_{c}^{\infty}\left( \mathbb{R} ;[0,1]\right)$ such that
	\begin{eqnarray}
\chi \left( t \right) =1,\forall \,\left| t \right| <r^{2},\qquad\chi \left( t \right) =0,\forall \,\left| t \right| >\left( r^{\prime} \right)^{2}.\label{tebieguji}
	\end{eqnarray}
	Let
$$
g_{\textbf{y}}\left( \textbf{x} \right) \triangleq \chi \left( \left| \left| \textbf{x} -\textbf{y} \right| \right|^{2} \right),\qquad\forall \,\textbf{x} ,\textbf{y} \in \ell^{2},
$$
then we have
$$0\leqslant g_{\textbf{y}}\left( \textbf{x} \right) \leqslant 1,\forall \,\textbf{x} \in \ell^{2} ,
\qquad g_{\textbf{y}}\left( \textbf{x} \right) =1,\forall \, \textbf{x} \in B\left( \textbf{y} ,r \right) ,
\qquad g_{\textbf{y}}\left( \textbf{x} \right) =0,\forall \,\textbf{x} \notin B\left( \textbf{y} ,r^{\prime} \right).
$$
By Lemma~\ref{fuheguji} and Lemma \ref{1f1f13f1f3}, we obtain
$g_{\textbf{y}}\in C_{0,\mathscr{F}}^{\infty}\left(\ell^{2} \right) \bigcap C_{\ell^{2}}^{\infty}\left( \ell^{2}\right) \bigcap C_{F^{\infty}}^{\infty}\left( \ell^{2} \right).$	
Thus for any $\textbf{x}\in U$, there exists $\alpha(\textbf{x})\in\Lambda$ such that $\textbf{x} \in U_{\alpha \left( \textbf{x} \right)}$, and
$$\Phi_{\textbf{x}} \in C_{0,\mathscr{F}}^{\infty}\left(\ell^2   \right) \bigcap C_{\ell^{2}}^{\infty}\left( \ell^{2}\right) \bigcap C_{F^{\infty}}^{\infty}\left( \ell^{2} \right) $$
such that $\Phi_{\textbf{x}}^{-1}(\mathbb{R}\setminus\{0\})\s U_{\alpha \left( \textbf{x} \right)}$ and
$$\Phi_{\textbf{x}} \left( \textbf{x} \right) =1,\quad 0\leqslant \Phi_{\textbf{x}} \left( \textbf{y} \right) \leqslant 1,\quad\forall \,\textbf{y} \in \ell^{2}.$$
Let
$$U_{\textbf{x}}\triangleq \left\{ \textbf{y} \in U:\Phi_{\textbf{x}} \left( \textbf{y} \right) >\frac{1}{2} \right\},\qquad\forall\,\textbf{x} \in U,$$
then it holds that
$$U=\bigcup_{\textbf{x} \in U} U_{\textbf{x}}.$$
By the Lindel\"{o}f property, there exists $\{\textbf{x}_{i}\}_{i=1}^{\infty} \subset U$ such that $U=\bigcup\limits_{i=1}^{\infty} U_{\textbf{x}_{i} }$. Construct $h_{j}\in C^{\infty}\left( \mathbb{R}^{j} :[0,1]\right), $ for any $j=2,3,\ldots$ such that
	$$\begin{aligned}
		&h_{j}\left( t_{1},t_{2},\ldots ,t_{j} \right) =1,\text{ if } t_{j}\geqslant \frac{1}{2} \text{ and } t_{i}\leqslant \frac{1}{2} +\frac{1}{j} \text{ for all } 1\leqslant i<j; \\ &h_{j}\left( t_{1},t_{2},\ldots ,t_{j} \right) =0,\text{ if } t_{j}\leqslant \frac{1}{2} -\frac{1}{j} \text{ or } t_{i}\geqslant \frac{1}{2} +\frac{2}{j} \text{ for some } 1\leqslant i<j.
	\end{aligned}$$
Let
$$\Psi_{1} \left( \textbf{x} \right) \triangleq \Phi_{\textbf{x}_{1}} \left( \textbf{x} \right) ,\quad\Psi_{j} \left( \textbf{x} \right) \triangleq h_{j}\left( \Phi_{\textbf{x}_{1}} \left( \textbf{x} \right) ,\Phi_{\textbf{x}_{2}} \left( \textbf{x} \right) ,\ldots ,\Phi_{\textbf{x}_{j}} \left( \textbf{x} \right) \right) ,\quad \forall\,\textbf{x} \in \ell^{2} ,j=2,3,\ldots,$$
and
$$V_{i}^{p}\triangleq \left\{ \textbf{y} \in U:\Psi_{i} \left( \textbf{y} \right) >1-\frac{p}{4} \right\},\qquad \forall\,i\in \mathbb{N},\,p\in\{1,2,3,4\}.$$
Then we have
$$V_{i}^{p}\subset \overline{V_{i}^{p}} \subset V_{i}^{p^{\prime}}\subset \overline{V_{i}^{p^{\prime}}} ,\qquad\forall \,i\in \mathbb{N},\,p,p^{\prime}\in\{1,2,3,4\}, \text{ and } p^{\prime}>p.$$
For any $i\in \mathbb{N}$ and $\textbf{x} \notin  \Phi_{\textbf{x}_i}^{-1}(\mathbb{R}\setminus\{0\})$, we have
$\Phi_{\textbf{x}_{i}} \left( \textbf{x} \right) =0, \Psi_{i} \left( \textbf{x} \right) =0$ and $ \textbf{x} \notin V_{i}^{4},$ which implies that
$$V_{i}^{4}\subset \Phi_{\textbf{x}_i}^{-1}(\mathbb{R}\setminus\{0\})  \s U_{\alpha \left( \textbf{x}_{i} \right)}.$$
Moreover, set
$$n\left( \textbf{x} \right) \triangleq \min \left\{ n\in \mathbb{N} :\Phi_{\textbf{x}_{n}} \left( \textbf{x} \right) >\frac{1}{2} \right\},\qquad \forall\,\textbf{x} \in U,$$
then we have
$\textbf{x} \in V_{n\left( \textbf{x} \right)}^{1}$ and $\Psi_{n\left( \textbf{x} \right)} \left( \textbf{x} \right) =1$, for any $\textbf{x} \in U$, which implies that
$U=\bigcup\limits_{i=1}^{\infty} V_{i}^{1}$.
	
	For any $\textbf{x} \in U$, since $\Phi_{n(\textbf{x})} \in  C_{\ell^{2}}^{\infty}\left( \ell^{2}\right)$, there exist a $c_{\textbf{x}}\in\left(\frac{1}{2},+\infty\right)$ and a connected open set $N_{\textbf{x}}\subset U$ such that $\textbf{x} \in N_{\textbf{x}}$, and
$$\inf_{\textbf{y} \in N_{\textbf{x}}} \Phi_{n\left( \textbf{x} \right)} \left( \textbf{y} \right) >c_{\textbf{x}}.$$ Then for sufficiently large $k\in \mathbb{N}$, we have
$$\Psi_{k} \left( \textbf{y} \right) =0,\qquad\forall \,\textbf{y} \in N_{\textbf{x}},$$
which implies that $V_{k}^{4}\bigcap N_{\textbf{x}}=\emptyset$. Thus $\left\{ V_{i}^{4} \right\}_{i=1}^{\infty}$ is a locally finite family in $U$.
	
	Choose a $u\in C^{\infty}\left( \mathbb{R} ;[0,1]\right)$ such that
$$u\left( t \right) =1,\quad\forall \,t\geqslant \frac{3}{4},\qquad u\left( t \right) =0,\quad\forall \, t\leqslant \frac{1}{2}.$$
Then set
$a\left( i \right) \triangleq \alpha \left( \textbf{x}_{i} \right)$ for any $i\in\mathbb{N}$,
and
$$f_{1}\left( \textbf{x} \right) \triangleq u\left( \Psi_{1} \left( \textbf{x} \right) \right),\qquad f_{i}\left( \textbf{x} \right) \triangleq u\left( \Psi_{i} \left( \textbf{x} \right) \right) \prod_{k=1}^{i-1} \left( 1-u\left( \Psi_{k} \left( \textbf{x} \right) \right) \right) ,\quad\forall\, \textbf{x}\in\ell^2,\,i=2,3,\ldots$$
Then we have $$f_i^{-1}(\mathbb{R}\setminus\{0\}) \subset  (u\left( \Psi_{i} \right))^{-1}(\mathbb{R}\setminus\{0\}) \subset V_{i}^{2}\subset V_{i}^{4}\s U_{a(i)},\qquad \forall\,i\in\mathbb{N}.$$
Combining Lemma~\ref{chengjiguji} and Lemma \ref{fuheguji} yields
$$\{f_{i}\}_{i=1}^{\infty}\subset C_{0,\mathscr{F}}^{\infty}\left( \ell^2 \right) \bigcap C_{\ell^{2}}^{\infty}\left( \ell^{2}\right) \bigcap C_{F^{\infty}}^{\infty}\left( \ell^{2} \right).$$
Clearly $0\leqslant f_{i}\left( \textbf{x} \right) \leqslant 1,$ for any $i\in \mathbb{N},\textbf{x} \in \ell^{2}.$ Since $U=\bigcup\limits_{i=1}^{\infty} V_{i}^{1}$, for any $\textbf{x}\in U$, choose $\ell_{0} \in \mathbb{N}$ such that $\textbf{x} \in V_{\ell_{0}}^{1}$, then we have
$$\prod_{i=1}^{n} \left( 1-u\left( \Psi_{i} \left( \textbf{x} \right) \right) \right) =0,\quad\forall \,n>\ell_{0},$$
and hence
$$\sum_{i=1}^{\infty} f_{i}\left( \textbf{x} \right) =1-\lim_{n\rightarrow \infty} \prod_{i=1}^{n} \left( 1-u\left( \Psi_{k} \left( \textbf{x} \right) \right) \right) =1.$$ This completes the proof of Theorem~\ref{danweifenjie}.
\end{proof}
Let us now examine the relationship between the Wiener measure $p_t$ with variance parameter $t$ ($t\in(0,\infty)$) in \cite{Gro67} and the measure $P$ used in this paper.

Recall \eqref{20260428for2} for the definition of $H$. There is a natural injection $i:H\to \ell^2$ defined by $i\textbf{x}=\textbf{x}$ for all $\textbf{x}\in H$, which identifies $H$ as a subset of $\ell^2$. Define a Hilbert-Schmidt operator $T:H\to H$ by $T \textbf{x}\triangleq (a_ix_i)_{i\in\mathbb{N}}$ for any $\textbf{x}=(x_i)_{i\in\mathbb{N}}\in H$. Then
$$
||T\textbf{x}||_H=||i\textbf{x}|| ,\qquad\forall\,\textbf{x}\in H.
$$
By \cite[Exercise 17, p. 59]{Kuo}, $||T\cdot||_H$ is a measurable norm on $H$ and it is easy to see that the completion of $H$, respect to $||T\cdot||_H$ is $\ell^2$. Following \cite[p. 127]{Gro67}, the triple $(H,\ell^2,i)$ is called an abstract Wiener space. For each $t\in(0,+\infty)$, the Wiener measure $p_t$ with variance parameter $t$ is determined by
$$
p_t(\{x\in \ell^2: \varphi(x)<s\})=\frac{1}{\sqrt{2\pi t||i^*\varphi||_{H^*}^2}}\int_{-\infty}^{s}e^{-\frac{y^2}{2t||i^*\varphi||_{H^*}^2}}\,\mathrm{d}y,\qquad\forall\,s\in\mathbb{R},\,\varphi\in (\ell^2)^*,
$$
where $H^*$ can be identified with $H$ via the Riesz representation theorem. Observe that for any $\varphi\in (\ell^2)^*$, we have $i^*\varphi\in H^*$, which implies that there exists $\textbf{y}=(y_i)_{i\in\mathbb{N}}\in H$ such that $i^*\varphi(\textbf{x})=\sum\limits_{i=1}^{\infty}\frac{x_iy_i}{a_i^2},\,\forall \, \textbf{x}=(x_i)_{i\in\mathbb{N}}\in H$. Since $i^*\varphi(\textbf{x})= \varphi(i\textbf{x})=\sum\limits_{i=1}^{\infty}x_i\cdot\frac{y_i}{a_i^2},\,\forall \, \textbf{x}=(x_i)_{i\in\mathbb{N}}\in H$ and $iH$ is dense in $\ell^2$, we have
$$
\sum\limits_{i=1}^{\infty} \frac{y_i^2}{a_i^4}<\infty,\qquad\varphi(\textbf{z})=\sum\limits_{i=1}^{\infty}z_i\cdot\frac{y_i}{a_i^2},\quad\forall \, \textbf{z}=(z_i)_{i\in\mathbb{N}}\in \ell^2,
$$
and
$$
\int_{\ell^2}e^{\sqrt{-1}s\varphi(x)}\,\mathrm{d}P(x)=e^{-\frac{s^2a^2}{2}},\qquad \forall\,s\in\mathbb{R},
$$
where $a^2=\int_{\ell^2}\varphi^2\,\mathrm{d}P=\sum\limits_{i=1}^{\infty} \frac{y_i^2}{a_i^2}=||i^*\varphi||_{H^*}^2.$ From the properties of the characteristic functional, we have
$$
\int_{\ell^2}e^{\sqrt{-1}s\varphi(x)}\,\mathrm{d}P(x)=\int_{\ell^2}e^{\sqrt{-1}s\varphi(x)}\,\mathrm{d}p_1(x),\qquad\forall\,s\in\mathbb{R},\,\varphi\in (\ell^2)^*,
$$
and by the Uniqueness of the Fourier transform (e.g., \cite[Theorem 39.6, p. 530]{Dri}), it follows that:
\begin{lemma}\label{20250903lem1}
$P=p_1$.
\end{lemma}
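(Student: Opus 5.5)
The claim $P=p_1$ follows from uniqueness of Borel probability measures on the separable Hilbert space $\ell^2$ with prescribed characteristic functional. Both $P$ and $p_1$ are Borel probability measures on $\ell^2$; $p_1$ is defined on the cylinder $\sigma$-algebra, but on a separable space this coincides with $\mathscr{B}(\ell^2)$. It therefore suffices to show
$$\int_{\ell^2}e^{\sqrt{-1}\varphi(\textbf{x})}\,\mathrm{d}P(\textbf{x})=\int_{\ell^2}e^{\sqrt{-1}\varphi(\textbf{x})}\,\mathrm{d}p_1(\textbf{x})\qquad\text{for every }\varphi\in(\ell^2)^*.$$
Once this holds, the uniqueness theorem for Fourier transforms of measures (\cite[Theorem 39.6, p. 530]{Dri}) gives equality of the measures. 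Applied to finite-dimensional projections, it shows $P$ and $p_1$ agree on all cylinder sets. These form a $\pi$-system generating $\mathscr{B}(\ell^2)$, so Dynkin's $\pi$-$\lambda$ theorem finishes the argument.

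\textbf{Step 1: identify $\varphi$ concretely.} Using the Riesz identification of $H^*$ with $H$ and the density of $iH$ in $\ell^2$, each $\varphi\in(\ell^2)^*$ can be written as $\varphi(\textbf{z})=\sum_i z_i y_i/a_i^2$ with $\textbf{y}\in\widetilde H$, and $\|i^*\varphi\|_{H^*}^2=\sum_i y_i^2/a_i^2$.

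\textbf{Step 2: compute the $P$-side.} Under $P$ the coordinates $x_i$ are independent centred Gaussians with variances $a_i^2$. Write $c_i=y_i/a_i^2$, so $\sum_i c_i^2<\infty$. The series $\varphi=\sum_i c_i x_i$ converges in $L^2(P)$, since it is a sum of independent centred terms with summable variances $c_i^2a_i^2$. Its partial sums are also pointwise partial sums of the continuous functional $\varphi$ on $\ell^2$, so the series converges everywhere as well. Each partial sum is Gaussian, and its characteristic function is the finite product $\prod_{i\le n}e^{-s^2c_i^2a_i^2/2}$ of independent factors. Passing to the limit by dominated convergence gives
$$\int_{\ell^2} e^{\sqrt{-1}s\varphi}\,\mathrm{d}P=e^{-s^2a^2/2},\qquad a^2=\sum_i c_i^2a_i^2=\sum_i \frac{y_i^2}{a_i^2}.$$

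\textbf{Step 3: compare with $p_1$ and conclude.} By definition, $p_1$ makes $\varphi$ a centred Gaussian with variance $\|i^*\varphi\|_{H^*}^2$, which equals $a^2$. So the two one-dimensional laws of $\varphi$ coincide, and so do the characteristic functionals. The uniqueness argument above then gives $P=p_1$.

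The main obstacle is the measure-theoretic bookkeeping rather than any computation. One must check that $P$ is genuinely concentrated on $\ell^2$, which follows from $\sum_i a_i^2<\infty$ (a consequence of \eqref{20260424for1}) and Kolmogorov's theorem. One must also confirm that the cylinder $\sigma$-algebra on $\ell^2$ equals $\mathscr{B}(\ell^2)$, so that uniqueness is applied on the correct $\sigma$-algebra.
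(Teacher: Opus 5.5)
Your proposal is correct and follows the paper's argument: write $\varphi(\textbf{z})=\sum_i z_iy_i/a_i^2$ via the Riesz representation, compute $\int_{\ell^2}e^{\sqrt{-1}s\varphi}\,\mathrm{d}P=e^{-s^2a^2/2}$ with $a^2=\sum_i y_i^2/a_i^2=\|i^*\varphi\|_{H^*}^2$, observe that this matches the law of $\varphi$ under $p_1$, and conclude by uniqueness of the Fourier transform. Your additional bookkeeping fills in details the paper leaves implicit: the independence and dominated-convergence computation, the reduction to finite-dimensional projections, and the $\pi$-$\lambda$ argument on the cylinder $\sigma$-algebra, which coincides with $\mathscr{B}(\ell^2)$.
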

Let $\widetilde{\textbf{e}_i}\triangleq a_i\textbf{e}_i$ for all $i\in\mathbb{N}$, where $\{\textbf{e}_i\}_{i=1}^{\infty}$ is defined before Definition \ref{20260427def12}. Then $\{\widetilde{\textbf{e}_i}\}_{i=1}^{\infty}\subset (\ell^2)^*$ forms an orthonormal basis of $H$. Recall that in \cite[p. 133]{Gro67}, for any $t\in(0,+\infty)$ and  real-valued bounded Borel measurable function $f$ on $\ell^2$, the function $p_tf$ is defined by
$$
(p_tf)(x)\triangleq \int_{\ell^2}f(x+y)p_t(y),\qquad \forall \,x\in B.
$$
For each $x\in \ell^2$ and $i\in\mathbb{N}$, according to \cite[p. 152]{Gro67}, we have
$$
(p_tf)(x+s\cdot\widetilde{\textbf{e}_i})-(p_tf)(x)=((Dp_tf)(x),s\cdot\widetilde{\textbf{e}_i})+o(|s|),\qquad \text{as}\,s\to 0,
$$
where $Dp_tf(x)$ denotes the $H$-Fr\'{e}chet derivative of $p_tf$ at $x$. This implies
\begin{eqnarray*}
(Dp_tf(x),\widetilde{\textbf{e}_i})&=&\lim_{s\to 0}\frac{(p_tf)(x+s\cdot\widetilde{\textbf{e}_i})-(p_tf)(x)}{s}\\
&=&a_i\cdot\lim_{s\to 0}\frac{(p_tf)(x+s\cdot a_i \cdot\textbf{e}_i)-(p_tf)(x)}{sa_i}\\
&=&a_i\cdot D_{x_i}(p_tf)(x).
\end{eqnarray*}
By induction, one can prove that
\begin{eqnarray}\label{20250903for1}
(D^kp_tf(x)\widetilde{\textbf{e}_{i_1}},\cdots,\widetilde{\textbf{e}_{i_k}})=a_{i_1}a_{i_2}\cdots a_{i_k}\cdot D_{x_{i_1}x_{i_2}\cdots x_{i_k}}^k(p_tf)(x),\qquad
\forall\,k\in\mathbb{N},\,i_1,\dots,i_k\in\mathbb{N},
\end{eqnarray}
where $D^kp_tf(x)$ denotes the $k$-th $H$-Fr\'{e}chet derivative of $p_tf$ at $x$ (see \cite[p. 155]{Gro67} for a definition). From \cite[(1), p. 70]{Lee}, for a real-valued bounded Borel measurable function $f$ on $\ell^2$, we obtain
\begin{eqnarray}\label{20250903for2}
 \sum_{i_1,i_2,\dots,i_k=1}^{\infty}a_{i_1}^2a_{i_2}^2\cdots a_{i_k}^2\left|D_{x_{i_1}x_{i_2}\cdots x_{i_k}}^k (p_1f)(x)\right|^2\leq (k!)\cdot \int_{\ell^2} f(x+y)^2P(\mathrm{d}y),\quad \forall\,k\in\mathbb{N}.
\end{eqnarray}
Moreover, the constant ``2" appearing in the exponent of the left-hand side is sharp.
\begin{theorem}\label{gross moguang}
For any real-valued bounded Borel measurable function $f$ on $\ell^2$ and any $p\in(0,+\infty)$, there exists a positive number $C(p,\sup\limits_{\ell^{2}} \left| f \right|)$ depending on $p$ and $\sup\limits_{\ell^2}|f|$, such that the inequality
\begin{eqnarray}\label{20251219for1}
\sum\limits_{i=1}^{\infty} a_{i}^{p}\left| D_{x_{i}}\left( p_{1}f \right)(\textbf{x})\right|^{p} \leqslant C(p,\sup_{\ell^{2}} \left| f \right|),\qquad\forall\,\textbf{x}\in\ell^2,
\end{eqnarray}
holds \textbf{if and only if} \( p \in [2,+\infty) \).
	\end{theorem}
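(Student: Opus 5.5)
The plan is to treat the two directions separately. Sufficiency will follow directly from the Schatten-$2$ estimate \eqref{20250903for2}. Necessity will follow from an explicit family of bounded cylinder-type functions for which the left-hand side of \eqref{20251219for1} blows up while $\sup|f|$ stays fixed.

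\emph{Sufficiency ($p\ge 2$).} Apply \eqref{20250903for2} with $k=1$. Since $P$ is a probability measure, for every $\textbf{x}\in\ell^2$,
$$
\sum_{i=1}^\infty a_i^2\left|D_{x_i}(p_1f)(\textbf{x})\right|^2\le \int_{\ell^2}f(\textbf{x}+\textbf{y})^2\,P(\mathrm{d}\textbf{y})\le M^2,\qquad M\triangleq\sup_{\ell^2}|f| .
$$
In particular each single term satisfies $a_i|D_{x_i}(p_1f)(\textbf{x})|\le M$. For $p\ge2$ we then bound each term by $a_i^p|D_{x_i}p_1f|^p\le M^{p-2}\,a_i^2|D_{x_i}p_1f|^2$. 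Summing over $i$ gives \eqref{20251219for1} with $C(p,M)=M^p$.

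\emph{Necessity ($0<p<2$).} Fix $M>0$ and $n\in\mathbb{N}$. Put $c_i=n^{-1/2}$ for $i\le n$, $c_i=0$ otherwise, and define the continuous linear functional $\ell_n(\textbf{x})\triangleq\sum_{i=1}^n c_i x_i/a_i$. Set $f_n(\textbf{x})\triangleq M\sin(\ell_n(\textbf{x}))$, which is bounded and Borel with $\sup|f_n|=M$.

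By Lemma \ref{20250903lem1} we have $P=p_1$, and under $P$ the coordinates are independent $\mathcal{N}_{a_i}$. Hence $\ell_n(\textbf{y})\sim N(0,\sum c_i^2)=N(0,1)$. It follows that
$$
(p_1f_n)(\textbf{x})=M\,\mathbb{E}\sin(\ell_n(\textbf{x})+Z)=Me^{-1/2}\sin(\ell_n(\textbf{x})),\qquad Z\sim N(0,1).
$$
Differentiating at $\textbf{x}=\textbf{0}$ gives $D_{x_i}(p_1f_n)(\textbf{0})=Me^{-1/2}c_i/a_i$. Therefore
$$
\sum_{i=1}^\infty a_i^p|D_{x_i}(p_1f_n)(\textbf{0})|^p=M^pe^{-p/2}\sum_{i=1}^n n^{-p/2}=M^pe^{-p/2}n^{1-p/2}\longrightarrow\infty\quad(n\to\infty),
$$
because $p<2$. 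So no constant depending only on $p$ and $\sup|f|$ can work, and the inequality fails for $p\in(0,2)$.

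The only step needing some care is the Gaussian computation of $p_1f_n$, i.e.\ identifying the law of $\ell_n$ under $p_1$. This is immediate from Lemma \ref{20250903lem1} and the product structure of $P$ over finitely many coordinates, so I do not expect a real obstacle there. The same example also shows that the exponent $2$ in \eqref{20250903for2} cannot be lowered.
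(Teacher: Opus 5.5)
Your proof is correct and follows the paper's route. Sufficiency comes from \eqref{20250903for2} with $k=1$; take, say, $C=M^p+1$ so the constant stays positive even when $f\equiv 0$. Necessity uses bounded cylinder functions of $\sum_{i\le n}x_i/a_i$, whose weighted gradient is spread evenly over $n$ coordinates.

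Your execution is cleaner than the paper's. Choosing $M\sin(\ell_n)$, which $p_1$ simply multiplies by $e^{-1/2}$, and evaluating at $\textbf{0}$ replaces the paper's $\mathrm{sgn}$ test function, its orthogonal change of variables, and its Law-of-Large-Numbers contradiction. That last step is not needed anyway: the paper's own formula at $\textbf{x}=\textbf{0}$ already gives $\frac{2^p}{(2\pi)^{p/2}}n^{1-p/2}\to\infty$.
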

\begin{proof}
By \eqref{20250903for2}, it is straightforward that \eqref{20251219for1} holds for $p\in[2,+\infty)$.
A direct computation yields
$$
D_{x_{i}}\left( p_{1}f \right) \left( \textbf{x} \right) =\int_{\ell^{2}} \frac{y_{i}}{a_{i}^{2}} f\left( \textbf{x}+\textbf{y} \right) P\left( \mathrm{d} \textbf{y} \right),\qquad \forall\,\textbf{x},\,\textbf{y}=(y_i)_{i\in\mathbb{N}}\in \ell^{2},\,i\in\mathbb{N}.
$$
For \( p\in (0,2) \), define
$$
f_{n}\left( \textbf{x} \right) \triangleq\mathrm{sgn} \left( \sum\limits_{i=1}^{n} \frac{x_{i}}{a_{i}} \right),\quad \forall\,n\in\mathbb{N},\,\textbf{x}=(x_i)_{i\in\mathbb{N}}\in\ell^2,
$$
where sgn$(x)\triangleq 1$ for $x>0$, sgn$(x)\triangleq -1$ for $x<0$, and sgn$(0)\triangleq 0$. Then we have
 $$
 \begin{aligned}
			&\sum\limits_{i=1}^{\infty} a_{i}^{p}\left| D_{x_{i}}\left( p_{1}f_{n} \right) \left( \textbf{x} \right) \right|^{p} =\sum\limits_{i=1}^{\infty} \left| \int_{\ell^{2}} \frac{y_{i}}{a_{i}} \mathrm{sgn} \left( \sum\limits_{j=1}^{n} \frac{x_{j}+y_{j}}{a_{j}} \right) P\left( \mathrm{d} y \right) \right|^{p}\\
 &\  =\sum\limits_{i=1}^{n} \left| \int_{\mathbb{R}^{n}} \frac{y_{i}}{a_{i}} \mathrm{sgn} \left( \sum\limits_{j=1}^{n} \frac{x_{j}+y_{j}}{a_{j}} \right) \frac{1}{\left( \sqrt{2\pi} \right)^{n} \prod\limits_{j=1}^{n} a_{j}} e^{-\sum\limits_{j=1}^{n} \frac{y_{j}^{2}}{2a_{j}^{2}}}\mathrm{d} y_{1}\mathrm{d} y_{2}\cdots \mathrm{d} y_{n} \right|^{p}\\
  &\  =\sum\limits_{i=1}^{n} \left| \int_{\mathbb{R}^{n}} y_{i}\mathrm{sgn} \left( \sum\limits_{j=1}^{n} \frac{x_{j}}{a_{j}} +\sum\limits_{j=1}^{n} y_{j} \right) \frac{1}{\left( \sqrt{2\pi} \right)^{n}} e^{-\sum\limits_{j=1}^{n} \frac{y_{j}^{2}}{2}}\mathrm{d} y_{1}\mathrm{d} y_{2}\cdots \mathrm{d} y_{n} \right|^{p}\\
   &\  =\sum\limits_{i=1}^{n} \left| \frac{1}{n} \int_{\mathbb{R}^{n}} \left( \sum\limits_{j=1}^{n} y_{j} \right) \mathrm{sgn} \left( \sum\limits_{j=1}^{n} \frac{x_{j}}{a_{j}} +\sum\limits_{j=1}^{n} y_{j} \right) \frac{1}{\left( \sqrt{2\pi} \right)^{n}} e^{-\sum\limits_{j=1}^{n} \frac{y_{j}^{2}}{2}}\mathrm{d} y_{1}\mathrm{d} y_{2}\cdots \mathrm{d} y_{n} \right|^{p},\end{aligned}
$$
where the last equality follows from symmetry of the integrand. Take an $n\times n$ orthogonal matrix $T$ whose first row consists entirely of $\frac{1}{\sqrt{n}}$. Applying the change of variables
$$
\begin{pmatrix}z_{1}\\ z_{2}\\ \vdots\\ z_{n}\end{pmatrix} =T\begin{pmatrix}y_{1}\\ y_{2}\\ \vdots\\ y_{n}\end{pmatrix},
$$
we obtain
$$
 \begin{aligned}
			&\sum\limits_{i=1}^{n} \left| \frac{1}{n} \int_{\mathbb{R}^{n}} \left( \sum\limits_{j=1}^{n} y_{j} \right) \mathrm{sgn} \left( \sum\limits_{j=1}^{n} \frac{x_{j}}{a_{j}} +\sum\limits_{j=1}^{n} y_{j} \right) \frac{1}{\left( \sqrt{2\pi} \right)^{n}} e^{-\sum\limits_{j=1}^{n} \frac{y_{j}^{2}}{2}}\mathrm{d} y_{1}\mathrm{d} y_{2}\cdots \mathrm{d} y_{n} \right|^{p}\\
    &\  =\sum\limits_{i=1}^{n} \left| \frac{1}{n} \int_{\mathbb{R}^{n}} \left( \sqrt{n} z_{1} \right) \mathrm{sgn} \left( \sum\limits_{j=1}^{n} \frac{x_{j}}{a_{j}} +\sqrt{n} z_{1} \right) \frac{1}{\left( \sqrt{2\pi} \right)^{n}} e^{-\sum\limits_{j=1}^{n} \frac{z_{j}^{2}}{2}}\mathrm{d} z_{1}\mathrm{d} z_{2}\cdots \mathrm{d} z_{n} \right|^{p}\\
&\  =\sum\limits_{i=1}^{n} \left| \frac{1}{\sqrt{n}} \int_{\mathbb{R}} z_{1}\mathrm{sgn} \left( \sum\limits_{j=1}^{n} \frac{x_{j}}{a_{j}} +\sqrt{n} z_{1} \right) \frac{1}{\sqrt{2\pi}} e^{-\frac{z_{1}^{2}}{2}}\mathrm{d} z_{1} \right|^{p}\\
&\  =\frac{1}{\left( 2\pi \right)^{\frac{p}{2}}n^{\frac{p}{2} -1}} \left| \int_{\mathbb{R}} z_{1}\mathrm{sgn} \left( \sum\limits_{j=1}^{n} \frac{x_{j}}{a_{j}} +\sqrt{n} z_{1} \right) e^{-\frac{z_{1}^{2}}{2}}\mathrm{d} z_{1} \right|^{p}\\
&\  =\frac{2^{p}}{\left( 2\pi \right)^{\frac{p}{2}}n^{\frac{p}{2} -1}}  e^{-\frac{p}{2n} \left( \sum\limits_{j=1}^{n} \frac{x_{j}}{a_{j}} \right)^{2}}.\end{aligned}
$$		
Thus we arrive at
\begin{eqnarray}
			\sum\limits_{i=1}^{\infty} a_{i}^{p}\left| D_{x_{i}}\left( p_{1}f_{n} \right) \left( \frac{\textbf{x}}{\sqrt{n}} \right) \right|^{p} =\frac{2^{p}}{\left( 2\pi \right)^{\frac{p}{2}}} \frac{1}{n^{\frac{p}{2} -1}} e^{-\frac{p}{2} \left( \frac{1}{n} \sum\limits_{j=1}^{n} \frac{x_{j}}{a_{j}} \right)^{2}},\quad\forall \, \textbf{x}=(x_i)_{i\in\mathbb{N}}\in \ell^{2},\,n\in \mathbb{N} .\label{13}
\end{eqnarray}
If \eqref{20251219for1} were true,
combining it with \eqref{13} would imply
\begin{eqnarray}\label{20251219for2}
\lim_{n\rightarrow \infty} \left( \frac{1}{n} \sum\limits_{j=1}^{n} \frac{x_{j}}{a_{j}} \right) =+\infty ,\qquad\forall \, \textbf{x}=(x_i)_{i\in\mathbb{N}}\in \ell^{2}.
\end{eqnarray}
Observe that $\left\{\frac{x_{i}}{a_{i}}\right\}_{i=1}^{\infty}$ is a sequence of independent and identically distributed random variables with respect to the measure $P$. By the Law of Large Numbers (see \cite[Theorem 2.4.1, p. 78]{Durrett}), we have
$$
\lim_{n\rightarrow \infty} \left( \frac{1}{n} \sum\limits_{j=1}^{n} \frac{x_{j}}{a_{j}} \right) =0,\qquad P\text{-a.e.,}
$$
which contradicts \eqref{20251219for2}. Hence, \eqref{20251219for1} cannot hold for $p\in(0,2)$, completing the proof of Theorem \ref{gross moguang}.
\end{proof}
Furthermore, a compactly truncated sequence has already been constructed in \cite{WYZ}. However, for $p\in[1,2)$, Theorem \ref{gross moguang} shows that the sequence constructed in \cite{WYZ} is not applicable in the compactly truncation procedure in this paper (e.g., the estimate in the following \eqref{20260427for1}). Therefore, we need to construct a variant of the compactly truncated sequence as follows.

\begin{proposition}\label{230215Th1}
There exists a sequence of functions $\{X_n\}_{n=1}^{\infty}\subset   C_{\mathcal{F}}^{\infty}\left( \ell^{2} \right)$ such that: 
	\begin{itemize}
		\item[$\mathrm{1}$.]    the set $X_n^{-1}(\mathbb{R}\setminus \{0\})$ is contained in a compact subset of $\ell^2$ for each $n\in\mathbb{N}$;
		\item[$\mathrm{2}$.] for each $k\in\mathbb{N}$, we have
\begin{eqnarray}
\sup_{\textbf{x} \in \ell^{2} ,n\in \mathbb{N}} \sum_{i_{1},i_{2},\ldots ,i_{k}=1}^{\infty} a_{i_{1}}a_{i_{2}}\cdots a_{i_{k}}\left| \frac{\partial^{k} X_{n}}{\partial x_{i_{1}}\partial x_{i_{2}}\cdots \partial x_{i_{k}}} (\textbf{x} ) \right|^{2} <\infty,\label{20250724for2}
\end{eqnarray}
and
\begin{eqnarray*}
\sup_{\textbf{x} \in \ell^{2} ,n\in \mathbb{N}} \sum\limits_{i_{1},i_{2},\ldots ,i_{k}=1}^{\infty} a_{i_{1}}^{p}a_{i_{2}}^{p}\cdots a_{i_{k}}^{p}\left| \frac{\partial^{k} X_{n}}{\partial x_{i_{1}}\partial x_{i_{2}}\cdots \partial x_{i_{k}}} (\textbf{x} )\right|^{p}  <\infty,\quad\forall\,p\in[1,+\infty);
\end{eqnarray*}
		\item[$\mathrm{3}$.]for each $k\in\mathbb{N}$, we have $\lim\limits_{n\to\infty}X_n=1$ and $\lim\limits_{n\rightarrow \infty} \sum\limits_{i_{1},i_{2},\ldots ,i_{k}=1}^{\infty} a_{i_{1}}^{p}a_{i_{2}}^{p}\cdots a_{i_{k}}^{p}\left| \frac{\partial^{k} X_{n}}{\partial x_{i_{1}}\partial x_{i_{2}}\cdots \partial x_{i_{k}}}  \right|^{p} =0$ holds almost everywhere on $\ell^2$ with respect to $P$.
	\end{itemize}
\end{proposition}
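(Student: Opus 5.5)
We construct $X_n$ explicitly as a composition of a one-dimensional cutoff with a weighted quadratic ``norm'' that has compact sublevel sets. Fix a second positive sequence $\{b_i\}_{i=1}^{\infty}$ with $b_i\to\infty$ slowly, chosen so that
$$
\sum_{i=1}^\infty b_i a_i^2<\infty,\qquad \sum_{i=1}^\infty a_i^{2} b_i^{2} a_i^{2}<\infty .
$$
One admissible choice is $b_i=a_i^{-1/2}$, since $\sum a_i<\infty$ implies $\sum a_i^{3/2}<\infty$. Set
$$
\rho(\textbf{x})\triangleq\sum_{i=1}^\infty b_i x_i^2\in[0,+\infty].
$$
Then $\{\rho\le R\}$ is compact in $\ell^2$, being closed with uniformly small tails because $b_i\to\infty$. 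Moreover $\rho<\infty$ $P$-a.e., since $\int\rho\,\mathrm{d}P=\sum b_ia_i^2<\infty$. Choose $\chi\in C^\infty(\mathbb{R};[0,1])$ with $\chi=1$ on $(-\infty,1]$ and $\chi=0$ on $[2,\infty)$, and define
$$
X_n(\textbf{x})\triangleq\chi\bigl(\rho(\textbf{x})/n\bigr),
$$
with the convention $X_n=0$ where $\rho=\infty$.

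\textbf{Items 1 and 3.} Item 1 holds because $X_n^{-1}(\mathbb{R}\setminus\{0\})\subset\{\rho\le 2n\}$, which is compact. For $\mathcal F$-continuity and Borel measurability, observe that along finite-dimensional slices $\rho$ is either identically $+\infty$ or a finite smooth quadratic, so every slice function is smooth. In addition, $\rho$ is a Borel function, being a supremum of continuous partial sums. The limit $X_n\to1$ holds wherever $\rho<\infty$, hence $P$-a.e.

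\textbf{Item 2: the derivative bounds.} Only $\partial_i\rho=2b_ix_i$ and $\partial_i^2\rho=2b_i$ are nonzero, so the Fa\`a di Bruno formula (as in Lemma \ref{fuheguji}) expresses $\partial^k X_n/\partial x_{i_1}\cdots\partial x_{i_k}$ as a finite combination of terms
$$
n^{-j}\chi^{(j)}(\rho/n)\prod(2b_{i}x_{i})\prod(2b_i\delta),
$$
where the Kronecker deltas pair repeated indices. Summing $a_{i_1}^p\cdots a_{i_k}^p|\cdot|^p$ over the indices, the sum factorizes into products of two kinds of factors:
$$
A_p\triangleq n^{-p}\sum_i a_i^p b_i^p|x_i|^p,\qquad B_p\triangleq n^{-p}\sum_i a_i^{2p}b_i^p.
$$
Everything lives on $\{\rho\le 2n\}$, where $|x_i|\le (2n/b_i)^{1/2}$ and $\sum b_ix_i^2\le 2n$. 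For $p\ge2$, Hölder gives
$$
\sum_i a_i^pb_i^p|x_i|^p\le \Bigl(\sup_i a_i^2 b_i\Bigr)^{p/2}\Bigl(\sum_i b_i x_i^2\Bigr)^{p/2}\le C n^{p/2},
$$
so $A_p\le Cn^{-p/2}$. For $1\le p<2$, Hölder with exponents $2/p$ and $2/(2-p)$ gives
$$
\sum_i a_i^pb_i^p|x_i|^p\le \Bigl(\sum_i b_ix_i^2\Bigr)^{p/2}\Bigl(\sum_i a_i^{2p/(2-p)}b_i^{p/(2-p)}\Bigr)^{(2-p)/2}.
$$
The last factor is finite as long as $\sum_i a_i^{2p/(2-p)}b_i^{p/(2-p)}<\infty$. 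With $b_i=a_i^{-1/2}$ this requires $\sum a_i^{3p/(2(2-p))}<\infty$, which holds because the exponent $3p/(2(2-p))$ is at least $1$ when $p\ge1$. This again gives $A_p\le Cn^{-p/2}$. The factor $B_p$ is at most $n^{-p}\sum a_i^{3p/2}$, which is finite and tends to $0$. Hence all the sums in \eqref{20250724for2}, including the $p=2$ sum with weights $a_i$ rather than $a_i^2$, are bounded uniformly in $n$ and $\textbf{x}$ (for the $p=2$, $a_i$-weighted case, use $\sum a_i b_i x_i^2\le\sup(a_i)\rho$).

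\textbf{Convergence to $0$ and the main obstacle.} The same bounds give $O(n^{-1/2})$ decay pointwise wherever $\rho<\infty$; in fact every term carries a factor $n^{-j}$ with $j\ge1$ multiplied by quantities bounded on $\{\rho<\infty\}$. This yields the a.e.\ convergence to $0$ in item 3. The main obstacle is exactly the range $p\in[1,2)$, where the Gross-convolution bound of Theorem \ref{gross moguang} fails. There the uniform bound must come from the Hölder step above. This is why the weights $b_i$ must be chosen carefully: they must grow enough to make $\{\rho\le R\}$ compact, yet slowly enough that $a_i^{2p/(2-p)}b_i^{p/(2-p)}$ remains summable for every $p\in[1,2)$. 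The choice $b_i=a_i^{-1/2}$ reconciles the two requirements, and I would verify that combinatorial bookkeeping of the Fa\`a di Bruno terms, counted as in Lemma \ref{zuhejishu}, introduces only constants depending on $k$ and $p$.
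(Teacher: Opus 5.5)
Your construction is correct, and it takes a genuinely different route from the paper's.

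The paper starts from the compact ellipsoids $K_n=\{\sum_i x_i^2/c_i\le n^2\}$. It smooths their indicators by convolving against the auxiliary Gaussian measure $P'$ (variances $a_i$), setting $g_n(\mathbf{x})=P'(K_n-\mathbf{x})$ and $X_n=H\circ g_{n+N_1}$. The derivative bounds come from Lee's Hilbert--Schmidt estimate $\sum a_{i_1}\cdots a_{i_k}|\partial^k g_n|^2\le k!$. Cauchy--Schwarz with $\sum_i a_i<\infty$ upgrades this to an $\ell^1$ bound on $a_{i_1}\cdots a_{i_k}|\partial^k g_n|$, and hence to every $\ell^p$, $p\ge1$, at once.

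You replace the convolution by an explicit quadratic gauge and compute the Fa\`a di Bruno terms by hand. The cost is the H\"older bookkeeping needed for $p\in[1,2)$. The gain is that you avoid the convolution, Lee's theorem and the second measure entirely. You also get more than required: uniform decay $O(n^{-p/2})$ of the weighted derivative sums, and in fact all derivatives vanish at $\mathbf{x}$ once $n\ge\rho(\mathbf{x})$.

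There is one slip. In the $a_i$-weighted $p=2$ sum, the singleton factor is $n^{-2}\sum_i a_ib_i^2x_i^2\le n^{-2}\sup_i(a_ib_i)\,\rho$, not $\sup_i a_i\,\rho$. The conclusion is unchanged.

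What the paper's route buys is information beyond the stated proposition: $X_n\equiv1$ on $K_n$ with $P'(\bigcup_nK_n)=1$. This is exactly what is used right afterwards to choose $X_{n_0}$ with $P'(\mathrm{supp}\,X_{n_0}\cap B_{1/4})>0$. With your choice $b_i=a_i^{-1/2}$ one has $\int\rho\,\mathrm{d}P'=\sum_ia_i^{1/2}$, which may diverge (e.g.\ $a_i=i^{-2}$). In that case $\rho=+\infty$ $P'$-a.s.\ and your $X_n$ vanish $P'$-a.e.

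This does not affect the proposition as stated, since only $P$ appears there. It is also easily cured: take any $b_i\to\infty$ with $\sum_ia_ib_i<\infty$, e.g.\ the paper's $b_i=1/c_i$. Then $a_ib_i\to0$, and both $a_i^{2p}b_i^p=(a_i^2b_i)^p$ and $a_i^{2p/(2-p)}b_i^{p/(2-p)}=(a_i^2b_i)^{p/(2-p)}$ are summable. All your estimates then go through unchanged, with $\rho<\infty$ both $P$- and $P'$-a.s.
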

\begin{proof}
Recall \eqref{20260424for1}, hence there exists a sequence of positive numbers $\{c_i\}_{i=1}^{\infty}$ such that
	$$
	\lim_{i\rightarrow \infty} c_{i}=0,\qquad\sum_{i=1}^{\infty} \frac{a_{i}}{c_{i}} <\infty,
	$$
	and we define
	$$
	K_{n}\triangleq \left\{ x\in \ell^{2} :\sum_{i=1}^{\infty} \frac{x_{i}^{2}}{c_{i}} \leqslant n^{2} \right\},\qquad\forall\,n\in\mathbb{N},\qquad K\triangleq \bigcup_{n=1}^{\infty} K_{n}.
	$$
	Following the proof in \cite{WYZ}, one can show that $P\left( K \right) =\lim\limits_{n\rightarrow \infty} P\left( K_{n} \right) =1$, and each $K_{n}$ is a compact subset of $\ell^{2}$. Analogous to the arguments in Section 2.2 of \cite[pp. 523-525]{YZ}, by restricting the product measure
	$$
	\prod_{i=1}^{\infty} \mathcal{N}_{\sqrt{a_i}}
	$$
	to $\ell^2$, we obtain a probability measure $P'$ on $\ell^2$. Note that
	$$
\sum_{i=1}^{\infty} \int_{\ell^{2}} \frac{x_{i}^{2}}{c_{i}} P^{\prime}\left( \textbf{x} \right) =\sum_{i=1}^{\infty} \frac{1}{\sqrt{2\pi a_{i}} c_{i}} \int_{\mathbb{R}} x_{i}^{2}e^{-\frac{x_{i}^{2}}{2a_{i}}}\mathrm{d} x_{i}=\sum_{i=1}^{\infty} \frac{a_{i}}{c_{i}} <\infty,
	$$
	which implies that
\begin{eqnarray}\label{20260418for1}
	P'\left( K \right) =\lim\limits_{n\rightarrow \infty} P'\left( K_{n} \right) =1.
\end{eqnarray}
	For a bounded real-valued Borel measurable function $f$ on $\ell^{2}$, define
	$$
	F\left( \textbf{x} \right) \triangleq \int_{\ell^{2}} f\left( \textbf{x}+\textbf{y} \right) \mathrm{d} P'\left( \textbf{y} \right),\qquad\forall\,\textbf{x}\in\ell^2.
	$$
By \cite[(1), p. 70]{Lee}, it holds that
	$$
\sum_{i_{1},i_{2},\ldots ,i_{k}=1}^{\infty} a_{i_{1}}a_{i_{2}}\cdots a_{i_{k}}\left| \frac{\partial^{k} F}{\partial x_{i_{1}}\partial x_{i_{2}}\cdots \partial x_{i_{k}}} (\textbf{x} ) \right|^{2} \leq (k!)\cdot \int_{\ell^{2}} f(\textbf{x} +\textbf{y} )^{2}P^{\prime}(\textbf{y} ),\qquad \forall \, \textbf{x} \in \ell^{2} ,\, k\in \mathbb{N},
	$$
where the left quantity in above inequality is square of the Hilbert-Schmit norm of $D^kF$ as in \cite[(1), p. 70]{Lee}. This implies that
	\begin{eqnarray}
		\begin{aligned}
			&\sum_{i_{1},i_{2},\cdots ,i_{k}=1}^{\infty} a_{i_{1}}a_{i_{2}}\cdots a_{i_{k}}\left| \frac{\partial^{k} F}{\partial x_{i_{1}}\partial x_{i_{2}}\cdots \partial x_{i_{k}}}(\textbf{x} ) \right|\\ &\  \  \  \leqslant \sqrt{\sum_{i_{1},i_{2},\cdots ,i_{k}=1}^{\infty} a_{i_{1}}a_{i_{2}}\cdots a_{i_{k}}\sum_{i_{1},i_{2},\cdots ,i_{k}=1}^{\infty} a_{i_{1}}a_{i_{2}}\cdots a_{i_{k}}\left| \frac{\partial^{k} F}{\partial x_{i_{1}}\partial x_{i_{2}}\cdots \partial x_{i_{k}}}(\textbf{x} ) \right|^{2}}\\ &\  \  \  \leqslant \sqrt{\left( \sum_{i=1}^{\infty} a_{i} \right)^{k} \cdot (k!)\cdot \int_{B} f(\textbf{x} +\textbf{y} )^{2}P'(\textbf{y} )}  .\end{aligned}\label{1f31f3}
	\end{eqnarray}
Now define
	$$
	g_{n}\left( \textbf{x} \right) \triangleq \int_{\ell^{2}} \chi_{K_{n}} \left( \textbf{x}+\textbf{y} \right) P'\left(  \textbf{y} \right)=P'\left( K_{n}-\textbf{x} \right),\qquad\forall \,\,n\in \mathbb{N} ,\,\textbf{x} \in \ell^{2}.
	$$
	Note that  \eqref{1f31f3} implies
	$$
	\sum_{i_{1},i_{2},\cdots ,i_{k}=1}^{\infty} \frac{a_{i_{1}}a_{i_{2}}\cdots a_{i_{k}}\left| \frac{\partial^{k} g_{n}}{\partial x_{i_{1}}\partial x_{i_{2}}\cdots \partial x_{i_{k}}}(\textbf{x}) \right|}{\sqrt{\left( \sum\limits_{i=1}^{\infty} a_{i} \right)^{k} \cdot (k!)}  } \leqslant 1,\qquad\forall\,\textbf{x}\in \ell^{2},\,k\in \nn,
	$$
	and for $p\in[1,+\infty)$,
	$$
	\sum_{i_{1},i_{2},\cdots ,i_{k}=1}^{\infty} \frac{a_{i_{1}}^{p}a_{i_{2}}^{p}\cdots a_{i_{k}}^{p}\left| \frac{\partial^{k} g_{n}}{\partial x_{i_{1}}\partial x_{i_{2}}\cdots \partial x_{i_{k}}}(\textbf{x} ) \right|^{p}}{\left( \sqrt{\left( \sum\limits_{i=1}^{\infty} a_{i} \right)^{k} \cdot (k!)}  \right)^{p}} \leqslant \sum_{i_{1},i_{2},\cdots ,i_{k}=1}^{\infty} \frac{a_{i_{1}}a_{i_{2}}\cdots a_{i_{k}}\left| \frac{\partial^{k} g_{n}}{\partial x_{i_{1}}\partial x_{i_{2}}\cdots \partial x_{i_{k}}}(\textbf{x} ) \right|}{ \sqrt{\left( \sum\limits_{i=1}^{\infty} a_{i} \right)^{k} \cdot (k!)}   } \leqslant 1,
	$$
	hence we obtain
	$$
	\sum_{i_{1},i_{2},\cdots ,i_{k}=1}^{\infty} a_{i_{1}}^{p}a_{i_{2}}^{p}\cdots a_{i_{k}}^{p}\left| \frac{\partial^{k} g_{n}}{\partial x_{i_{1}}\partial x_{i_{2}}\cdots \partial x_{i_{k}}}(\textbf{x} ) \right|^{p} \leqslant \left(\left( \sum\limits_{i=1}^{\infty} a_{i} \right)^{k} \cdot (k!) \right)^{\frac{p}{2}}.
	$$
	Take $N_{1}\in \mathbb{N}$ such that $P'\left( K_{N_{1}} \right) >\frac{4}{5}$. Then for any $n>N_{1},\textbf{x} \in K_{n-N_{1}}$, we have $K_{N_{1}}\subset K_{n}-\textbf{x}$ and $$g_{n}\left( \textbf{x} \right) =P'\left( K_{n}-\textbf{x} \right) \geqslant P'\left( K_{N_{1}} \right) >\frac{4}{5} .$$
	If $\textbf{x} \notin K_{n+N_{1}}$, then $K_{n}-\textbf{x} \subset \ell^{2} \setminus K_{N_{1}}$ and
	$$g_{n}(\textbf{x} ) =P'(K_{n}-\textbf{x} )\leq 1-P'(K_{N_{1}})<\frac{1}{5} .$$
	Choose a function $H\in C^{\infty}\left( \mathbb{R};[0,1]  \right)$ satisfying
	$H\left( x \right) =0$ if $\left| x \right| <\frac{1}{4}$ and $H\left( x \right) =1$ if $\left| x \right| >\frac{3}{4}.$
	Then set
	$$
	X_{n}\triangleq H\circ g_{n+N_{1}},\qquad\forall\,n\in\mathbb{N}.
	$$
	It is easy to see that $\left\{ X_{n} \right\}_{n=1}^{\infty} \subset C_{\mathcal{F}}^{\infty}(\ell^{2})$ and for every $n,i_{1},i_{2},\ldots ,i_{k}\in \mathbb{N}$,
	$$
	X_{n}\left( \textbf{x} \right) =1,\qquad \frac{\partial^{k} X_{n}}{\partial x_{i_{1}}\partial x_{i_{2}}\cdots \partial x_{i_{k}}}\left( \textbf{x} \right) =0,\qquad\forall \,\textbf{x} \in K_{n},
	$$
	and
	$$
	X_n^{-1}(\mathbb{R}\setminus \{0\})\subset K_{n+2N_{1}}.
	$$
	Consequently,
	$$
	\lim\limits_{n\to\infty}X_n(\textbf{x})=1,\qquad\lim\limits_{n\rightarrow \infty} \sum\limits_{i_{1},i_{2},\cdots ,i_{k}=1}^{\infty} a_{i_{1}}^{p}a_{i_{2}}^{p}\cdots a_{i_{k}}^{p}\left| \frac{\partial^{k} X_{n}}{\partial x_{i_{1}}\partial x_{i_{2}}\cdots \partial x_{i_{k}}}(\textbf{x}) \right|^{p} =0,\qquad\forall\,\textbf{x}\in K.
	$$
	This completes the proof of Proposition \ref{230215Th1}.
\end{proof}
\section{Exhaustion
Functions}\label{sec2}
In this section, we will prove that there exists a ``smooth" exhaustion function on any non-empty open subset of $\ell^2$. The proof combines the compactly truncated sequence obtained in Proposition \ref{230215Th1} with the modified Gross convolution developed in \cite{WZ}.

As in \cite[Definition 2.2, p. 13]{WYZ1}, any real-valued function $\eta$ on $\Omega$ is called an \textbf{exhaustion function} on $\Omega$, if the following holds
\begin{equation}\label{gx2401132}
	\Omega_{t}\triangleq \{\textbf{z}\in \Omega:\;\eta (\textbf{z})  \le t\} \stackrel{\circ}{\subset} \Omega,\quad \forall\;t\in \mathbb{R}.
\end{equation}
By the argument as that in \cite[p. 10]{WZ}, one can see that if $\eta$ is a continuous exhaustion function on $\Omega$, then the interior of $\Omega_{t}$ is given by
\begin{equation}\label{gx2401133}\Omega^{o}_{t}= \big\{\textbf{z}\in \Omega:\; \eta \left( \textbf{z}\right) < t\big\}.\end{equation}
In what follows, for any $\tau\geqslant 0$, we will frequently use the following function:
\begin{equation}\label{shyhsh236113}
	\mathcal{I}_{\tau} \left( t\right) =\begin{cases}
		1, & t\in (-\infty ,\tau], \\
		\left( e^{\frac{1}{t-\tau-1} }-1\right) e^{-\frac{e^{\frac{1}{t-\tau-1} }}{t-\tau} }+1, & t\in \left( \tau,\tau+1\right), \\
		0, & t\in [\tau+1,+\infty ).
	\end{cases}
\end{equation}

\begin{lemma}\label{pro240130}
	The following holds:
	\begin{equation}\label{shyhshdxzh236114}\nonumber
		\mathcal{I}_{\tau} \in C^{\infty }\left( \mathbb{R} \right),\quad 0\leqslant \mathcal{I}_{\tau} \leqslant 1,\quad -K_0\leqslant \mathcal{I}^{\prime }_{\tau} \leqslant 0,\quad \forall\;\tau\geqslant 0,
	\end{equation}
	where $K_0>0$ is a constant independent of $\tau$.
\end{lemma}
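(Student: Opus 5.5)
By translation invariance in $\tau$, we have $\mathcal{I}_{\tau}(t)=\mathcal{I}_0(t-\tau)$ for all $t\in\mathbb{R}$ and $\tau\ge 0$. Hence it suffices to treat $\tau=0$, and we may take $K_0\triangleq\sup_{\mathbb{R}}|\mathcal{I}_0'|$, which is then automatically independent of $\tau$. The plan therefore reduces to four facts about the single function $\mathcal{I}_0$:
\begin{itemize}
\item[(i)] $\mathcal{I}_0$ is $C^\infty$;
\item[(ii)] $0\le \mathcal{I}_0\le 1$;
\item[(iii)] $\mathcal{I}_0'\le 0$;
\item[(iv)] $\mathcal{I}_0'$ is bounded.
\end{itemize}
Fact (iv) follows from (i) once we know that $\mathcal{I}_0'$ vanishes outside $[0,1]$ and is continuous on the compact interval $[0,1]$.

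On $(0,1)$, set $s=e^{1/(t-1)}\in(0,e^{-1})$. Then $\mathcal{I}_0(t)=1-(1-s)e^{-s/t}$. Both $1-s$ and $e^{-s/t}$ lie in $(0,1)$, so $\mathcal{I}_0(t)\in(0,1)$, which gives (ii). For (iii), write $\mathcal{I}_0=1-\phi$ with $\phi(t)=(1-s(t))e^{-s(t)/t}$, and show that $\phi$ is increasing. We have $s'(t)=-s/(t-1)^2<0$, so $1-s$ is increasing. Moreover $s/t$ is decreasing on $(0,1)$, since it is the product of the positive decreasing function $s$ and the positive decreasing function $1/t$. Hence $e^{-s/t}$ is increasing. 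The product of two positive increasing functions is increasing, so $\phi'\ge 0$ and $\mathcal{I}_0'\le0$. This sign check is routine and can be confirmed explicitly from
$$\phi'=e^{-s/t}\Big[-s' + (1-s)\Big(\frac{s}{t^2}-\frac{s'}{t}\Big)\Big],$$
where each term in the bracket is nonnegative.

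The main step is (i), smoothness at the two junction points $t=0$ and $t=1$. Inside $(0,1)$ the function is clearly smooth.

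At $t=1^-$: we have $s=e^{1/(t-1)}\to0$ and all its derivatives vanish to infinite order, as with the standard bump function. Then $\mathcal{I}_0=1-(1-s)e^{-s/t}$ is a smooth function $G(t,s)$ composed with $s(t)$, where $G(t,0)\equiv 0$ near $t=1$. By the chain rule (Faà di Bruno), every derivative of $\mathcal{I}_0$ is a finite sum of terms each containing a factor $s^{(j)}$ with $j\ge 0$, and all of these tend to $0$. This matches the zero extension on $[1,\infty)$.

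At $t=0^+$: here $s\to e^{-1}>0$ is smooth near $t=0$, and $\phi(t)=(1-s)e^{-s/t}$. The factor $e^{-s(t)/t}$, with $s$ smooth and $s(0)=e^{-1}>0$, is flat at $0^+$: each derivative is $e^{-s/t}$ times a polynomial in $1/t$ with coefficients that are smooth in $t$, and this tends to $0$. Hence $\phi^{(k)}(0^+)=0$ for all $k$, so $\mathcal{I}_0^{(k)}\to0=$ the derivatives of the constant $1$ on $(-\infty,0]$ for $k\ge1$, and $\mathcal{I}_0\to1$.

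Standard one-sided derivative arguments, via the mean value theorem applied inductively, then show that $\mathcal{I}_0\in C^\infty(\mathbb{R})$. This completes (i), and with it (iv) and the lemma.
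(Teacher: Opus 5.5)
Your proof is correct. Note that the paper gives no proof of Lemma~\ref{pro240130} and simply treats it as an elementary calculus fact, so there is no competing route to compare with; your proposal supplies the omitted verification.

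The reduction $\mathcal{I}_\tau(\cdot)=\mathcal{I}_0(\cdot-\tau)$ is exactly what makes $K_0$ independent of $\tau$. The substitution $s=e^{1/(t-1)}$, which gives $\mathcal{I}_0=1-(1-s)e^{-s/t}$ on $(0,1)$, settles the range bound and the monotonicity cleanly. Your explicit formula for $\phi'$ confirms the sign.

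Two small tidy-ups:
\begin{itemize}
\item You should state that $K_0>0$. This is immediate, since $\mathcal{I}_0$ falls from $1$ to $0$ and so $\mathcal{I}_0'\not\equiv 0$.
\item At $t=1$, your sentence that every term of the Fa\`a di Bruno expansion contains a factor $s^{(j)}$ is not literally accurate. The terms $\partial_t^aG(t,s(t))$ contain no such factor; they instead tend to $\partial_t^aG(t,0)=0$. You can skip this bookkeeping entirely. Let $\tilde s(t)=e^{1/(t-1)}$ for $t<1$ and $\tilde s(t)=0$ for $t\ge 1$; this function lies in $C^\infty(\mathbb{R})$. The function $G(t,s)=1-(1-s)e^{-s/t}$ is smooth on $(0,\infty)\times\mathbb{R}$ and satisfies $G(t,0)=0$. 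Hence $\mathcal{I}_0(t)=G(t,\tilde s(t))$ on $(\tfrac12,\infty)$, and this is smooth as a composition of smooth maps.
\end{itemize}

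Your treatment at $t=0^+$ is sound. It rests on the flatness estimate $e^{-s/t}\le e^{-e^{-2}/t}$ on $(0,\tfrac12]$ together with the mean-value gluing argument.
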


Meanwhile, we choose a function:
\begin{equation}\label{lsh23611911}
	\Theta \left( \cdot\right)  \in C^{\infty }\left( \mathbb{R};[0,1] \right)\hbox{ satisfying }  \Theta \left( t\right)  =1 \hbox{ for }\left| t\right|  \leqslant \frac{1}{4}\hbox{, and }\Theta \left( t\right)  =0\hbox{ for }\left| t\right|  \geqslant 1.
\end{equation}
Similar to the proof as in \cite[Lemma 2.1, p. 9]{WYZ1}, one can obtain that $P^{\prime}\left( \left\{ \textbf{x}\in \ell^{2} :\left\| \textbf{x}\right\|  <\frac{1}{4} \right\}  \right)  >0$. By Proposition \ref{230215Th1}, we can take an $X_{n_{0}}\in C_{\mathcal{F}}^{\infty}\left( \ell^{2} \right)$ such that $\mathrm{supp}\, X_{n_{0}}$ is compact and
$$0\leqslant X_{n_{0}}\leqslant 1,\quad P^{\prime}\left( \mathrm{supp}\, X_{n_{0}}\cap B_{\frac{1}{4}} \right) >0.$$
Let
\begin{equation}\label{lsh23611912}
	\vartheta \left( \textbf{x} \right) \triangleq X_{n_{0}}\left( \textbf{x} \right)\cdot \Theta \left( \left\| \textbf{x} \right\|^{2} \right),\quad \forall\,\textbf{x}\in \ell^{2}.
\end{equation}
It is easy to see that $\vartheta\in C_{\mathcal{F}}^{\infty}\left( \ell^{2} \right)$.  Note that
 $$
 \sum_{\left| \alpha \right| =k} \textbf{a}^{\alpha} \left\vert D^{\alpha}X_{n_{0}} \right\vert^{2} \leqslant \sum_{i_{1},i_{2},\ldots ,i_{k}=1}^{\infty} a_{i_{1}}a_{i_{2}}\cdots a_{i_{k}}\left| \frac{\partial^{k} X_{n_{0}}}{\partial x_{i_{1}}\partial x_{i_{2}}\cdots \partial x_{i_{k}}} (\textbf{x} ) \right|^{2},\quad\forall \,k\in \mathbb{N}_{0},
 $$
 and combining Lemma \ref{1f1f13f1f3}, Lemma \ref{chengjiguji}, Lemma \ref{fuheguji},  (2) of Proposition \ref{230215Th1} , and $\Theta \left( t\right) =0$ for every $ \left| t\right| \geqslant 1$, we obtain
\begin{eqnarray}\label{20260419for3}
 \theta_{k}\triangleq \sup_{\textbf{x} \in \ell^{2} ,k\in \mathbb{N}_{0}} \sum_{\left| \alpha \right| =k} \textbf{a}^{\alpha} \left\vert D^{\alpha}\vartheta \left( \textbf{x} \right) \right\vert^{2} <\infty,\qquad \forall\, k\in\mathbb{N},
\end{eqnarray}
 $\vartheta^{-1}(\mathbb{R}\setminus \{0\}) \subset \overline{B_1 }$ and $\vartheta^{-1}(\mathbb{R}\setminus \{0\})$ is contained in a compact set of $\ell^2.$

We adapt the same notation as that in \cite[p. 10]{WZ}, let $\mathrm{Lip}\left( \ell^{2} \right)$ denote the family of all real-valued functions $f$ on $\ell^2$ satisfying that
$\text{for any } S\overset{\circ}{\subset} \ell^{2},\,\text{there exists a constant } C\left( S \right) >0\text{ such that } \left| f\left( \textbf{x} \right) -f\left( \textbf{y} \right) \right| \leqslant C\left( S \right) \left\| \textbf{x} -\textbf{y} \right\|_{\ell^{2}} $ holds for any $\textbf{x},\textbf{y}\in S.$ Any element $f$ in $\mathrm{Lip}\left( \ell^{2} \right)$ is called a \textbf{Lipschitz function} on $\ell^2$; as usual, if the the above constant
does not depend on $S$, then $f$ is a \textbf{globally Lipschitz function} on $\ell^2$.
 \begin{proposition}\label{wahah}
 	For $\vartheta$ given in \eqref{lsh23611912},  $\varepsilon>0$, and $f\in \mathrm{Lip}(\ell^{2})\bigcap C_{\ell^{2}}^{\infty}\left( \ell^{2}\right)$, define
\begin{eqnarray}
	 	f_{\varepsilon}\left( \textbf{x} \right) \triangleq \int_{\ell^{2}} f\left( \textbf{x} -\varepsilon \textbf{y} \right) \vartheta \left( \textbf{y} \right) \mathrm{d} P^{\prime}\left( \textbf{y} \right),\quad \forall \textbf{x} \in \ell^{2}.\label{13f}
\end{eqnarray}
 	Then $f_{\varepsilon}\in C_{\mathscr{F}}^{\infty}\left( \ell^{2}, loc \right) \bigcap C_{\ell^{2}}^{\infty}\left( \ell^{2}\right) \bigcap C_{F^{\infty}}^{\infty}\left( \ell^{2} \right)$.
 \end{proposition}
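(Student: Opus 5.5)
The plan is to treat the three memberships separately. In every case the derivatives are moved off $f$ and onto the mollifier by a change of variables in the Gaussian measure $P'$ (a Cameron--Martin type shift), with $f\in C_{\ell^2}^{\infty}(\ell^2)$ used directly whenever derivatives can stay on $f$.

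The first membership, $f_\varepsilon\in C_{\ell^2}^{\infty}(\ell^2)$, needs only local control. Since $\vartheta^{-1}(\mathbb{R}\setminus\{0\})\subset \overline{B_1}$, for $\textbf{x}$ in a ball $B(\textbf{x}_0,r)$ only the values of $f$ on $B(\textbf{x}_0,r+\varepsilon)$ enter. Because $f\in C^{\infty}_{\ell^2}$, the map $\textbf{x}\mapsto f(\textbf{x}-\varepsilon\textbf{y})$ has Fréchet derivatives $D^kf(\textbf{x}-\varepsilon\textbf{y})$. We differentiate under the integral sign to get $D^kf_\varepsilon(\textbf{x})=\int D^kf(\textbf{x}-\varepsilon\textbf{y})\vartheta(\textbf{y})\,\mathrm{d}P'(\textbf{y})$. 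This needs local boundedness of $D^kf$ on the bounded set $\{\textbf{x}-\varepsilon\textbf{y}:\textbf{x}\in B(\textbf{x}_0,r),\ \textbf{y}\in\mathrm{supp}\,\vartheta\}$. That set is relatively compact, because $\vartheta$ vanishes outside a compact set $K$ and $B(\textbf{x}_0,r)$ is fixed. Continuous $D^kf$ is bounded near compact sets, which gives the bound. Continuity of $D^kf_\varepsilon$ then follows by dominated convergence, using uniform continuity near the compact set $\textbf{x}_0-\varepsilon K$.

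For the $\mathscr{F}$-estimates we cannot use derivatives of $f$, since $\sum_{|\alpha|\le m}|\textbf{a}^\alpha D^\alpha f|^p$ need not be finite. Instead we shift variables. Write $\textbf{z}=\textbf{x}-\varepsilon\textbf{y}$, so $f_\varepsilon(\textbf{x})=\int f(\textbf{z})\vartheta((\textbf{x}-\textbf{z})/\varepsilon)\,\mathrm{d}P'_{\textbf{x},\varepsilon}(\textbf{z})$. Perturbing $\textbf{x}$ by $t\textbf{e}_i$ is a shift by a Cameron--Martin direction of $P'$, whose density derivative is $y_i/a_i$. Differentiating gives
\begin{equation*}
\frac{\partial f_\varepsilon}{\partial x_i}(\textbf{x})=\frac1\varepsilon\int f(\textbf{x}-\varepsilon\textbf{y})\Big(\frac{\partial\vartheta}{\partial y_i}(\textbf{y})-\frac{y_i}{a_i}\vartheta(\textbf{y})\Big)\,\mathrm{d}P'(\textbf{y}).
\end{equation*}
Iterating, $D^\alpha f_\varepsilon(\textbf{x})=\varepsilon^{-|\alpha|}\int f(\textbf{x}-\varepsilon\textbf{y})\,(\delta'^{\alpha}\vartheta)(\textbf{y})\,\mathrm{d}P'(\textbf{y})$, where $\delta'_i=\partial_{y_i}-y_i/a_i$ is the Gaussian divergence operator for $P'$.

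On $V\stackrel{\circ}{\subset}\ell^2$, $f$ is bounded on $V+\varepsilon\overline{B_1}$ because $f\in\mathrm{Lip}(\ell^2)$. Hence
\begin{equation*}
\sum_{|\alpha|\le m}|\textbf{a}^\alpha D^\alpha f_\varepsilon|^p\le C_V\sum_{|\alpha|\le m}\varepsilon^{-p|\alpha|}\Big(\int|\textbf{a}^\alpha\delta'^\alpha\vartheta|\,\mathrm{d}P'\Big)^p.
\end{equation*}
We bound each summand using $|\cdot|_{\ell^p}\le|\cdot|_{\ell^1}$ together with \eqref{20260419for3}, and expand $\delta'^\alpha$ into products of $y^\beta/\textbf{a}'^\beta$ times $D^{\alpha-\beta}\vartheta$, where $\textbf{a}'=(\sqrt{a_i})$. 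The weight $\textbf{a}^\alpha$ against $\sqrt{\textbf{a}}^{-\beta}$ leaves $\sqrt{a}$ factors, which are summable by \eqref{20260424for1}. The Gaussian moments $\int|y_i/\sqrt{a_i}|^{k}\,\mathrm{d}P'$ are uniform in $i$. This yields finiteness and hence $C^{m,p}_{\mathscr{F}}(\ell^2,loc)$. Borel measurability and $\mathcal{F}$-continuity come from the integral representation and dominated convergence.

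For $C^{\infty}_{F^{\infty}}$ we need $\sup_E\sum_i|\partial_iD^\alpha f_\varepsilon|^2<\infty$ on $E\stackrel{\circ}{\subset}\ell^2$. Here the derivatives stay on $f$ and the Lipschitz property is used. We put $|\alpha|$ derivatives on the density as above and the last one on $f$, via $\partial_i D^\alpha f_\varepsilon=\varepsilon^{-|\alpha|}\int \partial_if(\textbf{x}-\varepsilon\textbf{y})\,\delta'^\alpha\vartheta\,\mathrm{d}P'$. By Cauchy--Schwarz, $\sum_i|\cdot|^2\le \int\|\nabla f\|^2|\delta'^\alpha\vartheta|^2\,\mathrm{d}P'$, and $\|\nabla f\|\le C(S)$ by the Lipschitz bound. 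This is finite.

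The main obstacle is rigorously justifying the Cameron--Martin integration by parts under $P'$ for a function $\vartheta$ that is only $\mathcal{F}$-smooth. I would first prove it on finite-dimensional slices via the product decomposition $P'=\mathcal{N}^k\times P'^{\widehat{1..k}}$ and classical one-dimensional Gaussian integration by parts, then use Fubini. The other difficult point is the summability of $\sum_\alpha\|\textbf{a}^\alpha\delta'^\alpha\vartheta\|_{L^1(P')}^p$ for $p<2$, which is exactly where the $\ell^1$-type bound of Proposition~\ref{230215Th1} is needed rather than an $\ell^2$ bound.
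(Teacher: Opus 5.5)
\textbf{Gap: the $\mathscr{F}$-estimate for $p\in[1,2)$.}

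Your formula $D^\alpha f_\varepsilon(\textbf{x})=\varepsilon^{-|\alpha|}\int f(\textbf{x}-\varepsilon\textbf{y})\,(\delta')^{\alpha}\vartheta(\textbf{y})\,\mathrm{d}P'(\textbf{y})$ is correct; it is the paper's \eqref{shyhsh23609e2}, with $\varepsilon^{-|\alpha|}(\delta')^\alpha\vartheta=\vartheta_{\alpha,\varepsilon}$. Your $C^\infty_{\ell^2}$ and $C^\infty_{F^\infty}$ parts are essentially fine. One correction there: $B(\textbf{x}_0,r)-\varepsilon K$ is not relatively compact in $\ell^2$, so you must take $r$ small enough that this set lies in a neighbourhood of the compact set $\textbf{x}_0-\varepsilon K$ on which $D^kf$ is bounded.

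The problem is the $\mathscr{F}$-estimate for $p\in[1,2)$, which the statement requires because $C^\infty_{\mathscr F}(\ell^2,loc)$ is an intersection over all $p\ge1$. You claim the leftover $\sqrt{a_i}$ factors are summable by \eqref{20260424for1}. That is false: $\sum_i a_i<\infty$ only makes $(\sqrt{a_i})$ square-summable.

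In fact your majorant $\sum_{|\alpha|\le m}\varepsilon^{-p|\alpha|}\big(\int|\textbf{a}^\alpha(\delta')^\alpha\vartheta|\,\mathrm{d}P'\big)^p$ is in general infinite. Take $|\alpha|=1$ with $D^\alpha=\partial/\partial x_i$, so that $\textbf{a}^\alpha(\delta')^\alpha\vartheta=a_i\partial_i\vartheta-y_i\vartheta$.
\begin{itemize}
\item The first part is harmless, since $\sum_i a_i\int|\partial_i\vartheta|\,\mathrm{d}P'\le(\sum_i a_i)^{1/2}\theta_1^{1/2}$.
\item The second part satisfies $\int|y_i\vartheta|\,\mathrm{d}P'=\sqrt{a_i}\int|y_i/\sqrt{a_i}|\,\vartheta\,\mathrm{d}P'$. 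The $y_i/\sqrt{a_i}$ are i.i.d.\ standard normal under $P'$, so Bessel's inequality for the orthogonal system $|y_i/\sqrt{a_i}|-\sqrt{2/\pi}$ gives $\int|y_i/\sqrt{a_i}|\,\vartheta\,\mathrm{d}P'\to\sqrt{2/\pi}\int\vartheta\,\mathrm{d}P'>0$.
\end{itemize}
Hence your majorant diverges whenever $\sum_i a_i^{p/2}=\infty$, for example with $a_i=i^{-2}$ and $p=1$. Your route does work for $p\ge2$, since each term is $O(\textbf{a}^{\alpha/2})$.

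Proposition~\ref{230215Th1} cannot rescue this. The divergent contribution is the term $\textbf{a}^{\alpha/2}H_\alpha(\textbf{y}/\sqrt{\textbf{a}})\vartheta$, in which $\vartheta$ is not differentiated at all; it comes from differentiating the Gaussian density. This is exactly the obstruction of Theorem~\ref{gross moguang}: once absolute values are moved inside the integral, the exponent $2$ is sharp.

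\textbf{The missing idea: keep the cancellation and use orthogonality, as the paper does.}
\begin{enumerate}
\item Expand $\vartheta_{\alpha,\varepsilon}$ in Hermite polynomials as in \eqref{20260419for2}.
\item Test the vector $\big(\textbf{a}^{\alpha/2}D^\alpha f_\varepsilon(\textbf{x})\big)_{|\alpha|\le m}$ against an arbitrary finitely supported $(c_\alpha)$.
\item Apply Cauchy--Schwarz in $L^2(P')$, with $f(\textbf{x}-\varepsilon\cdot)$ kept on one side.
\item Apply Cauchy--Schwarz in $\beta$ against $\textbf{a}^{\beta/2}D^\beta\vartheta$, which is controlled by $\theta_k$.
\item Use the $L^2(P')$-orthogonality of the $H_\gamma(\textbf{y}/\sqrt{\textbf{a}})$.
\end{enumerate}
By duality this yields the Bessel-type bound $\sum_{|\alpha|\le m}\textbf{a}^{\alpha}|D^\alpha f_\varepsilon(\textbf{x})|^2\le C(m,\varepsilon)\sup_{k\le m}\theta_k\sup_{\textbf{x}-\varepsilon K}|f|^2$, which uses only half of the weight.

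The other half is then spent by Cauchy--Schwarz: $\sum_{|\alpha|\le m}|\textbf{a}^\alpha D^\alpha f_\varepsilon|\le\big(\sum_{|\alpha|\le m}\textbf{a}^\alpha\big)^{1/2}\big(\sum_{|\alpha|\le m}\textbf{a}^\alpha|D^\alpha f_\varepsilon|^2\big)^{1/2}$. Here $\sum_{|\alpha|\le m}\textbf{a}^\alpha\le\sum_{k\le m}(\sum_i a_i)^k<\infty$. Finally, $\|\cdot\|_{\ell^p}\le\|\cdot\|_{\ell^1}$ covers every $p\ge1$. At first order this is simply Bessel's inequality for the orthonormal system $\{y_i/\sqrt{a_i}\}_i$ in $L^2(P')$, applied to $f(\textbf{x}-\varepsilon\cdot)\vartheta$.
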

\begin{remark}
	The construction in Proposition \ref{wahah} is similar to that in \cite[Theorem 3.1, p. 12]{WZ}, and both are variants of \cite[Proposition 6, p. 133]{Gro67} and \cite[Proposition 9, p. 152]{Gro67} (compared to Gross's mollification, the integrand in \eqref{13f} contains an additional factor $\vartheta$). Under the assumptions that $f$ is sufficiently smooth and $\vartheta$ has compact support, the mollification adapted by \eqref{13f} yields better smoothness. Moreover, it also gives $\mathscr{F}$-type estimates for $f_{\varepsilon}$, which will play a crucial role in constructing exhaustion functions with specific estimates on infinite-dimensional open sets.
\end{remark}
\begin{proof}
	\textbf{ Step 1}. Since $\vartheta^{-1}(\mathbb{R}\setminus \{0\}) \overset{\circ}{\subset} \ell^{2}$, there exists $r>0$ such that $\vartheta^{-1}(\mathbb{R}\setminus \{0\})\subset B_r$. Clearly, $f\in C(\ell^{2})$. For each $s>0$ and $\textbf{x}\in B_s$, we have
\begin{equation}\label{shyhsh23609x1}
	\begin{aligned}
		\left| f_{\varepsilon}\left( \textbf{x} \right) \right|
		&\leqslant \int_{B_r} \left| f\left( \textbf{x} -\varepsilon \zeta \right) \vartheta \left( \zeta \right) \right| \mathrm{d} P^{\prime}\left( \zeta \right) \\
		&\leqslant \int_{B_r} \left( \left| f\left( \textbf{x} \right) \right| +\varepsilon C\left( B_{s+\varepsilon r} \right) \left\| \zeta \right\|_{\ell^{2}} \right) \cdot \left| \vartheta \left( \zeta \right) \right| \mathrm{d} P^{\prime}\left( \zeta \right)\\
		&\leqslant \left( \left| f\left( \textbf{x} \right) \right| +\varepsilon C\left( B_{s+\varepsilon r} \right) r \right) \int_{B_r} \left| \vartheta \left( \zeta \right) \right| \mathrm{d} P^{\prime}\left( \zeta \right)\\
		&\leqslant \left( \left| f\left( \textbf{0} \right) \right| +C\left( B_s  \right) s+\varepsilon C\left( B_{s+\varepsilon r} \right) r \right) \int_{B_r} \left| \vartheta \left( \zeta \right) \right| \mathrm{d} P^{\prime}\left( \zeta \right),
	\end{aligned}
\end{equation}
which implies
\begin{equation}\label{shyhsh23609e1}
	\sup\limits_{\textbf{x}\in S} \left| f_{\varepsilon }\left( \textbf{x}\right)  \right|  <\infty,\quad \forall\;S\stackrel{\circ}{\subset}\ell^{2}.
\end{equation}

Furthermore, similar argument as in the proof of \cite[Proposition 9, p. 152]{Gro67} gives that $f_{\varepsilon}\in C_{\mathcal{F}}^{\infty}\left( \ell^{2} \right)$. Precisely, for each $\alpha \in \mathbb{N}_{0}^{\left( \mathbb{N} \right)}$, let
$k\triangleq\max \{j : \alpha_{j}\neq 0\}\cup\{1\} \in \mathbb{N},$
write
\begin{equation}\label{20260419for1}
\vartheta_{\alpha,\varepsilon} \left( \textbf{w} \right) \triangleq \frac{1}{\varepsilon^{|\alpha |}} D^{\alpha}\left( \vartheta \left( \textbf{w} \right) e^{-\sum\limits_{i=1}^{k} \frac{|w_{i}|^{2}}{2a_{i}}} \right) \cdot e^{\sum\limits_{i=1}^{k} \frac{|w_{i}|^{2}}{2a_{i}}},\qquad \forall\,\textbf{w}=(w_i)_{i\in\mathbb{N}}\in\ell^2,
\end{equation}
and we have
\begin{equation}
	\begin{aligned}
	&D^{\alpha}f_{\varepsilon}\left( \textbf{x} \right)\\
		&=D^{\alpha}\left( \frac{1}{\left( \sqrt{2\pi} \right)^{k}\sqrt{a_{1}a_{2}\cdots a_{k}}} \int \int_{\mathbb{R}^{k}} f\left( \textbf{x} -\varepsilon \textbf{w} \right) e^{-\sum\limits_{i=1}^{k} \frac{|w_{i}|^{2}}{2a_{i}}}\vartheta \left( \textbf{w} \right) \mathrm{d} m_{k}\left( \textbf{w}_{k} \right) \mathrm{d} P^{\prime\widehat{1,2,\cdots ,k}}\left( \textbf{w}^{k} \right)\left( \textbf{w}^{k} \right) \right)\\
		&=D^{\alpha}\left( \frac{1}{\left( \sqrt{2\pi} \varepsilon \right)^{k}\sqrt{a_{1}a_{2}\cdots a_{k}}} \int \int_{\mathbb{R}^{k}} f\left( \textbf{x}_{k},\textbf{x}^{k} -\varepsilon \textbf{w}^{k} \right) \vartheta \left( \frac{\textbf{x}_{k} -\textbf{w}_{k}}{\varepsilon},\textbf{w}^{k} \right) \right.\\
		&\qquad \qquad \qquad \qquad \qquad \qquad \times \left. e^{-\sum\limits_{i=1}^{k} \frac{|x_{i}-w_{i}|^{2}}{2a_{i}\varepsilon^{2}}}\mathrm{d} m_{k}\left( \textbf{w}_{k} \right) \mathrm{d} P^{\prime\widehat{1,2,\cdots ,k}}\left( \textbf{w}^{k} \right)\left( \textbf{w}^{k} \right) \right)\\
		&=\frac{1}{\left( \sqrt{2\pi} \varepsilon \right)^{k}\sqrt{a_{1}a_{2}\cdots a_{k}}} \int \int_{\mathbb{R}^{k}} f\left( \textbf{w}_{k},\textbf{x}^{k} -\varepsilon \textbf{w}^{k} \right) D^{\alpha}\left( \vartheta \left( \frac{\textbf{x}_{k} -\textbf{w}_{k}}{\varepsilon},\textbf{w}^{k} \right) \right.\\
		&\qquad \qquad \qquad \qquad \qquad \qquad \times \left. e^{-\sum\limits_{i=1}^{k} \frac{|x_{i}-w_{i}|^{2}}{2a_{i}\varepsilon^{2}}} \right) \mathrm{d} m_{k}\left( \textbf{w}_{k} \right) \mathrm{d} P^{\prime\widehat{1,2,\cdots ,k}}\left( \textbf{w}^{k} \right)\left( \textbf{w}^{k} \right)\\
		&=\frac{1}{\left( \sqrt{2\pi} \varepsilon \right)^{k}\sqrt{a_{1}a_{2}\cdots a_{k}}} \int \int_{\mathbb{R}^{k}} f\left( \textbf{w}_{k},\textbf{x}^{k} -\varepsilon \textbf{w}^{k} \right) \vartheta_{\alpha,\varepsilon} \left( \frac{\textbf{x}_{k} -\textbf{w}_{k}}{\varepsilon},\textbf{w}^{k} \right)\\
		&\qquad \qquad \qquad \qquad \qquad \qquad \times e^{-\sum\limits_{i=1}^{k} \frac{|x_{i}-w_{i}|^{2}}{2a_{i}\varepsilon^{2}}}\mathrm{d} m_{k}\left( \textbf{w}_{k} \right) \mathrm{d} P^{\prime\widehat{1,2,\cdots ,k}}\left( \textbf{w}^{k} \right)\left( \textbf{w}^{k} \right)\\
		&=\frac{1}{\left( \sqrt{2\pi} \right)^{k}\sqrt{a_{1}a_{2}\cdots a_{k}}} \int \int_{\mathbb{R}^{k}} f\left( \textbf{x} -\varepsilon \textbf{w} \right) \vartheta_{\alpha,\varepsilon} \left( \textbf{w} \right) e^{-\sum\limits_{i=1}^{k} \frac{|w_{i}|^{2}}{2a_{i}}}\mathrm{d} m_{k}\left( \textbf{w}_{k} \right) \mathrm{d} P^{\prime\widehat{1,2,\cdots ,k}}\left( \textbf{w}^{k} \right)\left( \textbf{w}^{k} \right),\nonumber
	\end{aligned}
\end{equation}
where $m_k$ is the Lebesgue measure on $\mathbb{R}^k$, $P^{\prime\widehat{1,2,\cdots ,k}}$ is the counterpart of $P^{\widehat{1,2,\ldots,k}}$ (here $P^{\prime}$ instead of $P$), and hence we have
\begin{equation}\label{shyhsh23609e2}
D^{\alpha}f_{\varepsilon}\left( \textbf{x} \right)=\int_{\ell^{2}} f\left( \textbf{x} -\varepsilon \textbf{w} \right) \vartheta_{\alpha,\varepsilon} \left( \textbf{w} \right) \mathrm{d} P^{\prime}\left( \textbf{w} \right).
\end{equation}
In addition, for any $\rho>0$ and $\textbf{x},\textbf{x}'\in B_\rho$, since $\vartheta \in C_{\mathcal{F}}^{\infty}\left( \ell^{2} \right)$, we have
\begin{equation}\label{shyhsh23609e4}
	\vartheta_{\alpha,\varepsilon} \in C_{\mathcal{F}}^{\infty}\left( \ell^{2} \right),\qquad \vartheta_{\alpha,\varepsilon}^{-1}(\mathbb{R}\setminus \{0\}) \subset \vartheta^{-1}(\mathbb{R}\setminus \{0\})\subset B_r,
\end{equation}
and
\begin{equation}\label{shyhsh23609e3}
	\begin{aligned}
		&|D^{\alpha}f_{\varepsilon}\left( \textbf{x} \right) -D^{\alpha}f_{\varepsilon}\left( \textbf{x}^{\prime} \right) |
		\leqslant \int_{\ell^{2}} |f\left( \textbf{x} -\varepsilon \textbf{w} \right) -f\left( \textbf{x}^{\prime} -\varepsilon \textbf{w} \right) |\cdot |\vartheta_{\alpha,\varepsilon} \left( \textbf{w} \right) |\mathrm{d} P^{\prime}\left( \textbf{w} \right)\\
		&\leqslant C\left( B_{\rho +\varepsilon r} \right) \int_{\ell^{2}} \left\| \textbf{x} -\textbf{x}^{\prime} \right\| \cdot |\vartheta_{\alpha,\varepsilon} \left( \textbf{w} \right) |\mathrm{d} P^{\prime}\left( \textbf{w} \right)
		=C\left( B_{\rho +\varepsilon r} \right) \int_{\ell^{2}} |\vartheta_{\alpha,\varepsilon} |\mathrm{d} P^{\prime}\cdot \left\| \textbf{x} -\textbf{x}^{\prime} \right\|.
	\end{aligned}
\end{equation}

Therefore, from (\ref{shyhsh23609e2})--(\ref{shyhsh23609e3}), we obtain $f_{\varepsilon }\in C^{\infty }\left( \ell^{2} \right)$ (which is defined at \cite[p. 6]{WYZ1}), i.e., the function $f$ itself and all of its partial derivatives are continuous real-valued functions with respect to the $\ell^2$ topology.

To prove $f_{\varepsilon }\in C^{\infty }_{F^{\infty }}\left( \ell^{2} \right) $, it suffices to show that for any fixed $\alpha\in \mathbb{N}_{0}^{\left( \mathbb{N} \right)}$, we have $D^{\alpha }f_{\varepsilon } \in C^{1}_{F}\left( \ell^{2} \right) $. This follows from the  $C^{1}\left(  \ell^{2} \right) \cap \mathrm{Lip}\left( \ell^{2} \right) =C_{F}^{1}\left( \ell^{2} \right)$(A reference can be seen in \cite[Proposition 3.4, p. 16]{WZ}) and (\ref{shyhsh23609e2})--(\ref{shyhsh23609e4}).\\
\textbf{Step 2}. For any $k\in\mathbb{N}_0$ and $\textbf{x}\in\ell^2$, we denote by $D^{k}_{\ell^2}$ the $k$-th order Fr\'{e}chet derivative with respect to $\ell^2$, by the assumption that $f\in \mathrm{Lip}(\ell^{2})\bigcap C_{\ell^{2}}^{\infty}\left( \ell^{2}\right)$, we see that the mapping defined by
\begin{eqnarray}\label{20260418for2}
 \textbf{y}\mapsto D^{k}f\left( \textbf{x} -\varepsilon \textbf{y} \right),\qquad \forall\, \textbf{y}\in\ell^2,
\end{eqnarray}
 is continuous from $\ell^2$ into the space of $k$-linear bounded operator from $(\ell^2)^k$ into $\mathbb{R}$ (with the corresponding operator norm $||\cdot||_k$). Let $K\triangleq \vartheta^{-1}(\mathbb{R}\setminus \{0\})$ which is contained in a compact subset of $\ell^2$. Hence the range of the restriction of the mapping defined by  \eqref{20260418for2} to $K$ is bounded and separable. Let $\mu(E)\triangleq \int_E \vartheta \left( \textbf{y} \right) \mathrm{d} P^{\prime}\left( \textbf{y} \right)$, where $E$ is any Borel subset of $K$. Then $\mu$ is a Borel measure on $K$ such that $0<\mu(K)\leq 1$.
Thus the the restriction of the mapping defined by  \eqref{20260418for2} to $K$ is a continuous mapping with separable range, and the Theorem in \cite[p. 131]{Yo80}
implies that it is a strongly Borel-measurable function. It is easy to see that
\begin{eqnarray*}
\int_{K} \left\| D^{k}f\left( \textbf{x} -\varepsilon \textbf{y} \right)  \right\|_k  \mathrm{d} \mu\left( \textbf{y} \right) <\infty.
\end{eqnarray*}
Combining \cite[Theorem1, p. 133]{Yo80} implies that the following integral exists in the sense of Bochner as in \cite[pp. 132-133]{Yo80},
\begin{eqnarray*}
\int_{K} D^{k}f\left( \textbf{x} -\varepsilon \textbf{y} \right) \mathrm{d} \mu\left( \textbf{y} \right).
\end{eqnarray*}
Now, we come to prove by induction that
\begin{eqnarray}\label{1r1f1}
	D^{k}f_{\varepsilon}\left( \textbf{x} \right)
	=\int_{K} D^{k}f\left( \textbf{x} -\varepsilon \textbf{y} \right)  \mathrm{d} \mu\left( \textbf{y} \right),\qquad \forall \,\textbf{x}\in\ell^2.
\end{eqnarray}
Clearly, \eqref{1r1f1} holds for $k=0$. Assume \eqref{1r1f1} holds for $k$. For any $\textbf{h}\in \ell^{2}$ with $\left\| \textbf{h} \right\| \leqslant 1$, we have
$$
\begin{aligned}
	&\frac{\left\| D^{k}f_{\varepsilon}\left( \textbf{x} +\textbf{h} \right) -D^{k}f_{\varepsilon}\left( \textbf{x} \right) -\left( \int_{K} D^{k+1}f\left( \textbf{x} -\varepsilon \textbf{y} \right)   \mathrm{d} \mu\left( \textbf{y} \right) \right) \textbf{h} \right\|_k}{\left\| \textbf{h} \right\|}\\
	&\quad =\frac{\left\| \int_{K} \left( D^{k}f\left( \textbf{x} +\textbf{h} -\varepsilon \textbf{y} \right) -D^{k}f\left( \textbf{x} -\varepsilon \textbf{y} \right) -D^{k+1}f\left( \textbf{x} -\varepsilon \textbf{y} \right) \textbf{h} \right)  \mathrm{d} \mu\left( \textbf{y} \right) \right\|_k}{\left\| \textbf{h} \right\| }\\
	&\quad \leqslant \int_{K} \frac{\left\| D^{k}f\left( \textbf{x} +\textbf{h} -\varepsilon \textbf{y} \right) -D^{k}f\left( \textbf{x} -\varepsilon \textbf{y} \right) -D^{k+1}f\left( \textbf{x} -\varepsilon \textbf{y} \right) \textbf{h} \right\|_k}{\left\| \textbf{h} \right\| }  \mathrm{d}\mu\left( \textbf{y} \right),
\end{aligned}
$$
where the first equality follows from the property of Bochner's integration and the first inequality follows from \cite[Corollary 1, p. 133]{Yo80}. By the Mean Value Inequality in Banach Spaces (e.g., \cite[(8.6.2), p. 162]{Die}), we have
$$
\begin{aligned}
	& \frac{\left\| D^{k}f\left( \textbf{x} +\textbf{h} -\varepsilon \textbf{y} \right) -D^{k}f\left( \textbf{x} -\varepsilon \textbf{y} \right) -D^{k+1}f\left( \textbf{x} -\varepsilon \textbf{y} \right) \textbf{h} \right\|_k}{\left\| \textbf{h} \right\| }  \\
	&\quad \leqslant  \left( \sup_{t\in \left[ 0,1 \right]} \left\| D^{k+1}f\left( \textbf{x} -\varepsilon \textbf{y} +t\textbf{h} \right) - D^{k+1}f\left( \textbf{x} -\varepsilon \textbf{y} \right) \right\|_{k+1} \right).
\end{aligned}
$$
Since $\{\textbf{x} -\varepsilon \textbf{y} :\textbf{y} \in K\}\subset \textbf{x} -\varepsilon K$ which is contained in a compact subset of $\ell^2$ and the continuity of the mapping $D^{k+1}f$, there exists $r\in(0,+\infty)$ such that
$$
\sup_{t\in \left[ 0,1 \right],\textbf{y}\in K, ||\textbf{h}||<r } \left\| D^{k+1}f\left( \textbf{x} -\varepsilon \textbf{y} +t\textbf{h} \right) - D^{k+1}f\left( \textbf{x} -\varepsilon \textbf{y} \right) \right\|_{k+1}<\infty,
$$
and for any $\textbf{y}\in K$, it holds that
$$
 \lim_{||\textbf{h}||\to 0}\frac{\left\| D^{k}f\left( \textbf{x} +\textbf{h} -\varepsilon \textbf{y} \right) -D^{k}f\left( \textbf{x} -\varepsilon \textbf{y} \right) -D^{k+1}f\left( \textbf{x} -\varepsilon \textbf{y} \right) \textbf{h} \right\|_k}{\left\| \textbf{h} \right\| }=0.
$$
By the Dominated Convergence Theorem,
$$
 \lim_{||\textbf{h}||\to 0}\int_{K} \frac{\left\| D^{k}f\left( \textbf{x} +\textbf{h} -\varepsilon \textbf{y} \right) -D^{k}f\left( \textbf{x} -\varepsilon \textbf{y} \right) -D^{k+1}f\left( \textbf{x} -\varepsilon \textbf{y} \right) \textbf{h} \right\|_k}{\left\| \textbf{h} \right\| }  \mathrm{d}\mu\left( \textbf{y} \right) =0,
$$
which shows that \eqref{1r1f1} holds for $k+1$ instead of $k$. Therefore, $f_{\varepsilon}\in C_{\ell^{2}}^{\infty}\left( \ell^{2}\right)$.\\
\textbf{Step 3}. Write $H_{n}\left( t \right) \triangleq \left( -1 \right)^{n} e^{\frac{t^{2}}{2}}\frac{d^{n}}{dt^{n}} e^{-\frac{t^{2}}{2}},$ for any $n\in \mathbb{N}_{0},\,t\in\mathbb{R}$, and
$$\quad H_{\alpha}\left( \textbf{x} \right) \triangleq \prod_{i=1}^{\infty} H_{\alpha_{i}}\left( x_{i} \right),\quad\forall\, \alpha \in \mathbb{N}_{0}^{\left( \mathbb{N} \right)},\,\textbf{x}=(x_i)_{i\in\mathbb{N}}\in\ell^2.$$
Direct computation yields
$$\begin{aligned}
	&\int_{\mathbb{R}} H_{i}\left( t \right) H_{j}\left( t \right) \frac{1}{\sqrt{2\pi}} e^{-\frac{t^{2}}{2}}\mathrm{d} t = i!\,\delta_{ij},\quad \forall\, i,j\in \mathbb{N}_{0};\\
	&\int_{\ell^{2}} H_{\alpha}\left( \frac{\textbf{x}}{\sqrt{\textbf{a}}} \right) H_{\beta}\left( \frac{\textbf{x}}{\sqrt{\textbf{a}}} \right) \mathrm{d} P^{\prime}\left( \textbf{x} \right) = \alpha!\,\delta_{\alpha\beta},\quad \forall\, \alpha,\beta\in \mathbb{N}_{0}^{\left( \mathbb{N} \right)},
\end{aligned}$$
where $\delta_{\alpha\beta}\triangleq 0,$ if $\alpha\neq\beta$, $\delta_{\alpha\beta}\triangleq 1,$ if $\alpha=\beta$, and
$$H_{\alpha}\left( \frac{\textbf{x}}{\sqrt{\textbf{a}}} \right) \triangleq \prod_{i=1}^{\infty} H_{\alpha_{i}}\left( \frac{x_{i}}{\sqrt{a_{i}}} \right),\qquad \forall\,\alpha \in \mathbb{N}_{0}^{\left( \mathbb{N} \right)},\,\textbf{x}=(x_i)_{i\in\mathbb{N}}\in\ell^2.$$
Recall \eqref{20260419for1} for the definition of $\vartheta_{\alpha ,\varepsilon}$, and simple computation gives
\begin{equation}\label{20260419for2}
\vartheta_{\alpha ,\varepsilon} \left( \textbf{x} \right)=\sum_{\beta \leqslant \alpha}\frac{C_{\alpha}^{\beta}\left( -1 \right)^{\left| \alpha -\beta \right|} H_{\alpha -\beta}\left( \frac{\textbf{x}}{\sqrt{\textbf{a}}} \right)}{\left| \varepsilon \right|^{\left| \alpha \right|}\left( \textbf{a}^{\alpha -\beta} \right)^{\frac{1}{2}}}  D^{\beta}\vartheta \left( \textbf{x} \right),\qquad \forall\,\textbf{x}\in\ell^2.
\end{equation}
For any $\left\{ c_{\alpha} \right\}_{ \left| \alpha \right| \leqslant m} \subset \mathbb{R}, $ with $\sum\limits_{ \left| \alpha \right| \leqslant m} \left| c_{\alpha} \right|^{2} <\infty$, we have
\begin{equation}\label{20260419for4}
\begin{aligned}
	&\sum_{ \left| \beta \right| \leqslant m} \int_{\ell^{2}} \left\vert \sum_{\begin{gathered} \alpha \geqslant \beta,\\ \left| \alpha \right| \leqslant m\end{gathered}} \frac{1}{\left| \varepsilon \right|^{\left| \alpha \right|}} c_{\alpha}C_{\alpha}^{\beta}\left( -1 \right)^{\left| \alpha -\beta \right|} H_{\alpha -\beta}\left( \frac{\textbf{w}}{\sqrt{\textbf{a}}} \right)\right\vert^{2} \mathrm{d} P^{\prime}\left( \textbf{w} \right)\\
	&\  =\sum_{ \left| \beta \right| \leqslant m} \sum_{\begin{gathered} \alpha \geqslant \beta,\\ \left| \alpha \right| \leqslant m\end{gathered}} \left\vert \frac{1}{\left\vert \varepsilon \right\vert^{\left| \alpha \right|}} c_{\alpha}C_{\alpha}^{\beta} \right\vert^{2} \left( \alpha -\beta \right)!=\sum_{\begin{gathered}  \left| \alpha \right| \leqslant m,\\ \beta \leqslant \alpha\end{gathered}} \frac{1}{\left| \varepsilon \right|^{2\left| \alpha \right|}} \left\vert c_{\alpha}C_{\alpha}^{\beta} \right\vert^{2} \left( \alpha -\beta \right) !\\
	&\  \leqslant \sum_{\begin{gathered}  \left| \alpha \right| \leqslant m,\\ \beta \leqslant \alpha\end{gathered}}\frac{1}{\left| \varepsilon \right|^{2\left| \alpha \right|}}   \left\vert c_{\alpha} \right\vert^{2} 4^{\left| \alpha \right|}\alpha !=\sum_{\begin{gathered}\alpha  \in \mathbb{N}_{0}^{\left( \mathbb{N} \right)},\\ \left| \alpha \right| \leqslant m\end{gathered}}\frac{\prod_{i=1}^{\infty} \left( 1+\alpha_{i} \right)}{\left| \varepsilon \right|^{2\left| \alpha \right|}} \left\vert c_{\alpha} \right\vert^{2} 4^{\left| \alpha \right|}\alpha !\\
	&\  \leqslant \sum_{\begin{gathered} \left| \alpha \right| \leqslant m\end{gathered}}\frac{\left( 1+m \right)^{m}}{\left\vert \varepsilon \right\vert^{2\left| \alpha \right|}} \left\vert c_{\alpha} \right\vert^{2} 4^{m}(m!)^{m}\leqslant \frac{4^{m}(\left( m+1 \right) !)^{m}}{\min \left\{ \left| \varepsilon \right|^{2m} ,1 \right\}} \cdot\sum_{\begin{gathered}  \left| \alpha \right| \leqslant m\end{gathered}} \left| c_{\alpha} \right|^{2},
\end{aligned}
\end{equation}
where the first equality follows from the orthogonality of Hermite polynomials, the first inequality uses
$$C_{\alpha}^{\beta}\leqslant 2^{\left| \alpha \right|},\qquad \left( \alpha -\beta \right) !\leqslant \alpha !,$$
and the third equality comes from Lemma \ref{zuhejishu}. By \eqref{20260419for3}, \eqref{20260419for2} and \eqref{20260419for4}, we have
$$\begin{aligned}
	&\left| \sum_{ \left| \alpha \right| \leqslant m} \left(\left( \textbf{a}^{\alpha} \right)^{\frac{1}{2}} \int_{\ell^{2}} f\left( \textbf{x} -\varepsilon \textbf{w} \right) \vartheta_{\alpha ,\varepsilon} \left( \textbf{w} \right) \mathrm{d} P^{\prime}\left( \textbf{w} \right) \cdot c_{\alpha} \right) \right|^{2}\\
	&\  =\left| \int_{\ell^{2}} \left( f\left( \textbf{x} -\varepsilon \textbf{w} \right) \sum_{ \left| \alpha \right| \leqslant m} c_{\alpha}\left( \textbf{a}^{\alpha} \right)^{\frac{1}{2}} \vartheta_{\alpha ,\varepsilon} \left( \textbf{w} \right) \right) \mathrm{d} P^{\prime}\left( \textbf{w} \right) \right|^{2}\\
	&\  \leqslant \int_{K} \left| f\left( \textbf{x} -\varepsilon \textbf{w} \right) \right|^{2} \mathrm{d} P^{\prime}\left( \textbf{w} \right) \int_{\ell^{2}} \left| \sum_{ \left| \alpha \right| \leqslant m} c_{\alpha}\left( \textbf{a}^{\alpha} \right)^{\frac{1}{2}} \vartheta_{\alpha ,\varepsilon} \left( \textbf{w} \right) \right|^{2} \mathrm{d} P^{\prime}\left( \textbf{w} \right)\\
	&\  \leqslant \sup_{ \textbf{x} -\varepsilon K} \left| f \right|^{2} \cdot \int_{\ell^{2}} \left\vert \sum_{\left| \alpha \right| \leqslant m} \frac{1}{\left| \varepsilon \right|^{\left| \alpha \right|}} \sum_{\beta \leqslant \alpha} c_{\alpha}\left( \textbf{a}^{\alpha} \right)^{\frac{1}{2}} C_{\alpha}^{\beta}\left( -1 \right)^{\left| \alpha -\beta \right|} \frac{H_{\alpha -\beta}\left( \frac{\textbf{w}}{\sqrt{\textbf{a}}} \right)}{\left( \textbf{a}^{\alpha -\beta} \right)^{\frac{1}{2}}} D^{\beta}\vartheta \left( \textbf{w} \right) \right\vert^{2} \mathrm{d} P^{\prime}\left( \textbf{w} \right)\\
	&\  =\sup_{ \textbf{x} -\varepsilon K} \left| f \right|^{2} \cdot \int_{\ell^{2}} \left\vert \sum_{\left| \beta \right| \leqslant m} \sum_{\begin{gathered}\alpha \geqslant \beta,\\ \left| \alpha \right| \leqslant m\end{gathered}} \frac{1}{\left| \varepsilon \right|^{\left| \alpha \right|}} c_{\alpha}C_{\alpha}^{\beta}\left( -1 \right)^{\left| \alpha -\beta \right|} H_{\alpha -\beta}\left( \frac{\textbf{w}}{\sqrt{\textbf{a}}} \right)\left( \textbf{a}^{\beta} \right)^{\frac{1}{2}} D^{\beta}\vartheta \left( \textbf{w} \right) \right\vert^{2} \mathrm{d} P^{\prime}\left( \textbf{w} \right)\\
	&\  \leqslant \sup_{ \textbf{x} -\varepsilon K} \left| f \right|^{2} \cdot \int_{\ell^{2}} \left(\sum_{\left| \beta \right| \leqslant m} \left\vert \sum_{\begin{gathered}\alpha \geqslant \beta,\\ \left| \alpha \right| \leqslant m\end{gathered}} \frac{1}{\left| \varepsilon \right|^{\left| \alpha \right|}} c_{\alpha}C_{\alpha}^{\beta}\left( -1 \right)^{\left| \alpha -\beta \right|} H_{\alpha -\beta}\left( \frac{\textbf{w}}{\sqrt{\textbf{a}}} \right)\right\vert^{2} \right) \\
&\ \qquad\qquad\qquad\qquad\times\left(\sum_{\left| \beta \right| \leqslant m} \left\vert \left( \textbf{a}^{\beta} \right)^{\frac{1}{2}}D^{\beta}\vartheta \left( \textbf{w} \right) \right\vert^{2} \right)\mathrm{d} P^{\prime}\left( \textbf{w} \right)\\
	&\  \leqslant \sup_{ \textbf{x} -\varepsilon K} \left| f \right|^{2} \cdot \left(\sup_{0\leqslant k\leqslant m} \theta_{k} \right)\cdot\sum_{\left| \beta \right| \leqslant m} \int_{\ell^{2}} \left\vert \sum_{\begin{gathered}\alpha \geqslant \beta\\ \left| \alpha \right| \leqslant m\end{gathered}} \frac{1}{\left| \varepsilon \right|^{\left| \alpha \right|}} c_{\alpha}C_{\alpha}^{\beta}\left( -1 \right)^{\left| \alpha -\beta \right|} H_{\alpha -\beta}\left( \frac{\textbf{w}}{\sqrt{\textbf{a}}} \right)\right\vert^{2} \mathrm{d} P^{\prime}\left( \textbf{w} \right)\\
	&\  \leqslant \sup_{ \textbf{x} -\varepsilon K} \left| f \right|^{2} \cdot \left(\sup_{0\leqslant k\leqslant m} \theta_{k} \right)\cdot \frac{4^{m}(\left( m+1 \right) !)^{m}}{\min \left\{ \left| \varepsilon \right|^{2m} ,1 \right\}} \cdot \sum_{\left| \alpha \right| \leqslant m} \left| c_{\alpha} \right|^{2},
\end{aligned}$$
and hence by the duality argument, we have
$$\sum_{\left| \alpha \right| \leqslant m} \left| \left( \textbf{a}^{\alpha} \right)^{\frac{1}{2}} \int_{\ell^{2}} f\left( \textbf{x} -\varepsilon \textbf{w} \right) \vartheta_{\alpha ,\varepsilon} \left( \textbf{w} \right) \mathrm{d} P^{\prime}\left( \textbf{w} \right) \right|^{2} \leqslant \sup_{ \textbf{x} -\varepsilon K} \left| f \right|^{2} \cdot \left(\sup_{0\leqslant k\leqslant m} \theta_{k}\right) \cdot \frac{4^{m}\left(( m+1 \right) !)^{m}}{\min \left\{ \left| \varepsilon \right|^{2m} ,1 \right\}}.$$

Now for any $\textbf{x} \in\ell^2$, we have
$$\begin{aligned}
	&\sum_{\left| \alpha \right| \leqslant m} \left| \textbf{a}^{\alpha} \int_{\ell^{2}} f\left( \textbf{x} -\varepsilon \textbf{w} \right) \vartheta_{\alpha ,\varepsilon} \left( \textbf{w} \right) \mathrm{d} P^{\prime}\left( \textbf{w} \right) \right|^{p}\\
	&\  \leqslant \left( \sum_{\left| \alpha \right| \leqslant m} \left| \textbf{a}^{\alpha} \int_{\ell^{2}} f\left( \textbf{x} -\varepsilon \textbf{w} \right) \vartheta_{\alpha ,\varepsilon} \left( \textbf{w} \right) \mathrm{d} P^{\prime}\left( \textbf{w} \right) \right| \right)^{p}\\
	&\  \leqslant \left( \sqrt{\sum_{\left| \alpha \right| \leqslant m} \textbf{a}^{\alpha}} \right)^{p} \cdot \left( \sqrt{\sum_{\left| \alpha \right| \leqslant m} \left| \left( \textbf{a}^{\alpha} \right)^{\frac{1}{2}} \int_{\ell^{2}} f\left( \textbf{x} -\varepsilon \textbf{w} \right) \vartheta_{\alpha ,\varepsilon} \left( \textbf{w} \right) \mathrm{d} P^{\prime}\left( \textbf{w} \right) \right|^{2}} \right)^{p}\\
	&\  \leqslant \left( \sum_{k=0}^{m}  \left( \sum\limits_{i=1}^{\infty} a_{i} \right)^{k}  \right)^{\frac{p}{2}} \cdot\left( \sup_{ \textbf{x} -\varepsilon K} \left| f \right|^{2} \cdot \left(\sup_{0\leqslant k\leqslant m} \theta_{k}\right) \cdot \frac{4^{m}(\left( m+1 \right) !)^{m}}{\min \left\{ \left| \varepsilon \right|^{2m} ,1 \right\}} \right)^{\frac{p}{2}}.
\end{aligned}$$
Combining \eqref{shyhsh23609e2} and the assumption that $f\in \mathrm{Lip} (\ell^{2})$ give $f_{\varepsilon}\in C_{\mathscr{F}}^{\infty}\left( \ell^{2},loc \right)$.
This completes the proof of Theorem \ref{wahah}.
\end{proof}
The following theorem and its proof is a variant of \cite[Theorem 3.2, p. 14]{WZ}.
\begin{theorem}\label{qiangshiqiongjiehanshu}
There exists a positive exhaustion function $\Psi$ on $\Omega$ with
$$
\Psi \in C_{F^{\infty}}^{\infty}\left( \Omega \right) \bigcap C_{\ell^{2}}^{\infty}\left( \Omega \right) \bigcap C_{\mathscr{F}}^{\infty}\left( \Omega ,loc \right).
$$
\end{theorem}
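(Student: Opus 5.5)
We will build $\Psi$ from smoothed distance-type functions, following the scheme of \cite[Theorem 3.2, p. 14]{WZ}, with Gross's mollifier replaced by the operator $f\mapsto f_\varepsilon$ of Proposition \ref{wahah}. If $\Omega=\ell^2$, we take $\Psi\triangleq 1+\|\cdot\|^2$. This function is positive and exhaustive, and Lemma \ref{1f1f13f1f3} gives the required regularity. So assume $\partial\Omega\neq\emptyset$, and set
$$
d(\textbf{x})\triangleq \min\{\operatorname{dist}(\textbf{x},\ell^2\setminus\Omega),1\}.
$$
This $d$ is globally $1$-Lipschitz on $\ell^2$. The difficulty is that $d$ is not smooth, while Proposition \ref{wahah} requires $f\in \mathrm{Lip}(\ell^2)\cap C^{\infty}_{\ell^2}(\ell^2)$. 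We therefore do not mollify $d$ directly. Instead we proceed as follows.

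\textbf{Step 1 (smoothing $d$ up to an error).} Cover $\Omega$ by the balls $B(\textbf{y},d(\textbf{y})/4)$, $\textbf{y}\in\Omega$. Apply Theorem \ref{danweifenjie} to obtain a partition of unity $\{f_i\}$ subordinate to this cover, with associated centers $\textbf{y}_i$. Define
$$
\rho\triangleq \sum_i d(\textbf{y}_i) f_i .
$$
On each ball, $d$ varies by at most a factor between $3/4$ and $5/4$. Hence $\tfrac12 d\le \rho\le 2d$ on $\Omega$. By local finiteness, $\rho$ is locally a finite sum of functions in $C_{F^\infty}^{\infty}\cap C^\infty_{\ell^2}\cap C^{\infty}_{0,\mathscr{F}}$.

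\textbf{Step 1 problem and fix.} Local finiteness holds only pointwise, not on sets $V\s\Omega$, so the $C_{\mathscr{F}}(\Omega,loc)$ bounds are not yet uniform. To fix this, we instead mollify at the level of smooth globally Lipschitz pieces. For each $j\in\mathbb{N}$, choose a smooth globally Lipschitz function $\phi_j$ on $\ell^2$. We build it as $\phi_j=\chi_j\circ\|\cdot-\textbf{z}\|^2$ summed over a suitable countable family, or more simply as $h_j(\|\cdot\|^2)$ combined with the Lipschitz $d$ through $f_\varepsilon$ as follows.

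\textbf{Step 2 (sup-convolution replacement).} First apply a standard inf-convolution regularization to $d$. This produces, for each $j$, a $C^{\infty}_{\ell^2}$ globally Lipschitz function $g_j$ with $|g_j-d|<2^{-j-2}$ on $\ell^2$. Smooth Lipschitz approximation of Lipschitz functions on Hilbert space (Lasry--Lions, followed by a further smoothing) provides this. Then apply Proposition \ref{wahah} with a small $\varepsilon_j$. Since $g_j$ is uniformly continuous and the measure $\mu$ has mass at most $1$ supported in $B_r$, we get
$$
u_j\triangleq (g_j)_{\varepsilon_j}/\mu(K)\in C_{\mathscr{F}}^\infty(\ell^2,loc)\cap C^\infty_{\ell^2}\cap C^\infty_{F^\infty},\qquad |u_j-d|<2^{-j-1}.
$$

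\textbf{Step 3 (assembling $\Psi$).} Put
$$
\Psi\triangleq \|\cdot\|^2+\sum_{j=1}^{\infty}\mathcal{I}_0\bigl(2^{j}u_j\bigr)\quad\text{restricted to }\Omega,
$$
with $\mathcal{I}_\tau$ from \eqref{shyhsh236113}. We use the reversed sign so that the $j$-th term equals $1$ where $u_j\le 0$ and vanishes where $u_j\ge 2^{-j}$. Now fix $V\s\Omega$. Then $d\ge c>0$ on $V$, so for $j$ large we have $2^ju_j\ge 2^jc-1\ge 1$ on a neighborhood of $V$, and all but finitely many terms vanish identically there. The remaining finitely many terms lie in $C_{\mathscr{F}}^\infty(V)$, $C^\infty_{\ell^2}$ and $C^\infty_{F^\infty}$ by Lemma \ref{fuheguji}, and $\|\cdot\|^2$ does too by Lemma \ref{1f1f13f1f3}. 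This is exactly where the uniformity on $V\s\Omega$, which defines $C_{\mathscr{F}}(\Omega,loc)$, is secured.

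\textbf{Step 3 (exhaustion property).} If $\Psi(\textbf{x})\le t$, then $\|\textbf{x}\|^2\le t$. Moreover, every $j$ with $d(\textbf{x})<2^{-j-1}$ gives $u_j<0$ and hence contributes $1$. This forces $d(\textbf{x})\ge 2^{-t-2}$, so $\{\Psi\le t\}\s\Omega$. Positivity is clear, since every term of the sum is nonnegative.

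The main obstacle is Step 2: producing a globally Lipschitz $C^\infty_{\ell^2}$ approximant $g_j$ of $d$ so that Proposition \ref{wahah} applies. If no ready-made citation is available, we would instead mollify $d$ directly through the integral formula \eqref{shyhsh23609e2}. That formula needs only $f\in\mathrm{Lip}$ to give the $C_{\mathcal F}$, $C^\infty_{F^\infty}$ and $\mathscr{F}$-estimates. For the Fréchet smoothness we would then use the compactness of $K$ together with differentiation of the Gaussian factor in finitely many coordinates, as in \cite{WZ}.
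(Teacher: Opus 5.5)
Your Steps 2--3 give a workable construction, but the exhaustion argument as written is false. From $|u_j-d|<2^{-j-1}$ you only get $u_j(\textbf{x})<d(\textbf{x})+2^{-j-1}<2^{-j}$ when $d(\textbf{x})<2^{-j-1}$, not $u_j(\textbf{x})<0$. So nothing forces $\mathcal{I}_0(2^j u_j)=1$ near $\partial\Omega$: if, e.g., $u_j=d+2^{-j-2}$, no term ever equals $1$ on $\Omega$.

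The repair is easy, in either of two ways.
\begin{itemize}
\item Observe that $2^ju_j<2^jd+\tfrac{1}{2}\le\tfrac{3}{4}$ whenever $2^jd(\textbf{x})\le\tfrac{1}{4}$. Each such $j$ then contributes at least $\mathcal{I}_0(\tfrac{3}{4})>0$, since $\mathcal{I}_0$ is nonincreasing and, by \eqref{shyhsh236113}, positive on $(-\infty,1)$. Hence $\Psi\ge \mathcal{I}_0(\tfrac{3}{4})\bigl(\log_2(1/d)-3\bigr)$ on $\Omega$.
\item Use the terms $\mathcal{I}_0(2^{j+1}u_j-2)$ instead. These equal $1$ where $d<2^{-j-1}$ and vanish where $d\ge 2^{1-j}$. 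Your local-finiteness argument on $V\s\Omega$ is then unchanged, and $\Psi(\textbf{x})\le t$ forces $d(\textbf{x})\ge 2^{-\lfloor t\rfloor-2}$.
\end{itemize}
Also, ``positivity is clear'' is wrong. If $d(\textbf{0})=1$, every term of your sum vanishes at $\textbf{0}$ and $\Psi(\textbf{0})=0$. Use $1+\|\cdot\|^2$, as you did for $\Omega=\ell^2$.

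With these repairs your route is valid, and it differs from the paper's in what gets smoothed. The paper starts from $\eta=-\ln\operatorname{dist}(\cdot,\partial\Omega)+\|\cdot\|^2+c_0$, which is Lipschitz only on sublevel sets. It therefore has to:
\begin{itemize}
\item truncate to $I_{j+1}\eta$ and prove \eqref{fw2401121}, using the constants in \eqref{lsh236119} and the separation \eqref{fw236121};
\item approximate by $\varkappa_{j+1}$ via \cite[Theorem 6]{Azagra07} and mollify with Proposition \ref{wahah};
\item sum $\frac{1}{\psi(1)}\sup_{\Omega_j}\eta\cdot\psi\bigl(\eta_j+C(j+\tfrac{1}{2})\varepsilon_j+2-j\bigr)$, which yields the explicit domination $\Psi\ge\eta$.
\end{itemize}
You feed the same two tools a single bounded, globally $1$-Lipschitz function $d$ at dyadic accuracies. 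The mollification error $|u_j-g_j|\le L_j\varepsilon_j$ is then uniform on $\ell^2$, and all the truncation and sublevel-set bookkeeping disappears. The price is only a logarithmic lower bound in $1/d$ instead of $\Psi\ge\eta$, which is irrelevant for exhaustion.

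The approximation you call the main obstacle is the same smooth Lipschitz approximation result the paper takes from \cite[Theorem 6]{Azagra07}; Lasry--Lions alone only gives $C^{1,1}$. Your fallback of mollifying $d$ directly would not give $C^\infty_{\ell^2}$. Step 2 of the proof of Proposition \ref{wahah} obtains Fr\'echet derivatives by differentiating $f$ itself under the Bochner integral, while the Gaussian factor only supplies derivatives along coordinate directions. Step 1 and the ``fix'' paragraph are dead ends and can be deleted.
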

\begin{proof}
	Set
	\begin{equation}\label{240122e9}
		c\triangleq \left( \int_{\ell^{2} } \vartheta \left( \zeta \right)  \mathrm{d} P^{\prime}\left( \zeta \right)  \right)^{-1}\in(0,+
\infty),
	\end{equation}
	where $\vartheta \left( \cdot\right)$ is given by (\ref{lsh23611912}). We can choose a function $\psi \in C^{\infty }\left( \mathbb{R} \right)$ satisfying
	\begin{equation}\label{240122e1}
\psi \left( t\right)  =0,\quad\forall \,t\leqslant 0,\qquad\qquad\psi^{\left( j\right)  } \left( t\right)  >0,\quad\forall \,t>0,\,j=0,1,2.
	\end{equation}
	It suffices to consider the case where $\Omega\not=\ell^2$. By similar arguments as the proof of \cite[Theorem 3.2, p. 14]{WZ}, we may choose $c_0>0$ such that
$$\eta \left( \textbf{x}\right)  \triangleq -\ln \text{dist}\left( \textbf{x},\partial \Omega\right)  +\left\| \textbf{x}\right\|^{2}_{}+c_0  \geqslant 0,\quad \forall\,\textbf{x}\in \Omega,$$
where $\text{dist}\left( \textbf{x},\partial \Omega\right)\triangleq\inf_{\textbf{y}\in \partial\Omega}||\textbf{x}-\textbf{y}||,$
$$\Omega_{t}\triangleq \left\{ \textbf{x} \in V:\eta \left( \textbf{x} \right) \leqslant t \right\}\stackrel{\circ}{\subset} \Omega,\quad \forall\,t\geqslant 0,$$
$\eta$ is a continuous exhaustion function on $\Omega$, and for each $t \geqslant 0$, there exists a constant $C(t)>0$ such that
\begin{equation}\label{lsh236119}
	\left\{
	\begin{array}{ll}
		\left| \eta \left(  \textbf{x}\right)  -\eta \left(  \textbf{w}\right)  \right|  \leqslant C\left( t\right)  \left\|  \textbf{x}- \textbf{w}\right\|, \\[3mm]
		\left| \eta \left(  \textbf{x}\right)  \right|  \leqslant C\left( t\right) ,
	\end{array}\right.\quad\forall\;  \textbf{x}, \textbf{w}\in \Omega_{t}.
\end{equation}
Our goal is to construct a function $\Psi \in C_{F^{\infty}}^{\infty}\left( \Omega \right) \bigcap C_{\ell^{2}}^{\infty}\left( \Omega\right) \bigcap C_{\mathscr{F}}^{\infty}\left( \Omega ,loc \right)$ satisfying $\eta \left( \textbf{x}\right)  \leqslant \Psi \left(\textbf{x}\right)$ for any $\textbf{x}\in\Omega$, which implies that $\Psi$ is also an exhaustion function on $\Omega$.

By the proof of \cite[Theorem 3.2, p. 14]{WZ} again, for each $j\in \mathbb{N}$, we have $\Omega_{j}\stackrel{\circ}{\subset} \Omega^{o}_{j+{1}/{2} }$, $\Omega_{j+2}\stackrel{\circ}{\subset} \Omega^{o}_{j+3}$, and there exists $\varepsilon_{j} \in \left( 0,\frac{1}{4C\left( j+{1}/{2} \right)  } \right)$ such that
\begin{eqnarray}\label{fw236120}
	\overline{B\left(  \textbf{x},\varepsilon_{j} \right)  } \subset \Omega^{o}_{j+{1}/{2} },\quad \forall \; \textbf{x}\in \Omega^{o}_{j}
\end{eqnarray}
and
\begin{eqnarray}\label{fw236121}
	\overline{B\left(  \textbf{x},\varepsilon_{j} \right)  } \bigcap \Omega^{o}_{j+2}=\emptyset ,\quad \forall \; \textbf{x}\in \ell^{2} \setminus \Omega^{o}_{j+3}.
\end{eqnarray}
For any $\tau\geqslant 0$, we set
\begin{equation}\label{lsh23611912e1}
	I_{\tau}\left(  \textbf{x}\right)  \triangleq\begin{cases}\mathcal{I}_{\tau} \left( \eta \left(  \textbf{x}\right)  \right) , & \textbf{x}\in \Omega\\ 0, & \textbf{x}\in \ell^2\setminus \Omega,\end{cases}
\end{equation}
where $\mathcal{I}_{\tau}$ is given by \eqref{shyhsh236113}.
 By zero extension to $\ell^2\setminus\Omega$, $I_{j+1}(\cdot)\eta(\cdot)$ is a function on $\ell^2$. We claim that
\begin{eqnarray}\label{fw2401121}
	I_{j+1}(\cdot)\eta(\cdot)\text{ is a bounded globally Lipschitz function on }\ell^2.
\end{eqnarray}
Indeed, for each $ \textbf{x}, \textbf{w}\in \Omega_{j+4}^{o}$, using \eqref{lsh236119}, noting that $\left| \mathcal{I}^{\prime }_{j+1} \right|  \leqslant K_0$ (given by Lemma \ref{pro240130}) and $\left| I_{j+1}\right|  \leqslant 1$ (from \eqref{shyhshdxzh236114}), and by the Lagrange mean value theorem, we have
\begin{equation}
	\begin{aligned}
		\bigl| I_{j+1}(\textbf{x}) \eta(\textbf{x}) &- I_{j+1}(\textbf{w}) \eta(\textbf{w}) \bigr| \\
		&\leqslant |I_{j+1}(\textbf{x})| \cdot |\eta(\textbf{x}) - \eta(\textbf{w})| \\
		&\quad + |I_{j+1}(\textbf{x}) - I_{j+1}(\textbf{w})| \cdot |\eta(\textbf{w})| \\
		&\leqslant |\eta(\textbf{x}) - \eta(\textbf{w})| + C(j+4) |I_{j+1}(\textbf{x}) - I_{j+1}(\textbf{w})| \\
		&\leqslant |\eta(\textbf{x}) - \eta(\textbf{w})| + K_0 C(j+4) |\eta(\textbf{x}) - \eta(\textbf{w})| \\
		&= [1 + K_0 C(j+4)] \cdot |\eta(\textbf{x}) - \eta(\textbf{w})| \\
		&\leqslant [1 + K_0 C(j+4)] C(j+4) \cdot \|\textbf{x} - \textbf{w}\|.
	\end{aligned}
\end{equation}
Since $\mathrm{supp} \left( I_{j+1}\eta \right)  \subset \Omega_{j+2} \stackrel{\circ}{\subset} \Omega $, we have $|I_{j+1}\eta|\leq j+2$ and
$$
\begin{aligned}
	&\left| I_{j+1}\left(  \textbf{x}\right)  \eta \left(  \textbf{x}\right) -I_{j+1}\left(  \textbf{w}\right)  \eta \left(  \textbf{w}\right)  \right|  =0, \\
	&\qquad\qquad\qquad\forall\; ( \textbf{x}, \textbf{w})\in\left(\left( \ell^{2} \setminus \Omega^{o}_{j+4}\right)\times\left( \ell^{2} \setminus \Omega_{j+2}\right)\right)\bigcup
	\left(\left( \Omega^{o}_{j+4} \setminus \Omega_{j+2}\right)\times\left( \ell^{2} \setminus \Omega_{j+4}^{o}\right)\right).
\end{aligned} $$
For each $( \textbf{x}, \textbf{w})\in \left(\left(\ell^{2} \setminus \Omega^{o}_{j+4}\right)\times \Omega_{j+2}\right)\bigcup \left(\Omega_{j+2}\times\left(\ell^{2} \setminus \Omega^{o}_{j+4}\right)\right)$, \eqref{fw236121} implies $\left\| \textbf{x}-\textbf{w}\right\|> \varepsilon_{j} $, hence
$$\left| I_{j+1}\left( \textbf{x}\right)  \eta \left( \textbf{x}\right)  -I_{j+1}\left( \textbf{w}\right)  \eta \left( \textbf{w}\right)  \right|  =\left| I_{j+1}\left( \textbf{w}\right)  \eta \left( \textbf{w}\right)  \right| \leqslant j+2  < \frac{j+2}{\varepsilon_{j} } \left\| \textbf{x}-\textbf{w}\right\|.$$
Combining the above, it is easy to deduce that
$$\sup_{\textbf{x},\textbf{w}\in \ell^{2},\textbf{x}\neq \textbf{w}} \frac{\left| I_{j+1}\left( \textbf{x}\right)  \eta \left( \textbf{x}\right)  -I_{j+1}\left( \textbf{w}\right)  \eta \left( \textbf{w}\right)  \right|  }{\left\| \textbf{x}-\textbf{w}\right\|}<\infty,$$
which yields (\ref{fw2401121}).

By \cite[Theorem 6, p. 1373]{Azagra07}, we know that there exists a function $\varkappa_{j+1} \in \mathrm{Lip} \left( \ell^{2} \right)\cap C_{\ell^{2}}^{\infty}\left( \ell^{2}\right)$ such that
$$I_{j+1}\eta \leqslant \varkappa_{j+1} \leqslant I_{j+1}\eta +\frac{1}{2} ,\quad \forall\,j\in\mathbb{N}.$$
We set (recall $c$ and $\vartheta \left( \cdot\right)$ from (\ref{240122e9}) and (\ref{lsh23611912}), respectively)
$$\eta_{j} \left( \textbf{x} \right) \triangleq c\int_{\ell^{2}} \varkappa_{j+1} \left( \textbf{x} -\varepsilon_{j}  \textbf{y} \right) \vartheta \left(\textbf{y} \right) \mathrm{d} P^{\prime}\left( \textbf{y} \right) ,\quad \forall \; \textbf{x} \in \ell^{2} .$$
From Theorem \ref{wahah}, we have $ \eta_{j} \in C_{\mathscr{F}}^{\infty}\left( \ell^{2}, loc \right) \bigcap C_{\ell^{2}}^{\infty}\left( \ell^{2}\right) \bigcap C_{F^{\infty}}^{\infty}\left( \ell^{2} \right)$.

By \eqref{fw236120}, for each $ \textbf{x}\in \Omega^{o}_{j}$ and $\textbf{y}\in B_1$, we have $ \textbf{x}-\varepsilon_{j}\textbf{y} \in \Omega^{o}_{j+{1}/{2} }$. Using \eqref{lsh236119} and noting that $\mathrm{supp}\, \vartheta(\cdot)\subset B_1 $, we have
\begin{equation}\label{lsh236122}
	\begin{aligned}\eta_{j} \left( \textbf{x} \right)&=c\int_{\ell^{2}} \varkappa_{j+1} \left( \textbf{x} -\varepsilon_{j} \textbf{y} \right) \vartheta \left( \textbf{y} \right) \mathrm{d} P^{\prime}\left( \textbf{y} \right)\\
&\geqslant c\int_{\ell^{2}} I_{j+1}\left( \textbf{x} -\varepsilon_{j} \textbf{y} \right) \eta \left( \textbf{x} -\varepsilon_{j} \textbf{y} \right) \vartheta \left( \textbf{y} \right) \mathrm{d} P^{\prime}\left( \textbf{y} \right)\\
&=c\int_{\ell^{2}} \eta \left( \textbf{x} -\varepsilon_{j} \textbf{y} \right) \vartheta \left( \textbf{y} \right) \mathrm{d} P^{\prime}\left( \textbf{y} \right)\\ &\geqslant c\int_{\ell^{2}} \left( \eta \left( \textbf{x} \right) -C\left( j+{1} /{2} \right) \varepsilon_{j} \left\| \textbf{y} \right\| \right) \vartheta \left( \textbf{y} \right) \mathrm{d} P^{\prime}\left( \textbf{y} \right)\\
&\geqslant \eta \left( \textbf{x} \right) -C\left( j+{1} /{2} \right) \varepsilon_{j} ,\end{aligned}
\end{equation}
and
\begin{equation}\label{lsh236123}
	\begin{aligned}\eta_{j} \left( \textbf{x} \right)&=c\int_{\ell^{2}} \varkappa_{j+1} \left( \textbf{x} -\varepsilon_{j} \textbf{y} \right) \vartheta \left( \textbf{y} \right) \mathrm{d} P\left( \textbf{y}\right)\\
&\leqslant c\int_{\ell^{2}} \left( I_{j+1}\left( \textbf{x} -\varepsilon_{j} \textbf{y} \right) \eta \left( \textbf{x} -\varepsilon_{j} \textbf{y} \right) +\frac{1}{2} \right) \vartheta \left( \textbf{y} \right) \mathrm{d} P\left( \textbf{y} \right)\\
&=c\int_{\ell^{2}} \left( \eta \left( \textbf{x} -\varepsilon_{j} \textbf{y} \right) +\frac{1}{2} \right) \vartheta \left( \textbf{y} \right) \mathrm{d} P\left( \textbf{y} \right)\\
&\leqslant c\int_{\ell^{2}} \left( \eta \left( \textbf{x} \right) +C\left( j+{1} /{2} \right) \varepsilon_{j} \left\| \textbf{y} \right\| \right) \vartheta \left( \textbf{y} \right) \mathrm{d} P\left( \textbf{y} \right) +\frac{1}{2}\\ &\leqslant \eta \left( \textbf{x} \right) +C\left( j+{1} /{2} \right) \varepsilon_{j} +\frac{1}{2} .\end{aligned}
\end{equation}
Therefore, combining \eqref{lsh236122}, \eqref{lsh236123} and $\varepsilon_{j} \in \left( 0,\frac{1}{4C\left( j+{1}/{2} \right)  } \right)  $, we obtain that for any $\textbf{x} \in \Omega_{j}^{o}$, it holds that
\begin{equation}\label{lsh236125}
	\eta \left( \textbf{x} \right) \leqslant \eta_{j} \left( \textbf{x} \right) +C\left( j+{1} /{2} \right) \varepsilon_{j} \leqslant \eta \left( \textbf{x} \right) +2C\left( j+{1} /{2} \right) \varepsilon_{j} +\frac{1}{2} <\eta \left( \textbf{x} \right) +1.
\end{equation}

Recall $\psi$ from (\ref{240122e1}), we set
$$\Psi \left( \textbf{x}\right)  \triangleq\sum^{\infty }_{j=1} \frac{1}{\psi \left( 1\right)  }\sup\limits_{\Omega_{j}} \eta \cdot \psi \left( \eta_{j} \left( \textbf{x}\right)  +C\left( j+{1}/{2} \right)  \varepsilon_{j} +2-j\right)  ,\qquad\forall\,\textbf{x}\in \Omega.$$
Note that for each $j,k\in \mathbb{N}$ with $j\geqslant k+3$ and $\textbf{x}\in \Omega^{o}_{k}$, by \eqref{lsh236125}, we have
$$\eta_{j} \left( \textbf{x}\right)  +C\left( j+{1}/{2} \right)  \varepsilon_{j} +2-j<\eta \left( \textbf{x}\right)  +1+2-j<k+3-j\leqslant 0,$$
which implies
$$\psi \left( \eta_{j} \left( \textbf{x}\right)  +C\left( j+{1}/{2} \right)  \varepsilon_{j} +2-j\right)  =0,\quad \forall\;j\geqslant k+3,\,\textbf{x}\in \Omega^{o}_{k}.$$
Thus, for $\textbf{x}\in \Omega^{o}_{k}$, $\Psi \left( \textbf{x}\right)  = \sum\limits^{k+2}_{j=1} \frac{1}{\psi \left( 1\right)  } \sup\limits_{\Omega_{j}} \eta\cdot \psi \left( \eta_{j} \left( \textbf{x}\right)  +C\left( j+{1}/{2} \right)  \varepsilon_{j} +2-j\right)  $. Hence by Lemma \ref{fuheguji} and (3) of \cite[Proposition 3.1, p. 10]{WZ}, we have $\Psi$ is well-defined on $\Omega$, and satisfies $\Psi \in C^{\infty }_{F^{\infty }}\left( \Omega\right)\bigcap C_{\ell^{2}}^{\infty}\left( \Omega \right) \bigcap C_{\mathscr{F}}^{\infty}\left( \Omega, loc \right) $.

If $\textbf{x}\in \Omega_{1}^{o}$, then by \eqref{lsh236125} and $\eta(\textbf{x}) \geqslant 0$, we have
$$\eta_{1} \left( \textbf{x}\right)  +C\left( 1+{1}/{2} \right)  \varepsilon_{1} +2-1\geqslant \eta \left( \textbf{x}\right)  +1\geqslant 1,$$
which implies
$$\Psi \left( \textbf{x}\right)  \geqslant \frac{1}{\psi \left( 1\right)  } \sup\limits_{\Omega_{1}} \eta \cdot\psi \left( 1\right)  \geqslant \eta \left( \textbf{x}\right)  .$$

If $k\geqslant2$ and $\textbf{x}\in \Omega_{k}^{o}\setminus \Omega_{k-1}^{o}$, then \eqref{lsh236125} yields
$$\eta_{k} \left( \textbf{x}\right)  +C\left( k+{1}/{2} \right)  \varepsilon_{k} +2-k\geqslant k-1+2-k=1,$$
which implies
$$\Psi \left( \textbf{x}\right)  \geqslant \frac{1}{\psi \left( 1\right)  } \sup\limits_{\Omega_{k}} \eta \cdot\psi \left( 1\right)  \geqslant \eta \left( \textbf{x}\right)  .$$
This completes the proof of Theorem \ref{qiangshiqiongjiehanshu}.
\end{proof}
\section{Sobolev Spaces}\label{sec4}
We will introduce the Sobolev spaces for real-valued functions on any non-empty open subset of $\ell^2$ and some basic facts that will be used in the sequel.

By \cite[Corollary 3.2, p. 19]{WYZ1}, we know that $C_{0,F^{\infty}}^{\infty}\left( \Omega \right)$ is dense in $L^{p}\left( \Omega ,P \right)$, and we shall use $C_{0,F^{\infty}}^{\infty}\left( \Omega \right)$ as the test function space.
\begin{definition}
	Let $f,g\in L^{p}\left( \Omega ,P\right),$ and $\alpha \in \mathbb{N}_{0}^{\left( \mathbb{N} \right)}$. If
	$$\int_{\Omega} f\delta^{\alpha} \phi \mathrm{d} P=\left( -1 \right)^{\left| \alpha \right|} \int_{\Omega} g\phi \mathrm{d} P,\qquad\forall\, \phi \in C_{0,F^{\infty}}^{\infty}\left( \Omega \right),$$
then we say that $D^{\alpha}f=g$.
\end{definition}
By Theorem \ref{qiangshiqiongjiehanshu}, we know that every non-empty open subset $\Omega$ of $\ell^{2}$ admits a $C_{F^{\infty}}^{\infty}\left( \Omega \right)$ exhaustion function. Then, similarly to the proof of (1), (2) and (3) in \cite[Proposition 3.2, pp. 24-25]{WYZ1}, we have:
\begin{lemma}\label{nini1f3ff3}
	Let $\alpha \in \mathbb{N}_{0}^{\left( \mathbb{N} \right)},$ and $f,\,D^{\alpha}f\in L^{p}\left( \Omega, P\right)$.
\begin{enumerate}
		\item  Suppose that for any $\phi \in C_{\mathcal{F}}^{\left| \alpha \right|}\left( \Omega \right)$ with $\mathrm{supp} \phi \s \Omega$ and such that $\phi$ and all its partial derivatives are bounded, the following holds:
	$$\int_{\Omega} f\delta^{\alpha} \phi \mathrm{d} P=\left( -1 \right)^{\left| \alpha \right|} \int_{\Omega} D^{\alpha}f\cdot \phi \mathrm{d} P.$$
		\item If $\mathrm{supp} f \s\Omega$, then the assumption that $\mathrm{supp} \phi \s \Omega$ in conclusion 1 can be dropped.
	\end{enumerate}
\end{lemma}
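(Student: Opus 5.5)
The plan is to derive conclusion 1 from the defining identity (valid for $\phi\in C_{0,F^{\infty}}^{\infty}(\Omega)$) by approximation, and then to deduce conclusion 2 from conclusion 1 with a cutoff built from the exhaustion function $\Psi$ of Theorem \ref{qiangshiqiongjiehanshu}. Let $p'$ denote the conjugate exponent of $p$. Since $f,D^{\alpha}f\in L^p(\Omega,P)$, it suffices to produce $\phi_n\in C_{0,F^{\infty}}^{\infty}(\Omega)$ with $\phi_n\to\phi$ and $\delta^{\alpha}\phi_n\to\delta^{\alpha}\phi$ in $L^{p'}(\Omega,P)$ (in the weak-$*$ sense when $p=1$). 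For this we want a.e. convergence together with a uniform dominating function.

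\textbf{Step 1 (cutoff).} Since $\mathrm{supp}\,\phi\s\Omega$ and $\Psi$ is an exhaustion function, I expect $\mathrm{supp}\,\phi\subset\Omega_{t}$ for some $t$. I would verify this through the sets $\Omega_t$ together with \eqref{gx2401133}; if it fails, I would enlarge $t$ using the uniform inclusion radius of $\mathrm{supp}\,\phi$.

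Set $\rho\triangleq\mathcal{I}_{t+1}(\Psi)$. By Lemma \ref{fuheguji}, $\rho\in C_{F^{\infty}}^{\infty}(\Omega)\cap C_{\mathscr{F}}^{\infty}(\Omega,loc)$. Moreover $\rho\equiv1$ on a neighbourhood of $\Omega_{t}$, and $\mathrm{supp}\,\rho\subset\Omega_{t+2}\s\Omega$. Every approximant below will be multiplied by $\rho$. This keeps the supports uniformly inside $\Omega$, and it does not change the limit, because $\rho\phi=\phi$.

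\textbf{Step 2 (smoothing $\phi$).} Here I would use the dimension-reduction approach of \cite{WYZ1} in two stages.

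First, set
$$\phi^{(n)}(\textbf{x})\triangleq\int\phi(\textbf{x}_n,\textbf{y}^n)\,\mathrm{d}P^{\widehat{1,\ldots,n}}(\textbf{y}^n).$$
This is a bounded function of the first $n$ variables. Its derivatives in those variables are the averages of the corresponding derivatives of $\phi$, and they are continuous because $\phi\in C^{|\alpha|}_{\mathcal{F}}$ with bounded derivatives. Second, I mollify $\phi^{(n)}$ in $\mathbb{R}^n$ with a standard mollifier of small radius $\varepsilon_n$, obtaining a $C^\infty$ cylinder function. Finally I multiply by $\rho$ and call the result $\phi_n$.

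By Lemma \ref{chengjiguji}, $\phi_n\in C^{\infty}_{0,F^{\infty}}(\Omega)$: the cylinder factor involves only finitely many variables, so the $F$-sums are finite. For fixed $\alpha$, $\delta^{\alpha}$ involves only finitely many coordinates $i$. Hence
$$|\delta^{\alpha}\phi_n|\le C\sum_{\beta\le\alpha}\sup|D^{\beta}\phi|\cdot\Bigl(1+\sum_{i\le k}|x_i|/a_i^2\Bigr)^{|\alpha|},$$
where $k$ is the largest index with $\alpha_k\neq0$. The right-hand side lies in $L^{p'}(P)$, since Gaussian moments are finite, and it is independent of $n$.

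\textbf{Step 3 (a.e. convergence).} This is the main obstacle. We need $D^{\beta}\phi_n\to D^{\beta}\phi$ $P$-a.e. for $\beta\le\alpha$. Continuity of $D^\beta\phi$ holds only along finite-dimensional slices, and $\phi^{(n)}$ averages over the tail variables.

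I would argue with the martingale convergence theorem. The function $D^{\beta}\phi^{(n)}$ is the conditional expectation of $D^{\beta}\phi$ given the first $n$ coordinates. (This uses the product structure $P=\mathcal{N}^n\times P^{\widehat{1,\ldots,n}}$ and differentiation under the integral, which is justified by boundedness and dominated convergence.) Hence $D^{\beta}\phi^{(n)}\to D^{\beta}\phi$ $P$-a.e.

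Since only finitely many $\beta$ are involved, I can then choose $\varepsilon_n$ so small that the finite-dimensional mollification changes each $D^\beta\phi^{(n)}$ by at most $2^{-n}$ in $L^{p'}$. A diagonal subsequence then gives a.e. convergence, and the identity for $\phi$ follows by dominated convergence.

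\textbf{Step 4 (conclusion 2).} Suppose $\mathrm{supp} f\s\Omega$. Choose $t$ with $\mathrm{supp} f\subset\Omega_t$ and let $\rho$ be as in Step 1. For a general admissible $\phi$, the function $\rho\phi$ satisfies the hypotheses of conclusion 1. Since $\rho\equiv1$ on a neighbourhood $U$ of $\mathrm{supp}\, f$, all derivatives of $\rho$ vanish on $U$, so $f\,\delta^{\alpha}(\rho\phi)=f\,\delta^{\alpha}\phi$.

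It remains to check that $D^{\alpha}f=0$ a.e. outside $\mathrm{supp}\,f$. For this, test the definition against $\phi\in C_{0,F^{\infty}}^{\infty}$ supported in balls $B(\textbf{x},r)\subset\Omega_f$, and use the density of such functions in $L^{p'}$ of that ball. This gives $D^{\alpha}f\cdot\rho\phi=D^{\alpha}f\cdot\phi$, and conclusion 2 follows.
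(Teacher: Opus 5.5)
Your proposal is correct and follows essentially the route the paper indicates when it defers to \cite[Proposition 3.2]{WYZ1}: a cutoff $\mathcal{I}_{t+1}(\Psi)$ built from the $C^{\infty}_{F^{\infty}}$ exhaustion function of Theorem \ref{qiangshiqiongjiehanshu}, dimension reduction plus finite-dimensional mollification of $\phi$, dominated convergence, and, for conclusion 2, the vanishing of $D^{\alpha}f$ off $\mathrm{supp} f$ as in Lemma \ref{1f3ff32}. Two small repairs are needed: first, $\mathrm{supp}\,\phi\subset\Omega_t$ holds because $\Psi\in C^{1}_{F}(\Omega)$ is bounded on uniformly included sets (continuity plus the exhaustion property alone would not give this); second, for $p=1$ you should drop the $L^{p'}$ arguments and instead use that all supports lie in a fixed ball, so $\delta^{\alpha}\phi_n$ is uniformly bounded, and invoke the fundamental lemma \cite[Lemma 3.1]{WYZ1} rather than $L^{\infty}$-density.
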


\begin{lemma}\label{1f3ff32}
Let $\alpha \in \mathbb{N}_{0}^{\left( \mathbb{N} \right)},$ and $f,\,D^{\alpha}f\in L^{p}\left( \Omega, P \right)$. If $\mathrm{supp} f \s \Omega$, then
	$$\mathrm{supp} D^{\alpha}f\subset \mathrm{supp} f.$$
\end{lemma}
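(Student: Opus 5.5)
Let $S\triangleq\mathrm{supp} f$. Since $S$ is closed in $\Omega$, it suffices to show that $\Omega\setminus S\subset \Omega_{D^{\alpha}f}$. Concretely, for every ball $B(\textbf{x},r)\subset\Omega$ with $\int_{B(\textbf{x},r)}|f|^p\,\mathrm{d}P=0$, we will show that $D^{\alpha}f=0$ $P$-a.e. on a neighbourhood of each point of $B(\textbf{x},r)$. Then $\Omega_f\subset\Omega_{D^\alpha f}$, and taking complements and closures gives $\mathrm{supp} D^{\alpha}f\subset\mathrm{supp} f$.

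The approach is by duality, using Lemma~\ref{nini1f3ff3}. Fix $\textbf{x}_0\in\Omega_f$ and choose $\rho>0$ such that $B(\textbf{x}_0,2\rho)\subset B(\textbf{x},r)\subset\Omega$ with $f=0$ a.e. on $B(\textbf{x},r)$. Let $\phi$ be any function in $C_{0,F^{\infty}}^{\infty}(\Omega)$ whose support is contained in $B(\textbf{x}_0,\rho)$. We will establish this in the form: $\phi$ is $\chi(\|\cdot-\textbf{x}_0\|^2)\,g$ with $\chi$ a cutoff as in \eqref{tebieguji} and $g$ a test function, and by Lemma~\ref{chengjiguji} and Lemma~\ref{fuheguji} such products remain in $C_{0,F^{\infty}}^{\infty}(\Omega)$. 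The operator $\delta^{\alpha}$ is built from $\partial_{x_i}$ and multiplication by $x_i/a_i^2$, and both are local. Hence $\delta^{\alpha}\phi$ vanishes outside $\mathrm{supp}\phi\subset B(\textbf{x}_0,\rho)$. The definition of the weak derivative then gives
\[
(-1)^{|\alpha|}\int_{\Omega}D^{\alpha}f\cdot\phi\,\mathrm{d}P=\int_{\Omega}f\,\delta^{\alpha}\phi\,\mathrm{d}P=\int_{B(\textbf{x}_0,\rho)}f\,\delta^{\alpha}\phi\,\mathrm{d}P=0 .
\]
Note that $\mathrm{supp}\phi\stackrel{\circ}{\subset}\Omega$ holds automatically, so the hypothesis $\mathrm{supp} f\stackrel{\circ}{\subset}\Omega$ is needed only to apply part 2 of Lemma~\ref{nini1f3ff3} if we want to enlarge the class of test functions. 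We do not expect to need that.

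It remains to deduce from this that $D^{\alpha}f=0$ a.e. on $B(\textbf{x}_0,\rho/2)$. We take $h\in L^{p'}$ as the indicator of $B(\textbf{x}_0,\rho/2)$ times $\mathrm{sgn}(D^{\alpha}f)|D^{\alpha}f|^{p-1}$ truncated, or simply test against a dense class. By \cite[Corollary 3.2, p. 19]{WYZ1}, $C_{0,F^{\infty}}^{\infty}(B(\textbf{x}_0,\rho/2))$ is dense in $L^{q}(B(\textbf{x}_0,\rho/2),P)$ for every finite $q$. Extending these functions by zero gives elements of $C_{0,F^{\infty}}^{\infty}(\Omega)$ supported in $B(\textbf{x}_0,\rho)$. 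Since $D^{\alpha}f\in L^p\subset L^1$ and the integral vanishes for all such $\phi$, we conclude $D^{\alpha}f=0$ a.e. on $B(\textbf{x}_0,\rho/2)$.

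The main obstacle is this last density step when $p=1$. There $L^{\infty}$ is not the closure of the test functions, so a direct duality argument fails. We would handle it by approximating $\mathrm{sgn}(D^{\alpha}f)\chi_{B(\textbf{x}_0,\rho/2)}$ in $L^{1}(|D^{\alpha}f|\,\mathrm{d}P)$ by test functions uniformly bounded by $1$. Such approximants come from the $L^q$ density followed by composition with a bounded smooth truncation, which stays in the test class by Lemma~\ref{fuheguji}. Dominated convergence then gives $\int_{B(\textbf{x}_0,\rho/2)}|D^{\alpha}f|\,\mathrm{d}P=0$. Since $\textbf{x}_0\in\Omega_f$ is arbitrary, $\Omega_f\subset\Omega_{D^{\alpha}f}$, and this completes the proof.
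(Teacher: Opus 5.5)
Your proposal is correct and follows essentially the paper's route. You test against functions supported where $f$ vanishes, use the weak-derivative identity to get $\int_{\Omega} D^{\alpha}f\cdot\phi\,\mathrm{d}P=0$, and conclude that $D^{\alpha}f=0$ a.e. there. The only difference is that the paper does this in one step on the open set $\Omega\setminus\mathrm{supp} f$ and cites \cite[Lemma 3.1, p. 17]{WYZ1} for the vanishing, whereas you localize to balls in $\Omega_f$ and reprove that lemma from the density result, including the bounded-truncation argument for $p=1$.
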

\begin{proof}
	For any $\phi \in C_{0,F^{\infty}}^{\infty}\left( \Omega \setminus \mathrm{supp} f \right)$, we have
	$$\int_{\Omega \setminus \mathrm{supp} f} D^{\alpha}f\cdot \phi \mathrm{d} P=\int_{\Omega} D^{\alpha}f\cdot \phi \mathrm{d} P=\left( -1 \right)^{\left| \alpha \right|} \int_{\Omega} f\delta^{\alpha} \phi \mathrm{d} P=0.$$
	Then by \cite[Lemma 3.1, p. 17]{WYZ1}, we have
	$D^{\alpha}f=0,a.e.\text{ in } \Omega \setminus \mathrm{supp} f,$
	hence we have
	$\mathrm{supp}D^{\alpha}f\subset \mathrm{supp} f.$
	This completes the proof of Lemma \ref{1f3ff32}.
\end{proof}
\begin{definition}Denote the following set by $W^{m,p}\left( \Omega ,P \right) $,
	$$ \left\{ f\in L^{p}\left( \Omega ,P \right) :D^{\alpha}f\in L^{p}\left( \Omega ,P \right) ,\,\forall \,\alpha \in \mathbb{N}_{0}^{\left( \mathbb{N} \right)} ,\left| \alpha \right| \leqslant m,\text{ and } \sum_{\left| \alpha \right| \leqslant m} \int_{\Omega} \left| \textbf{a}^{\alpha} D^{\alpha}f \right|^{p} \mathrm{d} P<\infty \right\} .$$
	For any $f\in W^{m,p}\left( \Omega ,P \right)$, write
	$$\left| \left| f \right| \right|_{W^{m,p}\left( \Omega ,P \right)} \triangleq \left(\sum_{ \left| \alpha \right| \leqslant m} \int_{\Omega} \left| \textbf{a}^{\alpha} D^{\alpha}f \right|^{p} \mathrm{d} P\right)^{\frac{1}{p}} .$$
Let $W_{0,u}^{m,p}\left( \Omega ,P \right) \triangleq \left\{ f\in W^{m,p}\left( \Omega,P \right) :\mathrm{supp} f\s\Omega \right\}.$
\end{definition}
\begin{remark}
	Obviously $\mathscr{C}^{\infty}_{c} \subset W^{m,p}\left( \Omega,P \right)$, and the letter ``u" stands for ``uniformly included", which is given in Definition \ref{def of bounded contained}.
\end{remark}
\begin{remark}
For $f\in L^{p}\left( \Omega ,P \right)$ such that $D^{\alpha}f\in L^{p}\left( \Omega ,P \right) ,\forall \,\alpha \in \mathbb{N}_{0}^{\left( \mathbb{N} \right)}, \left| \alpha \right| \leqslant m$, we also have another definition of Sobolev norm as follows:
\begin{eqnarray}
	\left| \left| f \right| \right|_{\widetilde{W^{m,p}\left( \Omega ,P \right)}} \triangleq  \left(\int_{\Omega} \left| f \right| \mathrm{d} P+\sum_{k=1}^{m} \sum_{i_{1},i_{2},\cdots ,i_{k}=1}^{\infty} a_{i_{1}}^{p}a_{i_{2}}^{p}\cdots a_{i_{k}}^{p}\int_{\Omega} \left| \frac{\partial^{k} f}{\partial x_{i_{1}}\partial x_{i_{2}}\cdots \partial x_{i_{k}}} \right|^{p} \mathrm{d} P\right)^{\frac{1}{p}}.\label{sobolev norm}
\end{eqnarray}
	Obviously,
	$$\left| \left| f \right| \right|_{W^{m,p}\left( \Omega ,P \right)} \leqslant \left| \left| f \right| \right|_{\widetilde{W^{m,p}\left( \Omega ,P \right)}} \leqslant \sqrt[p]{m!} \left| \left| f \right| \right|_{W^{m,p}\left( \Omega ,P \right)},$$
	hence they are equivalent norms.
\end{remark}
\begin{remark}\label{20250920rem1}
	The sum appearing in \eqref{sobolev norm} coincides with (5.2.1) in \cite[p. 211]{Bog98} when $p=2$.
\end{remark}
\begin{proposition}\label{chengjijieduan}
	Let $f\in W^{m,p}\left( \Omega,P\right)$ and $g\in C_{\mathscr{F}}^{m,p}\left( \Omega \right)$. Then we have $fg\in W^{m,p}\left( \Omega,P \right)$, and there exists a constant $C\left( m,p \right)\in(0,+\infty)$ such that
	\begin{eqnarray}
		\left\| fg \right\|_{W^{m,p}\left( \Omega,P \right)} \leqslant C\left( m,p \right) \left\| g \right\|_{C_{\mathscr{F}}^{m,p}\left( \Omega \right)} \cdot \left\| f \right\|_{W^{m,p}\left( \Omega ,P \right)}.\label{budengshi}
	\end{eqnarray}
\end{proposition}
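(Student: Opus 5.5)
The plan has two parts. First I identify the weak derivatives of $fg$ through a Leibniz formula. Then I estimate their norms exactly as in the proof of Lemma \ref{chengjiguji}.

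\textbf{Leibniz rule.} For $|\alpha|\leqslant m$ I will show that
$$D^{\alpha}(fg)=\sum_{\gamma\leqslant\alpha}C_{\alpha}^{\gamma}\,D^{\gamma}f\cdot D^{\alpha-\gamma}g$$
in the weak sense. I argue by induction on $|\alpha|$, so it suffices to treat a single derivative $\alpha=\textbf{e}_i$ and then iterate.

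Fix a test function $\phi\in C_{0,F^{\infty}}^{\infty}(\Omega)$. The key identity is
$$g\,\delta_i\phi=\delta_i(g\phi)-\frac{\partial g}{\partial x_i}\,\phi .$$
The function $g\phi$ is not a test function in $C_{0,F^{\infty}}^{\infty}(\Omega)$, since $g$ is only in $C^m_{\mathcal F}$. However, it belongs to $C_{\mathcal F}^{m}(\Omega)$, it has $\mathrm{supp}(g\phi)\s\Omega$, and it is bounded together with its derivatives up to order $m$, because $g\in C^{m,p}_{\mathscr F}(\Omega)$ is bounded along with each $D^\beta g$. So Lemma \ref{nini1f3ff3}(1), which enlarges the admissible test class, applies to $D^{\textbf{e}_i}f$ with test function $g\phi$. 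This gives
$$\int_\Omega fg\,\delta_i\phi\,\mathrm dP=-\int_\Omega \Bigl(\frac{\partial f}{\partial x_i}g+f\frac{\partial g}{\partial x_i}\Bigr)\phi\,\mathrm dP .$$

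For higher orders I apply the same argument to $D^{\gamma}f\cdot D^{\beta}g$. This requires $D^\beta g$ with $|\beta|\leqslant m-1$ to have bounded derivatives up to the remaining order, which is again guaranteed by the supremum in the definition of $C^{m,p}_{\mathscr F}$. This step is where I expect the only real subtlety. Lemma \ref{nini1f3ff3} is stated for $|\alpha|$-times $\mathcal F$-differentiable test functions with bounded derivatives, so I must check that the products $D^{\beta}g\cdot\phi$ satisfy exactly those hypotheses, including Borel measurability and $\mathcal F$-continuity. Both properties pass to products.

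\textbf{Norm estimate.} This mirrors the computation in Lemma \ref{chengjiguji}, with the supremum over $\Omega$ replaced by integration against $P$. First, Lemma \ref{zuhejishu} and Lemma \ref{nver1} give the pointwise bound
$$|\textbf{a}^{\alpha}D^\alpha(fg)|^p\leqslant (k+1)^{kp-k}\bigl(C_k^{\lfloor k/2\rfloor}\bigr)^{kp}\sum_{\gamma\leqslant\alpha}|\textbf{a}^{\gamma}D^\gamma f|^p\,|\textbf{a}^{\alpha-\gamma}D^{\alpha-\gamma}g|^p,\qquad k=|\alpha|.$$

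Next I sum over $|\alpha|\leqslant m$. Reindexing by the pair $(\gamma,\beta)=(\gamma,\alpha-\gamma)$, each such pair occurs at most once, so
$$\sum_{|\alpha|\leqslant m}\sum_{\gamma\leqslant\alpha}|\textbf{a}^{\gamma}D^\gamma f|^p\,|\textbf{a}^{\beta}D^{\beta}g|^p\leqslant \Bigl(\sum_{|\beta|\leqslant m}|\textbf{a}^\beta D^\beta g|^p\Bigr)\sum_{|\gamma|\leqslant m}|\textbf{a}^\gamma D^\gamma f|^p .$$
The first factor is at most $\|g\|^p_{C^{m,p}_{\mathscr F}(\Omega)}$ pointwise. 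Integrating over $\Omega$ and applying Tonelli to the countable sums then yields \eqref{budengshi} with
$$C(m,p)=\sup_{0\leqslant k\leqslant m}\bigl((k+1)^{k-k/p}(C_k^{\lfloor k/2\rfloor})^{k}\bigr).$$

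It remains to check membership. Each $D^{\alpha}(fg)$ lies in $L^p$, since it is a finite sum of products of an $L^p$ function with a bounded function. The weighted sum is finite by the estimate just proved. Hence $fg\in W^{m,p}(\Omega,P)$.
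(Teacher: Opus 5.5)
Your argument is correct and has the same structure as the paper's proof. The Leibniz formula is justified through the enlarged test class of Lemma \ref{nini1f3ff3}, and the norm bound repeats the estimate of Lemma \ref{chengjiguji} with integration in place of the supremum, giving the same constant $C(m,p)$.

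The one real difference is how you verify \eqref{13f1f3f}. The paper does it in a single step for each $\alpha$. It moves all of $D^{\gamma}$ off $f$ onto the test function $D^{\alpha-\gamma}g\cdot\phi$, using Lemma \ref{nini1f3ff3} at order $|\gamma|$. It then expands $\delta^{\gamma}(D^{\alpha-\gamma}g\cdot\phi)$ by the Leibniz rule for the $\delta$'s. Finally it collapses the double sum with $\sum_{\gamma'\leqslant\alpha-\beta}(-1)^{|\gamma'|}C_{\alpha-\beta}^{\gamma'}=0$ for $\beta\neq\alpha$.

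Your induction avoids this combinatorics and only uses Lemma \ref{nini1f3ff3} at order one. In exchange, it silently relies on iterated weak derivatives agreeing with the paper's definition, which is stated directly for each multi-index. Specifically, you need two identities:
\begin{itemize}
\item $D^{\textbf{e}_i}(D^{\gamma}f)=D^{\gamma+\textbf{e}_i}f$ weakly, since this is what lets you apply the lemma to $D^{\gamma}f$ with test function $D^{\alpha-\gamma}g\cdot\phi$;
\item $D^{\alpha+\textbf{e}_i}(fg)=D^{\textbf{e}_i}\bigl(D^{\alpha}(fg)\bigr)$ weakly.
\end{itemize}

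Both follow from two facts. First, $\delta_i\phi\in C_{0,F^{\infty}}^{\infty}(\Omega)$ whenever $\phi$ is; the factor $x_i/a_i^{2}$ is harmless because $\mathrm{supp}\,\phi$ is bounded. Second, the operators $\delta_j$ commute, so $\delta^{\alpha+\textbf{e}_i}\phi=\delta^{\alpha}(\delta_i\phi)$. These are routine, but you should state them explicitly.
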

\begin{proof}
	We first note that
	\begin{eqnarray}
		D^{\alpha}\left( fg \right) =\sum_{\gamma \leqslant \alpha} C_{\alpha}^{\gamma}D^{\gamma}f\cdot D^{\alpha -\gamma}g,\qquad\forall \,\alpha \in \mathbb{N}_{0}^{\left( \mathbb{N} \right)}.\label{13f1f3f}
	\end{eqnarray}
	For $\alpha \in \mathbb{N}_{0}^{\left( \mathbb{N} \right)}$ with $\left| \alpha \right| \leqslant m$, it is clear that
	$$\sum_{\gamma \leqslant \alpha} C_{\alpha}^{\gamma}D^{\gamma}f\cdot D^{\alpha -\gamma}g\in L^{p}\left( \Omega ,P \right).$$
	Then for any $\phi \in C_{0,F^{\infty}}^{\infty}\left( \Omega \right)$, we have
	$$\begin{aligned}
		&\left( -1 \right)^{\left| \alpha \right|} \int_{\Omega} \sum_{\gamma \leqslant \alpha} C_{\alpha}^{\gamma}D^{\gamma}f\cdot D^{\alpha -\gamma}g\cdot \phi \mathrm{d} P\\
&\  =\left( -1 \right)^{\left| \alpha \right|} \sum_{\gamma \leqslant \alpha} C_{\alpha}^{\gamma}\int_{\Omega} D^{\gamma}f\cdot D^{\alpha -\gamma}g\cdot \phi \mathrm{d} P\\
&\  =\left( -1 \right)^{\left| \alpha \right|} \sum_{\gamma \leqslant \alpha} \left( -1 \right)^{\left| \gamma \right|} C_{\alpha}^{\gamma}\int_{\Omega} f\delta^{\gamma} \left( D^{\alpha -\gamma}g\cdot \phi \right) \mathrm{d} P\\
&\  =\left( -1 \right)^{\left| \alpha \right|} \sum_{\gamma \leqslant \alpha} \sum_{\gamma^{\prime} \leqslant \gamma} \left( -1 \right)^{\left| \gamma \right|} C_{\alpha}^{\gamma}C_{\gamma}^{\gamma^{\prime}}\int_{\Omega} f\left( D^{\gamma^{\prime}}D^{\alpha -\gamma}g\cdot \delta^{\gamma -\gamma^{\prime}} \phi \right) \mathrm{d} P\\
&\  =\left( -1 \right)^{\left| \alpha \right|} \int_{\Omega} f\sum_{\beta \leqslant \alpha} \sum_{\gamma^{\prime} \leqslant \alpha -\beta} \left( -1 \right)^{\left| \gamma^{\prime} \right| +\left| \beta \right|} C_{\alpha}^{\gamma^{\prime} +\beta}C_{\gamma^{\prime} +\beta}^{\gamma^{\prime}}\left( D^{\alpha -\beta}g\cdot \delta^{\beta} \phi \right) \mathrm{d} P\\
&\  =\left( -1 \right)^{\left| \alpha \right|} \int_{\Omega} f\sum_{\beta \leqslant \alpha} \sum_{\gamma^{\prime} \leqslant \alpha -\beta} \left( -1 \right)^{\left| \gamma^{\prime} \right| +\left| \beta \right|} C_{\alpha}^{\beta}C_{\alpha -\beta}^{\gamma^{\prime}}\left( D^{\alpha -\beta}g\cdot \delta^{\beta} \phi \right) \mathrm{d} P\\
&\  =\left( -1 \right)^{\left| \alpha \right|} \int_{\Omega} f\sum_{\beta \leqslant \alpha} \left( -1 \right)^{\left| \beta \right|} C_{\alpha}^{\beta}\left( D^{\alpha -\beta}g\cdot \delta^{\beta} \phi \right) \sum_{\gamma^{\prime} \leqslant \alpha -\beta} \left( -1 \right)^{\left| \gamma^{\prime} \right|} C_{\alpha -\beta}^{\gamma^{\prime}}\mathrm{d} P\\
&\  =\left( -1 \right)^{\left| \alpha \right|} \int_{\Omega} f\left( -1 \right)^{\left| \alpha \right|} C_{\alpha}^{\alpha}\left( D^{\alpha -\alpha}g\cdot \delta^{\alpha} \phi \right) \mathrm{d} P=\int_{\Omega} f\delta^{\alpha} \phi \mathrm{d} P,\end{aligned}$$
	where the second equality follows from $D^{\alpha -\gamma}g\cdot \phi \in C_{0,b}^{\left| \gamma \right|}\left( \Omega \right) ,\forall \gamma \leqslant \alpha$, and Lemma \ref{nini1f3ff3}. This shows that \eqref{13f1f3f} holds.
	
Then we have
\begin{eqnarray}
\label{20260420for1}\begin{aligned}\left\| fg \right\|_{W^{m,p}\left( \Omega ,P\right)}^{p}&=\sum_{k=0}^{m} \sum_{\left| \alpha \right| =k} \int_{\Omega} \left| \textbf{a}^{\alpha} D^{\alpha}\left( fg \right) \right|^{p} \mathrm{d} P
=\sum_{k=0}^{m} \sum_{\left| \alpha \right| =k} \int_{\Omega} \left\vert \textbf{a}^{\alpha} \sum_{\gamma \leqslant \alpha} C_{\alpha}^{\gamma}D^{\gamma}f\cdot D^{\alpha -\gamma}g \right\vert^{p} \mathrm{d} P\\
&\leqslant \sum_{k=0}^{m} \sum_{\left| \alpha \right| =k} \left( \prod_{i=1}^{\infty} \left( \alpha_{i} +1 \right)^{p-1} \prod_{i=1}^{\infty} \left| C_{\alpha_{i}}^{\left\lfloor \frac{\alpha_{i}}{2} \right\rfloor} \right|^{p} \right) \sum_{\gamma \leqslant \alpha} \int_{\Omega} \left| \textbf{a}^{\alpha} D^{\gamma}fD^{\alpha -\gamma}g \right|^{p} \mathrm{d} P\\
&\leqslant \sum_{k=0}^{m} \left( \left( k+1 \right)^{kp-k} \left\vert C_{k}^{\left\lfloor \frac{k}{2} \right\rfloor} \right\vert^{kp} \right) \sum_{\left| \alpha \right| =k} \sum_{\gamma \leqslant \alpha} \int_{\Omega} \left| \textbf{a}^{\alpha} D^{\gamma}fD^{\alpha -\gamma}g \right|^{p} \mathrm{d} P\\
&=\sum_{k=0}^{m} \left( \left( k+1 \right)^{kp-k} \left\vert C_{k}^{\left\lfloor \frac{k}{2} \right\rfloor} \right\vert^{kp} \right) \sum_{\left| \alpha \right| =k} \sum_{\gamma \leqslant \alpha} \int_{\Omega} \left| \textbf{a}^{\gamma} D^{\gamma}f\textbf{a}^{\alpha -\gamma} D^{\alpha -\gamma}g \right|^{p} \mathrm{d} P\\
&\leqslant\sup_{0\leqslant k\leqslant m} \left( \left( k+1 \right)^{kp-k} \left\vert C_{k}^{\left\lfloor \frac{k}{2} \right\rfloor} \right\vert^{kp} \right) \sum_{\left| \alpha \right| \leqslant m} \sum_{\gamma \leqslant \alpha} \int_{\Omega} \left| \textbf{a}^{\gamma} D^{\gamma}f\textbf{a}^{\alpha -\gamma} D^{\alpha -\gamma}g \right|^{p} \mathrm{d} P\\
&\leqslant\sup_{0\leqslant k\leqslant m} \left( \left( k+1 \right)^{kp-k} \left\vert C_{k}^{\left\lfloor \frac{k}{2} \right\rfloor} \right\vert^{kp} \right) \sum_{\left| \alpha \right| \leqslant m} \sum_{|\beta|\leqslant m} \int_{\Omega} \left| \textbf{a}^{\alpha} D^{\alpha}f\textbf{a}^{\beta} D^{\beta}g \right|^{p} \mathrm{d} P\\
&\leqslant \sup_{0\leqslant k\leqslant m} \left( \left( k+1 \right)^{kp-k} \left\vert C_{k}^{\left\lfloor \frac{k}{2} \right\rfloor} \right\vert^{kp} \right) \cdot \left\| g \right\|_{C_{\mathscr{F}}^{m,p}\left( \Omega \right)}^{p} \cdot \left\| f \right\|_{W^{m,p}\left( \Omega ,P \right)}^{p} <\infty ,\end{aligned}
\end{eqnarray}
where the first inequality follows from Lemma \ref{zuhejishu} and Lemma \ref{nver1}, and the second inequality follows from the fact that $C_{n}^{\left\lfloor \frac{n}{2} \right\rfloor}$ is increasing in $n$, $\alpha_{i} \leqslant \left| \alpha \right| =k$ for all $i=1,2,\ldots$, and that at most $k$ of the $\alpha_{i}$ are non-zero. Thus we take
	$$C\left( m,p \right) \triangleq \sup_{0\leqslant k\leqslant m} \left( \left( k+1 \right)^{k-\frac{k}{p}} \left\vert C_{k}^{\left\lfloor \frac{k}{2} \right\rfloor} \right\vert^{k} \right) \in(0,+\infty),$$
	as the desired constant. This completes the proof of Proposition \ref{chengjijieduan}.
\end{proof}
\begin{proposition}\label{sguanghualuozaisobolev}
	Suppose $f\in C_{\mathcal{F}}^{m}\left( \Omega \right)$ satisfies $$\sum_{\left| \alpha \right| \leqslant m} \int_{\Omega} \left| \textbf{a}^{\alpha} D^{\alpha}f \right|^{p} \mathrm{d} P<\infty,$$ then $f\in W^{m,p}\left( \Omega ,P \right)$.
\end{proposition}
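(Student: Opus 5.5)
The plan is to show that for every $\alpha$ with $|\alpha|\le m$ the classical partial derivative $D^{\alpha}f$ (which is Borel measurable and in $L^p(\Omega,P)$ by hypothesis, since $\textbf{a}^{\alpha}>0$) is also the weak derivative in the sense of the definition above. Concretely, I will fix $\phi\in C_{0,F^{\infty}}^{\infty}(\Omega)$ and prove
$$\int_{\Omega} f\,\delta^{\alpha}\phi\,\mathrm{d}P=(-1)^{|\alpha|}\int_{\Omega} D^{\alpha}f\cdot\phi\,\mathrm{d}P .$$
The finiteness of $\sum_{|\alpha|\le m}\int_\Omega|\textbf{a}^{\alpha}D^{\alpha}f|^p\,\mathrm{d}P$ is then exactly the hypothesis, so $f\in W^{m,p}(\Omega,P)$ follows at once.

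By induction on $|\alpha|$, and since $\delta^{\alpha}=\delta_1^{\alpha_1}\delta_2^{\alpha_2}\cdots$ with commuting factors, it suffices to treat a single direction. That is, I need
$$\int_\Omega g\,\delta_i\psi\,\mathrm{d}P=-\int_\Omega \partial_{x_i}g\cdot\psi\,\mathrm{d}P$$
for $g=D^{\beta}f$ with $|\beta|<m$ and $\psi=\delta^{\gamma}\phi$. For the induction to close, $\delta^{\gamma}\phi$ must again be a test function with bounded support uniformly included in $\Omega$. It is $C^\infty$ with support inside $\mathrm{supp}\,\phi$. It is no longer bounded, because of the factors $x_i/a_i^2$, but it is bounded on bounded sets, and since $\mathrm{supp}\,\phi\s\Omega$ is bounded this is enough for every integrability check below.

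For the one-dimensional step I will use the decomposition $P=\mathcal{N}_{a_i}\times P^{\hat i}$, applied coordinatewise in the $i$-th variable, and Fubini. For fixed $\textbf{x}'$ in the remaining coordinates, the slice $\{t:(t,\textbf{x}')\in\Omega\}$ is open in $\mathbb{R}$, and on it $t\mapsto g$ is $C^1$ by $\mathcal{F}$-continuity of $g$ and $\partial_{x_i}g$. The function $t\mapsto\psi$ is $C^1$ with compact support inside that slice, since $\mathrm{supp}\,\phi\s\Omega$ keeps it a positive distance from the boundary. Writing
$$\delta_i\psi\cdot\frac{e^{-t^2/2a_i^2}}{\sqrt{2\pi}a_i}=\frac{d}{dt}\Bigl(\psi\,\frac{e^{-t^2/2a_i^2}}{\sqrt{2\pi}a_i}\Bigr),$$
a one-dimensional integration by parts on each component interval gives no boundary terms, hence the slice identity. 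Integrating over $\textbf{x}'$ against $P^{\hat i}$ then gives the claim.

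The main obstacle is justifying Fubini, that is, integrability of $g\,\delta_i\psi$ and $\partial_{x_i}g\cdot\psi$ over $\Omega$. Both factors $g$ and $\partial_{x_i}g$ are in $L^p\subset L^1$ because $P$ is a probability measure. The test-function factors $\psi$ and $\delta_i\psi$ are bounded on the bounded set $\mathrm{supp}\,\phi$. A secondary point is the measurability of the slices, which follows from Borel measurability of all functions involved. Alternatively, once the identity is checked for $\phi$ of this type, I would invoke Lemma~\ref{nini1f3ff3} as stated to pass to the general class.
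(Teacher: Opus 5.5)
Your proof is correct, but it localizes differently from the paper.

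The paper treats only $\alpha=(1,0,\ldots)$ explicitly. It takes the smooth exhaustion function $\eta$ of Theorem~\ref{qiangshiqiongjiehanshu} and chooses $t_0$ with $\phi^{-1}(\mathbb{R}\setminus\{0\})\s\Omega^{\circ}_{t_0}$. It then multiplies $f$ by a cutoff $\psi_0(\eta)$ that equals $1$ on $\{\eta<t_0+1\}$ and vanishes on $\{\eta>t_0+2\}$, so the product has support uniformly included in $\Omega$. Next it runs the whole-space integration-by-parts argument of \cite[Corollary 2.2]{WYZ1}. Finally it drops the cutoff, because $\psi_0(\eta)\equiv 1$ where $\phi\neq 0$.

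You never cut off $f$; all localization comes from the test function. Since $\mathrm{supp}\,\phi\s\Omega$, the set $\{t:(t,\textbf{x}')\in\mathrm{supp}\,\phi\}$ is a compact subset of each open slice. It meets only finitely many component intervals, so the Gaussian-weighted integration by parts on those intervals produces no boundary terms.

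Your route is more elementary and self-contained: it shows the exhaustion function is not needed for this proposition. Your induction with $\psi=\delta^{\gamma}\phi$, which is bounded on the bounded set $\mathrm{supp}\,\phi$, also spells out the higher-order case that the paper passes over as ``without loss of generality''. The paper's route, in exchange, is shorter because it reduces to an already established whole-space formula.

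The one point you leave implicit is that $\partial_{x_i}D^{\beta}f$ is again one of the derivatives controlled by the hypothesis. This holds because mixed partials commute on finite-dimensional slices when all derivatives up to order $m$ are $\mathcal{F}$-continuous. Alternatively, peel off the commuting $\delta_i$'s in the order that reproduces $D^{\alpha}f$.

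Your closing appeal to Lemma~\ref{nini1f3ff3} is unnecessary, since your $\phi$ already ranges over the test class $C^{\infty}_{0,F^{\infty}}(\Omega)$ used in the definition of the weak derivative.
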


\begin{proof}
We only need to prove that for any $\phi \in C_{0,F^{\infty}}^{\infty}\left( \Omega \right),\alpha \in \mathbb{N}_{0}^{\left( \mathbb{N} \right)} ,0<\left| \alpha \right| \leqslant m$, we have
\begin{eqnarray}
	\int_{\Omega} f\delta^{\alpha} \phi \mathrm{d} P=\left( -1 \right)^{\left| \alpha \right|} \int_{\Omega} D^{\alpha}f\cdot \phi \mathrm{d} P.\label{1f1ffg}
\end{eqnarray}
Without loss of generality, consider $\alpha =\left( 1,0,\cdots \right)$. By Theorem \ref{qiangshiqiongjiehanshu}, there exists an exhaustion function $\eta$ on $\Omega$ such that $\eta  \in C_{F^{\infty}}^{\infty}\left( \Omega \right) \bigcap C_{\ell^{2}}^{\infty}\left( \Omega ,\mathbb{R} \right) \bigcap C_{\mathscr{F}}^{\infty}\left( \Omega ,loc \right)$. By (3) of \cite[Proposition 2.4, p. 15]{WYZ1} (or (3) of \cite[Proposition 3.1, p. 10]{WZ}), there exists $t_0\in\mathbb{R}$ such that
$$
\phi^{-1}(\mathbb{R}\setminus\{0\})\s \Omega_{t_0}^{\circ}.
$$
Choose a function $\psi_0\in C^{\infty}(\mathbb{R};[0,1])$ such that $\psi_0(t)=1$ for all $t<t_0+1$ and $\psi_0(t)=0$ for all $t>t_0+2$. Then $f\cdot \psi_0(\eta),g\in C_0^1(\Omega)$ and by a similar proof as in \cite[Corollary 2.2, p. 11]{WYZ1}, we obtain the following equality
$$
 \int_{\Omega} f\cdot \psi_0(\eta)\cdot\delta_{1} \phi \mathrm{d} P=-\int_{\Omega} \frac{\partial (f\cdot \psi_0(\eta))}{\partial x_1}\cdot \phi \mathrm{d} P.
$$
Since $f\cdot \psi_0(\eta)\cdot\delta_{1} \phi=f \cdot\delta_{1} \phi$ and $\frac{\partial (f\cdot \psi_0(\eta))}{\partial x_1}\cdot \phi=\frac{\partial f}{\partial x_1}\cdot \phi$ on $\Omega$,  we have
$$
 \int_{\Omega} f\cdot  \delta_{1} \phi \mathrm{d} P=-\int_{\Omega} \frac{\partial  f }{\partial x_1}\cdot \phi \mathrm{d} P,
$$
which proves \eqref{1f1ffg}. This completes the proof of Proposition \ref{sguanghualuozaisobolev}.
\end{proof}
The following definition is taken from \cite[(2.3), p. 9]{WYZ1}, and the proof of Proposition \ref{Reduce diemension} follows as similar argument to its counterpart (the case of $p=2$) in \cite[Proposition 2.2, pp. 9-10]{WYZ1}.
\begin{definition}\label{20250920def1}
	For each $f\in L^1(\ell^2, P)$, define
	\begin{eqnarray*}
		f_n(\textbf{x}_n)&\triangleq &\int_{\ell^2(\mathbb{N}\setminus\{1,\dots,n\})} f(\textbf{x}_n,\textbf{x}^n)\,\,\mathrm{d}P^{\widehat{1,\dots,n}}(\textbf{x}^n),
	\end{eqnarray*}
	where $\textbf{x}^n=(x_{i})_{i\in \mathbb{N}\setminus\{1,\dots,n\}}\in \ell^2(\mathbb{N}\setminus\{1,\dots,n\})$ and $\textbf{x}_n=(x_1,\dots,x_n)\in \mathbb{R}^n$.
\end{definition}
\begin{proposition}\label{Reduce diemension}
	For each $f\in L^{p} ( \ell^{2} ,P )$, the function $f_n$ defined above can be regarded as a cylinder function on $\ell^2$ (depending only on the first $n$ variables) and satisfies the following properties:
	\begin{itemize}
		\item[$\mathrm{1}$.]\label{z1}
		$\left| \left| f_{n}\right|  \right|_{L^{p} ( \ell^{2},P )  }  \le \left| \left| f\right|  \right|_{L^{p} ( \ell^{2},P )  }  $;
		\item[$\mathrm{2}$.] \label{z2}
		$\lim\limits_{n\rightarrow \infty } \left| \left| f_{n}-f\right|  \right|_{L^{p} ( \ell^{2},P)  }  =0$;
		\item[$\mathrm{3}$.] \label{z3}
		$\lim\limits_{n\rightarrow \infty } \int_{\ell^2}\big|| f_{n}|^{p}  -| f|^{p}\big|\,\mathrm{d}P =0.$	
	\end{itemize}
\end{proposition}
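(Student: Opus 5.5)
We deduce all three properties from one structural fact: $f_n$ is a conditional expectation of $f$. Let $\mathscr{G}_n$ be the $\sigma$-algebra on $\ell^2$ generated by the coordinate maps $\textbf{x}\mapsto x_1,\dots,x_n$. Since $P=\mathcal{N}^n\times P^{\widehat{1,\dots,n}}$, Fubini's theorem first shows that $f_n(\textbf{x}_n)$ is defined for $\mathcal{N}^n$-a.e. $\textbf{x}_n$ and is a Borel function of $\textbf{x}_n$. Hence, viewed as a function on $\ell^2$, $f_n$ is $\mathscr{G}_n$-measurable.

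Fubini's theorem also gives, for every bounded $\mathscr{G}_n$-measurable $h$,
\[
\int_{\ell^2} f\,h\,\mathrm{d}P=\int_{\mathbb{R}^n}h(\textbf{x}_n)\int f(\textbf{x}_n,\textbf{x}^n)\,\mathrm{d}P^{\widehat{1,\dots,n}}(\textbf{x}^n)\,\mathrm{d}\mathcal{N}^n(\textbf{x}_n)=\int_{\ell^2} f_n\,h\,\mathrm{d}P .
\]
Thus $f_n=\mathbb{E}_P[f\mid\mathscr{G}_n]$.

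Conclusion 1 then follows directly. Jensen's inequality for the convex function $t\mapsto|t|^p$, applied to the probability measure $P^{\widehat{1,\dots,n}}$, gives $|f_n(\textbf{x}_n)|^p\le\int|f(\textbf{x}_n,\textbf{x}^n)|^p\,\mathrm{d}P^{\widehat{1,\dots,n}}$. Integrating this against $\mathcal{N}^n$ yields $\|f_n\|_{L^p}\le\|f\|_{L^p}$. In particular, $f\mapsto f_n$ is a linear contraction on $L^p(\ell^2,P)$.

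For conclusion 2 we use density together with this uniform contraction bound. Cylinder functions depending on finitely many coordinates are dense in $L^p(\ell^2,P)$: indeed, $C^\infty_{0,F^\infty}$ is dense by \cite[Corollary 3.2]{WYZ1}, and $\bigcup_n L^p(\mathscr{G}_n)$ is dense because $\bigvee_n\mathscr{G}_n=\mathscr{B}(\ell^2)$. Given $\varepsilon>0$, pick $g$ depending only on the first $N$ coordinates with $\|f-g\|_{L^p}<\varepsilon$. For $n\ge N$ we have $g_n=g$, so by conclusion 1,
\[
\|f_n-f\|_{L^p}\le\|(f-g)_n\|_{L^p}+\|g-f\|_{L^p}\le 2\varepsilon .
\]
(Alternatively, conclusion 2 is the $L^p$ martingale convergence theorem, which holds for $p\in[1,\infty)$.)

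The one point that needs care is the equality $\bigvee_n\mathscr{G}_n=\mathscr{B}(\ell^2)$. It holds because $\ell^2$ is separable, so its Borel sets are generated by the balls, and the map $\textbf{x}\mapsto\|\textbf{x}-\textbf{y}\|^2=\sum_i(x_i-y_i)^2$ is a countable limit of coordinate functions; hence every ball lies in $\bigvee_n\mathscr{G}_n$.

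Conclusion 3 follows from conclusion 2 by an elementary inequality. For $a,b\in\mathbb{R}$ the mean value theorem gives $\big||a|^p-|b|^p\big|\le p\,|a-b|\,(|a|+|b|)^{p-1}$. Integrating and applying Hölder's inequality with exponents $p$ and $p/(p-1)$ (the bound is trivial when $p=1$), we get
\[
\int\big||f_n|^p-|f|^p\big|\,\mathrm{d}P\le p\,\|f_n-f\|_{L^p}\,\big(\|f_n\|_{L^p}+\|f\|_{L^p}\big)^{p-1}\le p\,2^{p-1}\|f_n-f\|_{L^p}\|f\|_{L^p}^{p-1},
\]
where the last step uses conclusion 1. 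The right-hand side tends to $0$ by conclusion 2.
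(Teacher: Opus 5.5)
Your proof is correct and follows essentially the argument the paper defers to: the $p=2$ case in \cite[Proposition 2.2]{WYZ1}, i.e.\ the conditional-expectation technique the paper later compares with \cite{Bog98}. That argument uses Jensen/H\"older with Fubini for the contraction, density of cylinder functions together with $g_n=g$ for $n\ge N$ for the $L^p$ convergence, and the estimate $\bigl||a|^p-|b|^p\bigr|\le p\,|a-b|\,(|a|+|b|)^{p-1}$ plus H\"older for conclusion 3. The one slip is the appeal to \cite[Corollary 3.2]{WYZ1}: functions in $C^{\infty}_{0,F^{\infty}}(\ell^2)$ are not cylindrical, so that result does not by itself give density of cylinder functions. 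Your $\sigma$-algebra argument does supply it, as does the density of $\mathscr{C}^{\infty}_{c}$ in $L^p(\ell^2,P)$ from \cite[Proposition 2.4]{YZ}, which the paper itself uses.
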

\begin{proposition}\label{jiangweibijin}
	Let $f\in W^{m,p}_{0,u}\left( \ell^{2},P \right)$, then $f_{n}\in W^{m,p}\left( \ell^{2} ,P \right)$ and
	$$\lim_{n\rightarrow \infty} \left| \left| f_{n}-f \right| \right|_{W^{m,p}\left( \ell^{2} ,P \right)} =0.$$
	Here $f_{n}$ is the function given in Definition \ref{20250920def1}.
\end{proposition}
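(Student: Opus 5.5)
The plan is to prove that taking conditional expectations onto the first $n$ coordinates commutes with weak differentiation in those coordinates and annihilates it in the others. Then the convergence follows from Proposition \ref{Reduce diemension} applied to each weak derivative $D^\alpha f$, together with a tail estimate that makes the sum over all $|\alpha|\le m$ converge.

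\textbf{Step 1 (derivative formula).} Fix $\alpha\in\mathbb{N}_0^{(\mathbb{N})}$ with $|\alpha|\le m$, and write $g_\alpha\triangleq D^\alpha f$. We claim that
\[
D^\alpha f_n=(g_\alpha)_n \quad\text{if } \alpha_i=0 \text{ for all } i>n,\qquad D^\alpha f_n=0\quad\text{otherwise}.
\]
The second case is immediate, since $f_n$ does not depend on $x_i$ for $i>n$ and $\delta_i$ of a test function integrates to zero in that variable. For the first case, take a test function $\phi\in C_{0,F^{\infty}}^{\infty}(\ell^2)$. By Fubini and the product structure $P=\mathcal{N}^n\times P^{\widehat{1,\dots,n}}$ we have $\int f_n\,\delta^\alpha\phi\,\mathrm{d}P=\int f\,\delta^\alpha\phi_n\,\mathrm{d}P$, where $\phi_n$ is the cylinder average of $\phi$. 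Indeed, $\delta^\alpha$ involves only the first $n$ coordinates and therefore commutes with averaging over $\textbf{x}^n$. The function $\phi_n$ is smooth and bounded with bounded derivatives in the first $n$ variables, but its support need not be uniformly included in $\ell^2$. This is exactly where the hypothesis $\mathrm{supp} f\s\ell^2$ enters: by conclusion 2 of Lemma \ref{nini1f3ff3} the support restriction on the test function can be dropped. Hence $\int f\,\delta^\alpha\phi_n\,\mathrm{d}P=(-1)^{|\alpha|}\int g_\alpha\phi_n\,\mathrm{d}P=(-1)^{|\alpha|}\int (g_\alpha)_n\phi\,\mathrm{d}P$. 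One still has to check that $\phi_n$ lies in $C^{|\alpha|}_{\mathcal{F}}$ and is Borel; this follows by differentiating under the integral, since all derivatives of $\phi$ are bounded.

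\textbf{Step 2 (membership and convergence).} By Jensen, $\|(g_\alpha)_n\|_{L^p}\le\|g_\alpha\|_{L^p}$ (conclusion 1 of Proposition \ref{Reduce diemension}). Therefore $\|f_n\|^p_{W^{m,p}}\le\|f\|^p_{W^{m,p}}$ and $f_n\in W^{m,p}(\ell^2,P)$. Now split
\[
\|f_n-f\|^p_{W^{m,p}}=\sum_{|\alpha|\le m,\ \alpha\text{ supported in }\{1,\dots,n\}}\|\textbf{a}^\alpha((g_\alpha)_n-g_\alpha)\|_p^p+\sum_{\text{other }\alpha}\|\textbf{a}^\alpha g_\alpha\|_p^p .
\]
The second sum is the tail of the convergent series $\sum_{|\alpha|\le m}\|\textbf{a}^\alpha g_\alpha\|_p^p$ over indices that leave every finite set as $n\to\infty$, so it tends to $0$. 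For the first sum, each term tends to $0$ by conclusion 2 of Proposition \ref{Reduce diemension}. Each term is also dominated by $2^p\|\textbf{a}^\alpha g_\alpha\|_p^p$, which is summable in $\alpha$. Dominated convergence for series then gives that the whole sum tends to $0$.

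\textbf{Main obstacle.} The hardest point will be Step 1, namely justifying the integration by parts against the averaged test function $\phi_n$. This requires the right regularity class for Lemma \ref{nini1f3ff3}, and it requires that $f\,\delta^\alpha\phi_n$ be integrable. The latter holds because $\delta^\alpha\phi_n$ is a polynomial in $x_1,\dots,x_n$ times bounded functions, so it lies in every $L^q(P)$, and because $\mathrm{supp} f$ is bounded. If that lemma does not apply directly, I would first multiply by a cutoff $\psi_0(\|\cdot\|^2)$ that equals $1$ near $\mathrm{supp} f$. By Lemma \ref{fuheguji} and Lemma \ref{1f1f13f1f3} this cutoff is smooth with bounded $\mathscr{F}$-norms, which restores the uniform support condition. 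The cutoff does not change the integrals, since it equals $1$ on $\mathrm{supp} f$ and, by Lemma \ref{1f3ff32}, also on $\mathrm{supp}\,g_\alpha$.
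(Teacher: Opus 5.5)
Your proposal is correct and follows essentially the same route as the paper. You establish $D^{\alpha}f_n=(D^{\alpha}f)_n$ (or $0$ when $\alpha$ charges a coordinate beyond $n$) via Fubini, the product structure of $P$, and conclusion 2 of Lemma \ref{nini1f3ff3}, then finish with the $L^p$-contraction and a tail estimate over $|\alpha|\leqslant m$. The paper freezes $\textbf{x}^n$ and tests against $\phi(\cdot,\textbf{x}^n)$, and spells out dominated convergence as an $\varepsilon$--$n_0$--$n_1$ argument; these differences are immaterial, and you correctly use conclusion 2 of Proposition \ref{Reduce diemension}.
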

\begin{proof}
	Consider $\left( D^{\alpha}f \right)_{n}$ as the function given in Definition \ref{20250920def1}. For any $n\in\nn$, $\phi \in C_{0,F^{\infty}}^{\infty}\left( \ell^{2} \right)$, $\alpha \in \mathbb{N}_{0}^{\left( \mathbb{N} \right)} ,\left| \alpha \right| \leqslant m$, with $\alpha_{j} =0,\forall j\geqslant n+1$, we have
	$$\begin{aligned}
		&\int_{\ell^{2}} f_{n}\left( \textbf{x}_{n} \right) \delta^{\alpha} \phi \left( \textbf{x}_{n} ,\textbf{x}^{n} \right) \mathrm{d} P\left( \textbf{x}_{n} ,\textbf{x}^{n} \right)\\ &\  =\int_{\ell^{2}} \int_{\ell^{2} (\mathbb{N} \setminus \{ 1,\cdots ,n\} )} f(\textbf{x}_{n} ,\textbf{y}^{n} )\, \delta^{\alpha} \phi \left( \textbf{x}_{n} ,\textbf{x}^{n} \right) \, \mathrm{d} P^{\widehat{1,\cdots ,n}}(\textbf{y}^{n} )\mathrm{d} P\left( \textbf{x}_{n} ,\textbf{x}^{n} \right)\\ &\  =\int_{\ell^{2} (\mathbb{N} \setminus \{ 1,\cdots ,n\} )} \mathrm{d} P^{\widehat{1,\cdots ,n}}(\textbf{x}^{n} )\int_{\ell^{2}} f(\textbf{x}_{n} ,\textbf{y}^{n} )\, \delta^{\alpha} \phi \left( \textbf{x}_{n} ,\textbf{x}^{n} \right) \, \mathrm{d} P\left( \textbf{x}_{n} ,\textbf{y}^{n} \right)\\ &\  =\left( -1 \right)^{\left| \alpha \right|} \int_{\ell^{2} (\mathbb{N} \setminus \{ 1,\cdots ,n\} )} \mathrm{d} P^{\widehat{1,\cdots ,n}}(\textbf{x}^{n} )\int_{\ell^{2}} D^{\alpha}f(\textbf{x}_{n} ,\textbf{y}^{n} )\, \phi \left( \textbf{x}_{n} ,\textbf{x}^{n} \right) \, \mathrm{d} P\left( \textbf{x}_{n} ,\textbf{y}^{n} \right)\\ &\  =\left( -1 \right)^{\left| \alpha \right|} \int_{\ell^{2}} \left( D^{\alpha}f \right)_{n} (\textbf{x}_{n} )\, \phi \left( \textbf{x}_{n} ,\textbf{x}^{n} \right) \, \mathrm{d} P\left( \textbf{x}_{n} ,\textbf{x}^{n} \right) ,\end{aligned}$$
	where the third equality follows from Lemma \ref{nini1f3ff3}.
	
	When there exists $j>n$ such that $\alpha_{j} =0$, we have
	$$\begin{aligned}&\int_{\ell^{2}} f_{n}\left( \textbf{x}_{n} \right) \delta^{\alpha} \phi \left( \textbf{x}_{n} ,\textbf{x}^{n} \right) \mathrm{d} P\left( \textbf{x}_{n} ,\textbf{x}^{n} \right)\\ &\  =\int_{\ell^{2} \left( \mathbb{N} \setminus \left\{ j \right\} \right)} f_{n}\left( \textbf{x}_{n} \right) \mathrm{d} P^{\widehat{j}}\left( \textbf{x}_{n} ,\textbf{x}^{n} \right) \int_{\mathbb{R}} \frac{\delta_{j}^{\alpha_{j}} \delta_{\alpha \setminus \left\{ \alpha_{j} \right\}} \phi \left( \textbf{x}_{n} ,\textbf{x}^{n} \right)}{\sqrt{2\pi} a_{j}} e^{-\frac{x_{j}^{2}}{2a_{j}^{2}}}\mathrm{d} x_{j}=0,\ \end{aligned}$$
	where the last equality follows from the Newton-Leibniz formula.
	Hence we have
	\begin{eqnarray}
		D^{\alpha}f_{n}=\begin{cases}\left( D^{\alpha}f \right)_{n} ,&\alpha_{j} =0,\forall j\geqslant n+1\\ 0,&\text{otherwise}\end{cases}.\label{zhouwenhui}
	\end{eqnarray}
	
	By Proposition \ref{Reduce diemension}, we have
	$$\begin{aligned}
		&\left| \left| f_{n} \right| \right|_{W^{m,p}\left( \ell^{2} ,P \right)}^{p} =\sum_{k=0}^{m} \sum_{\left| \alpha \right| =k} \left| \textbf{a}^{\alpha} \right|^{p} \int_{\ell^{2}} \left| D^{\alpha}f_{n} \right|^{p} \mathrm{d} P\\ &\  \leqslant \sum_{k=0}^{m} \sum_{\left| \alpha \right| =k} \left| \textbf{a}^{\alpha} \right|^{p} \int_{\ell^{2}} \left| \left( D^{\alpha}f \right)_{n} \right|^{p} \mathrm{d} P\\ &\  \leqslant \sum_{k=0}^{m} \sum_{\left| \alpha \right| =k} \left| \textbf{a}^{\alpha} \right|^{p} \int_{\ell^{2}} \left| D^{\alpha}f \right|^{p} \mathrm{d} P\\ &\  =\left| \left| f \right| \right|_{W^{m,p}\left( \ell^{2} ,P \right)}^{p} <\infty ,\end{aligned}$$
which implies that $f_{n}\in W^{m,p}\left( \ell^{2},P \right).$
	For any $n\in\nn$, we have
	$$\begin{aligned}
		&\left\vert \left| f_{n}-f \right| \right\vert_{W^{m,p}\left( \ell^{2} ,P \right)}^{p} =\sum_{k=0}^{m} \sum_{\left| \alpha \right| =k} \left| \textbf{a}^{\alpha} \right|^{p} \int_{\ell^{2}} \left| D^{\alpha}\left( f_{n}-f \right) \right|^{p} \mathrm{d} P\\
&\  =\sum_{k=0}^{m} \sum_{\begin{gathered}\left| \alpha \right| =k\\ \alpha_{j} =0,\forall j>n\end{gathered}} \left| \textbf{a}^{\alpha} \right|^{p} \int_{\ell^{2}} \left| D^{\alpha}\left( f_{n}-f \right) \right|^{p} \mathrm{d} P+\sum_{k=0}^{m} \sum_{\begin{gathered}\left| \alpha \right| =k\\ \exists j>n \text{ s.t } \alpha_{j} \neq 0\end{gathered}} \left| \textbf{a}^{\alpha} \right|^{p} \int_{\ell^{2}} \left| D^{\alpha}f \right|^{p} \mathrm{d} P\\
&\ \leq\sum_{k=0}^{m} \sum_{\begin{gathered}\left| \alpha \right| =k\\ \alpha_{j} =0,\forall j>n\end{gathered}} \left| \textbf{a}^{\alpha} \right|^{p} \int_{\ell^{2}} \left| \left( D^{\alpha}f \right)_{n} -D^{\alpha}f \right|^{p} \mathrm{d} P\\
&\ +\sum_{k=0}^{m} \sum_{\begin{gathered}\left| \alpha \right| =k\\ \exists j>n \text{ s.t } \alpha_{j} \neq 0\end{gathered}} 2^p\left| \textbf{a}^{\alpha} \right|^{p} \int_{\ell^{2}} \left| D^{\alpha}f \right|^{p} \mathrm{d} P.\end{aligned}$$
Given $\varepsilon>0,$ since
$$\sum_{k=0}^{m} \sum_{\left| \alpha \right| =k} \left| \textbf{a}^{\alpha} \right|^{p} \int_{\ell^{2}} \left| D^{\alpha}f \right|^{p} \mathrm{d} P<\infty,$$
$$\lim_{n\rightarrow \infty} \left( \sum_{k=0}^{m} \sum_{\begin{gathered}\left| \alpha \right| =k\\ \exists j>n \text{ s.t } \alpha_{j} \neq 0\end{gathered}} \left| \textbf{a}^{\alpha} \right|^{p} \int_{\ell^{2}} \left| D^{\alpha}f \right|^{p} \mathrm{d} P \right) =0.$$
and hence there exists $n_0\in\mathbb{N}$ such that for any $n\geq n_0$, we have
$$
\sum_{k=0}^{m} \sum_{\begin{gathered}\left| \alpha \right| =k\\ \exists j>n_0 \text{ s.t } \alpha_{j} \neq 0\end{gathered}} 2^p\left| \textbf{a}^{\alpha} \right|^{p} \int_{\ell^{2}} \left| D^{\alpha}f \right|^{p} \mathrm{d} P <\frac{\varepsilon}{2}.
$$
On the other hand, by Lemma \ref{nver1} and the conclusion 1 of Proposition \ref{Reduce diemension}, we have
$$
\int_{\ell^{2}} \left| \left( D^{\alpha}f \right)_{n} -D^{\alpha}f \right|^{p} \mathrm{d} P  \int_{\ell^{2}} \left| \left( D^{\alpha}f \right)_{n} \right|^{p} \mathrm{d} P+\int_{\ell^{2}} \left| D^{\alpha}f \right|^{p} \mathrm{d} P  \leqslant 2^{p}  \int_{\ell^{2}} \left| D^{\alpha}f \right|^{p} \mathrm{d} P,
$$
for any $\alpha\in \mathbb{N}_{0}^{\left( \mathbb{N} \right)},$ and $n\in\mathbb{N}$. By the conclusion 3 of Proposition \ref{Reduce diemension},
$$\lim_{n\rightarrow \infty} \left( \sum_{k=0}^{m} \sum_{\begin{gathered}\left| \alpha \right| =k\\ \alpha_{j} =0,\forall j>n_0\end{gathered}} \left| \textbf{a}^{\alpha} \right|^{p} \int_{\ell^{2}} \left| \left( D^{\alpha}f \right)_{n} -D^{\alpha}f \right|^{p} \mathrm{d} P \right)=0,
$$
and hence there exists $n_1\geq n_0$ such that for any $n\geq n_1$, it holds that
$$ \left( \sum_{k=0}^{m} \sum_{\begin{gathered}\left| \alpha \right| =k\\ \alpha_{j} =0,\forall j>n_0\end{gathered}} \left| \textbf{a}^{\alpha} \right|^{p} \int_{\ell^{2}} \left| \left( D^{\alpha}f \right)_{n} -D^{\alpha}f \right|^{p} \mathrm{d} P \right)<\frac{\varepsilon}{2}.
$$
Therefore, for any $n\geq n_1$, we have
$$\begin{aligned}
		&\left\vert \left| f_{n}-f \right| \right\vert_{W^{m,p}\left( \ell^{2} ,P \right)}^{p} \\
&\ \leq\sum_{k=0}^{m} \sum_{\begin{gathered}\left| \alpha \right| =k\\ \alpha_{j} =0,\forall j>n_0\end{gathered}} \left| \textbf{a}^{\alpha} \right|^{p} \int_{\ell^{2}} \left| \left( D^{\alpha}f \right)_{n} -D^{\alpha}f \right|^{p} \mathrm{d} P\\
 &\ +\sum_{k=0}^{m} \sum_{\begin{gathered}\left| \alpha \right| =k\\ \exists j>n_0 \text{ s.t } \alpha_{j} \neq 0\end{gathered}} 2^p\left| \textbf{a}^{\alpha} \right|^{p} \int_{\ell^{2}} \left| D^{\alpha}f \right|^{p} \mathrm{d} P\\
&\ <\varepsilon,\end{aligned}$$
which implies that
	$$\lim_{n\rightarrow \infty} \left\vert \left| f_{n}-f \right| \right\vert_{W^{m,p}\left( \ell^{2} ,P \right)}^{p} =0.$$
This completes the proof of Proposition \ref{jiangweibijin}.
\end{proof}

\begin{proposition}\label{quankongjiandebijin}
	$W_{0,u}^{m,p}\left( \ell^{2},P \right)$ is dense in $W^{m,p}\left( \ell^{2} ,P \right)$.
\end{proposition}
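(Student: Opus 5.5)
We approximate a given $f\in W^{m,p}(\ell^2,P)$ by the truncations $X_nf$, where $\{X_n\}_{n=1}^{\infty}$ is the compactly truncated sequence of Proposition \ref{230215Th1}. This is exactly the setting for which the variant built from $P'$ was designed.

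\textbf{Step 1: membership of the truncations.} Part (2) of Proposition \ref{230215Th1} gives
\[
\sup_{n}\|X_n\|_{C_{\mathscr{F}}^{m,p}(\ell^2)}<\infty,
\]
because $\sum_{|\alpha|=k}|\textbf{a}^\alpha D^\alpha X_n|^p$ is dominated by the full multi-index sum $\sum_{i_1,\dots,i_k}a_{i_1}^p\cdots a_{i_k}^p|\partial^k X_n|^p$. Proposition \ref{chengjijieduan} then yields $X_nf\in W^{m,p}(\ell^2,P)$ together with a bound uniform in $n$. By part (1), $X_n^{-1}(\mathbb{R}\setminus\{0\})$ is contained in a compact, hence bounded, set. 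Since $\Omega=\ell^2$, every bounded set is uniformly included in $\ell^2$. Hence $\mathrm{supp}(X_nf)\s\ell^2$ and $X_nf\in W^{m,p}_{0,u}(\ell^2,P)$.

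\textbf{Step 2: expansion of the error.} By the Leibniz formula \eqref{13f1f3f},
\[
D^\alpha(X_nf)-D^\alpha f=(X_n-1)D^\alpha f+\sum_{0\neq\gamma\le\alpha}C_\alpha^\gamma D^{\alpha-\gamma}f\cdot D^\gamma X_n .
\]
We weight by $\textbf{a}^\alpha=\textbf{a}^{\alpha-\gamma}\textbf{a}^\gamma$ and apply Lemma \ref{nver1}, exactly as in the chain of inequalities \eqref{20260420for1}. This gives
\[
\|X_nf-f\|^p_{W^{m,p}}\le C(m,p)\Big(\sum_{|\alpha|\le m}\int|X_n-1|^p|\textbf{a}^\alpha D^\alpha f|^p\,\mathrm{d}P+\int\sum_{|\beta|\le m}|\textbf{a}^\beta D^\beta f|^p\cdot G_n\,\mathrm{d}P\Big),
\]
where
\[
G_n\triangleq\sum_{1\le|\gamma|\le m}|\textbf{a}^\gamma D^\gamma X_n|^p .
\]
The key point is to keep the $X_n$-factor inside the integral, rather than taking its supremum as Proposition \ref{chengjijieduan} does.

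\textbf{Step 3: dominated convergence.} The function $F\triangleq\sum_{|\beta|\le m}|\textbf{a}^\beta D^\beta f|^p$ belongs to $L^1(P)$ by the definition of $W^{m,p}$. Both $|X_n-1|^p$ and $G_n$ are uniformly bounded in $n$ and $\textbf{x}$: for $G_n$ this is part (2) of Proposition \ref{230215Th1}, and for $X_n$ it follows from $0\le X_n\le1$, since $X_n=H\circ g$ with $H$ valued in $[0,1]$. Part (3) of Proposition \ref{230215Th1} gives $X_n\to1$ and $G_n\to0$ pointwise, and this convergence holds on the set $K$.

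The main obstacle is that $K$ was shown to have full measure only for $P'$, through \eqref{20260418for1}, while the dominated convergence theorem here is applied with respect to $P$. The statement of Proposition \ref{230215Th1}(3) claims $P$-a.e. convergence, and I would verify this directly. First, $\int\sum_i x_i^2/c_i\,\mathrm{d}P=\sum_i a_i^2/c_i$. This sum is finite because $a_i^2/c_i\le(\sup_j a_j)\,a_i/c_i$. Consequently $P(K)=1$ as well, and the argument goes through. If that check failed, one could instead choose the sequence $c_i$ so that both series converge.

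With $P(K)=1$ in hand, the dominated convergence theorem gives $\|X_nf-f\|_{W^{m,p}(\ell^2,P)}\to0$, which proves the density claimed in Proposition \ref{quankongjiandebijin}.
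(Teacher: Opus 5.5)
Your argument is correct, but it takes a different route from the paper. The paper does not use the compactly truncated sequence here. It uses the radial cutoffs $\Psi_n(\textbf{x})=\psi(\|\textbf{x}\|-n)$, with $\psi\equiv 1$ on $(-\infty,0]$ and $\psi\equiv 0$ on $[1,\infty)$. Their uniform $C^{m,p}_{\mathscr{F}}$-bounds come from Lemmas \ref{fuheguji} and \ref{1f1f13f1f3}; the non-smoothness of $\|\cdot\|$ at the origin does no harm, since $\Psi_n\equiv 1$ near $\textbf{0}$. The paper then runs essentially your Steps 2--3, keeping the factor $\sum_{|\beta|\le m}|\textbf{a}^\beta D^\beta(\Psi_n-1)|^p$ inside the integral and applying dominated convergence.

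The radial cutoff buys economy. Because $\Psi_n\equiv 1$ on $B_n$, both $\Psi_n\to 1$ and the vanishing of all weighted derivative sums of $\Psi_n-1$ hold at every point, so there is no exceptional set to control. Bounded support is also all that membership in $W^{m,p}_{0,u}(\ell^2,P)$ requires.

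Your route invokes the heavier Proposition \ref{230215Th1} (Lee's Hilbert--Schmidt bounds for convolution against $P'$) and needs $P(K)=1$, since convergence is only almost everywhere. In return, it produces approximants with compact support, which is a stronger conclusion. It is in fact the argument of Lemma \ref{jinhuaxuliebijin} specialized to $\Omega=\ell^2$, without the preliminary smoothing step.

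One correction to your framing: the proof of Proposition \ref{230215Th1} does assert $P(K)=\lim_{n}P(K_n)=1$, alongside $P'(K)=1$ in \eqref{20260418for1}, so this was never a real obstacle. Your direct check via $\int\sum_i x_i^2/c_i\,\mathrm{d}P=\sum_i a_i^2/c_i\le(\sup_j a_j)\sum_i a_i/c_i<\infty$ is nonetheless correct.
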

\begin{proof}
	Take $\psi\in C^{\infty}\left( \mathbb{R};[0,1]\right)$ satisfying
	\begin{eqnarray}\label{20260420for2}
\psi \left( t \right) =1,\,\forall \,t\leqslant 0,\qquad\psi \left( t \right) =0,\,\forall \,t\geqslant 1.
\end{eqnarray}
	By Lemma \ref{fuheguji} and Lemma \ref{1f1f13f1f3}, let
	$$\Psi_{n} \left( \textbf{x} \right) \triangleq \psi \left( \left| \left| \textbf{x} \right| \right|  -n \right) \in C_{0,\mathscr{F}}^{m,p}\left( \ell^{2} \right) \bigcap C_{\ell^{2}}^{\infty}\left( \ell^{2}\right),\qquad\forall\,\textbf{x}\in\ell^2,\,n\in\mathbb{N}.$$
Then it holds that
	\begin{eqnarray}
		0\leqslant \Psi_{n} \leqslant 1,\;\lim_{n\rightarrow \infty} \Psi_{n} =1,\;
		\lim_{n\rightarrow \infty} \sum_{k=1}^{m} \sum_{|\alpha |=k} \left| \textbf{a}^{\alpha} D^{\alpha}\Psi_{n} \right|^{p} =0,\;
		\sup_{n\geqslant 1} \left| \left| \Psi_{n} \right| \right|_{C_{\mathscr{F}}^{m,p}\left( \ell^{2} \right)} <\infty .\label{1f13ff3}
	\end{eqnarray}
By Proposition \ref{chengjijieduan}, for $f\in W^{m,p}\left( \ell^{2} ,P \right)$, we have $\Psi_{n} \cdot f \in W_{0,u}^{m,p}\left( \ell^{2} ,P \right)$ and, similar to the estimate in \eqref{20260420for1}, we obtain
	$$\begin{aligned}&\left\| \Psi_{n} f-f \right\|_{W^{m,p}\left( \ell^{2} ,P \right)}^{p} \\ &\  \leqslant \sup_{0\leqslant k\leqslant m} \left( \left( k+1 \right)^{kp-k} \left\vert C_{k}^{\left\lfloor \frac{k}{2} \right\rfloor} \right\vert^{kp} \right) \int_{\ell^{2}} \left(\sum_{\left| \alpha \right| \leqslant m}\left| \textbf{a}^{\alpha} D^{\alpha}f \right|^{p}\right)\cdot\left( \sum_{\left| \beta \right| \leqslant m}\left|\textbf{a}^{\beta} D^{\beta}\left( \Psi_{n} -1 \right) \right|^{p}\right) \mathrm{d} P.\end{aligned}$$
By \eqref{1f13ff3} and the Dominated Convergence Theorem, we have
	$$\begin{aligned}
		&\lim_{n\rightarrow \infty} \left\| \Psi_{n} f-f \right\|_{W^{m,p}\left( \ell^{2} ,P \right)}^{p}\\ &\   \leqslant \sup_{0\leqslant k\leqslant m} \left( \left( k+1 \right)^{kp-k} \left\vert C_{k}^{\left\lfloor \frac{k}{2} \right\rfloor} \right\vert^{kp} \right) \int_{\ell^{2}}\left(\sum_{\left| \alpha \right| \leqslant m}\left| \textbf{a}^{\alpha} D^{\alpha}f \right|^{p}\right)\cdot \lim_{n\to\infty}\left( \sum_{\left| \beta \right| \leqslant m}\left|\textbf{a}^{\beta} D^{\beta}\left( \Psi_{n} -1 \right) \right|^{p}\right) \mathrm{d} P=0,
	\end{aligned}$$
	which proves Proposition \ref{quankongjiandebijin}.
\end{proof}

\section{Approximation by Smooth Functions}\label{sec5}
This section establishes an infinite-dimensional counterpart of ``$H=W$".

\begin{theorem}\label{0bijin}	
	\begin{itemize}
		\item[$\mathrm{1}$.] $\mathscr{C}^{\infty}_{c}$ is dense in $W_{0,u}^{m,p}\left( \Omega ,P \right)$;
		\item[$\mathrm{2}$.] $C_{\ell^{2}}^{\infty}\left( \Omega \right) \bigcap C_{0,\mathscr{F}}^{\infty}\left( \Omega \right)$ is dense in $W_{0,u}^{m,p}\left( \Omega ,P \right)$;
\item[$\mathrm{3}$.]Assume $f\in W^{m,p}\left( \Omega ,P \right)$, then for any open set $\widetilde{\Omega}\s\Omega$ and $\varepsilon > 0$, there exists $g\in\mathscr{C}^{\infty}_{c}$ such that $\left\| f-g \right\|_{W^{m,p}\left( \widetilde{\Omega} ,P \right)} <\varepsilon$.
	\end{itemize}
\end{theorem}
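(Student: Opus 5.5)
The plan is to reduce everything to functions on the whole space $\ell^2$ with support uniformly included in $\Omega$, and then to approximate those in three moves: dimension reduction, finite-dimensional mollification, and a cutoff to compact support.

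\textbf{Part 1.} Let $f\in W^{m,p}_{0,u}(\Omega,P)$ and extend it by zero to $\ell^2$; call the extension $\tilde f$. Since $\mathrm{supp} f\s\Omega$, conclusion 2 of Lemma \ref{nini1f3ff3} lets us test against functions whose support is not confined to $\Omega$. Pick a cutoff $\chi\in C_{\mathscr F}^{\infty}$ of the type used in Theorem \ref{danweifenjie}, built from $\chi(\|\cdot-\textbf{y}\|^2)$ and finite sums or products of such, with $\chi=1$ near $\mathrm{supp} f$ and $\chi^{-1}(\mathbb R\setminus\{0\})\s\Omega$. For a test function $\phi\in C_{0,F^\infty}^\infty(\ell^2)$ we apply the weak-derivative identity to $\chi\phi$. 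The Leibniz terms containing derivatives of $\chi$ vanish on $\mathrm{supp} f$. This gives $\tilde f\in W^{m,p}_{0,u}(\ell^2,P)$ with $D^\alpha\tilde f=\widetilde{D^\alpha f}$.

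Next apply Proposition \ref{jiangweibijin}: the cylinder functions $\tilde f_n$ converge to $\tilde f$ in $W^{m,p}(\ell^2,P)$. Each $\tilde f_n$ is a Sobolev function on $\mathbb R^n$ with respect to the Gaussian measure $\mathcal N^n$. By \eqref{zhouwenhui} its weak derivatives are the classical finite-dimensional ones, and the $\textbf{a}^\alpha$-weights are just constants. So the classical Meyers--Serrin / density of $C_c^\infty(\mathbb R^n)$ in the Gaussian-weighted $W^{m,p}(\mathbb R^n)$ applies. Concretely, we truncate by $\psi(|x|-R)$, then mollify; the Gaussian density is bounded above and below on compact sets. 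This gives $g\in C_c^\infty(\mathbb R^n)\subset\mathscr C_c^\infty$ close to $\tilde f_n$ in $W^{m,p}(\ell^2,P)$, hence close on $\Omega$.

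\textbf{The main obstacle} is that $\tilde f_n$ no longer has support in $\Omega$. This is harmless, however, because the norm in part 1 is computed on $\Omega$, and restriction to $\Omega$ is norm-decreasing. So $\|g-f\|_{W^{m,p}(\Omega,P)}\le\|g-\tilde f\|_{W^{m,p}(\ell^2,P)}$, and the estimate passes directly.

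\textbf{Part 2.} Here the approximants must themselves have support uniformly included in $\Omega$. Take $g$ from part 1 and multiply by the cutoff $\chi$ above, choosing $\chi\in C_{\ell^2}^\infty\cap C_{0,\mathscr F}^\infty$ with $\chi=1$ on a uniform neighbourhood of $\mathrm{supp}f$, which is possible by Theorem \ref{danweifenjie}. Proposition \ref{chengjijieduan} gives $\|\chi g-\chi\tilde f\|\le C\|\chi\|_{C^{m,p}_{\mathscr F}}\|g-\tilde f\|$, and $\chi\tilde f=\tilde f$. Moreover $\chi g\in C_{\ell^2}^\infty(\Omega)\cap C^\infty_{0,\mathscr F}(\Omega)$ by Lemma \ref{chengjiguji}, since cylinder $C_c^\infty$ functions trivially have finite $C^{m,p}_{\mathscr F}$ norms.

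\textbf{Part 3.} Given $\widetilde\Omega\s\Omega$ with gap $r$, choose $\chi$ as above with $\chi=1$ on $\widetilde\Omega$ and $\chi^{-1}(\mathbb R\setminus\{0\})\s\Omega$, for example a finite combination of the $\chi(\|\cdot-\textbf{y}\|^2)$ cutoffs or a function of the exhaustion function from Theorem \ref{qiangshiqiongjiehanshu}. If needed, restrict first to a uniformly included open set on which $\Psi$ has finite $C^{m,p}_{\mathscr F}$ norm. Then $\chi f\in W^{m,p}_{0,u}(\Omega,P)$ by Proposition \ref{chengjijieduan}. Part 1 yields $g\in\mathscr C_c^\infty$ with $\|g-\chi f\|_{W^{m,p}(\Omega)}<\varepsilon$. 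Since $\chi f=f$ on $\widetilde\Omega$ together with its weak derivatives, we get $\|f-g\|_{W^{m,p}(\widetilde\Omega,P)}<\varepsilon$.

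The delicate point is producing $\chi$ with a finite $C^{m,p}_{\mathscr F}$ norm that equals $1$ on all of $\widetilde\Omega$. I would try $\chi=\mathcal I(\Psi)$, using the estimates of Theorem \ref{qiangshiqiongjiehanshu} on sublevel sets together with Lemma \ref{fuheguji}.
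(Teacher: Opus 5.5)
Your proposal is essentially the paper's proof: zero extension and Proposition \ref{jiangweibijin}, then Lebesgue mollification of the cylinder function $\tilde f_n$ on $\mathbb{R}^n$, and for conclusions 2 and 3 multiplication by a cutoff $\psi(\eta-t)$ built from the exhaustion function of Theorem \ref{qiangshiqiongjiehanshu}, using that every set uniformly included in $\Omega$ lies in some $\Omega_t^{\circ}$; your truncation step is unnecessary, since $\mathrm{supp}\, f\subset B_r$ already forces $\tilde f_n$ to vanish for $\|\textbf{x}_n\|_{\mathbb{R}^n}\geqslant r$. The one thing to fix is the justification of the cutoff in Part 2: for a bounded but non-compact support, finitely many ball cutoffs do not suffice, and Theorem \ref{danweifenjie} gives no control on the $C^{m,p}_{\mathscr F}$ norm of the infinite sum of its $f_i$, so you should use there the exhaustion-function cutoff you already propose in Part 3.
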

\begin{proof}
By Theorem \ref{qiangshiqiongjiehanshu}, there exists an exhaustion function $\eta$ on $\Omega$ with
$$\eta \in C_{F^{\infty}}^{\infty}\left( \Omega \right) \bigcap C_{\ell^{2}}^{\infty}\left( \Omega\right) \bigcap C_{\mathscr{F}}^{\infty}\left( \Omega ,loc \right).$$
Let $\Omega_{t}\triangleq \left\{ \textbf{x} \in \Omega:\eta \left( \textbf{x} \right) \leqslant t \right\}$ and $\Omega_{t}^{\circ}\triangleq \left\{ \textbf{x} \in \Omega:\eta \left( \textbf{x} \right) <t \right\}$ for any $t\in\mathbb{R}.$\\
\textbf{Step 1.} By (3) of \cite[Proposition 2.4, p. 15]{WYZ1} (or (3) of \cite[Proposition 3.1, p. 10]{WZ}), for any $u\in W_{0}^{m,p}\left( \ell^{2} ,P \right)$, there exists $t_{0},r\in(0,+\infty)$ such that
$$\operatorname{supp} u \subset \Omega_{t_{0}}^{\circ} \bigcap B_r.$$
For any $n\in\mathbb{N}$, let $u_{n}$ be the function given in Definition \ref{20250920def1}. By Proposition \ref{jiangweibijin}, we have $u_{n}\in W^{m,p}\left( \ell^{2} ,P \right)$ and
\begin{eqnarray}
	\lim_{n\rightarrow \infty} \left| \left| u_{n}-u \right| \right|_{W^{m,p}\left( \ell^{2} ,P \right)} =0.\label{1f1f}
\end{eqnarray}
Moreover, viewing $u_{n}$ as a function on $\mathbb{R}^{n}$, its support is contained in the open ball in $\mathbb{R}^{n}$ centered at the origin with radius $r$.

For any $n\in\nn$, $\delta\in(0,1)$ and $\textbf{x}_{n}\in\mathbb{R}^n$, define
$$
\chi_{n} \left( \textbf{x}_{n} \right) \triangleq \begin{cases}\frac{e^{-\frac{1}{1-\left| \left| \textbf{x}_{n} \right| \right|_{\mathbb{R}^{n}}^{2}}}}{\int_{\left| \left| \textbf{x}_{n} \right| \right|_{\mathbb{R}^{n}} <1} e^{-\frac{1}{1-\left| \left| \textbf{x}_{n} \right| \right|_{\mathbb{R}^{n}}^{2}}}\mathrm{d} \textbf{x}_{n}} ,&\left| \left| \textbf{x}_{n} \right| \right|_{\mathbb{R}^{n}} <1\\ 0,&\left| \left| \textbf{x}_{n} \right| \right|_{\mathbb{R}^{n}} \geqslant 1\end{cases} ,\qquad \chi_{n,\delta} \left( \textbf{x}_{n} \right) \triangleq \frac{1}{\delta^{n}} \chi_{n} \left( \frac{\textbf{x}_{n}}{\delta} \right),
$$
and let
$$u_{n,\delta}\left( \textbf{x}_{n} \right) \triangleq \int_{\mathbb{R}^{n}} u_{n}\left( \textbf{y}_{n} \right) \chi_{n,\delta} \left( \textbf{x}_{n} -\textbf{y}_{n} \right) \mathrm{d} \textbf{y}_{n}.$$
It is easy to see that
$$u_{n,\delta}\in C_{c}^{\infty}\left( \mathbb{R}^{n} \right),\qquad u_{n,\delta}\left( \textbf{x}_{n} \right) =0,\ \forall \left| \left| \textbf{x}_{n} \right| \right|_{\mathbb{R}^{n}} \geqslant r+1.$$
Viewing $u_{n,\delta}$ as a function on $\ell^{2}$ which only depends the first $n$ variables, we have
$$u_{n,\delta}\in C_{\mathscr{F}}^{\infty}\left( \ell^{2} \right) \bigcap C_{\ell^{2}}^{\infty}\left( \ell^{2} \right).$$
We shall prove that
\begin{eqnarray}
	\lim_{\delta \rightarrow 0^{+}} \left| \left| u_{n,\delta}-u_{n} \right| \right|_{W^{m,p}\left( \ell^{2} ,P \right)} =0,\qquad\forall \,n\in\nn.\label{13f1ff}
\end{eqnarray}
Direct calculation gives
$$\begin{aligned}
	&\left| \left| u_{n,\delta}-u_{n} \right| \right|_{W^{m,p}\left( \ell^{2} ,P \right)}^{p}
=\sum_{ \left| \alpha \right| \leqslant m} \left( \textbf{a}^{\alpha} \right)^{p} \int_{\ell^{2}} \left| D^{\alpha}u_{n,\delta}-D^{\alpha}u_{n} \right|^{p} \mathrm{d} P\\ &\
= \sum_{\begin{gathered}\left| \alpha \right| \leqslant m\\ \alpha_{j} =0,\forall j>n\end{gathered}} \left( \textbf{a}^{\alpha} \right)^{p} \int_{\ell^{2}} \left| D^{\alpha}u_{n,\delta}-D^{\alpha}u_{n} \right|^{p} \mathrm{d} P\\ &\
=\sum_{\begin{gathered}\left| \alpha \right| \leqslant m\\ \alpha_{j} =0,\forall j>n\end{gathered}} \left( \textbf{a}^{\alpha} \right)^{p} \int_{\ell^{2}} \left| \left( D^{\alpha}u \right)_{n,\delta} -\left( D^{\alpha}u \right)_{n} \right|^{p} \mathrm{d} P\\ &\
= \sum_{\begin{gathered}\left| \alpha \right| \leqslant m\\ \alpha_{j} =0,\forall j>n\end{gathered}} \left( \textbf{a}^{\alpha} \right)^{p} \int_{\mathbb{R}^{n}} \left| \left( D^{\alpha}u \right)_{n,\delta} -\left( D^{\alpha}u \right)_{n} \right|^{p} \mathrm{d} \mathcal{N}^{n} ,\end{aligned}$$
where the penultimate equality follows from \eqref{zhouwenhui} and
$$\begin{aligned}
	&\left(D^{\alpha}u_{n,\delta} \right) \left( \textbf{x}_{n} \right)
=\int_{\mathbb{R}^{n}} u_{n}\left( \textbf{y}_{n} \right) D^{\alpha}\chi_{n,\delta} \left( \textbf{x}_{n} -\textbf{y}_{n} \right) \mathrm{d} \textbf{y}_{n}\\ &\
=\int_{\mathbb{R}^{n}} u_{n}\left( \textbf{y}_{n} \right) D^{\alpha}\chi_{n,\delta} \left( \textbf{x}_{n} -\textbf{y}_{n} \right) \prod_{j=1}^{n} \left( \sqrt{2\pi} a_{j}e^{\frac{y_{j}^{2}}{2a_{j}^{2}}} \right) \mathrm{d} \mathcal{N}^{n} \left( \textbf{y}_{n} \right)\\ &\
=\int_{\ell^{2}} u\left( \textbf{y}_{n} ,\textbf{y}^{n} \right) D^{\alpha }\chi_{n,\delta} \left( \textbf{x}_{n} -\textbf{y}_{n} \right) \prod_{j=1}^{n} \left( \sqrt{2\pi} a_{j}e^{\frac{y_{j}^{2}}{2a_{j}^{2}}} \right) \mathrm{d} P\left( \textbf{y}_{n} ,\textbf{y}^{n} \right)\\
&\  =\left( -1 \right)^{\left| \alpha \right|} \int_{\ell^{2}} u\left( \textbf{y}_{n} ,\textbf{y}^{n} \right) \delta_{\alpha } \left( \chi_{n,\delta} \left( \textbf{x}_{n} -\textbf{y}_{n} \right) \prod_{j=1}^{n} \left( \sqrt{2\pi} a_{j}e^{\frac{y_{j}^{2}}{2a_{j}^{2}}} \right) \right) \mathrm{d} P\left( \textbf{y}_{n} ,\textbf{y}^{n} \right)\\
&\  =\int_{\ell^{2}} D^{\alpha}u\left( \textbf{y}_{n} ,\textbf{y}^{n} \right) \chi_{n,\delta} \left( \textbf{x}_{n} -\textbf{y}_{n} \right) \prod_{j=1}^{n} \left( \sqrt{2\pi} a_{j}e^{\frac{y_{j}^{2}}{2a_{j}^{2}}} \right) \mathrm{d} P\left( \textbf{y}_{n} ,\textbf{y}^{n} \right)\\
&\  =\int_{\mathbb{R}^{n}} \left( D^{\alpha}u \right)_{n} \left( \textbf{y}_{n} \right) \chi_{n,\delta} \left( \textbf{x}_{n} -\textbf{y}_{n} \right) \prod_{j=1}^{n} \left( \sqrt{2\pi} a_{j}e^{\frac{y_{j}^{2}}{2a_{j}^{2}}} \right) \mathrm{d} \mathcal{N}^{n} \left( \textbf{y}_{n} \right)\\
&\  =\int_{\mathbb{R}^{n}} \left( D^{\alpha}u \right)_{n} \left( \textbf{y}_{n} \right) \chi_{n,\delta} \left( \textbf{x}_{n} -\textbf{y}_{n} \right) \mathrm{d} \textbf{y}_{n} =\left( D^{\alpha}u \right)_{n,\delta} \left( \textbf{x}_{n} \right) ,\end{aligned}$$
where the fourth equality uses Lemma \ref{nini1f3ff3}.

For each $\alpha \in \mathbb{N}_{0}^{\left( \mathbb{N} \right)}$ with $\left| \alpha \right| \leqslant m,\alpha_{j} =0,\forall j>n$, similarly to the proof of (1) in \cite[Proposition 3.1, pp. 20-21]{WYZ1}, we have
$$\lim_{\delta \rightarrow 0^{+}} \int_{\mathbb{R}^{n}} \left| \left( D^{\alpha}u \right)_{n,\delta} -\left( D^{\alpha}u \right)_{n} \right|^{p} \mathrm{d} \mathcal{N}^{n}=0.$$
Hence
$$\lim_{\delta \rightarrow 0^{+}} \left| \left| u_{n,\delta}-u_{n} \right| \right|_{W^{m,p}\left( \ell^{2} ,P \right)}^{p} =\sum_{\begin{gathered}\left| \alpha \right| \leqslant m\\ \alpha_{j} =0,\forall j>n\end{gathered}} \left( \textbf{a}^{\alpha} \right)^{p} \lim_{\delta \rightarrow 0^{+}} \int_{\mathbb{R}^{n}} \left| \left( D^{\alpha}u \right)_{n,\delta} -\left( D^{\alpha}u \right)_{n} \right|^{p}\mathrm{d} \mathcal{N}^{n} =0,$$
which proves \eqref{13f1ff}. Combining \eqref{13f1ff} and \eqref{1f1f} yields conclusion 1.
\\
\textbf{Step 2.} Take the function $\psi$ given at \eqref{20260420for2} and by Lemma \ref{fuheguji}, we have
$$\Psi \left( \cdot \right) \triangleq \psi \left( \eta \left( \cdot \right) -t_{0} \right) \in C_{0,\mathscr{F}}^{\infty}\left( \Omega \right) \bigcap C_{\ell^{2}}^{\infty}\left( \Omega \right).$$
Note that $\Psi \cdot u_{n,\delta}\in C_{\ell^{2}}^{\infty}\left( \Omega  \right) \bigcap C_{0,\mathscr{F}}^{\infty}\left( \Omega \right)$, $\left| \left| \Psi \cdot u-u \right| \right|_{W^{m,p}\left( \Omega ,P \right)}=0$, and by Proposition \ref{chengjijieduan}, we have
$$\left| \left| \Psi \cdot u_{n,\delta}-\Psi \cdot u \right| \right|_{W^{m,p}\left( \Omega ,P \right)} \leqslant C\left( m,p \right) \left\vert \left| \Psi \right| \right\vert_{C_{\mathscr{F}}^{m,p}\left( \Omega \right)} \left| \left| u_{n,\delta}-u \right| \right|_{W^{m,p}\left( \Omega ,P \right)}.$$
By the triangle inequality, we have
$$\left| \left| \Psi \cdot u_{n,\delta}-u \right| \right|_{W^{m,p}\left( \Omega ,P \right)} \leqslant \left| \left| \Psi\cdot u_{n,\delta}-\Psi \cdot u \right| \right|_{W^{m,p}\left( \Omega ,P \right)} +\left| \left| \Psi \cdot u-u \right| \right|_{W^{m,p}\left( \Omega ,P \right)}.$$
Combining \eqref{13f1ff} and \eqref{1f1f} yields
$$\lim_{n\rightarrow \infty} \lim_{\delta \rightarrow 0^{+}} \left| \left| \Psi\cdot u_{n,\delta}-u \right| \right|_{W^{m,p}\left( \Omega ,P \right)} =0.$$
Thus we have proved conclusion 2. \\
\textbf{Step 3.} Since $\widetilde{\Omega} \s \Omega$, by (3) of \cite[Proposition 2.4, p.15]{WYZ1}, we may choose $t_{1}>0$ such that $\widetilde{\Omega} \s \Omega_{t_{1}}^{\circ}$. Then similarly to \textbf{Step 2} consider
$$\Psi_{1} \left( \cdot \right) \triangleq \psi \left( \eta \left( \cdot \right) -t_{1} \right) \in C_{0,\mathscr{F}}^{\infty}\left( \Omega \right) \bigcap C_{\ell^{2}}^{\infty}\left( \Omega  \right) .$$
By Proposition \ref{chengjijieduan}, we have $\Psi_{1} f\in W_{0}^{m,p}\left( \Omega ,P \right)$. By \textbf{Step 1}, we know that for any $\varepsilon > 0$, there exists $g\in \mathscr{C}^{\infty}_{c}$ such that
$$\left\| \Psi_{1} f-g \right\|_{W^{m,p}\left( \Omega ,P \right)} <\varepsilon.$$
Thus
$$\left\| f-g \right\|_{W^{m,p}\left( \widetilde{\Omega} ,P \right)} =\left\| \Psi_{1} f-g \right\|_{W^{m,p}\left( \widetilde{\Omega} ,P \right)} <\varepsilon,$$
which proves conclusion 3. The proof of Theorem \ref{0bijin} is now complete.

\end{proof}
Combining Proposition \ref{quankongjiandebijin} and Theorem \ref{0bijin} obtain the following.
\begin{corollary}\label{1f13f3}
	\begin{itemize}
		\item[$\mathrm{1}$.]	$C_{\ell^{2}}^{\infty}\left( \ell^{2} \right) \bigcap C_{0,\mathscr{F}}^{\infty}\left( \ell^{2} \right)$ is dense in $W^{m,p}\left( \ell^{2} ,P \right)$;
		\item [$\mathrm{2}$.]	$\mathscr{C}^{\infty}_{c}$ is dense in $W^{m,p}\left( \ell^{2},P \right)$.	
	\end{itemize}
\end{corollary}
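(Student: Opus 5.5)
Both conclusions follow by applying Theorem \ref{0bijin} with $\Omega=\ell^2$, after first using Proposition \ref{quankongjiandebijin} to reduce to functions with uniformly included support. Fix $f\in W^{m,p}(\ell^2,P)$ and $\varepsilon>0$. By Proposition \ref{quankongjiandebijin} we choose $u\in W^{m,p}_{0,u}(\ell^2,P)$ with $\|f-u\|_{W^{m,p}(\ell^2,P)}<\varepsilon/2$. Concretely, $u=\Psi_n f$ for large $n$, where $\Psi_n=\psi(\|\cdot\|-n)$. In the whole space, the condition $\mathrm{supp}\,u\s\ell^2$ simply means that the support is bounded, and $\Psi_n f$ is supported in $\overline{B_{n+1}}$.

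For conclusion 2, we apply conclusion 1 of Theorem \ref{0bijin} with $\Omega=\ell^2$. This gives $g\in\mathscr{C}^\infty_c$ with $\|u-g\|_{W^{m,p}(\ell^2,P)}<\varepsilon/2$, and the triangle inequality yields $\|f-g\|<\varepsilon$.

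For conclusion 1, we apply conclusion 2 of Theorem \ref{0bijin} with $\Omega=\ell^2$. This gives $g\in C^\infty_{\ell^2}(\ell^2)\cap C^\infty_{0,\mathscr{F}}(\ell^2)$ within $\varepsilon/2$ of $u$, and the triangle inequality again gives the claim. Alternatively, conclusion 1 follows directly from conclusion 2. Every element of $\mathscr{C}^\infty_c$, viewed as a cylinder function on $\ell^2$, is Fréchet smooth. It has bounded support, and its $\mathscr{F}$-norms are finite sums of bounded terms, so $\mathscr{C}^\infty_c\subset C^\infty_{\ell^2}(\ell^2)\cap C^\infty_{0,\mathscr{F}}(\ell^2)$, and the density of the smaller set implies that of the larger one.

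The only point needing care is the applicability of Theorem \ref{0bijin} when $\Omega=\ell^2$. Its proof uses an exhaustion function from Theorem \ref{qiangshiqiongjiehanshu}, and the case $\Omega=\ell^2$ was set aside there as trivial. Here we can simply take $\eta=\|\cdot\|^2$, which lies in the required classes by Lemma \ref{1f1f13f1f3} and whose sublevel sets are balls, hence uniformly included in $\ell^2$. With this choice, the cutoff $\Psi=\psi(\eta-t_0)$ in Step 2 of that proof is of exactly the same type as $\Psi_n$, so no new estimate is needed.
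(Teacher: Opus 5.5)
Your main argument is the paper's own: the corollary follows by combining Proposition~\ref{quankongjiandebijin} with conclusions 1 and 2 of Theorem~\ref{0bijin} for $\Omega=\ell^2$. Your observation that $\eta=\|\cdot\|^2$ can serve as the exhaustion function here, a case the proof of Theorem~\ref{qiangshiqiongjiehanshu} sets aside, is also correct.

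Your alternative derivation of conclusion 1 from conclusion 2 is false, however. Take a nonzero $g\in C_c^\infty(\mathbb{R}^k)$ viewed as a function on $\ell^2$. Its support is $\mathrm{supp}\,g\times\ell^2(\mathbb{N}\setminus\{1,\ldots,k\})$, which is unbounded. So this support is not uniformly included in $\ell^2$, and $g\notin C^\infty_{0,\mathscr{F}}(\ell^2)$. This is exactly why Step 2 of Theorem~\ref{0bijin} multiplies the cylinder approximants $u_{n,\delta}$ by the cutoff $\Psi$. Drop that remark, or insert such a cutoff using Proposition~\ref{chengjijieduan}, and the proof is complete.
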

\begin{remark}\label{20251220rem1}
	Recall that V.I.~Bogachev defined four whole-space versions of Sobolev spaces in \cite[pp. 211-217]{Bog98}. Precisely, if $\gamma$ is a centered Radon Gaussian measure on a locally convex space $X$, and $E$ is a Hilbert space (or Banach space), then the four whole-space versions of $E$-valued Sobolev spaces $W^{p,m}(\gamma,E),D^{p,m}(\gamma,E),G^{p,m}(\gamma,E),H^{p,m}(\gamma,E)$ are defined respectively by completion of smooth cylinder functions, two notions of partial derivatives and the Ornstein-Uhlenbeck semigroup. In \cite[Proposition 5.4.6, p. 220]{Bog98} and \cite[Theorem 5.7.2, p. 235]{Bog98}, Bogachev proved that for a separable Hilbert space $E$,
	\[
	W^{p,m}(\gamma,E)=D^{p,m}(\gamma,E)=H^{p,m}(\gamma,E)\subset G^{p,m}(\gamma,E),\qquad \forall\,m\in\mathbb{N},\,p\in(1,+\infty),
	\]
	where the technique of conditional expectation (see \cite[Proposition 5.4.5, p. 220]{Bog98}) plays a crucial role; this is essentially the same concept as in Definition \ref{20250920def1}. He also showed that for $E=\mathbb{R}$,
	\[
	W^{p,1}(\gamma,E)=D^{p,1}(\gamma,E)=H^{p,1}(\gamma,E)= G^{p,1}(\gamma,E),\qquad \forall\, p\in(1,+\infty),
	\]
	(see \cite[Corollary 5.4.7, p. 221]{Bog98}).
	
	In the special case that $X=\ell^2,\,\gamma=P$ and $E=\mathbb{R}$, the partial derivatives used in the present paper are more general than those employed by Bogachev.  Combining this with Remark \ref{20250920rem1}, we obtain
	\begin{eqnarray}\label{20250920for1}
		W^{2,m}(P,\mathbb{R})=D^{2,m}(P,\mathbb{R})=H^{2,m}(P,\mathbb{R})\subset G^{2,m}(P,\mathbb{R})\subset W^{m,2}(\ell^2,P),\qquad \forall\,\,m\in\mathbb{N},
	\end{eqnarray}
	where the last notation $W^{m,2}(\ell^2)$ is the one used in this paper. Moreover, Corollary \ref{1f13f3} implies that all five spaces appearing in \eqref{20250920for1} actually coincide. We stress that the key technique of Corollary \ref{1f13f3} is essentially the same as that in \cite[Proposition 5.4.5, p. 220]{Bog98} (though presented under different names and notation).
	
	Furthermore, one can see that for any $p\in[1,2]$, $m\in\mathbb{N}$, and $f\in W^{p,m}(P,\mathbb{R})\cap W^{m,p}(\ell^2,P)$,
	\begin{eqnarray*}
		\left\lVert f\right\rVert_{W^{p,m}(P,\mathbb{R})}&=&\sum_{k=0}^{m}\left(\int_{\ell^2}\left(\sum_{i_1,\dots,i_k=1}^{\infty}a_{i_1}^2a_{i_2}^2\cdots a_{i_k}^2\cdot |\frac{\partial^{k} f}{\partial x_{i_{1}}\partial x_{i_{2}}\cdots \partial x_{i_{k}}}|^2 \right)^{\frac{p}{2}}\,\mathrm{d}P\right)^{\frac{1}{p}}\\
		&\leq&\sum_{k=0}^{m}\left(\int_{\ell^2}\left(\sum_{i_1,\dots,i_k=1}^{\infty}a_{i_1}^pa_{i_2}^p\cdots a_{i_k}^p\cdot |\frac{\partial^{k} f}{\partial x_{i_{1}}\partial x_{i_{2}}\cdots \partial x_{i_{k}}}|^p \right) \,\mathrm{d}P\right)^{\frac{1}{p}}\\
		&\leq &(m+1)\left\lVert f\right\rVert_{\widetilde{W^{m,p}(\ell^{2} ,P)}},
	\end{eqnarray*}
	while for any $p\in(2,+\infty)$, $m\in\mathbb{N}$, and $f\in W^{p,m}(P,\mathbb{R})\cap W^{m,p}(\ell^2)$,
	\begin{eqnarray*}
		\left\lVert f\right\rVert_{W^{p,m}(P,\mathbb{R})}&=&\sum_{k=0}^{m}\left(\int_{\ell^2}\left(\sum_{i_1,\dots,i_k=1}^{\infty}a_{i_1}^2a_{i_2}^2\cdots a_{i_k}^2\cdot |\frac{\partial^{k} f}{\partial x_{i_{1}}\partial x_{i_{2}}\cdots \partial x_{i_{k}}}|^2 \right)^{\frac{p}{2}}\,\mathrm{d}P\right)^{\frac{1}{p}}\\
		&\geq&\sum_{k=0}^{m}\left(\int_{\ell^2}\left(\sum_{i_1,\dots,i_k=1}^{\infty}a_{i_1}^pa_{i_2}^p\cdots a_{i_k}^p\cdot |\frac{\partial^{k} f}{\partial x_{i_{1}}\partial x_{i_{2}}\cdots \partial x_{i_{k}}}|^p \right) \,\mathrm{d}P\right)^{\frac{1}{p}}\\
		&\geq&\left(\int_{\ell^2}\left(\sum_{k=0}^{m}\sum_{i_1,\dots,i_k=1}^{\infty}a_{i_1}^pa_{i_2}^p\cdots a_{i_k}^p\cdot |\frac{\partial^{k} f}{\partial x_{i_{1}}\partial x_{i_{2}}\cdots \partial x_{i_{k}}}|^p \right) \,\mathrm{d}P\right)^{\frac{1}{p}}\\
		&=& \left\lVert f\right\rVert_{\widetilde{W^{m,p}(\ell^{2} ,P)}}.
	\end{eqnarray*}
	Consequently,
	\begin{eqnarray}\label{20250920for2}
		W^{p,m}(P,\mathbb{R})=D^{p,m}(P,\mathbb{R})=H^{p,m}(P,\mathbb{R})\subset G^{p,m}(P,\mathbb{R})\subset W^{m,p}(\ell^{2} ,P),\qquad \forall\,\,m\in\mathbb{N},\,\,p\in[1,2),
	\end{eqnarray}
	and Corollary \ref{1f13f3} again implies that the five spaces appearing in \eqref{20250920for2} coincide.
\end{remark}
We now present the main theorem of this paper.
\begin{theorem}\label{zhuyaodingyi}
	$C_{\ell^{2}}^{\infty}\left( \Omega\right) \bigcap W^{m,p}\left( \Omega ,P \right) \bigcap C_{\mathscr{F}}^{\infty}\left( \Omega ,loc \right)$ is dense in $W^{m,p}\left( \Omega,P \right)$.
\end{theorem}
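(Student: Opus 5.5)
We follow the Meyers--Serrin scheme of Theorem \ref{1}. The ball-distance sets \eqref{aaa} are replaced by sublevel sets of the exhaustion function $\eta$ from Theorem \ref{qiangshiqiongjiehanshu}. Mollification is replaced by Theorem \ref{0bijin}, conclusion 2.

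\textbf{Setting up the partition.} Fix $f\in W^{m,p}(\Omega,P)$ and $\varepsilon>0$, and take $\eta\in C_{F^{\infty}}^{\infty}(\Omega)\cap C_{\ell^2}^{\infty}(\Omega)\cap C_{\mathscr F}^{\infty}(\Omega,loc)$ as in Theorem \ref{qiangshiqiongjiehanshu}. Write $\Omega_t^{\circ}=\{\eta<t\}$, with $\Omega_t^\circ=\emptyset$ for $t\le 0$. Choose $\chi_k\in C^\infty(\mathbb R;[0,1])$ such that $\sum_{k\ge1}\chi_k(t)=1$ for $t\ge 0$ and $\operatorname{supp}\chi_k\subset(k-2,k)$; a shifted standard partition of unity on $\mathbb R$ works. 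Put $\psi_k\triangleq\chi_k\circ\eta$.

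By Lemma \ref{fuheguji}, each $\psi_k\in C^{\infty}_{\mathscr F}(\Omega,loc)\cap C^\infty_{\ell^2}(\Omega)$. Its support lies in $\Omega_k^\circ\setminus\Omega_{k-2}\subset\Omega_k\s\Omega$. Hence $\psi_k$, after modification off $\Omega_{k+1}$, belongs to $C^{\infty}_{0,\mathscr F}(\Omega)$ with finite $C^{m,p}_{\mathscr F}$ norm. Proposition \ref{chengjijieduan} then gives $\psi_kf\in W^{m,p}_{0,u}(\Omega,P)$ with $\operatorname{supp}(\psi_kf)\subset\Omega_k^\circ\setminus\Omega^\circ_{k-2}$. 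One must verify that $\sum_k\psi_kf=f$ in the weak-derivative sense on each $\Omega_N^\circ$. This follows because the sum is finite there and $\sum_k\psi_k=1$.

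\textbf{Approximating each piece.} By Theorem \ref{0bijin}(2), for each $k$ there is $v_k\in C_{\ell^2}^\infty(\Omega)\cap C^\infty_{0,\mathscr F}(\Omega)$ with $\|v_k-\psi_kf\|_{W^{m,p}(\Omega,P)}<\varepsilon 2^{-k}$. The support of $v_k$ is not automatically controlled, so we localize it. Let $\rho_k\triangleq\theta_k\circ\eta$, where $\theta_k\in C^\infty(\mathbb R;[0,1])$ equals $1$ on $[k-2,k]$ and vanishes outside $(k-3,k+1)$. Then $\rho_k\psi_k f=\psi_kf$. Proposition \ref{chengjijieduan} applied to $\rho_k$ on $\Omega_{k+1}$ gives
\[
\|\rho_kv_k-\psi_kf\|_{W^{m,p}}\le C(m,p)\|\rho_k\|_{C^{m,p}_{\mathscr F}(\Omega_{k+1})}\|v_k-\psi_kf\|_{W^{m,p}}.
\]
Since the constant $M_k\triangleq C(m,p)\|\rho_k\|_{C^{m,p}_{\mathscr F}(\Omega_{k+1})}$ is finite and depends only on $k$, we choose $v_k$ with error $<\varepsilon2^{-k}/(1+M_k)$.

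\textbf{Summing.} Set $w_k=\rho_kv_k$ and $v\triangleq\sum_k w_k$. On $\Omega^\circ_N$ only $k\le N+3$ contribute. Hence $v\in C^\infty_{\ell^2}(\Omega)$, and since the intersections of sets $V\s\Omega$ with the supports are locally finite, $v\in C^\infty_{\mathscr F}(\Omega,loc)$. Here we use that any $V\s\Omega$ lies in some $\Omega_N^\circ$ (by (3) of \cite[Proposition 2.4]{WYZ1}), so the sum is finite on $V$ and each term has finite $C^{m,p}_{\mathscr F}(V)$ norm. On $\Omega_N^\circ$ we get $\|v-f\|_{W^{m,p}(\Omega^\circ_N,P)}\le\sum_k\|w_k-\psi_kf\|<\varepsilon$. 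The integrands $\sum_{|\alpha|\le m}|\mathbf a^\alpha D^\alpha(v-f)|^p$ restricted to $\Omega_N^\circ$ increase to the integrand on all of $\Omega$, because $\bigcup_N\Omega_N^\circ=\Omega$. Monotone convergence therefore yields $\|v-f\|_{W^{m,p}(\Omega,P)}\le\varepsilon$. In particular $v\in W^{m,p}(\Omega,P)$: Proposition \ref{sguanghualuozaisobolev} applies since $v\in C^m_{\mathcal F}(\Omega)$ has finite norm.

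\textbf{Main obstacle.} The delicate point is the uniformity hidden in the infinite sums \eqref{20260427for2}. Two things are needed. First, $\psi_k,\rho_k$ must have finite $C^{m,p}_{\mathscr F}$ norm on their support sets; this is exactly why $\eta$ must lie in $C^\infty_{\mathscr F}(\Omega,loc)$ rather than be $-\ln\operatorname{dist}$. Second, the weak derivatives of $v$ must equal the pointwise derivatives, and localization to $\Omega_N^\circ$ must be compatible with the test-function definition. I expect this to require Lemma \ref{nini1f3ff3} together with the locality of weak derivatives.
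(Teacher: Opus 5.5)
Your proposal is correct and is essentially the paper's proof: Meyers--Serrin with sublevel sets of the exhaustion function $\eta$ from Theorem~\ref{qiangshiqiongjiehanshu}, a partition of unity composed with $\eta$, Theorem~\ref{0bijin}(2) applied to each piece, a locally finite sum, and monotone convergence. The only variation is how you control supports: the paper applies Theorem~\ref{0bijin}(2) directly on the open annuli $U_k=\Omega_{k+1}^{\circ}\setminus\Omega_{k-1}$ and extends by zero, whereas you approximate on all of $\Omega$ and then re-localize with $\theta_k\circ\eta$. Your re-localization works because $\theta_k\circ\eta\in C^{m,p}_{\mathscr F}(\Omega)$, so Proposition~\ref{chengjijieduan} supplies the finite factor $M_k$. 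Note that this norm should be taken over $\Omega$, not over the non-open set $\Omega_{k+1}$; it is finite because $\theta_k\circ\eta$ vanishes together with all its derivatives outside $\Omega_{k+1}^{\circ}$, and $\eta$ has finite $C^{m,p}_{\mathscr F}$ norm there.
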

\begin{proof}
By Theorem \ref{qiangshiqiongjiehanshu}, there exists an exhaustion function $\eta$ on $\Omega$ with
$$\eta \in C_{F^{\infty}}^{\infty}\left( \Omega \right) \bigcap C_{\ell^{2}}^{\infty}\left( \Omega \right) \bigcap C_{\mathscr{F}}^{\infty}\left( \Omega ,loc \right).$$
Let $\Omega_{t}\triangleq \left\{ \textbf{x} \in \Omega:\eta \left( \textbf{x} \right) \leqslant t \right\}$ and $\Omega_{t}^{\circ}\triangleq \left\{ \textbf{x} \in \Omega:\eta \left( \textbf{x} \right) <t \right\}$ for any $t\in\mathbb{R}.$
	Consider the following open cover of $\Omega$,
	$$\mathscr{O}\triangleq \left\{ U_{k}:U_{k}=\Omega_{k+1}^{\circ} \setminus \Omega_{k-1},k\in \mathbb{N}_0 \right\}.$$
	For any $k\in\nn_0$, take $\varphi_{k} \in C^{\infty}\left( \mathbb{R};[0,1] \right)$ with
	$$\varphi_{k} \left( t \right) =1,\ \forall \,t\in \left[ k-\frac{1}{2} ,k+\frac{1}{2} \right] ,\qquad \varphi_{k} \left( t \right) =0,\ \forall \,t\notin \left( k-\frac{2}{3} ,k+\frac{2}{3} \right).$$
	Then for any $\textbf{x}\in\ell^2$, define
	$$\rho_{0} \left( \textbf{x} \right) \triangleq \varphi_{0} \left( \eta \left( \textbf{x} \right) \right) ,\qquad \rho_{k} \left( \textbf{x} \right) \triangleq \varphi_{k} \left( \eta \left( \textbf{x} \right) \right) \prod_{i=0}^{k-1} \left( 1-\varphi_{i} \left( \eta \left( \textbf{x} \right) \right) \right) ,\ \forall\,k\in\mathbb{N}.$$
	Combining Lemma \ref{chengjiguji} and Lemma \ref{fuheguji}, we have $\{\rho_{k}\}_{k=0}^{\infty} \subset C_{\ell^{2}}^{\infty}\left( \Omega  \right) \bigcap C_{\mathscr{F}}^{\infty}\left( \Omega \right)$. By (2) in \cite[Proposition 2.4, p. 15]{WYZ1} we have
	$$\rho_{k}^{-1}(\mathbb{R}\setminus\{0\}) \subset \Omega_{k+\frac{2}{3}}\setminus \Omega_{k-\frac{2}{3}}^{\circ}\s U_{k},\quad \forall \,k\in\mathbb{N}_0,$$
	and a direct calculation gives
	\begin{eqnarray}
		\sum_{k=0}^{\infty} \rho_{k} \left( \textbf{x} \right) =1-\lim_{k\rightarrow +\infty} \prod_{i=0}^{k} \left( 1-\varphi_{i} \left( \eta \left( \textbf{x} \right) \right) \right) =1,\quad \forall \,\textbf{x} \in \Omega.\label{2}
	\end{eqnarray}
	For any $\varepsilon>0$, $k\in\nn_0$ and $u\in W^{m,p}\left( \Omega,P \right)$, by Proposition \ref{chengjijieduan}, we have $\rho_{k} u \in W^{m,p}\left( \Omega,P  \right)$. Note that supp$(\rho_{k}u)\s U_{k}$, we can view $\rho_{k}u$ as a member of $W^{m,p}_0\left( U_k,P  \right)$. By Theorem \ref{0bijin}, there exists
$$u_{k,\varepsilon}\in C_{\ell^{2}}^{\infty}\left( U_k \right) \bigcap C_{0,\mathscr{F}}^{\infty}\left( U_{k} \right)$$
 such that
	$$\left| \left|u_{k,\varepsilon}-\rho_{k} u \right| \right|_{W^{m,p}\left( U_{k},P \right)} < \frac{\varepsilon}{2^{k+1}}.$$
Since $u_{k,\varepsilon}\in C_{\ell^{2}}^{\infty}\left( U_k  \right) \bigcap C_{0,\mathscr{F}}^{\infty}\left( U_{k} \right)$, $u_{k,\varepsilon}^{-1}(\mathbb{R}\setminus\{0\})\s U_k$ and we can view $u_{k,\varepsilon}$ as a function in $C_{\ell^{2}}^{\infty}\left( \Omega  \right) \bigcap C_{0,\mathscr{F}}^{\infty}\left( \Omega \right)$ by zero extension to $\Omega\setminus U_k$. Then the series
$$u_{\varepsilon}(\textbf{x})\triangleq \sum_{k=0}^{\infty} u_{k,\varepsilon}(\textbf{x}),\qquad\forall\,\textbf{x}\in\Omega,$$
is trivially convergent. For any non-empty open set $V\s \Omega$, by (3) in \cite[Proposition 2.4, p. 15]{WYZ1} there exists $\tau>0$ such that $V\subset \Omega_{\tau}^{\circ}$, and then for any $k>\tau+1$, we have $ u_{k,\varepsilon}(\textbf{x})=0$, for all $\textbf{x}\in V$. Thus
 $$
 u_{\varepsilon}(\textbf{x})= \sum_{k=0}^{\left\lfloor \tau\right\rfloor+2} u_{k,\varepsilon}(\textbf{x}),\qquad\forall\,\textbf{x}\in V,
 $$
 which implies that $u_{\varepsilon}\in C_{\ell^{2}}^{\infty}\left( \Omega \right) \bigcap C_{\mathscr{F}}^{\infty}\left( \Omega ,loc \right)$. Note that for any $n\in\mathbb{N}$ and $\textbf{x}\in\Omega_n^{\circ}$, we have
$$
u_{k,\varepsilon}(\textbf{x})=\rho_k(\textbf{x})=0,\quad \,\forall\,k>n+1.
$$
and hence
$$\left| \left| u_{\varepsilon}-u \right| \right|_{W^{m,p}\left( \Omega_n^{\circ} ,P \right)} \leqslant \sum_{k=0}^{n+1} \left| \left| u_{k,\varepsilon}-\rho_{k} u \right| \right|_{W^{m,p}\left( U_{k},P \right)} <\sum_{k=0}^{n+1} \frac{\varepsilon}{2^{k+1}} <\varepsilon .$$
Letting $n\to\infty$, the Monotone Convergence Theorem implies that $\left| \left| u_{\varepsilon}-u \right| \right|_{W^{m,p}\left( \Omega,P \right)} \leqslant \varepsilon$. This completes the proof of Theorem \ref{zhuyaodingyi}.
\end{proof}
	
\section{The Segment Conditions}
We will now define open sets in $\ell^2$ that satisfy the following properties:
\begin{itemize}
\item[$(\mathrm{1})$] have continuous boundary,
\item[$(\mathrm{2})$] have Lipschitz continuous boundary,
\item[$(\mathrm{3})$] have regular boundary,
\item[$(\mathrm{4})$] satisfy the segment condition,
\item[$(\mathrm{5})$] satisfy the strong segment condition,
\item[$(\mathrm{6})$] satisfy the finitely convex condition.
\end{itemize}
Among these, the strong segment condition plays an important role in the final section. Moreover, we will explore the relationships among these properties. Roughly speaking, the following hold:
\begin{itemize}
\item $(\mathrm{1})$ is equivalent to $(\mathrm{4})$;
\item $(\mathrm{5})$ is strictly stronger than $(\mathrm{4})$;
\item  each of $(\mathrm{1})$,\,$(\mathrm{2})$,\,$(\mathrm{3})$,\,and \,$(\mathrm{6})$ implies $(\mathrm{5})$.
\end{itemize}
Let us first recall some classical definitions for open sets in finite dimensions in \cite[3.21, p. 68]{AF}. We will assume throughout this section that $n\in\mathbb{N}$, and $O$ is a non-empty subset of $\mathbb{R}^n$.
\begin{definition}
We say that $O$ satisfies the \textbf{segment condition} if every $\textbf{x}\in\partial O$ has a neighbourhood $U_{\textbf{x}}$ of $\textbf{x}$ and a nonzero vector $\textbf{y}_{\textbf{x}}\in\mathbb{R}^n$ such that if $\textbf{z}\in \overline{O}\cap U_{\textbf{x}}$, then $\textbf{z}+t\textbf{y}_{\textbf{x}}\in O$ for all $t\in(0,1)$.
\end{definition}
Recall the definitions of open sets in $\mathbb{R}^n$ with continuous (Lipschitz continuous) boundary in \cite[Definition 9.57, p. 273]{Leo17} as following. Recall that the rigid motion $T:\mathbb{R}^n\to \mathbb{R}^n$ is an affine mapping given by $T\textbf{x}=\textbf{c}+R\textbf{x},\,\forall\,\textbf{x}\in\mathbb{R}^n$, where $R$ is a rotation and $\textbf{c}\in \mathbb{R}^n$.
\begin{definition}\label{20260501def1}
	We say that $O$ has a \textbf{continuous (Lipschitz continuous) boundary} if for every $\textbf{x}_0 \in \partial O$, there exist a rigid motion $T$ from $\mathbb{R}^n$ into itself with $T\textbf{x}_0=\textbf{0}$, an open neighborhood $U_{\textbf{x}}$ of $\textbf{x}_0$, and a continuous (Lipschitz continuous) function $f:\mathbb{R}^{n-1} \to \mathbb{R}$ such that
	$$\begin{aligned}
		&T \left( U_{\textbf{x}}\bigcap \Omega \right) =\left\{ \left( y_{1},\textbf{y}^{1} \right) \in TU_{\textbf{x}}:y_{1}<f\left( \textbf{y}^{1} \right) \right\} ,\\
 &T \left( U_{\textbf{x}}\bigcap \partial \Omega \right) =\left\{ \left( y_{1},\textbf{y}^{1} \right) \in TU_{\textbf{x}}:y_{1}=f\left( \textbf{y}^{1} \right) \right\} .
	\end{aligned}$$
\end{definition}
By \cite[Theorem 11.34, p.328-330]{Leo17}, the following holds.
\begin{theorem}\label{20260501thm1}
The open set $O$ satisfies the segment condition if and only if $O$ has a continuous boundary.
\end{theorem}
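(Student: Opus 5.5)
The plan is to prove the two implications separately, working locally near a fixed boundary point $\textbf{x}_0\in\partial O$. Throughout, $O$ is taken to be open, as in the intended application. After a rigid motion we may assume $\textbf{x}_0=\textbf{0}$, and we write points as $(y_1,\textbf{y}^1)\in\mathbb{R}\times\mathbb{R}^{n-1}$.

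\emph{Continuous boundary $\Rightarrow$ segment condition.} This direction is elementary.
\begin{enumerate}
\item Let $T$, $U_{\textbf{x}}$, $f$ be as in Definition \ref{20260501def1}. Since $f$ is continuous with $f(\textbf{0})=0$, choose $r,h>0$ such that the cylinder $C\triangleq\{|\textbf{y}^1|<r,\ |y_1|<2h\}$ is contained in $TU_{\textbf{x}}$ and $|f|<h$ on $\{|\textbf{y}^1|<r\}$.
\item Take the smaller neighbourhood $T^{-1}\{|\textbf{y}^1|<r,\ |y_1|<h\}$ and the vector $\textbf{y}_{\textbf{x}}\triangleq -hR^{-1}\textbf{e}_1$, where $R$ is the rotation part of $T$.
\item If $\textbf{z}\in\overline{O}$ lies in this neighbourhood, then in the new coordinates $z_1\le f(\textbf{z}^1)$, because closure points of $O$ inside $TU_{\textbf{x}}$ lie on or below the graph.
\item For $t\in(0,1)$ we have $z_1-th\in(-2h,f(\textbf{z}^1))$. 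Hence $T(\textbf{z}+t\textbf{y}_{\textbf{x}})\in C$ and lies strictly below the graph, so $\textbf{z}+t\textbf{y}_{\textbf{x}}\in O$.
\end{enumerate}

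\emph{Segment condition $\Rightarrow$ continuous boundary.} This is the substantive direction. Rotate so that $\textbf{y}_{\textbf{x}}=-\lambda\textbf{e}_1$ with $\lambda>0$, and shrink $U_{\textbf{x}}$ to a small ball $B$. Two observations drive the argument.
\begin{enumerate}
\item \emph{No closure points above boundary points.} If $\textbf{w}\in\partial O\cap B$ and $s\in(0,1)$, then $\textbf{w}+s\lambda\textbf{e}_1\notin\overline{O}\cap B$. Otherwise $\textbf{w}=(\textbf{w}+s\lambda\textbf{e}_1)+s\textbf{y}_{\textbf{x}}\in O$, which is impossible since $O$ is open and $\textbf{w}\in\partial O$.
\item \emph{Crossing along nearby vertical lines.} Applied at $\textbf{x}_0=\textbf{0}$, the first observation gives a point $\textbf{p}_+=(s\lambda,\textbf{0})$ in the open set $\mathbb{R}^n\setminus\overline{O}$. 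The segment condition gives $\textbf{p}_-=(-s\lambda,\textbf{0})\in O$. By openness, there is $r>0$ such that for $|\textbf{y}^1|<r$ we have $(s\lambda,\textbf{y}^1)\notin\overline{O}$ and $(-s\lambda,\textbf{y}^1)\in O$. So every vertical segment $\{(y_1,\textbf{y}^1):|y_1|\le s\lambda\}$ meets $\partial O$.
\end{enumerate}

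The steps are then as follows.
\begin{enumerate}
\item Choose $s$ and $r$ small enough that the cylinder $C=\{|\textbf{y}^1|<r,\ |y_1|<s\lambda\}$ lies in $B$, with $s\lambda<\lambda$.
\item \emph{Uniqueness.} If $(a,\textbf{y}^1)$ and $(b,\textbf{y}^1)$ with $a<b$ are both in $\partial O\cap C$, then $b-a<\lambda$. Observation 1 applied to the lower point then contradicts $(b,\textbf{y}^1)\in\overline{O}$.
\item \emph{Definition of $f$.} For $|\textbf{y}^1|<r$, let $f(\textbf{y}^1)$ be the unique $y_1$ with $(y_1,\textbf{y}^1)\in\partial O\cap C$.
\item \emph{Sides of the graph.} Points of the segment below $f(\textbf{y}^1)$ are in $O$, by applying the segment condition from the boundary point downward. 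Points above are not in $\overline{O}$, by observation 1. This yields the two set identities of Definition \ref{20260501def1} with $U_{\textbf{x}}=C$.
\item \emph{Continuity of $f$.} Suppose $\textbf{y}^1_k\to\textbf{y}^1$. Any limit point $c$ of $f(\textbf{y}^1_k)$ satisfies $(c,\textbf{y}^1)\in\partial O$, since $\partial O$ is closed. We also need $|c|<s\lambda$ strictly, which follows because the endpoints $(\pm s\lambda,\textbf{y}^1)$ lie in the open sets $O$ or $\mathbb{R}^n\setminus\overline{O}$. By uniqueness, $c=f(\textbf{y}^1)$. Since $f$ is bounded, this gives continuity.
\item Finally, extend $f$ continuously from $\{|\textbf{y}^1|<r/2\}$ to all of $\mathbb{R}^{n-1}$, for instance by Tietze extension, and replace $C$ by the half-radius cylinder.
\end{enumerate}

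The main obstacle is the uniqueness-and-continuity step. In particular, the cylinder dimensions must be chosen so that all the segments used stay inside the single neighbourhood $U_{\textbf{x}}$ where the segment condition applies, and so that the vertical extent of $C$ is less than $\lambda$, which is what makes observation 1 applicable. I would fix these constants first and then verify each claim inside $C$.
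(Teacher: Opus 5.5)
Your proof is correct apart from one constant in the converse (see the last paragraph), and it takes a different route from the paper.

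The paper gives no proof of this finite-dimensional statement; it simply cites Leoni. The natural comparison is therefore its proof of the $\ell^2$ analogue, Theorem~\ref{jihekehua}.

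For ``continuous boundary $\Rightarrow$ segment condition'' you do what the paper does, as in Proposition~\ref{1111}: shrink the neighbourhood and push closure points down along $-\textbf{e}_1$, strictly below the graph.

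For the converse the two arguments differ. The paper normalizes $\textbf{y}_{\textbf{x}_0}=-\textbf{e}_1$ and fixes a neighbourhood $(a,b)\times W$ of diameter less than $1$. It defines $f(\textbf{y}^1)$ as the supremum of $y_1$ over $\overline{\Omega}$ on the vertical line, with default value $a$ when the line misses $\overline{\Omega}$. It then proves lower and upper semicontinuity by a case analysis ($f=a$, $f=b$, $a<f<b$).

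You instead shrink to a thin cylinder in which every vertical line is forced to run from a point of $O$ at the bottom to a point of $\mathbb{R}^n\setminus\overline{O}$ at the top. A crossing point then exists by connectedness, and it is unique by your Observation~1. Continuity follows at once from the closedness of $\partial O$ together with this uniqueness. This removes the default value and the edge cases where the supremum sits on the boundary of the neighbourhood. The cost is a final Tietze extension, which Definition~\ref{20260501def1} requires anyway, since $f$ must be defined on all of $\mathbb{R}^{n-1}$. Your argument uses only metric and connectedness properties, so it would also go through verbatim in $\ell^2$.

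The constant to fix: in Step~1 of the converse you only require $s\lambda<\lambda$, but $C$ has vertical extent $2s\lambda$.
\begin{itemize}
\item The uniqueness step needs $b-a<\lambda$.
\item The ``sides of the graph'' step needs the downward segment from $(f(\textbf{y}^1),\textbf{y}^1)$ to reach below $-s\lambda$, i.e.\ $f(\textbf{y}^1)-\lambda\le -s\lambda$.
\item The same step needs every point above the graph in $C$ to lie within distance $\lambda$ of the boundary point.
\end{itemize}
All three require $2s\lambda\le\lambda$, i.e.\ $s\le\frac12$. Your closing remark (vertical extent of $C$ less than $\lambda$) already states the right condition, so just impose it in Step~1.
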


\begin{definition}\label{20260427def1}
	We say that $\Omega$ satisfies the \textbf{strong segment condition} (\textbf{segment condition}) if for any $\textbf{x}\in\partial\Omega$ there exist a neighbourhood $U_{\textbf{x}}\subset\ell^{2}$ of $\textbf{x}$ and a nonzero vector $\textbf{y}_{\textbf{x}}$ in $\widetilde{H}$ (in $\ell^2$) such that
	$$\left\{\textbf{z}+t\textbf{y}_{\textbf{x}}:t\in\left(0,1\right)\right\}\subset\Omega,\qquad\forall\,\textbf{z}\in\overline{\Omega}\cup U_{\textbf{x}}.$$
\end{definition}

\begin{example}
Take a vector $\textbf{x}_0 \in \ell^{2} \setminus \widetilde{H}$ such that $\left\| \textbf{x}_0 \right\| =1$, and let
$$f\left( \textbf{x} \right) \triangleq \left< \textbf{x} ,\textbf{x}_0 \right>_{\ell^{2}},\qquad\qquad Q\textbf{x} \triangleq\textbf{x}-f\left( \textbf{x} \right) \textbf{x}_0,\qquad\forall \,\textbf{x} \in \ell^{2}.$$
We consider the following nonempty open subset of $\ell^2$:
$$\Omega \triangleq\left\{ \textbf{x} \in \ell^{2} :f\left( \textbf{x} \right) >\sqrt{\left\| Q\left( \textbf{x} \right) \right\|} \right\}.$$
Obviously,
$$\overline{\Omega} =\left\{ \textbf{x} \in \ell^{2} :f\left( \textbf{x} \right) \geqslant \sqrt{\left\| Q\left( \textbf{x} \right) \right\| } \right\},\quad \text{and}\quad\partial \Omega =\left\{ \textbf{x} \in \ell^{2} :f\left( \textbf{x} \right) =\sqrt{\left\| Q\left( \textbf{x} \right) \right\| } \right\}.
$$
Then for any $\textbf{z} \in \overline{\Omega},$ and $t\in \left( 0,1 \right)$, we have
$$f\left( \textbf{z} +t\textbf{x}_0 \right) =f\left( \textbf{z} \right) +t>f\left( \textbf{z} \right) \geqslant \sqrt{\left\| Q \textbf{z}  \right\| } =\sqrt{\left\| Q\left( \textbf{z} +t\textbf{x}_0 \right) \right\| },$$
which implies that $\textbf{z} +t\textbf{x}_0 \in \Omega$. Hence $\Omega$ satisfies the segment condition.

Moreover, note that the zero vector $\textbf{0} \in \partial \Omega$. If there exists a $\textbf{y} \in \widetilde{H} \setminus \left\{ \textbf{0} \right\}$, such that $\textbf{0}+t\textbf{y} \in \Omega,$ for any $t\in \left( 0,1 \right)$, then we have
$f\left( t\textbf{y} \right) >\sqrt{\left\| Q\left( t\textbf{y} \right) \right\| },$
which implies that
$$
\sqrt{t} f\left( \textbf{y} \right) >\sqrt{\left\| Q\left( \textbf{y} \right) \right\|}.
$$
Letting $t\rightarrow 0^{+}$ gives
$\sqrt{\left\| Q\left( \textbf{y} \right) \right\| } =0$, which implies that $Q\left( \textbf{y} \right)=0,$ and hence $\textbf{y} \in \mathrm{span} \left\{ \textbf{x}_0 \right\}\subset \widetilde{H},$
which contradicts the assumption that $\textbf{x}_0 \in \ell^{2} \setminus \widetilde{H}$. Thus $\Omega$ does not satisfy the strong segment condition.
\end{example}

Before proceeding, we need the notations $\ell^2_F\triangleq \{\textbf{x}=(x_i)_{i\in\mathbb{N}}\in\ell^2:\#\{i\in\mathbb{N}:x_i\neq 0\}\in\mathbb{N}_0\}$ and $\widetilde{H}$ defined at \eqref{20260428for1}.
\begin{definition}
	We say that $\Omega$ satisfies the \textbf{finitely convex condition} if for any $\textbf{x}=(x_i)_{i\in\mathbb{N}},\textbf{y}=(y_i)_{i\in\mathbb{N}}\in\Omega$ satisfying
$\textbf{x}-\textbf{y}\in\ell^2_F,$
then it holds that
$$\left\{ t \textbf{x} +\left( 1-t \right) \textbf{y} :t \in \left[ 0,1 \right] \right\} \subset \Omega.$$
\end{definition}
\begin{remark}
Obviously, the concept of finitely convex is more general than the notion of $H$-convex sets defined in \cite[p. 668]{Hino}.
\end{remark}
\begin{proposition}\label{youxiantubixianduantiaojian}
If $\Omega$ is finitely convex, then $\Omega$ satisfies the strong segment condition.
\end{proposition}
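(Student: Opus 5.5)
The plan is to build the segment vector $\textbf{y}_{\textbf{x}}$ from finitely supported vectors. Note that $\ell^2_F\subset \widetilde{H}$ trivially, since only finitely many terms of $\sum_i x_i^2/a_i^4$ are nonzero. I will also use that $\ell^2_F$ is dense in $\ell^2$. I read the quantifier in Definition \ref{20260427def1} as $\textbf{z}\in\overline{\Omega}\cap U_{\textbf{x}}$, as in the classical segment condition.

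\textbf{Step 1: choice of $\textbf{y}_{\textbf{x}}$ and $U_{\textbf{x}}$.} Fix $\textbf{x}\in\partial\Omega$. Choose $\textbf{w}\in\Omega$ and $r>0$ with $B(\textbf{w},r)\subset\Omega$. By density of $\ell^2_F$, pick $\textbf{v}\in\ell^2_F$ with $\|\textbf{x}+\textbf{v}-\textbf{w}\|<r/2$. Then $\textbf{x}+\textbf{v}\in\Omega$, while $\textbf{x}\notin\Omega$ because $\Omega$ is open and $\textbf{x}\in\partial\Omega$. Hence $\textbf{v}\neq\textbf{0}$, and $\textbf{v}\in\widetilde{H}$. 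Set $\textbf{y}_{\textbf{x}}\triangleq\textbf{v}$ and $U_{\textbf{x}}\triangleq B(\textbf{x},r/2)$. For $\textbf{z}\in U_{\textbf{x}}$ the triangle inequality gives $\|\textbf{z}+\textbf{v}-\textbf{w}\|<r$. So $\textbf{z}+\textbf{v}\in\Omega$, and in fact $B(\textbf{z}+\textbf{v},\rho)\subset\Omega$ with $\rho\triangleq r-\|\textbf{z}+\textbf{v}-\textbf{w}\|>0$.

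\textbf{Step 2: approximating $\textbf{z}$ by finite perturbations.} Let $\textbf{z}\in\overline{\Omega}\cap U_{\textbf{x}}$. I claim there are $\textbf{f}_k\in\ell^2_F$ with $\textbf{f}_k\to\textbf{0}$ and $\textbf{z}_k\triangleq\textbf{z}+\textbf{f}_k\in\Omega$. Indeed, for each $k$ take $\textbf{p}\in\Omega$ with $\|\textbf{p}-\textbf{z}\|<1/k$ and a ball $B(\textbf{p},s)\subset\Omega$. Then approximate $\textbf{p}-\textbf{z}$ within $\min\{s,1/k\}$ by an element $\textbf{f}_k$ of $\ell^2_F$.

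\textbf{Step 3: applying finite convexity.} Fix $t\in(0,1)$ and write $\textbf{z}+t\textbf{v}=(1-t)\textbf{z}_k+t\textbf{q}_k$ with
$$\textbf{q}_k\triangleq \textbf{z}+\textbf{v}-\frac{1-t}{t}\textbf{f}_k .$$
Then $\|\textbf{q}_k-(\textbf{z}+\textbf{v})\|\to0$, so $\textbf{q}_k\in B(\textbf{z}+\textbf{v},\rho)\subset\Omega$ for large $k$. Moreover $\textbf{q}_k-\textbf{z}_k=\textbf{v}-\textbf{f}_k/t\in\ell^2_F$. Finite convexity of $\Omega$ then puts the whole segment $[\textbf{z}_k,\textbf{q}_k]$ in $\Omega$, and this segment contains $\textbf{z}+t\textbf{v}$. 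This proves $\{\textbf{z}+t\textbf{y}_{\textbf{x}}:t\in(0,1)\}\subset\Omega$.

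The main obstacle is Step 3. The boundary point $\textbf{z}$ itself is not in $\Omega$, so finite convexity cannot be applied to it directly. Ordinary approximations $\textbf{z}_k\to\textbf{z}$ would make $\textbf{q}_k-\textbf{z}_k$ fail to lie in $\ell^2_F$. This is why Step 2 insists that the approximations differ from $\textbf{z}$ by finitely supported vectors, and why $\textbf{q}_k$ uses the $t$-dependent correction $-\frac{1-t}{t}\textbf{f}_k$.
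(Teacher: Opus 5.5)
Your proof is correct and follows essentially the same route as the paper: you pick a finitely supported segment vector by density of $\ell^2_F$, approximate $\textbf{z}\in\overline{\Omega}$ by points $\textbf{z}_\varepsilon\in\Omega$ with $\textbf{z}_\varepsilon-\textbf{z}\in\ell^2_F$, and apply finite convexity to the same $t$-dependent point $\textbf{z}+\textbf{y}_{\textbf{x}}+\frac{1-t}{t}(\textbf{z}-\textbf{z}_\varepsilon)$. Your version differs only in minor ways: Step 2 explicitly justifies the existence of these approximants (the paper just asserts it), you treat $\textbf{z}\in\Omega$ and $\textbf{z}\in\partial\Omega$ uniformly, and your reading of the definition with $\overline{\Omega}\cap U_{\textbf{x}}$ is the intended one.
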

\begin{proof}
	For any $\textbf{x}\in\partial\Omega$, take $\textbf{y}\in\Omega$. Since $\Omega$ is open, there exists $r>0$ such that $B(\textbf{y},r)\subset\Omega$. Choose a $\textbf{y}_{\textbf{x}}\in\ell^2_F$, such that
	$$\left\|\textbf{y}_{\textbf{x}}-(\textbf{y}-\textbf{x})\right\|=\left\|\textbf{x}+\textbf{y}_{\textbf{x}}-\textbf{y}\right\|<r.$$
Then we have $\textbf{x}+\textbf{y}_{\textbf{x}}\in\Omega$, hence there exists $\delta>0$ with
	$$B(\textbf{x}+\textbf{y}_{\textbf{x}},\delta)\subset\Omega.$$
	Set $U_{\textbf{x}}\triangleq B(\textbf{x},\delta)$. For any $\textbf{z}\in\overline{\Omega}\cup U_{\textbf{x}}$ we have
	$$\left\|\textbf{z}+\textbf{y}_{\textbf{x}}-(\textbf{x}+\textbf{y}_{\textbf{x}})\right\| =\left\|\textbf{z}-\textbf{x}\right\| <\delta,$$
	which implies $\textbf{z}+\textbf{y}_{\textbf{x}}\in\Omega$. If $\textbf{z}\in\Omega$, then by the finite convexity of $\Omega$,
	$$\textbf{z}+t\textbf{y}_{\textbf{x}}\in\Omega,\quad\forall\, t\in(0,1).$$
	If $\textbf{z}\in\partial\Omega$, choose $\varepsilon>0$ such that
	$$B(\textbf{z}+\textbf{y}_{\textbf{x}},\varepsilon)\subset\Omega.$$
	For any $t\in(0,1)$, take a $\textbf{z}_{\varepsilon}\in\Omega$ with
	$$\left\|\textbf{z}_{\varepsilon}-\textbf{z}\right\|<\frac{t\varepsilon}{1-t},\qquad \text{and}\qquad \textbf{z}_{\varepsilon}-\textbf{z}\in \ell^2_F.$$
Let
	$\textbf{a}_{\varepsilon}\triangleq(1-t)\textbf{z}_{\varepsilon}+t(\textbf{z}+\textbf{y}_{\textbf{x}})\in\Omega.$
	Observe that
	 $$\left\|\frac{(1-t)\textbf{z}+t(\textbf{z}+\textbf{y}_{\textbf{x}})-\textbf{a}_{\varepsilon}}{t}\right\|=\frac{1-t}{t}\left\|\textbf{z}-\textbf{z}_{\varepsilon}\right\|<\varepsilon,$$
	and
	 $$\frac{(1-t)\textbf{z}+t(\textbf{z}+\textbf{y}_{\textbf{x}})-\textbf{a}_{\varepsilon}}{t}+\textbf{z}+\textbf{y}_{\textbf{x}}-\textbf{z}_{\varepsilon}=\frac{1}{t}(\textbf{z}-\textbf{z}_{\varepsilon})+\textbf{y}_{\textbf{x}}\in\ell^2_F.$$
Consequently,
 $$\textbf{z}+t\textbf{y}_{\textbf{x}}=(1-t)\textbf{z}+t(\textbf{z}+\textbf{y}_{\textbf{x}})=(1-t)\textbf{z}_{\varepsilon}+t\left(\textbf{z}+\textbf{y}_{\textbf{x}}+\frac{(1-t)\textbf{z}+t(\textbf{z}+\textbf{y}_{\textbf{x}})-\textbf{a}_{\varepsilon}}{t}\right)\in\Omega.$$
	Thus we have shown that
	$$\textbf{z}+t\textbf{y}_{\textbf{x}}\in\Omega,\quad\forall \,t\in(0,1),$$
	which completes the proof of Proposition \ref{youxiantubixianduantiaojian}.
\end{proof}
For $i\in\nn$, we denote by $P_{\mathbb{N}\setminus\{i\}}$ the natural projection from $\ell^{2}$ onto $\ell^{2} \left( \mathbb{N} \setminus \left\{ i \right\} \right)$ defined by $P_{\mathbb{N}\setminus\{i\}}\textbf{x}\triangleq (x_j)_{j\in \mathbb{N}\setminus\{i\}}$ for any $\textbf{x}=(x_j)_{j\in \mathbb{N}}\in\ell^2$.
\begin{definition}\label{20260501def2}
	We say that $\Omega$ has a \textbf{regular boundary} if for every $\textbf{x} \in \partial \Omega$, there exist a neighbourhood $U_{\textbf{x}}$ of $\textbf{x}$, an index $i_{}\in \mathbb{N}$, and a real-valued function $f$ on $P_{\mathbb{N}\setminus\{i\}}U_{\textbf{x}}$ such that
$$\begin{aligned}
		&U_{\textbf{x}}\bigcap \Omega =\left\{ \textbf{y}=(y_j)_{j\in\mathbb{N}} \in U_{\textbf{x}}:y_{i_{}}<f\left(P_{\mathbb{N}\setminus\{i\}} \textbf{y} \right) \right\},\\
		&U_{\textbf{x}}\bigcap \partial \Omega =\left\{ \textbf{y}=(y_j)_{j\in\mathbb{N}} \in U_{\textbf{x}}:y_{i_{}}=f\left(P_{\mathbb{N}\setminus\{i\}} \textbf{y}\right) \right\}.
	\end{aligned}$$
\end{definition}
Inspired by Definitions \ref{20260501def1} and \ref{20260501def2}, we have the following definition.
\begin{definition}\label{lianxubianjiedingyi}
	We say that $\Omega$ has a \textbf{continuous (Lipschitz continuous) boundary} if for every $\textbf{x}_0 \in \partial \Omega$, there exist a surjective isometry mapping $T$ from $\ell^{2}$ onto itself ( i.e., $T\ell^2=\ell^2$ and $||T\textbf{y}-T\textbf{y}'||=||\textbf{y}-\textbf{y}'||,\,\forall\,\textbf{y},\textbf{y}'\in \ell^2$) with $T\textbf{x}_0 =\textbf{0}$, an open neighborhoods $U_{\textbf{x}_0}$ of $\textbf{x}_0$, and a continuous (Lipschitz continuous) function $f$ on $P_{\mathbb{N}\setminus\{1\}}T U_{\textbf{x}_0}$ such that
	$$\begin{aligned}
		&T\left( U_{\textbf{x}_0}\bigcap \Omega \right) =\left\{ \textbf{y}=(y_j)_{j\in\mathbb{N}} \in T U_{\textbf{x}_0} :y_{1}<f\left( P_{\mathbb{N}\setminus\{1\}}\textbf{y}  \right) \right\},\\
 &T\left( U_{\textbf{x}_0}\bigcap \partial \Omega \right) =\left\{  \textbf{y}=(y_j)_{j\in\mathbb{N}}  \in T U_{\textbf{x}_0}:y_{1}=f\left( P_{\mathbb{N}\setminus\{1\}}\textbf{y} \right) \right\} .
	\end{aligned}$$
\end{definition}
\begin{remark}\label{20260501rem1}
The Mazur--Ulam theorem (see e.g. \cite{MazurUlam32}) states that for a surjective isometry $T$ on $\ell^{2}$, there exists a unitary operator $U$ on $\ell^2$ and a vector $\textbf{x}_0\in\ell^2$ such that $T\textbf{x}=U(\textbf{x}-\textbf{x}_0)$ for every $\textbf{x}\in \ell^2$.
\end{remark}
\begin{remark}
	Our definition here is slightly different from its finite dimensional counterpart as in Definition \ref{20260501def1}. More precisely, we use a function defined on an open neighborhood instead the whole space. Anyway, these two kind of definitions are equivalent in both finite and infinite dimensions. Because, for any function $f$ defined on an open neighborhood $V_{\textbf{x}}$ of $\textbf{x}$ such that $\overline{T  V_{\textbf{x}} } \subset T U_{\textbf{x}}$. Then by the Tietze Extension Theorem we obtain a continuous function $\widetilde{f}$ on $\ell^{2}$ such that $\widetilde{f} |_{\overline{T  V_{\textbf{x}}  }}=f$. Furthermore, if $f$ is a Lipschitz continuous function on $T V_{\mathbf{x}}$, then we can similarly extend $f$ to a globally Lipschitz continuous function on $\ell^{2}$ using \cite[Theorem 3.1, p. 102]{EG}.
\end{remark}
\begin{example}
	Consider the real version of the Hilbert polydisc
	$$\mathbb{D}_2^{\infty}\triangleq\left\{ \textbf{x}=(x_i)_{i\in\mathbb{N}} \in \ell^{2} :\sup_{i\in \mathbb{N}} \left| x_{i} \right| <1 \right\}.$$

For any $\mathbf{x}=\left( x_{i} \right)_{i\in \mathbb{N}}  \in \partial \mathbb{D}_2^{\infty}$, there exists a $m\in\mathbb{N}$ and distinct positive integers $i_{1},i_{2},\ldots ,i_{m},$ such that
$$I\triangleq \left\{ n\in \mathbb{N} :\left| x_{n} \right| =1 \right\} =\left\{ i_{1},i_{2},\ldots ,i_{m} \right\}.$$
For each $\mathbf{z}=\sum_{i=1}^{\infty}z_{i}\textbf{e}_i \in  \ell^2$, let
$$
P_{\mathbf{x}}\mathbf{z}\triangleq\sum_{k=1}^{m} z_{i_k}\textbf{e}_k+\sum_{ i_k\in\{i_{1},i_{2},\ldots ,i_{m} \}\setminus \{1,2,\ldots ,m \}} z_{k}\textbf{e}_{i_k} +\sum_{i\in \mathbb{N}\setminus(\{i_{1},i_{2},\ldots ,i_{m} \}\cup \{1,2,\ldots ,m \})} z_{i}\textbf{e}_i,
$$
and
$$S_{\mathbf{x}} \mathbf{z}\triangleq \sum_{k=1}^{m} \mathrm{sgn} \left( x_{i_{k}} \right) z_{k}\textbf{e}_k+\sum_{k=m+1}^{\infty} z_{k}\textbf{e}_{k},$$
where $\mathrm{sgn} x\triangleq \frac{x}{|x|}$, if $x\neq 0,$ and $\mathrm{sgn}x=0$, if $x=0.$ Then $S_{\mathbf{x}} P_{\mathbf{x}}$ is a unitary operator on $\ell^2$ and let
$$
\widetilde{\mathbf{x}} \triangleq  S_{\mathbf{x}}  P_{\mathbf{x}} \mathbf{x}   =\left( \widetilde{x_{i}} \right)_{i\in \mathbb{N}},$$
then we have
$\widetilde{x_{i}} =1,\,\forall\,i=1,\ldots, m,$ and $\left| \widetilde{x_{i}} \right| <1,\,\forall\,i>m.$
Choose a natural number $N>m$ such that
$\left| \widetilde{x_{i}} \right| \leqslant \frac{1}{2} ,\,\forall \,i>N.$
Take a
$$\delta \in \left( 0,\min \left\{ \frac{1}{4} ,\min_{m<i\leqslant N} \left( 1-\left| \widetilde{x_{i}} \right| \right) \right\} \right)$$
and let
$$U_{\mathbf{x}}\triangleq B\left( \mathbf{x} ,\delta \right) ,\qquad\widetilde{U_{\mathbf{x}}} \triangleq  S_{\mathbf{x}} P_{\mathbf{x}}   U_{\mathbf{x}} =B\left( \widetilde{\mathbf{x}} ,\delta \right).$$
For any $\mathbf{z}=\sum_{i=1}^{\infty}z_{i}\textbf{e}_i \in \widetilde{U_{\mathbf{x}}} $, if $i>m$, then we have
$$\left| z_{i} \right| \leqslant \left| \widetilde{x_{i}} \right| +\left| z_{i}-\widetilde{x_{i}} \right| \leqslant \left| \widetilde{x_{i}} \right| +\left\| \mathbf{z} -\widetilde{\mathbf{x}} \right\|  <\left| \widetilde{x_{i}} \right| +\delta <1;$$
and if $i=1,\ldots,m$, we have
$$\left| z_{i}-1 \right| =\left| z_{i}-\widetilde{x_{i}} \right| \leqslant \left\| \mathbf{z} -\widetilde{\mathbf{x}} \right\|  <\delta <\frac{1}{4}.$$
Thus for any $\mathbf{z}=\sum_{i=1}^{\infty}z_{i}\textbf{e}_i  \in \widetilde{U_{\mathbf{x}}}$, it holds that
$\mathbf{z} \in \mathbb{D}_2^{\infty},$ if and only if $z_{j}<1,\,\forall\,j=1,\ldots, m.$ By similar argument, we see that for any $\mathbf{z}=\sum_{i=1}^{\infty}z_{i}\textbf{e}_i  \in  S_{\mathbf{x}}  P_{\mathbf{x}}(U_{\mathbf{x}}-\textbf{x})$,
\begin{equation}\label{20260505for1}
\mathbf{z} \in S_{\mathbf{x}}  P_{\mathbf{x}}(U_{\mathbf{x}}\cap \mathbb{D}_2^{\infty} -\textbf{x})
,\quad \text{ if and only if }\quad z_{j}<0,\,\forall\,j=1,\ldots, m.
\end{equation}
Take a real $m\times m$ orthogonal matrix $\widetilde{R_{\mathbf{x}}}=(R_{jk})_{m\times m}$ whose first row consists entirely of $\frac{1}{\sqrt{m}}$. Then $\widetilde{R_{\mathbf{x}}}^{-1}=\widetilde{R_{\mathbf{x}}}^{\top}$(the transpose of $\widetilde{R_{\mathbf{x}}}$). For any $j=1,2,\ldots ,m,$ and $\mathbf{y}_m=(y_1,\cdots,y_m)^{\top} \in \mathbb{R}^{m}$, we denote the $j$-th component of $\widetilde{R_{\mathbf{x}}}^{-1} \mathbf{y}_m $ and $\widetilde{R_{\mathbf{x}}} \mathbf{y}_m $ by $\left( \widetilde{R_{\mathbf{x}}}^{-1} \mathbf{y}_m \right)_{j}$ and $\left( \widetilde{R_{\mathbf{x}}}  \mathbf{y}_m \right)_{j}$, respectively. Then it holds that
$$\left( \widetilde{R_{\mathbf{x}}}^{-1} \mathbf{y}_m \right)_{j} =\frac{1}{\sqrt{m}} y_{1}+\sum_{k=2}^{m} R_{jk}y_{k}.$$
We define a unitary operator on $\ell^{2}$ by
$$R_{\mathbf{x}} \mathbf{y} \triangleq\sum_{i= 1}^{m}\left( \widetilde{R_{\mathbf{x}}}  \mathbf{y}_m \right)_{i}\textbf{e}_i+\sum_{i=m+1}^{\infty}y_{i}\textbf{e}_i,\qquad
\forall \,\mathbf{y}=\sum_{i=1}^{\infty}y_{i}\textbf{e}_i\in \ell^{2},$$
where $\mathbf{y}_m \triangleq (y_1,\cdots,y_m)^{\top}\in \mathbb{R}^{m}$. Let
$$T \mathbf{y}  \triangleq \left( R_{\mathbf{x}}  S_{\mathbf{x}} P_{\mathbf{x}} \right) (\mathbf{y}-\textbf{x})   ,\quad \forall\,\mathbf{y} \in \ell^{2},$$
and
$$f\left( \mathbf{y}^{1} \right) \triangleq \min_{1\leqslant j\leqslant m} \left( -\sqrt{m} \sum_{k=2}^{m} R_{jk}y_{k} \right) ,\quad\forall\, \mathbf{y}^{1}=(y_i)_{i\in\mathbb{N}\setminus\{1\}} \in \ell^{2} \left( \mathbb{N} \setminus \left\{ 1 \right\} \right).$$
Note that $T$ is a surjective isometry on $\ell^2$ and $f$ is the minimum of $m$ Lipschitz continuous functions, and it is well known that such a function is still Lipschitz continuous.

 Note that for any $\mathbf{z}=\sum_{i=1}^{\infty}z_{i}\textbf{e}_i   \in T  U_{\mathbf{x}}$, by \eqref{20260505for1}, we have
$$\mathbf{z} \in T \left( \mathbb{D}_2^{\infty} \bigcap U_{\mathbf{x}} \right),\quad \text{ if and only if }\quad \left( R_{\mathbf{x}}^{-1}\mathbf{z},\textbf{e}_{j} \right)_{\ell^2} <0,\,\forall\,j=1,2,\ldots ,m,$$
$$\left( R_{\mathbf{x}}^{-1}\mathbf{z}, \textbf{e}_{j}\right)_{\ell^2} =\frac{1}{\sqrt{m}} z_{1}+\sum_{k=2}^{m} R_{jk}z_{k},\quad\,\forall\,j=1,2,\ldots ,m,$$
and hence
$$\mathbf{z} \in T \left( \mathbb{D}_2^{\infty} \bigcap U_{\mathbf{x}} \right),\quad \text{ if and only if }\quad z_{1}<f\left(P_{\mathbb{N}\setminus\{1\}} \mathbf{z}  \right).$$
Similarly, one can prove that
$$\mathbf{z} \in T \left( (\partial\mathbb{D}_2^{\infty} )\bigcap U_{\mathbf{x}} \right),\quad \text{ if and only if }\quad z_{1}=f\left(P_{\mathbb{N}\setminus\{1\}} \mathbf{z}  \right),$$
and
$$\mathbf{z} \in T \left(  U_{\mathbf{x}}\setminus \overline{\mathbb{D}_2^{\infty}}\right),\quad \text{ if and only if }\quad z_{1}>f\left(P_{\mathbb{N}\setminus\{1\}} \mathbf{z}  \right).$$
Thus we have shown that $\mathbb{D}_2^{\infty}$ has a Lipschitz continuous boundary. Combining the following Proposition \ref{1f1f1f}, we will see that $\mathbb{D}_2^{\infty}$ also satisfies the strong segment condition.
\end{example}
\begin{proposition}\label{1111}
	If $\Omega$ has a regular boundary, then $\Omega$ satisfies the strong segment condition.
\end{proposition}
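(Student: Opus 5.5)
Fix $\textbf{x}\in\partial\Omega$ and take $U_{\textbf{x}}$, the index $i$, and the function $f$ from Definition \ref{20260501def2}. The natural candidate for the segment direction is a negative multiple of the coordinate vector, $\textbf{y}_{\textbf{x}}\triangleq -s\textbf{e}_i$ with $s>0$ small. Since $\textbf{e}_i$ has finite support, $\textbf{y}_{\textbf{x}}$ lies in $\widetilde{H}\setminus\{\textbf{0}\}$, which is the only membership requirement. Moving in this direction only lowers the $i$-th coordinate and leaves $P_{\mathbb{N}\setminus\{i\}}$ unchanged. Hence, inside the chart, a point $\textbf{z}$ with $z_i\le f(P_{\mathbb{N}\setminus\{i\}}\textbf{z})$ is sent to $z_i-ts<f(P_{\mathbb{N}\setminus\{i\}}\textbf{z})$ for every $t\in(0,1)$. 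By the first identity in Definition \ref{20260501def2}, this point then lies in $U_{\textbf{x}}\cap\Omega$, provided the whole segment stays inside $U_{\textbf{x}}$.

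The plan is as follows. First, I shrink the neighbourhood: choose $r>0$ with $B(\textbf{x},2r)\subset U_{\textbf{x}}$, set $s\triangleq r$, and let the new neighbourhood be $V_{\textbf{x}}\triangleq B(\textbf{x},r)$. For $\textbf{z}\in V_{\textbf{x}}$ and $t\in(0,1)$ we get $\|\textbf{z}-ts\textbf{e}_i-\textbf{x}\|<2r$, so the segment lies in $U_{\textbf{x}}$.

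Second, I check that every $\textbf{z}\in\overline{\Omega}\cap V_{\textbf{x}}$ satisfies $z_i\le f(P_{\mathbb{N}\setminus\{i\}}\textbf{z})$. If $\textbf{z}\in\Omega$, this is immediate from the first identity. If $\textbf{z}\in\partial\Omega$, it follows from the second identity, which gives equality. Third, I combine the two steps. For every $\textbf{z}\in\overline{\Omega}\cap V_{\textbf{x}}$ the point $\textbf{z}+t\textbf{y}_{\textbf{x}}$ lies in $U_{\textbf{x}}$ and its $i$-th coordinate is strictly below $f(P_{\mathbb{N}\setminus\{i\}}(\textbf{z}+t\textbf{y}_{\textbf{x}}))$, so it belongs to $\Omega$.

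The main obstacle is the quantifier in Definition \ref{20260427def1}, which as printed reads $\overline{\Omega}\cup U_{\textbf{x}}$. Taken literally, the condition would require segments from all of $\overline{\Omega}$, including points far from $\textbf{x}$ where the chart says nothing. It would also require segments from points of $U_{\textbf{x}}$ outside $\overline{\Omega}$, which is false for points above the graph. I would read it as $\overline{\Omega}\cap U_{\textbf{x}}$, in line with the finite-dimensional segment condition, and argue as above.

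If the literal union reading must be handled, it already fails for the half-space $\Omega=\{x_1<0\}$, which has a regular boundary. Points of $U_{\textbf{x}}$ above the graph do not move into $\Omega$ for small $t$, so the statement cannot be proved under that reading, and I would flag this explicitly. No regularity of $f$ is needed anywhere, since the argument uses only the two set identities and the fact that the direction is a coordinate vector.
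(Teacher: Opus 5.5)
Your argument is correct and is essentially the paper's own proof. The paper also takes $\textbf{y}_{\textbf{x}}=-\delta\textbf{e}_i$, shrinks to a neighbourhood $V_{\textbf{x}}\subset U_{\textbf{x}}$ so that the segments stay inside $U_{\textbf{x}}$, and reads off $z_i-\delta t<f(P_{\mathbb{N}\setminus\{i\}}\textbf{z})$ from the graph description for $\textbf{z}\in V_{\textbf{x}}\cap\overline{\Omega}$. Your reading of $\overline{\Omega}\cup U_{\textbf{x}}$ in Definition \ref{20260427def1} as a misprint for $\overline{\Omega}\cap U_{\textbf{x}}$ matches what the paper's proofs, including this one, actually use.
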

\begin{proof}
For $\textbf{x}\in \partial \Omega$, there exist a neighbourhood $U_{\textbf{x}}$ of $\textbf{x}$, an index $i\in \mathbb{N}$, and a real-valued function $f$ on $P_{\mathbb{N}\setminus\{i\}}U_{\textbf{x}}$ such that
	$$\begin{aligned}
		&U_{\textbf{x}}\bigcap \Omega =\left\{ \textbf{y}=(y_j)_{j\in\mathbb{N}} \in U_{\textbf{x}}:y_{i_{}}<f\left(P_{\mathbb{N}\setminus\{i\}} \textbf{y} \right) \right\},\\
		&U_{\textbf{x}}\bigcap \partial \Omega =\left\{ \textbf{y}=(y_j)_{j\in\mathbb{N}} \in U_{\textbf{x}}:y_{i_{}}=f\left(P_{\mathbb{N}\setminus\{i\}} \textbf{y}\right) \right\}.
	\end{aligned}$$
	Choose a neighbourhood $V_{\textbf{x}}\subset U_{\textbf{x}}$ of $\textbf{x}$ such that there exists $\delta>0$ sufficiently small with
	$$\textbf{z} -\delta t\textbf{e}_{i}\in U_{\textbf{x}},\quad \forall \,\textbf{z} \in V_{\textbf{x}},\; t\in \left( 0,1 \right).$$
	Consider
	$$\textbf{y}_{\textbf{x}} \triangleq -\delta \textbf{e}_{i}.$$
	For any $\textbf{z}=(z_i)_{i\in\mathbb{N}} \in V_{\textbf{x}}\bigcap \overline{\Omega}$ and $t\in \left( 0,1 \right)$ we have
	$$z_i -\delta t <f\left( P_{\mathbb{N}\setminus\{i\}}\textbf{z}   \right),$$
	which implies $\textbf{z} +t\textbf{y}_{\textbf{x}}\in \Omega$. Hence $\Omega$ satisfies the strong segment condition, which proves Proposition \ref{1111}.
\end{proof}
Now, we give an infinite-dimensional analogue of Theorem \ref{20260501thm1}.
\begin{theorem}\label{jihekehua}
	$\Omega$ satisfies the segment condition if and only if $\Omega$ has a continuous boundary.
\end{theorem}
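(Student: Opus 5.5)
The plan is to follow the finite-dimensional argument of Theorem~\ref{20260501thm1} (see \cite[Theorem 11.34]{Leo17}), checking that it uses only openness, closedness and isometries and never local compactness. In Definition~\ref{20260427def1} the condition ``$\textbf{z}\in\overline{\Omega}\cup U_{\textbf{x}}$'' will be read as $\textbf{z}\in\overline{\Omega}\cap U_{\textbf{x}}$, as in the finite-dimensional definition. Throughout, $T$ will be a surjective isometry of the form $T\textbf{y}=U(\textbf{y}-\textbf{x}_0)$ with $U$ unitary, as in Remark~\ref{20260501rem1}. We write points of $\ell^2$ as $(y_1,\textbf{y}')$ with $\textbf{y}'=P_{\mathbb{N}\setminus\{1\}}\textbf{y}$.

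\emph{Continuous boundary $\Rightarrow$ segment condition.} Fix $\textbf{x}_0\in\partial\Omega$ and take $T$, $U_{\textbf{x}_0}$, $f$ as in Definition~\ref{lianxubianjiedingyi}, so that $T\textbf{x}_0=\textbf{0}$.
\begin{itemize}
\item Choose $h>0$ with $B(\textbf{0},2h)\subset TU_{\textbf{x}_0}$, and set $W=B(\textbf{0},h)$.
\item For $\textbf{z}\in W\cap T\overline{\Omega}$ the point $\textbf{z}$ lies in $TU_{\textbf{x}_0}\cap T(\Omega\cup\partial\Omega)$. Hence $z_1\le f(\textbf{z}')$.
\item For $t\in(0,1)$ the point $\textbf{z}-th\textbf{e}_1$ still lies in $TU_{\textbf{x}_0}$ and has the same projection $\textbf{z}'$. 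Its first coordinate is $z_1-th<f(\textbf{z}')$, so it lies in $T\Omega$.
\end{itemize}
Pulling back gives the segment condition at $\textbf{x}_0$ with neighbourhood $T^{-1}W$ and vector $\textbf{y}_{\textbf{x}_0}=-hU^{-1}\textbf{e}_1\in\ell^2\setminus\{\textbf{0}\}$.

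\emph{Segment condition $\Rightarrow$ continuous boundary.} Fix $\textbf{x}_0\in\partial\Omega$ with its data $U_{\textbf{x}_0}$ and $\textbf{y}_{\textbf{x}_0}$, and put $L=\|\textbf{y}_{\textbf{x}_0}\|$. Take a unitary $U$ sending $\textbf{y}_{\textbf{x}_0}/L$ to $-\textbf{e}_1$ (a Householder reflection suffices), and set $T\textbf{y}=U(\textbf{y}-\textbf{x}_0)$ and $\widetilde\Omega=T\Omega$. Pick $r>0$ with $B(\textbf{0},r)\subset TU_{\textbf{x}_0}$. In these coordinates, every $\textbf{z}\in\overline{\widetilde\Omega}\cap B(\textbf{0},r)$ satisfies $\textbf{z}-\tau\textbf{e}_1\in\widetilde\Omega$ for all $\tau\in(0,L)$. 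The argument then runs in four steps.
\begin{itemize}
\item \emph{The axis.} For $s\in(0,\min\{L,r\})$ we have $(-s,\textbf{0})\in\widetilde\Omega$, since $\textbf{0}\in\overline{\widetilde\Omega}$. Also $(s,\textbf{0})\notin\overline{\widetilde\Omega}$: otherwise translating it by $-s\textbf{e}_1$ would put $\textbf{0}$ in $\widetilde\Omega$, which is impossible because $\textbf{0}\in\partial\widetilde\Omega$ and $\widetilde\Omega$ is open.
\item \emph{A cylinder.} Fix $a<\min\{L,r/2\}$. Since $\widetilde\Omega$ and $\ell^2\setminus\overline{\widetilde\Omega}$ are open, we can choose $b\in(0,r/2)$ such that for all $\|\textbf{w}'\|<b$ we have $(-a,\textbf{w}')\in\widetilde\Omega$ and $(a,\textbf{w}')\notin\overline{\widetilde\Omega}$. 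Let $C=\{|y_1|<a,\ \|\textbf{y}'\|<b\}\subset B(\textbf{0},r)$.
\item \emph{The graph function.} Define $f(\textbf{w}')=\sup\{s\in(-a,a):(s,\textbf{w}')\in\overline{\widetilde\Omega}\}$. The translation property (with $a<L$) shows that every $s<f(\textbf{w}')$ in $(-a,a)$ gives a point of $\widetilde\Omega$. The point $(f(\textbf{w}'),\textbf{w}')$ lies in $\overline{\widetilde\Omega}$ but not in the open set $\widetilde\Omega$, so it lies in $\partial\widetilde\Omega$. Points with $s>f(\textbf{w}')$ lie outside $\overline{\widetilde\Omega}$. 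Hence $C\cap\widetilde\Omega=\{y_1<f(\textbf{y}')\}$ and $C\cap\partial\widetilde\Omega=\{y_1=f(\textbf{y}')\}$, and we set $U_{\textbf{x}_0}:=T^{-1}C$.
\item \emph{Continuity of $f$.} Lower semicontinuity comes from openness of $\widetilde\Omega$: if $(f(\textbf{w}')-\epsilon,\textbf{w}')\in\widetilde\Omega$, this persists for nearby $\textbf{w}''$. Upper semicontinuity comes from openness of $\ell^2\setminus\overline{\widetilde\Omega}$ applied at $(f(\textbf{w}')+\epsilon,\textbf{w}')$.
\end{itemize}

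The main obstacle is the cylinder step together with continuity, because in $\ell^2$ we cannot use compactness of the base to choose $a$ and $b$ uniformly. The plan avoids this by choosing $b$ from openness at the two points $(\pm a,\textbf{0})$ only, and by deriving all other properties pointwise from the translation property. One must also check that the translated points stay inside $B(\textbf{0},r)$, which the choices $a<r/2$ and $b<r/2$ guarantee.
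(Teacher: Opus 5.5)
Your argument is correct. The sufficiency direction is the paper's own: it is the proof of Proposition~\ref{1111} transported by $T$.

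For necessity you use the same sup-over-fibres idea but a cleaner neighbourhood than the paper does.
\begin{itemize}
\item The paper rescales so that $\|\textbf{y}_{\textbf{x}_0}\|=1$, so its $T$ is only a similarity until it scales back at the end. It then works on an arbitrary product neighbourhood $(a,b)\times W$ of diameter less than $1$. It defines $f$ as the supremum of the fibre of $U_{\textbf{x}_0}\cap\overline{\Omega}$, with default value $a$ on empty fibres.
\item As a result it must treat the endpoint values $f=a$ and $f=b$ separately, including a subsequence argument for upper semicontinuity where $f=a$. At those values the graph point does not even lie in the neighbourhood.
\item Your adapted cylinder has its bottom face in $\widetilde\Omega$ and its top face outside $\overline{\widetilde\Omega}$. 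This confines $f$ to the open interval $(-a,a)$, so the supremum is attained and the graph lies in $C$ and equals $C\cap\partial\widetilde\Omega$. Both semicontinuities then become one-line openness arguments.
\item Using translations of length $\tau\in(0,L)$ also removes the need to rescale.
\end{itemize}
The paper's route has the mild advantage that any sufficiently small product neighbourhood works. Yours needs a specially chosen cylinder, which the definition of continuous boundary allows.

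One slip needs repair. In the graph-function step the drop $f(\textbf{w}')-s$ can approach $2a$. So $a<L$ does not let a single translation reach every $s\in(-a,f(\textbf{w}'))$. There are two easy fixes:
\begin{itemize}
\item require $2a<L$, which is the counterpart of the paper's ``diameter less than the segment length''; or
\item translate twice by $(f(\textbf{w}')-s)/2<L$, noting that the intermediate point lies in $\widetilde\Omega\cap C\subset\overline{\widetilde\Omega}\cap B(\textbf{0},r)$.
\end{itemize}
Your upper-semicontinuity step relies on this fibre description, since it needs every $s\le f(\textbf{w}'')$ in $(-a,a)$ to give a point of $\overline{\widetilde\Omega}$. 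It therefore inherits the same fix.
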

\begin{proof}
	\textbf{Sufficiency:} For every $\textbf{x}_0 \in \partial \Omega$, there exist a surjective isometry mapping $T$ from $\ell^{2}$ into itself with $T\textbf{x}_0 =\textbf{0}$, an open neighborhoods $U_{\textbf{x}_0}$ of $\textbf{x}_0$, and a continuous
function $f$ on $P_{\mathbb{N}\setminus\{1\}}TU_{\textbf{x}_0}$ such that
	$$\begin{aligned}
		&T\left( U_{\textbf{x}_0}\bigcap \Omega \right) =\left\{ \textbf{y}=(y_j)_{j\in\mathbb{N}} \in TU_{\textbf{x}_0} :y_{1}<f\left( P_{\mathbb{N}\setminus\{1\}}\textbf{y}  \right) \right\},\\
 &T\left( U_{\textbf{x}_0}\bigcap \partial \Omega \right) =\left\{  \textbf{y}=(y_j)_{j\in\mathbb{N}}  \in TU_{\textbf{x}_0}:y_{1}=f\left( P_{\mathbb{N}\setminus\{1\}}\textbf{y} \right) \right\} .
	\end{aligned}$$
By the proof Proposition \ref{1111}, there exist an open set $V\subset TU_{\textbf{x}_0}$ and $\textbf{z}_{\textbf{x}_{0}} \in \ell^{2}\setminus\{\textbf{0}\}$ such that
$$\left\{ \textbf{z}+t\textbf{z}_{\textbf{x}_{0}}  :t\in \left( 0,1 \right) ,\,\textbf{z} \in T\overline{\Omega} \bigcap V  \right\} \subset T\left( \Omega \right) .$$
Then there exists a nonzero vector $\textbf{y}_{\textbf{x}_0}$ such that $T(\textbf{x}_0+\textbf{y}_{\textbf{x}_0})=\textbf{z}_{\textbf{x}_{0}}$. Let $V_{\textbf{x}_0}\triangleq T^{-1}V$ which is an open neighborhood of $\textbf{x}_0$. Then we have
$$
T\textbf{x}+tT(\textbf{x}_0+\textbf{y}_{\textbf{x}_0})\in T\left( \Omega \right),\qquad\forall\,t\in(0,1),\,\textbf{x}\in \overline{\Omega} \bigcap V_{\textbf{x}_0}.
$$
Combining the fact that $T\textbf{x}_0 =\textbf{0}$ with Remark \ref{20260501rem1}, we have $T(\textbf{x}+t \textbf{y}_{\textbf{x}_0})=T\textbf{x}+tT(\textbf{x}_0+\textbf{y}_{\textbf{x}_0})$, and hence
$$\left\{ \textbf{x} +t\textbf{y}_{\textbf{x}_{0}} :t\in \left( 0,1 \right) ,\,\textbf{x} \in \overline{\Omega} \bigcap V_{\textbf{x}_{0}} \right\} \subset \Omega .$$
Thus we have proved sufficiency.
	\\
\textbf{Necessity:} Let $\textbf{x}_{0}\in \partial \Omega$, take a bounded connected neighborhood $U_{\textbf{x}_{0}}\subset\ell^{2}$ of $\textbf{x}_{0}$ and $\textbf{y}_{\textbf{x}_{0}}\in\ell^{2}\setminus \left\{ \textbf{0} \right\}$ such that
	\begin{eqnarray}
		\left\{ t\in \left( 0,1 \right) :\textbf{z} +t\textbf{y}_{\textbf{x}_{0}} \right\} \subset \Omega ,\qquad \forall \,\textbf{z} \in \overline{\Omega} \cap U_{\textbf{x}_{0}}.\label{111}
	\end{eqnarray}	
Assume that $\textbf{x}_{0} =\textbf{0} ,\textbf{y}_{\textbf{x}_{0}} =-\textbf{e}_{1},$ and $U_{\textbf{x}_{0}}=\left( a,b \right) \times W$ is an open ball with diameter less than $1$, where $a,b\in\mathbb{R}$ with $a<0<b$, and $W$ is an open set in $\ell^2(\mathbb{N}\setminus\{1\})$. Set
$$f\left( \textbf{y}^{1} \right) \triangleq \begin{cases}\sup \left\{ y_{1}\in \mathbb{R} :\left( y_{1},\textbf{y}^{1} \right) \in U_{\textbf{x}_{0}}\bigcap \overline{\Omega} \right\} ,&\textbf{y}^{1} \in P_{\mathbb{N}\setminus\{1\}} \left( U_{\textbf{x}_{0}}\bigcap \overline{\Omega} \right),\\ a,&\textbf{y}^{1} \in P_{\mathbb{N}\setminus\{1\}}   U_{\textbf{x}_{0}}   \setminus P_{\mathbb{N}\setminus\{1\}} \left( U_{\textbf{x}_{0}}\bigcap \overline{\Omega} \right).\end{cases}$$	
	Clearly,
	$$\left( f\left( \textbf{y}^{1} \right) ,\textbf{y}^{1} \right) \in U_{\textbf{x}_{0}}\bigcap \partial \Omega ,\qquad\forall \,\textbf{y}^{1} \in P_{\mathbb{N}\setminus\{1\}} \left( U_{\textbf{x}_{0}}\bigcap \overline{\Omega} \right).$$
For any $\left( y_{1},\textbf{y}^{1} \right) \in U_{\textbf{x}_{0}}\bigcap \Omega$, since $U_{\textbf{x}_{0}}\bigcap \Omega$ is open, we have $y_{1}<f\left( \textbf{y}^{1} \right)$. For any $\left( y_{1},\textbf{y}^{1} \right) \in U_{\textbf{x}_{0}}\bigcap \partial \Omega$, if $y_{1}<f\left( \textbf{y}^{1} \right)$, then
	$$0<\left\| \left( f\left( \textbf{y}^{1} \right) ,\textbf{y}^{1} \right) -\left( y_{1},\textbf{y}^{1} \right) \right\| = f\left( \textbf{y}^{1} \right) -y_{1}  <1,$$
and
$$\left(f\left( \textbf{y}^{1} \right),\textbf{y}^{1} \right)+(f\left( \textbf{y}^{1} \right) -y_{1} )\textbf{y}_{\textbf{x}_{0}}=\left( y_{1} ,\textbf{y}^{1} \right) \in U_{\textbf{x}_{0}}\bigcap \partial \Omega ,$$
	which contradicts \eqref{111}. Hence $y_{1}=f\left( \textbf{y}^{1} \right)$. Similarly, one can prove that for any $\left( y_{1},\textbf{y}^{1} \right) \in U_{\textbf{x}_{0}}\setminus\overline{\Omega}$, $y_{1}>f\left( \textbf{y}^{1} \right)$.		

 Therefore, we have proved that
	$$\begin{aligned}
		&U_{\textbf{x}_{0}}\bigcap \Omega =\left\{ \left( y_{1},\textbf{y}^{1} \right) \in U_{\textbf{x}_{0}}  :y_{1}<f\left( \textbf{y}^{1} \right) \right\},\\ &U_{\textbf{x}_{0}}\bigcap \partial \Omega =\left\{ \left( y_{1},\textbf{y}^{1} \right) \in U_{\textbf{x}_{0}}  :y_{1}=f\left( \textbf{y}^{1} \right) \right\} .
	\end{aligned}$$

Now we come to prove that $f$ is continuous on $P_{\mathbb{N}\setminus\{1\}} U_{\textbf{x}_{0}}$, which will be divided into two steps: $f$ is lower (upper) semi-continuous at every point of $P_{\mathbb{N}\setminus\{1\}} U_{\textbf{x}_{0}}$.

For any $\textbf{y}^{1}_{0} \in P_{\mathbb{N}\setminus\{1\}}U_{\textbf{x}_{0}} ,$ and $\varepsilon >0$.

\textbf{Case 1.} $f\left( \textbf{y}^{1}_{0} \right) = a$. By the definition of $f$, we have
$$f\left( \textbf{y}^{1} \right) \geqslant a,\qquad\forall \,\textbf{y}^{1} \in P_{\mathbb{N}\setminus\{1\}} U_{\textbf{x}_{0}} .$$

\textbf{Case 2.} $f\left( \textbf{y}^{1}_{0} \right) > a$. Then $\textbf{y}_{0}^{1} \in P_{\mathbb{N}\setminus\{1\}}\left( U_{\textbf{x}_{0}}\bigcap \overline{\Omega} \right)$. Choose a $\delta_0 \in \left( 0,\min \left\{ 1,f\left( \textbf{y}_{0}^{1} \right) - a,\varepsilon \right\} \right)$. By the definition of supremum, there exists $y_{1}^{\ast}>f\left( \textbf{y}_{0}^{1} \right) -\frac{\delta_0}{2}$ such that
$$\left( y_{1}^{\ast},\textbf{y}_{0}^{1} \right) \in U_{\textbf{x}_{0}}\bigcap \overline{\Omega}.$$
Since the diameter of $U_{\textbf{x}_{0}}$ is less than $1$, by \eqref{111} and
$$0<\frac{\delta_0}{2} <y_{1}^{\ast}-\left( f\left( \textbf{y}_{0}^{1} \right) -\delta_0 \right) =\left\| \left( y_{1}^{\ast},\textbf{y}_{0}^{1} \right) -\left( f\left( \textbf{y}_{0}^{1} \right) -\delta_0 ,\textbf{y}_{0}^{1} \right) \right\| <1,$$
we obtain
$$\left( y_{1}^{\ast},\textbf{y}_{0}^{1} \right) +(y_{1}^{\ast}-\left( f\left( \textbf{y}_{0}^{1} \right) -\delta_0 \right))\textbf{y}_{\textbf{x}_{0}}=\left( f\left( \textbf{y}_{0}^{1} \right) -\delta_0 ,\textbf{y}_{0}^{1} \right) \in \Omega \bigcap U_{\textbf{x}_{0}}.$$
Since $\Omega  \bigcap U_{\textbf{x}_{0}}$ is open, there exists a $\delta\in(0,\varepsilon)$ such that for any $\textbf{z}^1\in\ell^2(\mathbb{N}\setminus\{1\})$ with $$\left\| \textbf{z}^{1} -\textbf{y}_{0}^{1} \right\|_{\ell^{2} \left( \mathbb{N} \setminus \left\{ 1 \right\} \right)} <\delta,$$ it holds that
$$\left( f\left( \textbf{y}_{0}^{1} \right) -\delta ,\textbf{z}^{1} \right) \in \Omega \bigcap U_{\textbf{x}_{0}},$$
and hence
$$f\left( \textbf{z}^{1} \right) \geqslant f\left( \textbf{y}^{1}_{0} \right) -\delta \geqslant f\left( \textbf{y}^{1}_{0} \right) -\varepsilon.$$
Thus we have showed that $f$ is lower semi-continuous at $\textbf{y}^{1}_{0}$.

\textbf{Case 1.} $f\left( \textbf{y}_{0}^{1} \right) = b$. By the definition of $f$, we have
$$f\left( \textbf{y}^{1} \right) \leqslant b,\qquad\forall \,\textbf{y}^{1} \in P_{\mathbb{N}\setminus\{1\}} U_{\textbf{x}_{0}} .$$

\textbf{Case 2.} $f\left( \textbf{y}_{0}^{1} \right) = a$. If $f$ is not upper semi-continuous at $\textbf{y}_{0}^{1}$, then there exists an $\varepsilon_{0} >0$ and $\{\textbf{y}_{n}^{1} \}_{n=1}^{\infty}\subset P_{\mathbb{N}\setminus\{1\}}U_{\textbf{x}_{0}} $ such that
$$\textbf{y}_{n}^{1} \rightarrow \textbf{y}_{0}^{1},\quad \text{as}\, n\to\infty,\quad \text{and}\quad f\left( \textbf{y}_{n}^{1} \right) \geqslant a+2\varepsilon_{0},\quad\forall\,n\in\mathbb{N}.$$
By the definition of supremum, there exists $y_{1,n}\geqslant a+\varepsilon_{0}$ such that
$$\left( y_{1,n},\textbf{y}_{n}^{1} \right) \in U_{\textbf{x}_{0}}\bigcap \overline{\Omega},\quad\forall\,n\in\mathbb{N}.$$
Since $\left\{ y_{1,n} \right\}_{n=1}^{\infty}$ is a sequence of bounded real numbers, suppose that $\{y_{1,n_{k}}\}_{k=1}^{\infty}$ is convergent subsequence and $\lim_{k\to\infty}y_{1,n_{k}}= y_{1}^{\star}\in \left[ a+\varepsilon_{0} ,b \right]$, then
$$\lim_{k\to\infty}\left( y_{1,n_{k}},\textbf{y}_{n_{k}}^{1} \right)= \left( y_{1}^{\star},\textbf{y}_{0}^{1} \right) \notin U_{\textbf{x}_{0}}\bigcap \overline{\Omega},$$
which implies that $y_{1}^{\star}=b$.  Choose a $\delta^{\prime} \in \left( 0,\min \left\{ \varepsilon_{0} ,b-a \right\} \right)$. For sufficiently large $k\in\mathbb{N}$, we have $y_{1,n_{k}}>\left( b-\delta^{\prime} \right)$, and since the diameter of $U_{\textbf{x}_{0}}$ is less than $1$, by \eqref{111} and
$$a<b-\delta^{\prime} <b,\qquad 0<y_{1,n_{k}}-\left( b-\delta^{\prime} \right) =\left\| \left( y_{1,n_{k}},\textbf{y}_{n_{k}}^{1} \right) -\left( b-\delta^{\prime} ,\textbf{y}_{n_{k}}^{1} \right) \right\| <1,$$
we have
$$\left( b-\delta^{\prime} ,\textbf{y}_{n_{k}}^{1} \right) =\left( y_{1,n_{k}},\textbf{y}_{n_{k}}^{1} \right)+\left(y_{1,n_{k}}-\left( b-\delta^{\prime} \right) \right)\textbf{y}_{\textbf{x}_{0}}
\in U_{\textbf{x}_{0}}\bigcap \Omega.$$
Letting $k\rightarrow \infty$ gives
$$\left( b-\delta^{\prime} ,\textbf{y}_{0}^{1} \right) \in U_{\textbf{x}_{0}}\bigcap \overline{\Omega},$$
and hence $f\left( \textbf{y}_{0}^{1} \right) \neq a$, which is a contradiction. Therefore, $f$ is upper semi-continuous at $\textbf{y}_{0}^{1}$ in this case.

\textbf{Case 3.} $a<f\left( \textbf{y}_{0}^{1} \right) <b$. Then we have
$$\textbf{y}_{0}^{1} \in P_{\mathbb{N}\setminus\{1\}} U_{\textbf{x}_{0}}\bigcap \overline{\Omega},$$
and
$$\left( f\left( \textbf{y}_{0}^{1} \right) ,\textbf{y}_{0}^{1} \right) \in U_{\textbf{x}_{0}}\bigcap \overline{\Omega}.$$
If $f$ is not upper semi-continuous at $\textbf{y}_{0}^{1}$, then there exist an $\varepsilon_{0}^{\prime} \in \left( 0,\frac{b-f\left( \textbf{y}_{0}^{1} \right)}{2} \right)$ and $\{\textbf{y}_{n}^{1}\}_{n=1}^{\infty} \subset  P_{\mathbb{N}\setminus\{1\}}U_{\textbf{x}_{0}} $ with $\lim_{n\to\infty}\textbf{y}_{n}^{1}= \textbf{y}^{1}_{0}$ such that
$$f\left( \textbf{y}_{n}^{1} \right) \geqslant f\left( \textbf{y}_{0}^{1} \right) +2\varepsilon_{0}^{\prime},\qquad\forall\,n\in\mathbb{N}.$$
By the definition of supremum, choose $y_{1,n}\in \left( f\left( \textbf{y}_{n}^{1} \right) -\frac{\varepsilon_{0}^{\prime} }{2} ,b \right)$ satisfying
$$\left( y_{1,n},\textbf{y}_{n}^{1} \right) \in U_{\textbf{x}_{0}}\bigcap \overline{\Omega},\qquad\forall\,n\in\mathbb{N}.$$
Note that
$$y_{1,n}\geqslant f\left( \textbf{y}_{0}^{1} \right) +\frac{3}{2} \varepsilon_{0}^{\prime} .$$
Again using that the diameter of $U_{\textbf{x}_{0}}$ does not exceed $1$, we have
$$0<y_{1,n}-\left( f\left( \textbf{y}_{0}^{1} \right) +\frac{1}{2} \varepsilon_{0}^{\prime} \right) =\left\| \left( y_{1,n},\textbf{y}_{n}^{1} \right) -\left( f\left( \textbf{y}_{0}^{1} \right) +\frac{1}{2} \varepsilon_{0}^{\prime} ,\textbf{y}_{n}^{1} \right) \right\| <1.$$
Hence by \eqref{111} we obtain
$$\left( f\left( \textbf{y}_{0}^{1} \right) +\frac{1}{2} \varepsilon_{0}^{\prime} ,\textbf{y}_{n}^{1} \right) \in U_{\textbf{x}_{0}}\bigcap \Omega.$$
Letting $n\rightarrow \infty$ gives
$$\left( f\left( \textbf{y}_{0}^{1} \right) +\frac{1}{2} \varepsilon_{0}^{\prime} ,\textbf{y}_{0}^{1} \right) \in U_{\textbf{x}_{0}}\bigcap \overline{\Omega},$$
which contradicts the fact that $f\left( \textbf{y}_{0}^{1} \right)$ is the supremum. Thus $f$ is upper semi-continuous at $\textbf{y}_{0}^{1}$ in this case.

Thus we have showed that $f$ is lower semi-continuous at $\textbf{y}^{1}_{0}$ and hence $f$ is continuous on $P_{\mathbb{N}\setminus\{1\}} U_{\textbf{x}_{0}}$.

In the general case, we can take a unitary operator $U$ on $\ell^{2}$ such that $U\textbf{y}_{\textbf{x}_{0}}=-||\textbf{y}_{\textbf{x}_{0}}||\textbf{e}_{1}$. Let $T\textbf{x}\triangleq \frac{1}{||\textbf{y}_{\textbf{x}_{0}}||}U(\textbf{x}-\textbf{x}_{0}),$ for any $\textbf{x}\in\ell^2$. Choosing a small enough open neighborhood $V_{\textbf{x}_{0}}\subset U_{\textbf{x}_{0}}$ of $\textbf{x}_0$, so that $T V_{\textbf{x}_{0}}=\left( a,b \right) \times W$ is an open ball containing $\textbf{0}$ with diameter less than $1$, where $a,b\in\mathbb{R}$ with $a<0<b$, and $W$ is an open set in $\ell^2(\mathbb{N}\setminus\{1\})$. For any
$$\textbf{z} \in \overline{T  \Omega  } \bigcap T  V_{\textbf{x}_{0}}  =T\left( \overline{\Omega} \bigcap V_{\textbf{x}_{0}} \right),\,t\in \left( 0,1 \right),$$
we have
$$
 T^{-1}  \textbf{z}  +t\textbf{y}_{\textbf{x}_{0}}\in \Omega,
 $$
and note that $\textbf{z} -t\textbf{e}_{1}=T( T^{-1}  \textbf{z} ) +\frac{1}{||\textbf{y}_{\textbf{x}_{0}}||}U(t\textbf{y}_{\textbf{x}_{0}})=T( T^{-1}  \textbf{z}  +t\textbf{y}_{\textbf{x}_{0}})$, which implies that
$$\textbf{z} -t\textbf{e}_{1} \in T \Omega.$$
This reduces to the previous case, and thus there exits a continuous
function $f$ on $P_{\mathbb{N}\setminus\{1\}}(T  \overline{\Omega } \bigcap T  V_{\textbf{x}_{0}})$ such that
	$$\begin{aligned}
		& T  \Omega  \bigcap T  V_{\textbf{x}_{0}} =\left\{ \textbf{y}=(y_j)_{j\in\mathbb{N}} \in T  V_{\textbf{x}_{0}} :y_{1}<f\left( P_{\mathbb{N}\setminus\{1\}}\textbf{y}  \right) \right\},\\
 &T\left( \partial \Omega \right)\bigcap T  V_{\textbf{x}_{0}}  =\left\{  \textbf{y}=(y_j)_{j\in\mathbb{N}}  \in T  V_{\textbf{x}_{0}} :y_{1}=f\left( P_{\mathbb{N}\setminus\{1\}}\textbf{y} \right) \right\} .
	\end{aligned}$$
Therefore, we have
	$$\begin{aligned}
		& (||\textbf{y}_{\textbf{x}_{0}}|| \cdot T  \Omega ) \bigcap (||\textbf{y}_{\textbf{x}_{0}}||\cdot T  V_{\textbf{x}_{0}}) =\left\{ \textbf{y}=(y_j)_{j\in\mathbb{N}} \in ||\textbf{y}_{\textbf{x}_{0}}||\cdot T  V_{\textbf{x}_{0}} :y_{1}<||\textbf{y}_{\textbf{x}_{0}}|| f\left( P_{\mathbb{N}\setminus\{1\}}\textbf{y}  \right) \right\},\\
 &(||\textbf{y}_{\textbf{x}_{0}}|| \cdot T  (\partial\Omega )) \bigcap (||\textbf{y}_{\textbf{x}_{0}}||\cdot T  V_{\textbf{x}_{0}}) =\left\{ \textbf{y}=(y_j)_{j\in\mathbb{N}} \in ||\textbf{y}_{\textbf{x}_{0}}||\cdot T  V_{\textbf{x}_{0}} :y_{1}=||\textbf{y}_{\textbf{x}_{0}}|| f\left( P_{\mathbb{N}\setminus\{1\}}\textbf{y}  \right) \right\}.
	\end{aligned}$$
This completes the proof of Theorem \ref{1f1f1f}.
\end{proof}

\begin{proposition}\label{1f1f1f}
	If $\Omega$ has a Lipschitz continuous boundary, then $\Omega$ satisfies the strong segment condition.
\end{proposition}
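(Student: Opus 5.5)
Fix $\textbf{x}_0\in\partial\Omega$ and take the data of Definition \ref{lianxubianjiedingyi}: a surjective isometry $T$ with $T\textbf{x}_0=\textbf{0}$, a neighbourhood $U_{\textbf{x}_0}$, and a Lipschitz function $f$ on $P_{\mathbb{N}\setminus\{1\}}TU_{\textbf{x}_0}$ with constant $L$. By Remark \ref{20260501rem1}, $T\textbf{x}=U(\textbf{x}-\textbf{x}_0)$ with $U$ unitary. The plan is to produce a vector in the transformed coordinates that pulls back to a vector of $\widetilde{H}$. Translations commute with $T$ up to the linear part, so the segment $\textbf{z}+t\textbf{y}$ becomes $T\textbf{z}+tU\textbf{y}$. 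We therefore need $\textbf{w}=U\textbf{y}$ with $\textbf{y}\in\widetilde{H}\setminus\{\textbf{0}\}$ such that adding $t\textbf{w}$ pushes points of the closed "subgraph" strictly below the graph.

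\textbf{Cone condition.} Write $\textbf{w}=(w_1,\textbf{w}^1)$. For $\textbf{v}=(v_1,\textbf{v}^1)\in T(\overline{\Omega}\cap V)$ we have $v_1\le f(\textbf{v}^1)$. The Lipschitz bound then gives
$$v_1+tw_1<f(\textbf{v}^1)-L t\|\textbf{w}^1\|\le f(\textbf{v}^1+t\textbf{w}^1)$$
whenever $w_1<-L\|\textbf{w}^1\|$. So every $\textbf{w}$ in the open cone $\mathcal{C}=\{w_1<-L\|\textbf{w}^1\|\}$ works, provided the segment stays inside $TU_{\textbf{x}_0}$. We arrange this by shrinking to a smaller ball $V$ around $\textbf{0}$ and scaling $\textbf{w}$ to be short, exactly as in the proof of Proposition \ref{1111}. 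Here $\overline{\Omega}\cap U_{\textbf{x}_0}$ maps into $\{v_1\le f\}$ because $\Omega$ maps to $\{v_1<f\}$ and $f$ is continuous.

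\textbf{Density step.} $\mathcal{C}$ is a nonempty open subset of $\ell^2$ (it contains $-\textbf{e}_1$ with a ball around it), so $U^{-1}\mathcal{C}$ is a nonempty open subset of $\ell^2$. Since $\widetilde{H}\supset\ell^2_F$ is dense in $\ell^2$, we can pick $\textbf{y}\in\widetilde{H}\cap U^{-1}\mathcal{C}$, and $\textbf{y}\neq\textbf{0}$ because $\textbf{0}\notin\mathcal{C}$. Scaling by a small positive constant keeps $\textbf{y}$ in $\widetilde{H}$ and $U\textbf{y}$ in $\mathcal{C}$, since $\mathcal{C}$ is a cone. Then $\textbf{y}_{\textbf{x}_0}=\textbf{y}$ and $U_{\textbf{x}_0}'=T^{-1}V$ satisfy Definition \ref{20260427def1}. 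This uses $T(\textbf{z}+t\textbf{y})=T\textbf{z}+tU\textbf{y}$, which holds by Remark \ref{20260501rem1}.

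\textbf{Main obstacle.} The delicate point is the bookkeeping in the neighbourhood. We must choose the radius of $V$ and the length of $\textbf{w}$ so that, for every $\textbf{v}\in V\cap T\overline{\Omega}$ and $t\in(0,1)$:
\begin{itemize}
\item[(a)] $\textbf{v}+t\textbf{w}\in TU_{\textbf{x}_0}$;
\item[(b)] $\textbf{v}^1+t\textbf{w}^1$ lies in the domain of $f$, so that the characterisation of $T(U_{\textbf{x}_0}\cap\Omega)$ applies.
\end{itemize}
Taking $V=B(\textbf{0},r)$ with $B(\textbf{0},2r)\subset TU_{\textbf{x}_0}$ and $\|\textbf{w}\|<r$ settles both. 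Point (b) holds because $P_{\mathbb{N}\setminus\{1\}}$ of a point of $TU_{\textbf{x}_0}$ lies in the domain of $f$.

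A second subtlety is that the definition as stated uses $\overline{\Omega}\cup U_{\textbf{x}}$. We interpret this as $\overline{\Omega}\cap U_{\textbf{x}}$, consistent with the proof of Proposition \ref{1111}.
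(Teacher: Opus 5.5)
Your proposal is correct and follows essentially the same route as the paper. The paper likewise takes the open cone $\{x_1<-C\|P_{\mathbb{N}\setminus\{1\}}\textbf{x}\|\}$ (intersected with a small ball), picks $\textbf{w}$ in it from the dense set $T(\widetilde{H}+\textbf{x}_0)=U\widetilde{H}$, checks $\textbf{z}+t\textbf{w}\in T(U_{\textbf{x}_0}\cap\Omega)$ by the same Lipschitz chain of inequalities, and pulls back via $\textbf{y}_{\textbf{x}_0}=T^{-1}\textbf{w}-\textbf{x}_0\in\widetilde{H}$. Your reading of $\overline{\Omega}\cup U_{\textbf{x}}$ as $\overline{\Omega}\cap U_{\textbf{x}}$ also matches how the paper actually uses the definition in its proofs.
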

\begin{proof}
	For every $\textbf{x}_{0} \in \partial \Omega$, there exist a surjective isometry mapping $T$ from $\ell^{2}$ onto itself, an open neighborhood $U_{\textbf{x}_{0}}\subset \ell^{2}$ of $\textbf{x}_{0}$, and a Lipschitz continuous function $f$ on $P_{\mathbb{N}\setminus\{1\}} \left( T  U_{\textbf{x}_{0}}  \right)$ such that $T\left( \textbf{x}_{0} \right) =\textbf{0}$ and
	$$\begin{aligned}
		&T\left( U_{\textbf{x}_{0}}\bigcap \Omega \right) =\left\{ \left( y_{1},\textbf{y}^{1} \right) \in T  U_{\textbf{x}_{0}}  :y_{1}<f\left( \textbf{y}^{1} \right) \right\} ,\\
&T\left( U_{\textbf{x}_{0}}\bigcap \partial \Omega \right) =\left\{ \left( y_{1},\textbf{y}^{1} \right) \in T U_{\textbf{x}_{0}} :y_{1}=f\left( \textbf{y}^{1} \right) \right\} .
	\end{aligned}$$
There exits a positive constant $C $ such that
	$$\left| f\left( \textbf{y}^{1} \right) -f\left( \textbf{z}^{1} \right) \right| \leqslant C\left\| \textbf{y}^{1} -\textbf{z}^{1} \right\|_{\ell^{2} \left( \mathbb{N} \setminus \left\{ 1 \right\} \right)} ,\qquad\forall \,\textbf{y}^{1} ,  \,\textbf{z}^{1} \in P_{\mathbb{N}\setminus\{1\}}\left( T\left( \overline{\Omega} \bigcap U_{\textbf{x}_{0}} \right) \right).$$
	Since $\widetilde{H}$ is dense in $\ell^{2}$, and $T\left( \cdot +\textbf{x}_{0} \right)$ is an isometry from $\ell^{2}$ onto itself, we see that $T\left( \widetilde{H} +\textbf{x}_{0} \right)$ is also dense in $\ell^{2}$.
	
Let $V_{\textbf{x}_{0}}\subset U_{\textbf{x}_{0}}$ be an open neighborhood of $\textbf{x}_{0}$ such that for some $r > 0$, for every $\textbf{z} \in T V_{\textbf{x}_{0}} $, we have $B\left( \textbf{z} ,r \right) \subset TU_{\textbf{x}_{0}}$.

	Consider the nonempty open set
	$$O\triangleq \left\{ \textbf{x} =\left( x_{i} \right)_{i\in \mathbb{N}}\in\ell^2 :x_{1}<-C\left\| P_{\mathbb{N}\setminus\{1\}} \textbf{x}  \right\|_{\ell^{2} \left( \mathbb{N} \setminus \left\{ 1 \right\} \right)}, ||\textbf{x}||<r\right\},$$
and we take
	$$\textbf{w} =\left( w_{i} \right)_{i\in \mathbb{N}} \in O\bigcap T\left( \widetilde{H} +\textbf{x}_{0} \right) \neq \emptyset.$$
Now for any $\textbf{z} =\left( z_{i} \right)_{i\in \mathbb{N}} \in T\left(V_{\textbf{x}_{0}}\bigcap \overline{\Omega} \right) ,$ and $t\in \left( 0,1 \right)$, we have
$||t\textbf{w}||<r$, $\textbf{z} +t\textbf{w}\in T U_{\textbf{x}_{0}}$,
$$\left( \textbf{z} +t\textbf{w},\textbf{e}_1 \right)_{\ell^2} \leqslant f\left(P_{\mathbb{N}\setminus\{1\}} \textbf{z}  \right) +t w_{1} <f\left( P_{\mathbb{N}\setminus\{1\}}\textbf{z}  \right) -Ct\left\| P_{\mathbb{N}\setminus\{1\}}\textbf{w}  \right\|_{\ell^{2} \left( \mathbb{N} \setminus \left\{ 1 \right\} \right)} \leqslant f\left( P_{\mathbb{N}\setminus\{1\}}(\textbf{z}  +t\textbf{w}) \right).$$
	Thus $\textbf{z} +t\textbf{w} \in T\left( U_{\textbf{x}_{0}}\bigcap \Omega \right)\subset T \Omega$. Therefore, setting $\textbf{y}_{\textbf{x}_{0}} \triangleq T^{-1}\left( \textbf{w} \right) -\textbf{x}_{0} \in \widetilde{H}$, and similarly to the proof of sufficiency in Theorem \ref{jihekehua}, we know that for $\textbf{z} \in V_{\textbf{x}_{0}}\bigcap \overline{\Omega}, t\in \left( 0,1 \right)$, we have $\textbf{z} +t\textbf{y}_{\textbf{x}_{0}} \in \Omega$. Hence we have shown that $\Omega$ satisfies the strong segment condition. This completes the proof of Proposition \ref{1f1f1f}.
\end{proof}
\section{Approximation by Smooth Cylindrical Functions}\label{20260427sec1}
Recall that a classical result in \cite[Theorem 3.2, p. 68]{AF}) states that if $\Omega$ is a domain in $\mathbb{R}^n$ satisfying the segment condition, then $C_c^{\infty}(\mathbb{R}^n)$ is dense in $W^{m,p}(\Omega)$ for every $m\in\mathbb{N}$ and $p\in[1,\infty)$. We will establish its infinite-dimensional counterpart in this section.

Firstly, an example is given showing that this phenomenon does not hold for every connected open subset of $\ell^2$.
\begin{example}
Let
		$$\Omega \triangleq B_1 \setminus \left\{ \textbf{x}=(x_i)_{i\in\mathbb{N}} \in \ell^{2} :x_{1}\in \left[ 0,1 \right), x_{2}=0 \right\} ,$$
		$$\begin{aligned}
			&\omega_{+} \triangleq B_1 \bigcap \left\{  \textbf{x}=(x_i)_{i\in\mathbb{N}} \in \ell^{2} :x_{1}>\frac{3}{4} ,x_{2}>0 \right\} ,\\
			&\omega_{-} \triangleq B_1 \bigcap \left\{  \textbf{x}=(x_i)_{i\in\mathbb{N}} \in \ell^{2} :x_{1}>\frac{3}{4} ,x_{2}<0 \right\} .
		\end{aligned}$$
Take $\varphi \in C^{\infty}\left( \mathbb{R} \right)$ with
		$$\varphi \left( t \right) =0,\ \forall\,t\leqslant \frac{1}{2} ,\qquad\ \varphi \left( t \right) =1,\ \forall\,t\geqslant \frac{3}{4},$$
		and let
		$$u\left( \textbf{x}  \right) \triangleq \varphi \left( x_{1} \right) \mathrm{sgn} x_{2},\qquad \forall\, \textbf{x}=(x_i)_{i\in\mathbb{N}}\in\ell^2.$$
		Obviously $u\in C_{\mathscr{F}}^{\infty}\left( \Omega \right) \bigcap C_{\ell^{2}}^{\infty}\left( \Omega ,\mathbb{R} \right)$, thus $u\in W^{m,p}\left( \Omega ,P \right)$ for any $m\in\mathbb{N}$ and $p\in[1,+\infty)$. If there exist $\{u_{n}\}_{n=1}^{\infty}\subset C^{m}\left( \Omega \right) \bigcap C\left( \overline{\Omega} \right)\bigcap W^{m,p}\left( \Omega ,P \right)$ such that
		$$\lim_{n\rightarrow \infty} \left| \left| u_{n}-u \right| \right|_{W^{m,p}\left( \Omega ,P \right)} =0,$$
then we have
		$$\begin{aligned}
			&\lim_{n\rightarrow \infty} \int_{\omega_{+}} \left| u_{n}-1 \right|^{p} \mathrm{d} P=0,\quad \lim_{n\rightarrow \infty} \int_{\omega_{-}} \left| u_{n}+1 \right|^{p} \mathrm{d} P=0,\\
			&\lim_{n\rightarrow \infty} \int_{\omega_{+}} \left| \frac{\partial u_{n}}{\partial x_{2}} \right|^{p} \mathrm{d} P=0,\quad \lim_{n\rightarrow \infty} \int_{\omega_{-}} \left| \frac{\partial u_{n}}{\partial x_{2}}  \right|^{p} \mathrm{d} P=0.
		\end{aligned}$$
		By (b) of \cite[Theorem 6.3.1, pp. 171-172]{Res} and \cite[(i), p. 181]{Res}, we obtain
		$$\begin{aligned}
			&\lim_{k\rightarrow \infty} u_{n_{k}}\left( \textbf{x} \right) =1,\ \text{a.e. on } w_{+},\qquad \lim_{k\rightarrow \infty} u_{n_{k}}\left( \textbf{x} \right) =-1,\ \text{a.e. on } w_{-};\\
			&\lim_{k\rightarrow \infty} \frac{\partial u_{n_k}}{\partial x_{2}} \left( \textbf{x} \right) =0,\ \text{a.e. on } w_{+}\bigcup w_{-}.
		\end{aligned}$$
		Fubini's theorem yields the existence of $x_{1}^{0}\in \left( \frac{3}{4} ,1 \right) ,\textbf{x}^{2}_{0} \in \ell^{2} \left( \mathbb{N} \setminus \left\{ 1,2 \right\} \right)$ such that
		$$\delta^{2} \triangleq \left| x_{1}^{0} \right|^{2} +\left| \left| \textbf{x}_{0}^{2} \right| \right|_{\ell^{2} \left( \mathbb{N} \setminus \left\{ 1,2 \right\} \right)}^{2} <1$$
		and
		$$\begin{aligned}
			&\lim_{k\rightarrow \infty} u_{n_{k}}\left( x_{1}^{0},x_{2},\textbf{x}_{0}^{2} \right) =1,\ \text{a.e. on } \left( 0,\sqrt{1-\delta^{2}} \right) ,\qquad \lim_{k\rightarrow \infty} u_{n_{k}}\left( \textbf{x} \right) =-1,\ \text{a.e. on } \left( -\sqrt{1-\delta^{2}} ,0 \right);\\
			&\lim_{k\rightarrow \infty} \frac{\partial u_{n_k}}{\partial x_{2}} \left( x_{1}^{0},x_{2},\textbf{x}_{0}^{2} \right) =0,\ \text{a.e. on } \left( -\sqrt{1-\delta^{2}} ,\sqrt{1-\delta^{2}} \right).
		\end{aligned}$$
		For almost every $x_{2}\in \left( 0,\sqrt{1-\delta^{2}} \right) ,y_{2}\in \left( -\sqrt{1-\delta^{2}} ,0 \right)$, the Newton-Leibniz formula gives
		$$u_{n_{k}}\left( x_{1}^{0},x_{2},\textbf{x}_{0}^{2} \right) -u_{n_{k}}\left( x_{1}^{0},y_{2},\textbf{x}_{0}^{2} \right) =\int_{y_{2}}^{x_{2}} \frac{\partial u_{n_k}}{\partial x_{2}} \left( x_{1}^{0},t,\textbf{x}_{0}^{2} \right) \mathrm{d} t,$$
		and the Dominated Convergence Theorem gives
		$$2=\lim_{k\rightarrow \infty} \int_{y_{2}}^{x_{2}}  \frac{\partial  u_{n_{k}}\left( x_{1}^{0},t,\textbf{x}_{0}^{2} \right)}{\partial x_{2}} \mathrm{d} t=0,$$
		which is a contradiction! In particular, $\mathscr{C}^{\infty}_{c}$ is not dense in $W^{m,p}\left( \Omega ,P \right)$.
	\end{example}

We need two technical lemmas to esablish the main theorem in this section.
\begin{lemma}\label{jinhuaxuliebijin}
	Let $f\in W^{m,p}\left( \Omega ,P \right)$. Then for any $\varepsilon > 0$, there exists a function
$$g\in W^{m,p}\left( \Omega ,P \right) \bigcap C_{\mathscr{F}}^{\infty}\left( \Omega ,loc \right),
$$
such that
$\mathrm{supp} g$ is a compact subset of $\ell^{2}$ and
	$$\left\| f-g \right\|_{W^{m,p}\left( \Omega ,P \right)} <\varepsilon.$$
\end{lemma}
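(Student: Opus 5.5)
First approximate $f$ by a smooth function using Theorem \ref{zhuyaodingyi}, then cut it off with the compactly truncated sequence $\{X_n\}$ of Proposition \ref{230215Th1}. Theorem \ref{zhuyaodingyi} gives
$$
h\in C_{\ell^{2}}^{\infty}(\Omega)\bigcap W^{m,p}(\Omega,P)\bigcap C_{\mathscr{F}}^{\infty}(\Omega,loc)
\quad\text{with}\quad
\|f-h\|_{W^{m,p}(\Omega,P)}<\varepsilon/2 .
$$
It then suffices to show that $\|X_nh-h\|_{W^{m,p}(\Omega,P)}\to0$ as $n\to\infty$, and that $g\triangleq X_nh$ has the required properties.

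\textbf{Properties of $g$.} Since $X_n^{-1}(\mathbb{R}\setminus\{0\})$ lies in a compact subset $K_{n+2N_1}$ of $\ell^2$, the support of $X_nh$ is a closed subset of a compact set, hence compact. By item 2 of Proposition \ref{230215Th1}, each $X_n$ belongs to $C_{\mathscr{F}}^{m,p}(\ell^2)$, with
$$
\sup_n\|X_n\|_{C_{\mathscr{F}}^{m,p}(\ell^2)}<\infty .
$$
This holds because $\sum_{|\alpha|=k}|\textbf{a}^\alpha D^\alpha X_n|^p$ is dominated by the full $k$-fold sum appearing there. Lemma \ref{chengjiguji}, applied on each $V\s\Omega$, then gives $X_nh\in C_{\mathscr{F}}^{\infty}(\Omega,loc)$. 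Proposition \ref{chengjijieduan} gives $X_nh\in W^{m,p}(\Omega,P)$.

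\textbf{Convergence.} Repeat the Leibniz estimate \eqref{20260420for1} with $g$ replaced by $X_n-1$. This yields
$$
\|X_nh-h\|^p_{W^{m,p}(\Omega,P)}\le C(m,p)^p\int_\Omega\Big(\sum_{|\alpha|\le m}|\textbf{a}^\alpha D^\alpha h|^p\Big)\Big(\sum_{|\beta|\le m}|\textbf{a}^\beta D^\beta(X_n-1)|^p\Big)\,\mathrm{d}P .
$$
The second factor is bounded uniformly in $n$ and $\textbf{x}$ by item 2 of Proposition \ref{230215Th1}, so the integrand is dominated by a constant times an $L^1$ function.

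\textbf{Main obstacle: pointwise convergence.} Item 3 of Proposition \ref{230215Th1} gives convergence to $0$ only $P$-a.e., and the justification in its proof is that $X_n$ is locally constant equal to $1$ on each $K_n$. The proof established $P'(K)=1$, and I need $P(K)=1$. The plan is to verify this directly by the same computation:
$$
\int\sum_i \frac{x_i^2}{c_i}\,\mathrm{d}P=\sum_i \frac{a_i^2}{c_i}\le\Big(\sup_i a_i\Big)\sum_i\frac{a_i}{c_i}<\infty .
$$
Hence $P(K)=1$ as well, which is also what the proof of Proposition \ref{230215Th1} cites from \cite{WYZ}. For $\textbf{x}$ in the open interior region, namely $\textbf{x}\in K_{n-N_1}$ with $n$ large, we have $X_n\equiv1$ near $\textbf{x}$ along coordinate directions, so all its derivatives vanish there. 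Thus the second factor tends to $0$ on $K$, that is, $P$-a.e.

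The Dominated Convergence Theorem then gives $\|X_nh-h\|_{W^{m,p}(\Omega,P)}<\varepsilon/2$ for large $n$, and the triangle inequality finishes the proof.
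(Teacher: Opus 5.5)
Your proposal is correct and is essentially the paper's own proof. You approximate $f$ via Theorem~\ref{zhuyaodingyi}, multiply by the truncation $X_n$ of Proposition~\ref{230215Th1}, bound $\|(X_n-1)h\|_{W^{m,p}(\Omega,P)}^p$ by the integrand form of \eqref{20260420for1}, and conclude by dominated convergence using items 2 and 3 of that proposition. Your extra check that $P(K)=1$ (not just $P'(K)=1$) is sound and matches what the proof of Proposition~\ref{230215Th1} asserts at the outset, so it adds detail rather than changing the route.
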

\begin{proof}
For $\varepsilon > 0$, by Theorem \ref{zhuyaodingyi}, we take $F\in C{}_{\mathscr{F}}^{\infty}\left( \Omega ,loc \right) \bigcap W^{m,p}\left( \Omega ,P \right)$ such that
\begin{eqnarray}
	\left\| F-f \right\|_{W^{m,p}\left( \Omega ,P \right)} <\frac{\varepsilon}{2}.\label{qqq1}
\end{eqnarray}
Recall the sequence $\{X_{n}\}_{n=1}^{\infty}$ given in Proposition \ref{230215Th1}. It is easy to see that for every $n\in \mathbb{N}$, $X_{n}F\in C_{\mathcal{F}}^{\infty}\left( \Omega \right)$ and by Lemma \ref{chengjiguji} we have $X_{n}F\in C_{\mathscr{F}}^{\infty}\left( \Omega ,loc \right)$. By conclusion 2 of Proposition \ref{230215Th1}, we have $Y_{n}\triangleq X_{n}-1\in C_{\mathscr{F}}^{\infty}\left( \Omega \right)$ for any $n\in\mathbb{N}$. Similar to the estimate in \eqref{20260420for1}, we obtain
\begin{eqnarray}
&&\left\| Y_{n}F \right\|_{W^{m,p}\left( \Omega ,P \right)}^{p}\nonumber\\
 &\leqslant &\sup_{0\leqslant k\leqslant m} \left( \left( k+1 \right)^{kp-k} \left\vert C_{k}^{\left\lfloor \frac{k}{2} \right\rfloor} \right\vert^{kp} \right) \int_{\Omega} \left(\sum_{\left| \gamma \right| \leqslant m} \textbf{a}^{\gamma}\left| D^{\gamma}F\right|^p\right)\cdot \left(\sum_{\left| \beta \right| \leqslant m} \textbf{a}^{\beta} \left| D^{\beta}Y_{n} \right|^{p}\right) \mathrm{d} P.\label{20260427for1}
\end{eqnarray}
By the Dominated Convergence Theorem and conclusion 3 in Proposition \ref{230215Th1}, we have
$$\begin{aligned}\lim_{n\rightarrow \infty} \left\| Y_{n}F \right\|_{W^{m,p}\left( \Omega ,P \right)}^{p}&=0.\end{aligned}$$
Thus we can choose $N\in \mathbb{N}$ such that
\begin{eqnarray}
	\left\| Y_{N}F \right\|_{W^{m,p}\left( \Omega ,P \right)} <\frac{\varepsilon}{2}.\label{1f31f}
\end{eqnarray}
Set $g\triangleq X_{N}F$. By \eqref{qqq1} and \eqref{1f31f} we obtain
$$\left\| g-f \right\|_{W^{m,p}\left( \Omega ,P \right)} \leqslant \left\| Y_{N}F \right\|_{W^{m,p}\left( \Omega ,P \right)} +\left\| F-f \right\|_{W^{m,p}\left( \Omega ,P \right)} <\varepsilon,$$
which proves Lemma \ref{jinhuaxuliebijin}.

\end{proof}

\begin{lemma}\label{jeduan}
	Let $f\in L^{p}\left( \ell^{2} ,P \right)$ and $\mathrm{supp} f\subset B_r$ for some $r>0$. Then
	\begin{eqnarray}
		\lim_{t\rightarrow 0^{+}} \int_{\ell^{2}} \left| f\left( \textbf{x} +t\textbf{y} \right) -f\left( \textbf{x} \right) \right|^{p} \mathrm{d} P\left( \textbf{x} \right) =0,\quad \forall \,\textbf{y} \,\in \widetilde{H}.\label{1f332g2gg1}
	\end{eqnarray}
\end{lemma}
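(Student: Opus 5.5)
We prove \eqref{1f332g2gg1} by a density argument combined with the quasi-invariance of the Gaussian measure $P$ under translations by vectors of the Cameron--Martin type. Fix $\textbf{y}\in\widetilde{H}$; since $\widetilde{H}\subset H$, the Cameron--Martin theorem applies to the product measure $P$.

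\textbf{Step 1 (a uniform bound for translations).} For $\textbf{y}\in H$ and $t\in\mathbb{R}$, the translate $P(\cdot-t\textbf{y})$ is absolutely continuous with respect to $P$, with density
$$
\rho_t(\textbf{x})=\exp\Big(t\sum_{i=1}^\infty \frac{x_iy_i}{a_i^2}-\frac{t^2}{2}\|\textbf{y}\|_H^2\Big).
$$
The series converges in $L^2(P)$. In fact, for $\textbf{y}\in\widetilde{H}$ it converges for every $\textbf{x}\in\ell^2$, because $\sum |x_iy_i|/a_i^2\le \|\textbf{x}\|\,\|\textbf{y}\|_{\widetilde H}$. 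This gives
$$
\int|f(\textbf{x}+t\textbf{y})|^p\,\mathrm{d}P(\textbf{x})=\int |f|^p\rho_{-t}\,\mathrm{d}P .
$$
Because $\mathrm{supp} f\subset B_r$, only $\textbf{x}\in B_r$ matter. On $B_r$ we have $\rho_{-t}(\textbf{x})\le \exp(|t|\,r\,\|\textbf{y}\|_{\widetilde H})$. Hence for $|t|\le 1$,
$$
\|f(\cdot+t\textbf{y})\|_{L^p(P)}\le C_0\|f\|_{L^p(P)},
$$
with $C_0$ depending only on $r$, $\textbf{y}$ and $p$. This is exactly where $\widetilde{H}$ rather than $H$ is used: it turns the exponential density into a function that is bounded on bounded sets.

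\textbf{Step 2 (approximation).} Given $\varepsilon>0$, I would pick a bounded continuous $g$ with $\mathrm{supp}\, g\subset B_{r+1}$ and $\|f-g\|_{L^p(P)}<\varepsilon$. Such a $g$ exists by density of $C_{0,F^\infty}^\infty(B_{r+1})$ in $L^p(B_{r+1},P)$ (\cite[Corollary 3.2]{WYZ1}), or alternatively via the cylinder approximations of Proposition \ref{Reduce diemension} truncated by a smooth cutoff of $\|\textbf{x}\|$. For $g$ itself, $g(\textbf{x}+t\textbf{y})\to g(\textbf{x})$ for every $\textbf{x}$ as $t\to0^+$, and $|g|$ is bounded. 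Dominated convergence therefore gives $\int|g(\cdot+t\textbf{y})-g|^p\,\mathrm{d}P\to 0$.

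\textbf{Step 3 (conclusion).} Write
$$
f(\cdot+t\textbf{y})-f=\big[(f-g)(\cdot+t\textbf{y})\big]+\big[g(\cdot+t\textbf{y})-g\big]+\big[g-f\big].
$$
Apply Step 1 to $f-g$, whose support lies in $B_{r+1}$, so the constant depends on $r+1$. This yields
$$
\limsup_{t\to0^+}\|f(\cdot+t\textbf{y})-f\|_{L^p(P)}\le (C_0+1)\varepsilon ,
$$
and letting $\varepsilon\to0$ proves the lemma.

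The main obstacle is Step 1: justifying the Cameron--Martin change of variables in this product-measure setting and getting the pointwise bound on the density over the ball. The rest is routine. If quoting Cameron--Martin is undesirable, I would instead reduce to cylinder $f$ via Proposition \ref{Reduce diemension} and compute the finite-dimensional Gaussian densities explicitly, using $\sum_i |y_i|\,|x_i|/a_i^2\le \|\textbf{y}\|_{\widetilde H}\,r$ uniformly in the dimension.
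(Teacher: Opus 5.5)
Your proof is correct and has the same architecture as the paper's. You approximate $f$ in $L^p(\ell^2,P)$ by a bounded continuous $g$ and treat $g$ by dominated convergence. You then control the translate of $f-g$ through the Cameron--Martin density, which for $\textbf{y}\in\widetilde{H}$ is at most $e^{|t|R\|\textbf{y}\|_{\widetilde{H}}}$ on $B_R$ by Cauchy--Schwarz.

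The one genuine difference is the choice of approximant, and it works in your favour. The paper takes $g\in\mathscr{C}^{\infty}_{c}$ and then asserts \eqref{20260425for1} for $f-g$ ``similarly''. But a nonzero cylindrical function is nonzero on a set of the form $V\times\ell^2(\mathbb{N}\setminus\{1,\dots,k\})$ with $V\subset\mathbb{R}^k$ open, and $P$ charges every nonempty open set. So $f-g$ is not supported in $B_r$, and the ball estimate does not apply to it as written. Your requirement $\mathrm{supp}\, g\subset B_{r+1}$ is exactly what makes your Step 3 legitimate, at the harmless price of the constant $e^{(r+1)\|\textbf{y}\|_{\widetilde{H}}/p}$.

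A clean way to produce your $g$, which also repairs the paper's argument, is the following. Take any $g_0\in\mathscr{C}^{\infty}_{c}$ with $\|f-g_0\|_{L^p(\ell^2,P)}<\varepsilon$, and set $g\triangleq\chi(\|\cdot\|)\,g_0$, where $\chi\in C^\infty(\mathbb{R};[0,1])$ equals $1$ on $(-\infty,r]$ and $0$ on $[r+\frac12,\infty)$. Since $f=0$ a.e.\ off $B_r$, you get $\|f-g\|_{L^p}=\|\chi(\|\cdot\|)(f-g_0)\|_{L^p}\le\|f-g_0\|_{L^p}<\varepsilon$. Moreover $g$ is bounded, continuous on $\ell^2$, and supported in $B_{r+1}$. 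Your alternative via Proposition \ref{Reduce diemension} would need an extra mollification step, since the $f_n$ need not be continuous.

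Finally, the law of $\textbf{x}+t\textbf{y}$ under $P$ has density $\rho_t$, not $\rho_{-t}$. This sign slip is harmless because your bound is symmetric in $t$.
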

\begin{proof}
If $f\in \mathscr{C}^{\infty}_{c}$, combining the fact
	$$\int_{\ell^{2}} \left| f\left( \textbf{x} +t\textbf{y} \right) -f\left( \textbf{x} \right) \right|^{p} \mathrm{d} P\left( \textbf{x} \right) \leqslant 2^{p}\sup_{\ell^{2}} \left| f \right|^{p} \int_{\ell^{2}} 1\,\mathrm{d} P<\infty,$$
	and the Dominated Convergence Theorem, we obtain
	\begin{eqnarray}
		\lim_{t\rightarrow 0^{+}} \left\| f\left( \cdot +t\textbf{y} \right) -f \right\|_{L^{p}\left( \ell^{2} ,P \right)} =0.\label{1ff3gg}
	\end{eqnarray}
	Now for $f\in L^{p}\left( \ell^{2} ,P \right)$, $\varepsilon>0$ with $\mathrm{supp} f\subset B_r,r>0$, by \cite[Proposition 2.4, p. 528]{YZ} we know that $\mathscr{C}^{\infty}_{c}$ is dense in $L^{p}\left( \ell^{2} ,P \right)$. Hence there exists $g\in \mathscr{C}^{\infty}_{c}$ such that
	\begin{eqnarray}
		\left\| f-g \right\|_{L^{p}\left( \ell^{2} ,P \right)} <\varepsilon.\label{13f3f3fg}
	\end{eqnarray}
	Moreover, by \cite[Theorem I.4, p. 61]{Kuo}, for $t\in(0,1)$ and $\textbf{y} \,\in \widetilde{H}$, we have
	$$\begin{aligned}
		\left\| f\left( \cdot +t\textbf{y} \right) \right\|_{L^{p}\left( \ell^{2} ,P \right)}^{p}
		&=\int_{\ell^{2}} \left| f\left( \textbf{x} +t\textbf{y} \right) \right|^{p} \mathrm{d} P\left( \textbf{x} \right)
		=\int_{\ell^{2}} \left| f\left( \textbf{x} \right) \right|^{p} e^{-t\left< \textbf{x} ,\textbf{y} \right>_{H} -\frac{t^{2}}{2} \left\| \textbf{y} \right\|_{H}^{2}}\,\mathrm{d} P\left( \textbf{x} \right)\\
		&\leqslant \int_{\ell^{2}} \left| f\left( \textbf{x} \right) \right|^{p} e^{t\left| \left< \textbf{x} ,\textbf{y} \right>_{H} \right|}\,\mathrm{d} P\left( \textbf{x} \right)
		\leqslant \int_{\ell^{2}} \left| f\left( \textbf{x} \right) \right|^{p} e^{\left\| \textbf{x} \right\|\cdot \left\| \textbf{y} \right\|_{\widetilde{H}}}\,\mathrm{d} P\left( \textbf{x} \right)\\
		&\leqslant e^{r\left\| \textbf{y} \right\|_{\widetilde{H}}}\int_{\ell^{2}} \left| f\left( \textbf{x} \right) \right|^{p} \mathrm{d} P\left( \textbf{x} \right)
		=e^{r\left\| \textbf{y} \right\|_{\widetilde{H}}}\left\| f \right\|_{L^{p}\left( \ell^{2} ,P \right)}^{p}.
	\end{aligned}$$
	Similarly, we have
\begin{eqnarray} \label{20260425for1}
	\left\| f\left( \cdot +t\textbf{y} \right) -g\left( \cdot +t\textbf{y} \right) \right\|_{L^{p}\left( \ell^{2} ,P \right)} \leqslant e^{\frac{r}{p} \left\| \textbf{y} \right\|_{\widetilde{H}}}\left\| f-g \right\|_{L^{p}\left( \ell^{2} ,P \right)}.
\end{eqnarray}
Combining \eqref{1ff3gg}, \eqref{13f3f3fg} and \eqref{20260425for1}, we get
	$$\begin{aligned}
		0&\leqslant \varlimsup_{t\rightarrow 0^{+}} \left\| f\left( \cdot +t\textbf{y} \right) -f \right\|_{L^{p}\left( \ell^{2} ,P \right)} \\
		&\leqslant \varlimsup_{t\rightarrow 0^{+}} \Big( \left\| f\left( \cdot +t\textbf{y} \right) -g\left( \cdot +t\textbf{y} \right) \right\|_{L^{p}\left( \ell^{2} ,P \right)}
		+\left\| g\left( \cdot +t\textbf{y} \right) -g \right\|_{L^{p}\left( \ell^{2} ,P \right)}
		+\left\| g-f \right\|_{L^{p}\left( \ell^{2} ,P \right)} \Big)\\
		&\leqslant \left( e^{\frac{r}{p} \left\| \textbf{y} \right\|_{\widetilde{H}}}+1 \right) \left\| f-g \right\|_{L^{p}\left( \ell^{2} ,P \right)}
		\leqslant \left( e^{\frac{r}{p} \left\| \textbf{y} \right\|_{\widetilde{H}}}+1 \right) \varepsilon .
	\end{aligned}$$
	Since $\varepsilon$ is arbitrary, \eqref{1f332g2gg1} follows. This completes the proof of Lemma \ref{jeduan}.
\end{proof}
\begin{theorem}\label{zhuhanshuguanghuabijin}
	If $\Omega$ satisfies the strong segment condition, then $\mathscr{C}^{\infty}_{c}$ is dense in $W^{m,p}\left( \Omega ,P \right)$.
\end{theorem}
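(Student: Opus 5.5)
We follow the classical proof of \cite[Theorem 3.2, p. 68]{AF}, using Lemma \ref{jinhuaxuliebijin} and Lemma \ref{jeduan} as the infinite-dimensional replacements for compactness and continuity of translation.

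\textbf{Step 1: compact support.} Fix $f\in W^{m,p}(\Omega,P)$ and $\varepsilon>0$. By Lemma \ref{jinhuaxuliebijin} we may assume $f\in C_{\mathscr{F}}^{\infty}(\Omega,loc)\cap W^{m,p}(\Omega,P)$ with $K\triangleq\mathrm{supp} f$ compact in $\ell^2$. Only the compact set $K\cap\partial\Omega$ is then relevant.

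\textbf{Step 2: localisation.} For each $\textbf{x}\in K\cap\partial\Omega$, the strong segment condition gives a neighbourhood $U_{\textbf{x}}$ and a vector $\textbf{y}_{\textbf{x}}\in\widetilde H\setminus\{\textbf{0}\}$. By compactness, finitely many sets $U_1,\dots,U_N$ (with vectors $\textbf{y}_1,\dots,\textbf{y}_N$) cover $K\cap\partial\Omega$. Shrink them to open sets $V_j$ with $V_j\s U_j$ and $\overline{V_j}$ still inside $U_j$ at positive distance. Add an open set $V_0\s\Omega$ so that $K\subset\bigcup_{j=0}^N V_j$. Theorem \ref{danweifenjie} supplies a smooth partition of unity $\{\psi_j\}$ subordinate to this finite cover, with $\psi_j\in C_{0,\mathscr{F}}^{\infty}(\ell^2)$. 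By Proposition \ref{chengjijieduan}, $f=\sum_{j=0}^N\psi_jf$ on $\Omega$ with each $f_j\triangleq\psi_jf\in W^{m,p}(\Omega,P)$. It therefore suffices to approximate each $f_j$ by elements of $\mathscr{C}^\infty_c$. For $j=0$ we have $\mathrm{supp} f_0\s\Omega$, so Theorem \ref{0bijin}(1) applies directly.

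\textbf{Step 3: translation.} For $j\ge1$, set $f_{j,t}(\textbf{z})\triangleq f_j(\textbf{z}+t\textbf{y}_j)$ for small $t>0$. By the segment condition, $\textbf{z}\in\overline\Omega\cap U_j$ implies $\textbf{z}+t\textbf{y}_j\in\Omega$. Together with compactness of $\mathrm{supp} f_j$, this should show that $f_{j,t}$ is defined and Sobolev on the open set $\Omega_t\triangleq\Omega-t\textbf{y}_j$, that $\overline{\Omega}\cap U_j'$ lies uniformly inside $\Omega_t$, and hence that the restriction of $f_{j,t}$ to a uniform neighbourhood of $\overline\Omega\cap\mathrm{supp}$ is a $W^{m,p}_{0,u}$-type function there. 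Its weak derivatives are the translates of $D^\alpha f_j$; this uses the Cameron--Martin quasi-invariance from \cite[Theorem I.4]{Kuo}, which gives integration by parts for translated test functions because $\textbf{y}_j\in\widetilde H\subset H$. Lemma \ref{jeduan}, applied to each $\textbf{a}^\alpha D^\alpha f_j$ extended by zero (with supports in a fixed ball), then gives $\|f_{j,t}-f_j\|_{W^{m,p}(\Omega,P)}\to0$. Summability over $|\alpha|\le m$ follows by dominated convergence, using the exponential bound in the proof of Lemma \ref{jeduan}.

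\textbf{Step 4: smoothing.} Choose $\widetilde\Omega\s\Omega_t$ containing $\Omega\cap\mathrm{supp} f_{j,t}$. Apply Theorem \ref{0bijin}(3), or its proof via cutting off by an exhaustion of $\Omega_t$ and using Step 1 of that proof, to obtain $g_j\in\mathscr{C}^\infty_c$ with $\|g_j-f_{j,t}\|_{W^{m,p}(\Omega,P)}$ small. Summing over $j$ finishes the proof.

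\textbf{Main obstacle.} The hardest step is Step 3. One has to justify rigorously that translation by $t\textbf{y}_j$ commutes with weak derivatives under the non-invariant measure $P$. One also has to make the ``uniformly included'' geometry work despite $\ell^2$ not being locally compact, which needs the uniform gap between $V_j$ and $U_j$. This is where $\textbf{y}_j\in\widetilde H$, rather than merely $\ell^2$, is essential, since the Radon--Nikodym density $e^{-t\langle\textbf{x},\textbf{y}\rangle_H-\frac{t^2}{2}\|\textbf{y}\|_H^2}$ and the bound $|\langle\textbf{x},\textbf{y}\rangle_H|\le\|\textbf{x}\|\|\textbf{y}\|_{\widetilde H}$ require it.
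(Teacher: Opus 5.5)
Your proposal is correct and is essentially the paper's argument, which is itself the Adams--Fournier proof transplanted: reduce by Lemma \ref{jinhuaxuliebijin}, localize with Theorem \ref{danweifenjie} and compactness of $\mathrm{supp} f$, translate each piece by $t\textbf{y}_j\in\widetilde H$ and control the error with the Cameron--Martin bound and Lemma \ref{jeduan}, then smooth via Theorem \ref{0bijin} and Corollary \ref{1f13f3}; the only differences are that the paper translates every piece (not just the boundary ones), and it obtains the weak derivatives of the translate from classical $C^{\infty}_{\mathcal{F}}$-smoothness via Proposition \ref{sguanghualuozaisobolev} rather than from a Cameron--Martin integration by parts. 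In Step 4 you must use your second option (Step 1 of the proof of Theorem \ref{0bijin}, which approximates the zero extension of the cut-off $\Psi_1 f_{j,t}$ in $W^{m,p}(\ell^2,P)$, or Corollary \ref{1f13f3} as the paper does), because the bare statement of Theorem \ref{0bijin}(3) only controls the norm on $\widetilde\Omega$ and gives no control of $g_j$ on $\Omega\setminus\widetilde\Omega$.
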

\begin{proof}
	By Lemma \ref{jinhuaxuliebijin}, it suffices to show that for any $\varepsilon > 0$ and any $f\in W^{m,p}\left( \Omega ,P \right)\bigcap C_{\mathscr{F}}^{\infty}\left( \Omega ,loc \right) $ with $\mathrm{supp} f$ being a compact subset of $\ell^{2}$, there exists $g\in \mathscr{C}^{\infty}_{c}$ such that
	$$\left\| g-f \right\|_{W^{m,p}\left( \Omega ,P \right)} <\varepsilon.$$
\textbf{Step 1.} Since $\Omega$ satisfies the strong segment condition, we can choose open covers $\left\{ V_{i} \right\}_{i=1}^{\infty}$, $\left\{ U_{i} \right\}_{i=1}^{\infty}$ of the $ \overline{\Omega}$ such that $V_{i}\s U_{i}, \forall i\in\nn$, and there exists $n_0\in\mathbb{N}$, a mapping $a$ from $\mathbb{N}$ into itself and non-zero vectors $\left\{ \textbf{y}_{i} \right\}_{i=1}^{\infty} \subset \widetilde{H}$ satisfying
\begin{eqnarray}\label{20260426for3}
\left\{ t\in \left( 0,1 \right) :\textbf{z} +t\textbf{y}_{i} \right\} \subset \Omega ,\quad \forall\, \textbf{z} \in U_{i}\bigcap \overline{\Omega},i\in \nn,
\\
\label{20260426for6}
\rho_i(\textbf{x})=0,\qquad\forall\,\textbf{x}\in\mathrm{supp} f,\,i>n_0,
\\
\label{20260426for4}
\left| \left| \textbf{y}_{i} \right| \right|  <1, \forall\,i\in\mathbb{N},\quad\overline{B\left( \textbf{z} , \left| \left| \textbf{y}_{a(i)} \right| \right| \right)} \s U_{a(i)},\quad \forall \,\textbf{z} \in \overline{V_{a(i)}}, i=1,\ldots,n_0,
\end{eqnarray}
where $\left\{ \rho_{i} \right\}_{i=1}^{\infty}$ is a partition of unity subordinate to $\left\{ V_{i} \right\}_{i=1}^{\infty}$ (the existence is by Theorem \ref{danweifenjie}) with the following properties:
\begin{itemize}
    \item[$\mathrm{1}$.] $\rho_{i}\in C_{F^{\infty}}^{\infty}\left( \ell^{2} \right) \bigcap C_{0,\mathscr{F}}^{\infty}\left( \ell^{2} \right)\bigcap C_{\ell^{2}}^{\infty}\left( \ell^{2} ,\mathbb{R} \right),\quad\forall\,i\in\mathbb{N}$;
	\item[$\mathrm{2}$.]  $\rho_{i}^{-1}(\mathbb{R}\setminus\{0\})\s V_{a(i)}$, $\forall\,i\in\mathbb{N}$;
	\item[$\mathrm{3}$.] $
		\sum\limits_{i=1}^{\infty} \rho_{i}\left( \textbf{x} \right) =1,\quad \forall \, \textbf{x}\in \bigcup\limits_{i=1}^{\infty} V_{i};
	$
	\item[$\mathrm{4}$.] $\left\{  \rho_{i}^{-1}(\mathbb{R}\setminus\{0\})\bigcap \overline{\Omega}\right\}_{i=1}^{\infty}$ is a locally finite family in $\bigcup\limits_{i=1}^{\infty} V_{i}$.
\end{itemize}
For any $i\in\mathbb{N}$, we define
$$f_{i}\left( \textbf{x} \right) \triangleq \begin{cases}f\left( \textbf{x} \right) \cdot \rho_{i} \left( \textbf{x} \right) ,&\textbf{x} \in \Omega\\ 0,&\textbf{x} \in \ell^2\setminus\Omega\end{cases} .$$
Then, for any $i\in\mathbb{N}$, there exists $r_{i} \in (0,+\infty)$ such that
$\mathrm{supp} f_{i}\subset B_{r_{i}}.$

For any $i=1,\ldots,n_0,$ by Proposition \ref{chengjijieduan} and the fact $f\in C_{\mathscr{F}}^{\infty}\left( \Omega ,loc \right) $, we know that (by restricting to the corresponding set)
$$
f_{i}\in W^{m,p}\left( \Omega ,P \right) \bigcap C_{\mathcal{F}}^{\infty}\left( \ell^{2} \setminus \left( \partial \Omega \bigcap \overline{V_{a(i)}}\right) \right).
$$
For any $t\in (0,1)$, if $\left( \partial \Omega \bigcap \overline{V_{a(i)}} -t\textbf{y}_{a(i)} \right) \bigcap \overline{\Omega}\neq\emptyset$, choose $\textbf{x}\in\left( \partial \Omega \bigcap \overline{V_{a(i)}} -t\textbf{y}_{a(i)} \right) \bigcap \overline{\Omega}$, then by \eqref{20260426for4}, we have $\textbf{x} +t\textbf{y}_{a(i)} \in \partial \Omega \bigcap \overline{V_{a(i)}}$ and $\textbf{x}=(\textbf{x} +t\textbf{y}_{a(i)})-t\textbf{y}_{a(i)}  \in \overline{\Omega}\cap U_{a(i)}$. By \eqref{20260426for3}, we have
$$
\textbf{x} +t\textbf{y}_{a(i)} \in \Omega ,$$
which yields a contradiction. Consequently,
\begin{eqnarray}\label{20260426for5}
\left( \partial \Omega \bigcap \overline{V_{a(i)}} -t\textbf{y}_{a(i)} \right) \bigcap \overline{\Omega} =\emptyset ,\quad f_{i}\left( \cdot +t\textbf{y}_{a(i)} \right) \in C_{\mathcal{F}}^{\infty}\left( \Omega \right),\quad\forall\, t\in(0,1),\,i=1,\ldots,n_0.
\end{eqnarray}
\textbf{Step 2.} We now prove that
\begin{eqnarray}
	\lim_{t\rightarrow 0^{+}} \left\| f_{i}\left( \cdot +t\textbf{y}_{a(i)} \right) -f_{i} \right\|_{W^{m,p}\left( \Omega ,P \right)} =0,\qquad \forall\, i=1,\ldots,n_0.\label{1f3f33g3g}
\end{eqnarray}
Observe that for any $|\alpha|\leqslant m$, and $t\in(0,1)$, we have
$$\begin{aligned}
	&\int_{\ell^{2} \setminus \left( \partial \Omega \bigcap \overline{V_{a\left( i \right)}} -t\textbf{y}_{a\left( i \right)} \right)} \left| \textbf{a}^{\alpha} D^{\alpha}f_{i}\left( \textbf{x} +t\textbf{y}_{a(i)} \right) \right|^{p} \mathrm{d} P\left( \textbf{x} \right)\\ &\quad =\int_{\ell^{2} \setminus \left( \partial \Omega \bigcap \overline{V_{a\left( i \right)}} \right)} \left| \textbf{a}^{\alpha} D^{\alpha}f_{i}\left( \textbf{x} \right) \right|^{p} e^{-t\left< \textbf{x} ,\textbf{y}_{a\left( i \right)} \right>_{H} -\frac{1}{2} t^{2}\left\| \textbf{y}_{a\left( i \right)} \right\|_{H}^{2}}\, \mathrm{d} P\left( \textbf{x} \right)\\
&\quad \leqslant \int_{\ell^{2} \setminus \left( \partial \Omega \bigcap \overline{V_{a\left( i \right)}} \right)} \left| \textbf{a}^{\alpha} D^{\alpha}f_{i}\left( \textbf{x} \right) \right|^{p} e^{t\left| \left< \textbf{x} ,\textbf{y}_{a\left( i \right)} \right>_{H} \right|}\, \mathrm{d} P\left( \textbf{x} \right)\\
&\quad \leqslant \int_{\Omega} \left| \textbf{a}^{\alpha} D^{\alpha}f_{i}\left( \textbf{x} \right) \right|^{p} e^{\left\| \textbf{y}_{a\left( i \right)} \right\|_{\widetilde{H}} \cdot \left\| \textbf{x} \right\|}\, \mathrm{d} P\left( \textbf{x} \right)\\
&\quad \leqslant e^{r_{i}\left\| \textbf{y}_{a\left( i \right)} \right\|_{\widetilde{H}}}\int_{\Omega} \left| \textbf{a}^{\alpha} D^{\alpha}f_{i} \right|^{p} \mathrm{d} P,\end{aligned}$$
so that
$$\sum_{\left| \alpha \right| \leqslant m} \int_{\ell^{2} \setminus \left( \partial \Omega \bigcap \overline{V_{a\left( i \right)}} -t\textbf{y}_{a\left( i \right)} \right)} \left| \textbf{a}^{\alpha} D^{\alpha}f_{i}\left( \textbf{x} +t\textbf{y}_{a\left( i \right)} \right) \right|^{p} \mathrm{d} P\left( \textbf{x} \right)
<\infty .$$
By Proposition \ref{sguanghualuozaisobolev}, we obtain $f_{i}\left( \cdot +t\textbf{y}_{a(i)} \right) \in W^{m,p}\left( \ell^{2} \setminus \left( \partial \Omega \bigcap \overline{V_{a(i)}} -t\textbf{y}_{a(i)} \right) ,P \right)$. Let
$$
 \widetilde{D^{\alpha}f_{i}}\left( \textbf{x} \right)\triangleq
 \begin{cases}
 D^{\alpha}f_{i} \left( \textbf{x} \right),&\textbf{x} \in \ell^2\setminus(\partial \Omega \bigcap \overline{V_{a(i)}}),\\
  0,&\textbf{x} \in \partial \Omega \bigcap \overline{V_{a(i)}}.
  \end{cases}
 $$
Then we have
$$
\int_{\ell^{2}} \left|\widetilde{D^{\alpha}f_{i}}\left( \textbf{x} \right) \right|^{p} \mathrm{d} P\left( \textbf{x} \right)=\int_{\ell^{2} \setminus \left( \partial \Omega \bigcap \overline{V_{a\left( i \right)}} \right)} \left|  D^{\alpha}f_{i}\left( \textbf{x} \right) \right|^{p} \mathrm{d} P\left( \textbf{x} \right)=\int_{\Omega} \left|   D^{\alpha}f_{i}\left( \textbf{x} \right) \right|^{p} \mathrm{d} P\left( \textbf{x} \right)<\infty,
$$
which implies that $\widetilde{D^{\alpha}f_{i}}\in L^p(\ell^2,P)$. By \eqref{20260426for5},  we have $\Omega\subset \ell^2\setminus\left( \partial \Omega \bigcap \overline{V_{a(i)}} -t\textbf{y}_{a(i)} \right)$ and $\Omega\subset \ell^2\setminus\left( \partial \Omega \bigcap \overline{V_{a(i)}}  \right)$, we have
$$\begin{aligned}
	&\int_{\Omega} \left| \textbf{a}^{\alpha} D^{\alpha}f_{i}\left( \textbf{x} +t\textbf{y}_{a(i)} \right) -\textbf{a}^{\alpha} D^{\alpha}f_{i}\left( \textbf{x} \right) \right|^{p} \mathrm{d} P\left( \textbf{x} \right)\\
	&\quad \leqslant \int_{\ell^2} \left| \textbf{a}^{\alpha} \widetilde{D^{\alpha}f_{i}}\left( \textbf{x} +t\textbf{y}_{a(i)} \right) -\textbf{a}^{\alpha} \widetilde{D^{\alpha}f_{i}}\left( \textbf{x} \right) \right|^{p} \mathrm{d} P\left( \textbf{x} \right),
\end{aligned}$$
which tends to zero as $t\to 0+$ (by Lemma \ref{jeduan}).
Moreover, for any $|\alpha|\leqslant m,$ we have
$$\begin{aligned}
	&\int_{\Omega} \left| \textbf{a}^{\alpha} D^{\alpha}f_{i}\left( \textbf{x} +t\textbf{y}_{a(i)} \right) -\textbf{a}^{\alpha} D^{\alpha}f_{i}\left( \textbf{x} \right) \right|^{p} \mathrm{d} P\left( \textbf{x} \right)\\
	&\quad \leqslant 2^{p-1}\int_{\Omega} \left| \textbf{a}^{\alpha} D^{\alpha}f_{i}\left( \textbf{x} +t\textbf{y}_{a(i)} \right) \right|^{p} \mathrm{d} P\left( \textbf{x} \right) +2^{p-1}\int_{\Omega} \left| \textbf{a}^{\alpha} D^{\alpha}f_{i} \right|^{p} \mathrm{d} P\\
	&\quad =2^{p-1}\int_{\Omega} \left| \textbf{a}^{\alpha} D^{\alpha}f_{i}\left( \textbf{x} +t\textbf{y}_{a(i)} \right) \right|^{p} \mathrm{d} P\left( \textbf{x} \right) +2^{p-1}\int_{\Omega} \left| \textbf{a}^{\alpha} D^{\alpha}f_{i} \right|^{p} \mathrm{d} P\\
	&\quad \leqslant 2^{p-1}e^{r_{i}\left\| \textbf{y}_{a(i)} \right\|_{\widetilde{H}}}\int_{\Omega} \left| \textbf{a}^{\alpha} D^{\alpha}f_{i} \right|^{p} \mathrm{d} P+2^{p-1}\int_{\Omega} \left| \textbf{a}^{\alpha} D^{\alpha}f_{i} \right|^{p} \mathrm{d} P.
\end{aligned}$$
Since
$$\sum_{\left| \alpha \right| \leqslant m} \left( 2^{p-1}e^{r_{i}\left\| \textbf{y}_{a(i)} \right\|_{\widetilde{H}}}\int_{\Omega} \left| \textbf{a}^{\alpha} D^{\alpha}f_{i} \right|^{p} \mathrm{d} P+2^{p-1}\int_{\Omega} \left| \textbf{a}^{\alpha} D^{\alpha}f_{i} \right|^{p} \mathrm{d} P \right) <\infty,$$
the Dominated Convergence Theorem (for the set $\{\alpha\in \mathbb{N}_{0}^{\left( \mathbb{N}  \right)}:|\alpha|\leqslant m\}$ with the counting measure) yields
$$\begin{aligned}
	&\lim_{t\rightarrow 0^{+}} \sum_{\left| \alpha \right| \leqslant m} \int_{\Omega} \left| \textbf{a}^{\alpha} D^{\alpha}f_{i}\left( \textbf{x} +t\textbf{y}_{a(i)} \right) -\textbf{a}^{\alpha} D^{\alpha}f_{i}\left( \textbf{x} \right) \right|^{p} \mathrm{d} P\left( \textbf{x} \right)\\
	&\quad =\sum_{\left| \alpha \right| \leqslant m} \left| \textbf{a}^{\alpha} \right|^{p} \left(\lim_{t\rightarrow 0^{+}} \int_{\Omega} \left| D^{\alpha}f_{i}\left( \textbf{x} +t\textbf{y}_{a(i)} \right) -D^{\alpha}f_{i}\left( \textbf{x} \right) \right|^{p} \mathrm{d} P\left( \textbf{x} \right) \right) =0.
\end{aligned}$$
Thus we have established \eqref{1f3f33g3g}.\\
\textbf{Step 3.} For any $\varepsilon > 0$, by \eqref{1f3f33g3g} there exists $t_{i}\in \left( 0,1 \right)$ such that
\begin{eqnarray}
\left\| f_{i}\left( \cdot +t_{i}\textbf{y}_{a(i)} \right) -f_{i} \right\|_{W^{m,p}\left( \Omega ,P \right)} <\frac{\varepsilon}{3\cdot 2^{i}},\qquad \forall\, i=1,\ldots,n_0.\label{1q}
\end{eqnarray}
Clearly $K_{i}\triangleq \mathrm{supp}\, f_{i}\left( \cdot +t_{i}\textbf{y}_{a(i)} \right) \bigcap \overline{\Omega}$ is compact. Hence there exists an open set $\hat{\Omega}_{i}$ such that
$$K_{i}\s \hat{\Omega}_{i} \s  \ell^{2} \setminus \left( \left( \partial \Omega \bigcap \overline{V_{a(i)}} \right) -t\textbf{y}_{a(i)} \right).$$
Thus $f_{i}\left( \cdot +t_{i}\textbf{y}_{a(i)} \right) \in W^{m,p}_0\left( \hat{\Omega}_{i} ,P \right)$, by conclusion 2 of the Theorem \ref{0bijin}, we obtain $F_{i}\in C_{0,\mathscr{F}}^{\infty}\left( \widehat{\Omega}_{i} \right)$ such that
$$\left\| f_{i}\left( \cdot +t_{i}\textbf{y}_{a(i)} \right) -F_{i} \right\|_{W^{m,p}\left( \hat{\Omega}_{i} ,P \right)} <\frac{\varepsilon}{3\cdot 2^{i}}.$$
Since $F_i$ can be view as a function on $\ell^2$ by zero extension to $\ell^2\setminus\widehat{\Omega}_{i} $, we have $F_i\in W^{m,p}(\ell^2,P)$ and
\begin{eqnarray*}
 \left\| f_{i}\left( \cdot +t_{i}\textbf{y}_{a(i)} \right) -F_{i} \right\|_{W^{m,p}\left( \Omega ,P \right)} &=&\left\| f_{i}\left( \cdot +t_{i}\textbf{y}_{a(i)} \right) -F_{i} \right\|_{W^{m,p}\left( \Omega \bigcap \hat{\Omega}_{i} ,P \right)} \\
 &\leqslant &\left\| f_{i}\left( \cdot +t_{i}\textbf{y}_{a(i)} \right) -F_{i} \right\|_{W^{m,p}\left( \hat{\Omega}_{i} ,P \right)},
\end{eqnarray*}
and hence
\begin{eqnarray}
	\left\| f_{i}\left( \cdot +t_{i}\textbf{y}_{a(i)} \right) -F_{i} \right\|_{W^{m,p}\left( \Omega ,P \right)} <\frac{\varepsilon}{3\cdot 2^{i}}.\label{2q}
\end{eqnarray}
By conclusion 2 of Corollary \ref{1f13f3}, we obtain $g_{i}\in \mathscr{C}^{\infty}_{c}$ such that
\begin{eqnarray}
	\left\| g_{i}-F_{i} \right\|_{W^{m,p}\left( \ell^{2} ,P \right)} <\frac{\varepsilon}{3\cdot 2^{i}}.\label{3q}
\end{eqnarray}
Set $g\triangleq \sum\limits_{i=1}^{n_0} g_{i}\in \mathscr{C}^{\infty}_{c}$. By \eqref{20260426for6}, we have $f=\sum\limits_{i=1}^{n_0}f_i$. Combining \eqref{1q}, \eqref{2q} and \eqref{3q}, we obtain
$$\begin{aligned}
	\left\| g-f \right\|_{W^{m,p}\left( \Omega ,P \right)} &\ =\left\| \sum\limits_{i=1}^{n_0}(g_i-f_i) \right\|_{W^{m,p}\left( \Omega ,P \right)}\\
&\leqslant \sum_{i=1}^{n_0} \left\| g_{i}-f_{i} \right\|_{W^{m,p}\left( \Omega ,P \right)}\\
	&\leqslant \sum_{i=1}^{n_0} \Big( \left\| g_{i}-F_{i} \right\|_{W^{m,p}\left( \Omega ,P \right)} +\left\| F_{i}-f_{i}\left( \cdot +t_{i}\textbf{y}_{a(i)} \right) \right\|_{W^{m,p}\left( \Omega ,P \right)} \\
	&\qquad\qquad +\left\| f_{i}\left( \cdot +t_{i}\textbf{y}_{a(i)} \right) -f_{i} \right\|_{W^{m,p}\left( \Omega ,P \right)} \Big)\\
	&\leqslant \sum_{i=1}^{\infty} \left( \frac{\varepsilon}{3\cdot 2^{i}}+\frac{\varepsilon}{3\cdot 2^{i}}+\frac{\varepsilon}{3\cdot 2^{i}} \right) =\varepsilon .
\end{aligned}$$
This completes the proof of Theorem \ref{zhuhanshuguanghuabijin}.
\end{proof}
\section*{Acknowledgement}
This work is partially supported by National Key R$\&$D Program of China under grant 2024YFA1013400, by the New Cornerstone Science Foundation, and by the Science Development Project of Sichuan University under grant 2020SCUNL201.

\end{document}